\numberwithin{equation}{section}
\newtheorem{theorem}{Theorem}[section]
\newtheorem{definition}[theorem]{Definition}
\newtheorem{proposition}[theorem]{Proposition}
\newtheorem{lemma}[theorem]{Lemma}
\newtheorem{remark}[theorem]{Remark}
\newcommand{\cali}[1]{\mathscr{#1}}
\newcommand{\supp}{{\rm supp}}
\newcommand{\dist}{{\rm dist}}
\newcommand{\loc}{{loc}}
\newcommand{\Cc}{\cali{C}}
\newcommand{\Dc}{\cali{D}}
\newcommand{\Fc}{\cali{F}}
\newcommand{\Hc}{\cali{H}}
\newcommand{\Lc}{\cali{L}}
\newcommand{\Rc}{\cali{R}}
\newcommand{\C}{\mathbb{C}}
\newcommand{\N}{\mathbb{N}}
\newcommand{\Z}{\mathbb{Z}}
\newcommand{\R}{\mathbb{R}}
\renewcommand{\P}{\mathbb{P}}
\newcommand{\Xf}{{\mathfrak{X}}}
\newcommand{\Xfh}{{\widehat{\mathfrak{X}}}}
\newcommand{\Kmc}{{\mathcal{K}}}
\newcommand{\Tmc}{{\mathcal{T}}}
\title[]{Intersection of Positive Closed Currents}
\author{Taeyong Ahn}
\address{(Ahn) Department of Mathematics Education, Inha University, 100 Inha-ro, Michuhol-gu, Incheon 22212, Republic of Korea}%
\email{t.ahn@inha.ac.kr}
\subjclass[2020]{32U05, 32U40, 32C30}%
\keywords{pluripotential theory, tangent current, intersection of positive closed currents, self-intersection of analytic subsets}%
\date{\today}
\begin{document}
\begin{abstract}
	We investigate the intersection of positive closed currents in a general setting, employing tangent currents alongside King's residue formula. Our main result establishes a natural condition for the intersection--namely, the Dinh-Sibony product--of positive closed currents on domains and derives an integral representation of this intersection. In parallel, we study the existence, $h$-dimension, and shadow of tangent currents, extending our approach to the study of the self-intersection of analytic subsets. We also present a local version of superpotentials and a regularization of positive closed currents, explore the connections with slicing theory, and examine classical examples. Our work extends to general complex manifolds, including compact K\"ahler manifolds.
\end{abstract}
\maketitle

\section{Introduction} 
Positive closed currents are key tools in complex geometry, with broad applications in complex dynamics and algebraic geometry (e.g., \cite{DemaillyH}, \cite{FSCPLDYN2}, \cite{DS09}). One of the fundamental problems in pluripotential theory is to establish a reasonable definition for the intersection of positive closed currents (e.g., Demailly’s question in \cite{DemaillyG}).\medskip

In this context, due to the application of (quasi-)plurisubharmonic functions, the bidegree $(1, 1)$ case has been intensively studied, notably by Bedford-Taylor, Demailly, Forn{\ae}ss-Sibony, Boucksom-Eyssidieux-Guedj-Zeriahi, Vu, and Huynh-Kaufmann-Vu, among others (\cite{BT}, \cite{Demailly}, \cite{FS}, \cite{BEGZ}, \cite{VU21-1}, \cite{VU21-2}, \cite{HKV}).\medskip

In the higher bidegree case, there had not been much known until Dinh-Sibony brought in remarkable ideas: the theory of superpotentials (\cite{DS06}, \cite{DS09}, \cite{DS10}) and the theory of tangent currents (\cite{DS18}). Dinh-Sibony initiated the idea of superpotentials in \cite{DS06} and used superpotentials to answer Demailly's question raised in \cite{DemaillyG} in the case of complex projective spaces in \cite{DS09}. From the perspective of intersection theory, superpotentials have been further studied by Dinh-Sibony, Vu, Dinh-Nguyen-Vu and Luo-Zhou (\cite{DS10}, \cite{VuMich}, \cite{DNV}, \cite{LZ}). However, the general case is far from being well understood. Then, the notions of the tangent current and the Dinh-Sibony product were introduced in \cite{DS18} and have since been investigated by Kaufmann-Vu, Vu, Hyunh-Kaufmann-Vu, Nguyen, and Nguyen-Truong (\cite{KV}, \cite{VU21-1}, \cite{VU21-2}, \cite{Vu}, \cite{HKV}, \cite{NguyenPositive}, \cite{Nguyen2501}, \cite{NT}). 
While this paper was being prepared, in the preprint \cite{Nguyen2501}, Nguyen introduced the effective sufficient condition for the intersection--the Dinh-Sibony product--of positive closed currents on compact K\"ahler manifolds.\medskip

The primary purpose of this paper is to establish a natural condition for the intersection--the Dinh-Sibony product (Definition \ref{defn:DSproduct})--of positive closed currents of arbitrary bidegree and to derive an integral representation of this intersection in a general setting including compact K\"ahler manifolds. Our approach avoids cohomological techniques and yields a local analytic formula that is amenable to explicit computations, potentially providing an analytic method for the local study of intersections of algebraic cycles (e.g., Remark \ref{rmk:int_for_ex}; see also Theorem \ref{thm:non-proper}). The following is our main result:\medskip

Let $D\subset\C^n$ be a bounded simply connected domain with smooth boundary and $\omega_{\rm euc}$ the Euclidean K\"ahler form on $\C^n$. Let $k\in\Z$ be such that $k\ge 2$. Let $\pi_i:D^k\to D$ be the canonical projection onto the $i$-th factor for $i=1,\ldots, k$. Let $u=\frac{1}{2}\log \sum_{i=1}^{k-1} |x_i-x_k|^2$ and $\Delta=\{(x, \ldots, x)\in D^k: x\in D\}$, where $(x_1, \ldots, x_k)\in D^k$ are the coordinates. For tangent currents, see Section \ref{sec:tangent_currents}. The product $\langle \cdot \rangle_\Kmc$ is defined as in Definition \ref{def:K-product}. We now present our main theorem.
\begin{theorem}\label{thm:main_intersection_1}
	Let $S_i$ for $i=1, \ldots, k$ be positive closed $(s_i, s_i)$-currents on $D$, respectively, where $1\le s:=s_1+\cdots+ s_k\le n$. Suppose that we have
	\begin{align*}
	u\in L_{\loc}^1\left(\left\langle \pi_1^*S_1\wedge\cdots\wedge \pi_{k-1}^*S_{k-1}\wedge\pi_k^*\left(S_k\wedge \omega_{\rm euc}^{kn-s-i+1}\right)\wedge (dd^cu)^{i-1}\right\rangle_\Kmc\right)
	\end{align*}
	in $D^k$ inductively from $i=1$ through $n$. Then, there exists a unique tangent current of $\pi_1^*S_1 \wedge \ldots\wedge \pi_k^*S_k$ along $\Delta$ with the minimal $h$-dimension. Its shadow is exactly the positive closed $(s, s)$-current $\left(S_1\wedge\ldots\wedge S_k\right)_K$ on $D$, where we have
	\begin{align*}
		\left\langle \left(S_1\wedge \cdots\wedge S_k\right)_K, \varphi\right\rangle:=\int_{D^k\setminus \Delta} \pi_1^*S_1 \wedge \cdots \wedge \pi_k^*S_k\wedge\left(u(dd^cu)^{(k-1)n-1}\right)\wedge dd^c\Phi
	\end{align*}
	for a smooth test $(n-s, n-s)$-form $\varphi$ on $D$ and for a smooth $(n-s, n-s)$-form $\Phi$ with compact support on $D^k$ such that $\Phi=\pi_k^*\varphi$ in a neighborhood of $\Delta$. In particular, the Dinh-Sibony product of $S_1, \ldots, S_k$ is well-defined and equal to $\left(S_1\wedge \cdots \wedge S_k\right)_K$.
\end{theorem}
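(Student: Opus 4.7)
The overall strategy is to exploit King's residue formula, which identifies $(dd^cu)^{(k-1)n}$ with a universal positive constant times the current of integration $[\Delta]$, since $u$ is a plurisubharmonic function whose unbounded locus is precisely the diagonal $\Delta$. The inductive $L^1_{\rm loc}$ hypothesis then plays the role of the natural Bedford--Taylor-type condition legitimating the successive construction of the K-products of $\pi_1^*S_1\wedge\cdots\wedge\pi_k^*S_k$ with powers of $dd^cu$. Once this ladder is built, both the integral representation of the shadow and the identification of the tangent current become consequences of Stokes' theorem and the normal-bundle dilation construction.

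Concretely, I would first build inductively in $i=0,1,\ldots,(k-1)n$ the positive closed currents
\begin{align*}
T_i := \big\langle \pi_1^*S_1\wedge\cdots\wedge\pi_k^*S_k\wedge (dd^cu)^i \big\rangle_{\Kmc}
\end{align*}
on $D^k$. The base case is the tensor product. For the inductive step, the hypothesis that $u$ is locally integrable against the appropriate auxiliary extension of $T_{i-1}$ enables $u\, T_{i-1}$ to be defined as a current, and then $T_i := dd^c(u\, T_{i-1})$ is positive and closed by the standard Demailly truncation $u_\epsilon := \max(u,\log\epsilon)$ and monotone convergence on the K-product side. The auxiliary factor $\pi_k^*(S_k\wedge\omega_{\rm euc}^{kn-s-i+1})$ appearing in the integrability hypothesis is a bidegree bookkeeping device that matches exactly what is needed at step $i$ so that only the intersection-theoretic singularity is tested.

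For the shadow formula, observe that on $D^k\setminus\Delta$ one has $dd^c\big(u(dd^cu)^{(k-1)n-1}\big) = (dd^cu)^{(k-1)n} = 0$ since $u$ is smooth there. Applying Stokes in a shrinking tubular neighbourhood $\{u<\log\epsilon\}$ and transferring $dd^c$ from $\Phi$ onto $u(dd^cu)^{(k-1)n-1}$, the pairing in the statement becomes $\langle T_{(k-1)n},\Phi\rangle$ modulo boundary terms that vanish as $\epsilon\to 0$ thanks to the mass estimates inherited from the inductive chain. By King's formula, $T_{(k-1)n}$ is a constant times $[\Delta]\wedge\pi_1^*S_1\wedge\cdots\wedge\pi_k^*S_k$, and using $\Phi=\pi_k^*\varphi$ in a neighbourhood of $\Delta$ identifies this with $\langle (S_1\wedge\cdots\wedge S_k)_K,\varphi\rangle$ after absorbing the universal constant. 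The tangent current is then obtained by extending the K-product construction to the normal bundle $N_\Delta D^k$ via the dilations $A_\lambda$: since $u$ behaves like $\log|\cdot|$ in the transverse direction, $A_\lambda^*(dd^cu)=dd^cu$ off $\Delta$, giving $\C^*$-invariance of the limit; minimality of the $h$-dimension follows because the $L^1_{\rm loc}$ hypothesis rules out concentration of $T_{(k-1)n}$ on any proper analytic subset of $\Delta$. The identification of its shadow with $(S_1\wedge\cdots\wedge S_k)_K$ follows from the Stokes computation, and this in turn yields the well-definedness of the Dinh-Sibony product and its equality with $(S_1\wedge\cdots\wedge S_k)_K$.

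The main obstacle will be the rigorous justification of the Stokes' theorem computation in the presence of the singularity of $u$ along $\Delta$: one must show that boundary contributions on $\{u=\log\epsilon\}$ vanish as $\epsilon\to 0$, which reduces to controlling $\epsilon$-tubular masses of $T_{(k-1)n-1}$. This is precisely where the inductive chain of $L^1_{\rm loc}$ hypotheses becomes essential, each bound feeding into the next; the careful calibration of the factor $\omega_{\rm euc}^{kn-s-i+1}$ in the hypothesis is what guarantees the right estimates at each level. A secondary difficulty is ensuring coherence throughout the induction, namely that the inductively constructed $T_i$ genuinely agree with the K-product $\langle\cdot\rangle_{\Kmc}$ of the paper's definition, which I expect to be handled by identifying both with the same regularized limit $\lim_{\epsilon\to 0}\pi_1^*S_1\wedge\cdots\wedge\pi_k^*S_k\wedge(dd^cu_\epsilon)^i$ and invoking the uniqueness portion of the K-product construction.
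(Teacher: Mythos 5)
Your high-level strategy matches the paper's: build the ladder of K-products $T_i=\left\langle \pi_1^*S_1\wedge\cdots\wedge\pi_k^*S_k\wedge(dd^cu)^i\right\rangle_\Kmc$ inductively from the integrability hypothesis (this is Lemma \ref{lem:basic_conv_KT}), obtain the integral representation by moving $dd^c$ onto $u(dd^cu)^{(k-1)n-1}$ (Proposition \ref{prop:compute_tangent_currents}), and connect the K-products to tangent currents through the dilation-invariance of $dd^cu$ off the diagonal. However, the proposal leaves genuine gaps at the three points where the theorem has real content. First, the existence of tangent currents is assumed rather than proved: one needs locally uniform mass bounds for $(A_\lambda)_*(\mathbf{1}_{D^k}T)$, and the paper extracts these from Condition $(\mathrm{I})$ only after comparing the forms $\Tmc_\theta^i=(dd^cu_\theta^\Tmc)^i$, which compute the dilated masses (Proposition \ref{prop:tangent_2_shadows}), with the mixed forms $dd^cu_\theta^\Tmc\wedge(dd^cu)^{i-1}$ that the hypothesis controls; this is the pointwise form inequality of Lemma \ref{lem:T_vs_K} and it is not automatic. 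Second, your argument for minimality of the $h$-dimension (``the $L^1_{\rm loc}$ hypothesis rules out concentration of $T_{(k-1)n}$ on any proper analytic subset of $\Delta$'') is not the right mechanism: $T_{(k-1)n}$ is supported on $\Delta$. What the integrability actually gives (Remark \ref{rmk:integrability}) is that the intermediate products $T_{i}\wedge\pi_\Delta^*\omega_{\Delta}^{(k-1)n-i}$ for $i<(k-1)n$ put no mass on $\Delta$, and this feeds into the characterization of the $h$-dimension via Theorem \ref{thm:h_dim_K} and Proposition \ref{prop:h-dim}.

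Third, and most importantly, uniqueness. $\C^*$-invariance of the limit does not give uniqueness: every tangent current is invariant under the $A_\lambda$, and a priori different subsequences $\lambda_j\to\infty$ can produce different tangent currents. The paper's route is: (i) every tangent current has minimal $h$-dimension; (ii) the shadow of any tangent current equals $\lim_{\theta\to 0}(\pi_\Delta)_*\left(T\wedge\Kmc_\theta^{(k-1)n}\right)$, because the specific representative $\Omega_{(k-1)n}$ of the class $\{\omega_F^{(k-1)n}\}$ constructed in Lemma \ref{lem:useful_form} has compact support in the fiber $\C^{(k-1)n}$ and satisfies $(A_\lambda)^*\left(\pi_F^*\Omega_{(k-1)n}\right)=\Kmc^{(k-1)n}_{e^{-M}\lambda^{-1}}$, while the shadow is independent of the representative (Remark \ref{rmk:indep_shadow}); since Condition $(\mathrm{I})$ makes this limit exist independently of the subsequence, all tangent currents share the same shadow; (iii) a tangent current with minimal $h$-dimension is determined by its shadow (Proposition \ref{prop:uniqueness_from_shadow}). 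Your proposal contains the germ of (ii) in the observation $A_\lambda^*(dd^cu)=dd^cu$, but without the compactly supported fiber form and the two-sided mass comparison carried out in Proposition \ref{prop:main_true_general}, the identification of the shadow with the K-product, and hence uniqueness, is not established. A smaller inaccuracy: King's formula here reads $(dd^cu)^{(k-1)n}=[\Delta]$ with constant $1$, and the identity $T_{(k-1)n}=[\Delta]\wedge T$ may not be invoked for singular $T$ --- it is essentially what is being proved.
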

For general complex manifolds, see Theorem \ref{thm:intersection_cpt} and Remark \ref{rmk:singularity_K}. The integrability condition in Theorem \ref{thm:main_intersection_1} is called Condition $(\mathrm{I})$ (Definition \ref{defn:K-integrable_many}). The inductively defined integrability condition in Condition $(\mathrm{I})$ might seem a bit strong at first, but actually, each step lowers the $h$-dimension of the tangent current by one, as shown in Proposition \ref{prop:h-dim}. In particular, Condition $(\mathrm{I})$ implies that the $h$-dimension of the tangent current is minimal. Roughly speaking, Condition $(\mathrm{I})$ is for the inductive definition of the product of $\pi_1^*S_1 \wedge \cdots \wedge \pi_k^*S_k$ with $(dd^c u)^{(k-1)n}$. For the sake of generality, some technicalities have been added as in Lemma \ref{lem:basic_conv_KT}. Note that according to King's residue formula in \cite{King}, we have $(dd^c u)^{(k-1)n}=[\Delta]$ in the sense of currents. In fact, Condition $(\mathrm{I})$ is satisfied in the classical cases discussed in Section \ref{sec:examples}. Under Condition $(\mathrm{I})$, the intersection, in the sense of slicing theory, is also well-defined as proved in Subsection \ref{subsec:slicing}.\medskip

We also extend our approach to the study of the self-intersection of analytic subsets including algebraic cycles. The shadow of tangent currents (Definition \ref{def:shadow}) may be regarded as the largest piece of the intersection of positive closed currents. Theorem \ref{thm:non-proper} shows that the shadow of the tangent current associated with the self-intersection of an analytic subset is the current defined by the analytic subset itself.
\begin{theorem}\label{thm:non-proper}
	Let $X$ be a compact K\"ahler manifold of dimension $n$. Let $A$ be an irreducible analytic subset of pure codimension $a$ in $X$. Let $\Delta$ be the diagonal submanifold of $X^2$. Let $\pi_i:X^2\to X$ denote the canonical projection onto the $i$-th factor for $i=1, 2$. Then, $\pi_1^*[A]\wedge\pi_2^*[A]$ has a unique tangent current along $\Delta$ and its $h$-dimension is $n-a$. Its shadow is exactly $[A]$.
\end{theorem}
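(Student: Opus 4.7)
The plan is to reduce everything to an explicit local computation at smooth points of $A$ and then globalize by a dimension argument across $A_{\sing}$. Theorem~\ref{thm:main_intersection_1} cannot be applied directly, because the asserted $h$-dimension $n-a$ strictly exceeds the minimal value $n-2a$ that a proper intersection of two currents of bidegree $(a,a)$ would produce; a hands-on analysis of the normal cone of $A\times A$ along $\Delta$ is therefore required.

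At a smooth point $p\in A_{\reg}$ choose holomorphic coordinates $(x,y)\in\C^{n-a}\times\C^a$ on a neighborhood of $p$ so that $A=\{y=0\}$, and on $X^2$ use diagonal coordinates $u=x_1-x_2$, $v=x_2$, $s=y_1-y_2$, $t=y_2$. In these coordinates
\[
\Delta=\{u=0,\ s=0\}\qquad\text{and}\qquad A\times A=\{s=0,\ t=0\}
\]
are linear. Identifying a tubular neighborhood of $\Delta$ in $X^2$ with a neighborhood of the zero section of $N_{\Delta/X^2}\cong TX$ via these coordinates, with $(u,s)$ as fibre and $(v,t)$ as base, the analytic set $A\times A$ corresponds to the total space $TA|_{A_{\reg}}$: over $(v,0)\in A$ its fibre is $\{(u,0):u\in\C^{n-a}\}=T_{(v,0)}A$, and there is no contribution over base points with $t\neq 0$.

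The decisive observation is that $A\times A$ is set-theoretically invariant under the normal scaling $A_\lambda(u,v,s,t)=(\lambda u,v,\lambda s,t)$, so inside the identified tubular neighborhood $(A_\lambda)_*(\pi_1^*[A]\wedge\pi_2^*[A])=\pi_1^*[A]\wedge\pi_2^*[A]$ for every $\lambda\in\C^*$. Sending $|\lambda|\to\infty$ yields both existence and uniqueness of the tangent current on the open subset of $N_{\Delta/X^2}$ lying over $A_{\reg}$: it is the current of integration along $TA|_{A_{\reg}}$. To globalize across $A_{\sing}$, which has complex dimension strictly less than $n-a$, one uses that the normal cone $C_\Delta(A\times A)$ is a purely $2(n-a)$-dimensional analytic subset of $N_{\Delta/X^2}$ whose only $2(n-a)$-dimensional irreducible component is $TA:=\overline{TA|_{A_{\reg}}}$; a Siu-type decomposition of any cluster tangent current then rules out extraneous positive closed contributions supported over $A_{\sing}$, so the tangent current is globally $[TA]$. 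Reading off the invariants, the canonical projection $\pi:N_{\Delta/X^2}\to\Delta\cong X$ maps $TA$ generically one-to-one onto $A$, so the $h$-dimension of $[TA]$ equals $\dim A=n-a$ and its shadow is exactly $[A]$.

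The main obstacle is the globalization step, i.e.\ verifying that $A_{\sing}$ produces no additional tangent current. Concretely one needs either a uniform mass bound on $(A_\lambda)_*(\pi_1^*[A]\wedge\pi_2^*[A])$ in small neighborhoods of the preimage of $A_{\sing}$, or an application of Siu's support theorem to the positive closed limit current on $N_{\Delta/X^2}$; in either case the dimensional deficit $\dim A_{\sing}<n-a$ is the crucial input ensuring that any stray contribution is too small to carry a tangent current of bidegree $(2a,2a)$.
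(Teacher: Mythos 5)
Your local computation at points of $A_{\reg}$ is correct: in the admissible coordinates $(u,v,s,t)$ the current $[A\times A]=\{s=0,\,t=0\}$ is invariant under $A_\lambda$, so over $A_{\reg}$ every tangent current equals $[TA|_{A_{\reg}}]$ with multiplicity $1$, and this already yields the $h$-dimension $n-a$ and the shadow $[A]$ \emph{provided} a unique tangent current exists globally. That proviso is where your argument has a genuine gap. The paper imports existence and uniqueness wholesale from \cite[Proposition 2.6]{Vu} (tangent currents of currents of integration on analytic sets), and neither of your two proposed substitutes works. First, you never produce the uniform mass bound for $(A_\lambda)_*[A\times A]$ near $\pi^{-1}(A_{\sing})$, so existence of cluster values is not established there. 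Second, the dimension count behind your "Siu-type" step is wrong: a positive closed $(2a,2a)$-current on the $2n$-dimensional total space $E$ has bidimension $2n-2a$, whereas $\pi^{-1}(A_{\sing})$ has dimension up to $(n-a-1)+n=2n-a-1\ge 2n-2a$ for every $a\ge 1$, so the support theorem does not rule out positive closed contributions carried by $\pi^{-1}(A_{\sing})$. Even after restricting to the normal cone, your claim that $\overline{TA|_{A_{\reg}}}$ is its only component of dimension $2(n-a)$ is false in general: for a cuspidal cubic $A\subset X$ with $\dim X=2$, the normal cone $C_{\Delta_A}(A\times A)$ has a second $2$-dimensional irreducible component lying entirely over the cusp (its fiber there is all of $\C^2$). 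Such components have exactly the critical dimension, so Siu's decomposition assigns them unknown, a priori subsequence-dependent, nonnegative coefficients; uniqueness of the tangent current cannot be extracted from dimension considerations alone.

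Two remarks to put this in perspective. The extra components over $A_{\sing}$ do not threaten the \emph{statement}: they have $h$-dimension $<n-a$ and their contribution to the shadow is a positive closed current of bidimension $(n-a,n-a)$ on $X$ supported in $A_{\sing}$, hence zero by the support theorem; so once existence and uniqueness are supplied (e.g.\ by citing Vu, as the paper does), your scaling-invariance computation over $A_{\reg}$ is a legitimate, and arguably more geometric, replacement for the paper's evaluation of the multiplicity via Theorem \ref{thm:shadow_general} and King's residue formula on $A^2$. But as written, the globalization step is not a technicality you can wave at with $\dim A_{\sing}<n-a$; it is precisely the content of the cited result on tangent cones of analytic cycles.
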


To give a flavor of our approach, we consider Theorem \ref{thm:main_intersection_1} with $k=2$.  
We use King's residue formula in \cite{King} to explore the relationship between tangent currents and complex Monge-Amp\`ere type currents. 
When $S_1$ and $S_2$ are smooth, the residue formula says
\begin{align*}
	\left\langle S_1\wedge S_2, \varphi\right\rangle = \int_{{D^2}}\pi_1^*S_1\wedge\pi_2^*(S_2\wedge\varphi)\wedge[\Delta]= \int_{{D^2}}\pi_1^*S_1\wedge\pi_2^*(S_2\wedge\varphi)\wedge (dd^c u)^n,
\end{align*}
where $1\le s:=s_1+s_2\le n$ and $\varphi$ is a smooth test $(n-s, n-s)$-form on $D$. Using the notion of double currents in \cite{Federer}, $\pi_1^*S_1\wedge \pi_2^*S_2$ is well-defined for general positive closed currents $S_1$ and $S_2$. So, if the product of $\pi_1^*S_1\wedge \pi_2^*S_2$ and $(dd^c u)^n$ makes sense, one may expect the resulting product as above to define the intersection of positive closed currents.\medskip

The main difficulty in this approach is that $\Delta = \{u=-\infty\}$ is not compact in ${D^2}$ and is quite big in dimension. It is not clear how the comparison principle or Stokes' theorem can be applied in this setting. Hence, instead, we consider integrability conditions. There are two natural options: one associated with the classical complex Monge-Amp\`ere type product (Subsection \ref{subsec:classical} and Theorem \ref{thm:main_intersection_1}) and the other with the relative non-pluripolar product (Subsection \ref{subsec:general_CondII} and Theorem \ref{thm:non-proper}). Here, we continue to describe our approach with the classical case. Under Condition $(\mathrm{I})$, the product $(S_1\wedge S_2)_K$ of $S_1$ and $S_2$ in terms of $\pi_1^*S_1\wedge\pi_2^*S_2$ and $\left(dd^cu\right)^n$ is well-defined.\medskip

We relate $(S_1\wedge S_2)_K$ and tangent currents of $\pi_1^*S_1\wedge\pi_2^*S_2$ along $\Delta$ to show that the Dinh-Sibony product is well-defined. We briefly summarize the notion of tangent currents in our setting (Definition \ref{def:tangent_current}). We may assume that ${D^2}$ is a subset of $E$, where $E:=\Delta\times \C^n$ is the normal bundle of $\Delta$ in ${D^2}$. Let $\overline{E}:=\Delta\times\P^n$ be the projective compactification of $E$. Let $\pi_\Delta: \overline{E}\to \Delta$ and $\pi_F:\overline{E}\to \P^n$ denote the canonical projections onto $\Delta$ and $\P^n$, respectively. Let $A_\lambda:\overline{E}\to \overline{E}$ be the extension to $\overline{E}$ of the multiplication by $\lambda$ on fibers of $E$ for $\lambda\in\C^*$. A tangent current $(\pi_1^*S_1\wedge \pi_2^*S_2)_\infty$ of $\pi_1^*S_1\wedge\pi_2^*S_2$ along $\Delta$ is a limit current of the family $\left(\left(A_\lambda\right)_*\left(\mathbf{1}_{D^2}\pi_1^*S_1\wedge \pi_2^*S_2\right)\right)_{\lambda\in\C^*}$ of currents on $E$. Under Condition $(\mathrm{I})$, tangent currents exist as in Section \ref{sec:integrability}. Let $\left(\lambda_k\right)_{k\in\N}$ be a sequence in $\C^*$ such that $\lim_{k\to\infty}\lambda_k=\infty$ and that $(\pi_1^*S_1\wedge \pi_2^*S_2)_\infty=\lim_{k\to\infty}\left(A_{\lambda_k}\right)_*(\mathbf{1}_{D^2}\pi_1^*S_1\wedge \pi_2^*S_2)$ in the sense of currents on $E$.
\medskip

The point in connecting $(\pi_1^*S_1\wedge \pi_2^*S_2)_\infty$ to $(S_1\wedge S_2)_K$ is that we utilize the independence property involved in defining the shadow of tangent currents as in Remark \ref{rmk:indep_shadow}. We use the form $\Omega_n$ as in Lemma \ref{lem:useful_form}, which is a smooth positive closed form of maximal bidegree with compact support in $\C^n\subset\P^n$ cohomologous to a linear subspace. The current $(dd^cu)^n$ is related to $\Omega_n$ via change of coordinates.  
We approximate $(dd^c u)^n$ by 
smooth positive closed $(n, n)$-forms $\left(\Kmc_\theta^n\right)_{0<|\theta|\ll 1}$ defined by $\Kmc^n_{\theta}=dd^c\chi\left(u+\log\left|\theta\right|\right) \wedge(dd^cu)^{n-1}$, where $\theta\in\C^*$ and $\chi:\R\to \R_{\ge 0}$ is the smooth convex increasing function as in the definition of the form $\Omega_n$ in Lemma \ref{lem:useful_form}. On ${D^2}\subset \overline{E}$, the change of variables in the fiber direction in $E$ gives us
\begin{align*}
	&\left(A_{\lambda_k}\right)^*\left(\pi_F^*\Omega_n\right)
	=dd^c\chi\left(\left(\log\left|\lambda_k(x-y)\right|\right)+M\right) \wedge(dd^c\log|\lambda_k\left(x-y\right)|)^{n-1}\\
	\notag&\quad=dd^c\chi\left(\left(\log\left|x-y\right|\right)+\log\left|e^M\lambda_k\right|\right) \wedge(dd^c\log|x-y|)^{n-1}=\Kmc^n_{e^{-M}\lambda_k^{-1}}.
\end{align*}
Hence, for a smooth test form $\varphi$ on $D$, since $\supp\, \left(\pi_F^*\Omega_n\wedge\pi_\Delta^*\varphi\right)$ is compact in $E$, the shadow $(\pi_1^*S_1\wedge \pi_2^*S_2)_\infty^h$ of $(\pi_1^*S_1\wedge \pi_2^*S_2)_\infty$ is as follows: 
\begin{align*}
	\left\langle\left(\pi_1^*S_1\wedge \pi_2^*S_2\right)_\infty^h, \varphi\right\rangle&=\left\langle\left(\pi_\Delta\right)_*\left(\left(\pi_1^*S_1\wedge \pi_2^*S_2\right)_\infty\wedge \pi_F^*\Omega_n\right), \varphi\right\rangle\\
	&= \lim_{k\to\infty}\left\langle \left(A_{\lambda_k}\right)_*\left(\mathbf{1}_{D^2}\pi_1^*S_1\wedge \pi_2^*S_2\right)\wedge \pi_F^*\Omega_n, \pi_\Delta^*\varphi \right\rangle\\
	&= \lim_{k\to\infty}\int_{D^2}\pi_1^*S_1\wedge \pi_2^*S_2\wedge \Kmc_{e^{-M}\lambda_k^{-1}}^n\wedge\pi_2^*\varphi=\left\langle (S_1\wedge S_2)_K, \varphi \right\rangle.
\end{align*}
Here, we identify $\Delta$ with $D$, and the last convergence is from the integrability condition. We see that $(\pi_1^*S_1\wedge \pi_2^*S_2)_\infty^h$ coincides with $(S_1\wedge S_2)_K$. In particular, all the tangent currents have the same shadow. As the $h$-dimension of $(\pi_1^*S_1\wedge \pi_2^*S_2)_\infty$ is minimal, there exists a unique tangent current according to Proposition \ref{prop:uniqueness_from_shadow}. Hence, the Dinh-Sibony product of $S_1$ and $S_2$ is well-defined and equal to $(S_1\wedge S_2)_K$.
\medskip

The above framework works for general tangent currents. Indeed, we obtain Theorem \ref{thm:main_intersection_1} as a particular case of Theorem \ref{thm:uniqueness_tangent_current}, which is proved in Sections \ref{sec:tangent_currents} and \ref{sec:integrability}. More precisely, we relate tangent currents and complex Monge-Amp\`ere type currents as in Proposition \ref{prop:main_true_general}, and as applications, we present: a characterization of the $h$-dimension of tangent currents and a condition for the existence of tangent currents. In Section \ref{sec:integrability}, integrability conditions are introduced. In particular, together with the results in Section \ref{sec:tangent_currents}, from the integrability condition, Condition $(\mathrm{K}-\max)$, we deduce the existence and uniqueness of tangent current with the minimal $h$-dimension and describe the shadow of the tangent current via an integral representation, which proves Theorem \ref{thm:uniqueness_tangent_current}. In Section \ref{sec:intersection}, we apply the results to the intersection of positive closed currents and obtain Theorem \ref{thm:main_intersection_1}. Due to the invariant nature of tangent currents, our results extend to general complex manifolds as in Section \ref{sec:compact}. In addition, we present a local version of superpotentials. In Section \ref{sec:regularization}, regularizations of positive closed currents are discussed, and in particular, we show that the standard regularization by convolution works well with the Dinh-Sibony product under Condition $(\mathrm{I})$, which means that the Dinh-Sibony product, together with Condition $(\mathrm{I})$, generalizes King's work in \cite{King-1}. We also consider slicing theory. In Section \ref{sec:examples}, we examine classical examples. Finally, in Section \ref{sec:self-intersecting}, we investigate the self-intersection of analytic subsets, proving Theorem \ref{thm:non-proper}.
\medskip

For the intersection on compact K\"ahler manifolds, Nguyen used generalized Lelong numbers to study tangent currents and proved several Lelong-Jensen type formulas for estimates in \cite{Nguyen2501}; we use King's residue formula to define the product of a given current and a given submanifold, and then directly establish the relationship between tangent currents and the product current as described above. In essence, closedness and global nature are pursued in \cite{Nguyen2501} while positivity and local nature in this work. In this sense, for compact K\"ahler manifolds, the two works employ different approaches but arrive at essentially the same conclusion regarding the condition for the intersection--the Dinh-Sibony product--of positive closed currents (cf. Theorem \ref{thm:intersection_cpt} and \cite[Theorem 2.18]{Nguyen2501}) and the characterization of the $h$-dimension of tangent currents (cf. Theorem \ref{thm:h_dim_K} and \cite[Theorem 2.12]{Nguyen2501}). Further, we hope that our approach can provide a different perspective to the study of generalized Lelong numbers.
\medskip

\noindent {\bf Notations.} We use a smooth convex increasing function $\chi:\R\to \R_{\ge 0}$ such that $\chi(t)=0$ if $t\le -1$ and $\chi(t)=t$ if $t\ge 1$. For a set $A$ and $\epsilon>0$, we denote by $A_\epsilon$ the $\epsilon$-neighborhood of $A$ with respect to the given distance. We use $\|\cdot\|_\infty$ (resp., $\|\cdot\|_{C^\alpha}$, $\|\cdot\|_{L^p}$) to denote the uniform (resp., $C^\alpha$, $L^p$) norm of a given function. By each norm applied to a form, we mean the supremum of the norms of its coefficients. The norms $\|\cdot\|_{\infty, U}$,  $\|\cdot\|_{C^\alpha, U}$ and $\|\cdot\|_{L^p, U}$ refer to the corresponding norms over the set $U$, respectively. For a complex manifold $X$, we denote by $\Cc_k(X)$ the set of positive closed $(k, k)$-currents on $X$. For an analytic subset $V$, we write $[V]$ for the current of integration on its regular part.
For a positive current $S$, we denote by $\|S\|$ the mass of the current $S$ and for a Borel set $U$, by $\|S\|_U$ the mass of $S$ over $U$.
\medskip

\noindent
\textbf{Acknowledgments.} 
The research of the author was supported by the National Research Foundation of Korea (NRF) grant funded by the Korea government (MSIT) (No. RS-2023-00250685). The paper was partially prepared during the visit of the author at the University of Michigan, Ann Arbor. He would
like to express his gratitude to the organization for hospitality. The author would like to give thanks to Dan Burns and Mattias Jonsson for constructive discussions and for their support. The author would like to thank Tien-Cuong Dinh, Viet-Anh Nguyen, Duc-Viet Vu and Sungmin Yoo for their valuable comments. 

\section{Tangent Currents}\label{sec:tangent_currents}

The tangent current was introduced by Dinh-Sibony (\cite{DS18}) and further studied by Vu (\cite{Vu}) and Nguyen (\cite{NguyenPositive},\cite{Nguyen2501}). In this section, we relate tangent currents to complex Monge-Amp\`ere type currents as in Proposition \ref{prop:main_true_general} and use this relationship to study the $h$-dimension and existence of tangent currents on domains.\medskip

Let $X$ be a complex manifold of dimension $N$ and $V\subset X$ a complex submanifold of codimension $n$. Let $T\in\Cc_p(X)$. We may assume that $T$ has no mass on $V$ as the El Mir-Skoda theorem says that every positive closed current on $X$ can be decomposed into the sum of a positive closed current on $X$ with support in $V$ and another positive closed current on $X$ having no mass on $V$. Let $E$ be the normal bundle of $V$ in $X$ and $\overline E:=\P(E\oplus \C)$ the projective compactification of $E$. Let $\pi_V:\overline{E}\to V$ denote the canonical projection. The hypersurface at infinity $H_\infty:=\overline E\setminus E$ of $\overline E$ is isomorphic to $\P(E)$ as a fiber bundle over $V$. We also have another canonical projection $\pi_\infty:\overline{E}\setminus V\to H_\infty$.
\begin{definition}
	Let $U$ be an open subset of $X$ with $U\cap V\ne \emptyset$. A (local) holomorphic admissible map is a biholomorphism $\tau$ from $U$ to an open neighborhood of $U\cap V$ in $E$, which is the identity on $U\cap V$, and the restriction of whose differential $d\tau$ to ${U\cap V}$ is the identity. 
\end{definition}
The adjective ``local'' means that it may not be defined in a global neighborhood of the entire submanifold $V$ in $X$ but is defined in the neighborhood $U$ of $U\cap V$. The admissible maps in \cite{DS18} are global but may not be holomorphic. In this work, we use local holomorphic admissible maps. For simplicity, we will drop ``local'' and just say a ``holomorphic admissible map.'' 
\medskip

For $\lambda\in\C^*$, let $A_\lambda:E\to E$ be the multiplication by $\lambda$ on fibers of $E$. It can be extended to $\overline{E}$. The following definition of tangent current is the version in \cite{Vu}. See also \cite{DS18}.
\begin{definition}[Definition 2.1 in \cite{Vu}]\label{def:tangent_current}
	A tangent current $T_\infty$ of $T$ along $V$ is a positive closed current on $E$ such that there exists a sequence $(\lambda_k)_{k\in\N}\subset \C^*$ converging to $\infty$ and a collection of holomorphic admissible maps $\tau_i:U_i\to E$ for $i\in I$ satisfying the following two properties:
	\begin{align*}
		&(i) V\subset \bigcup_{i\in I} U_i\\
		&(ii) T_\infty :=\lim_{k\to\infty} \left(A_{\lambda_k}\right)_*(\tau_i)_*T \,\,\textrm{ on }\,\pi_V^{-1}(U_i\cap V)\setminus H_\infty\,\,\textrm{ for every }\,i\in I.
	\end{align*}
	A tangent current $T_\infty$ trivially extends to $\overline{E}$. We still denote it by $T_\infty$.
	For an open subset $V_0$ of $V$, the horizontal dimension (or the $h$-dimension for short) of $T_\infty$ over $V_0$ is the largest integer $h_T$ such that $T_\infty\wedge \pi_V^*(\omega_V^{h_T})\ne 0$ on $\pi_V^{-1}(V_0)$, where $\omega_V$ is a K\"ahler form on $V$. The $h$-dimension of $T_\infty$ is its $h$-dimension over $V$.
\end{definition}

\begin{remark}
	If $h_T$ is the $h$-dimension of $T_\infty$, then we have $\max\{N-p-n, 0\}\le h_T\le \min\{N-p, N-n\}$. The $h$-dimension $h_T$ of $T_\infty$ is said to be minimal, if $h_T=\max\{N-p-n, 0\}$.
\end{remark}
\begin{remark}\label{rmk:independence_of_admissible_map}
	Thanks to \cite[Proposition 2.5]{KV}, tangent currents are independent of the choice of holomorphic admissible maps $(\tau_i)_{i\in I}$. In particular, they are well-defined on any intersection $\pi_V^{-1}(U_i\cap V)\cap \pi_V^{-1}(U_j\cap V)$ whenever the limits exist.
\end{remark}

\begin{definition}[Definition 3.6 in \cite{DS18}]\label{def:shadow}
	Let $T_\infty$ be a tangent current of $T$ along $V$ with its $h$-dimension $h_T$ over an open subset $V_0$ in $V$. Let $\Omega$ be a smooth closed $\left(N-p-h_T, N-p-h_T\right)$-form on $\pi_V^{-1}(V_0)$ whose restriction to each fiber of $\pi_V$ is cohomologous to a linear subspace in this fiber. The shadow of $T_\infty$ on $V_0$ is the positive closed $(N-n-h_T, N-n-h_T)$-current $T_\infty^h:=\left(\pi_V\right)_*\left(T_\infty\wedge\Omega\right)$ with support in $\pi_V\left(\supp\left(T_\infty\right)\right)\cap V_0$. The shadow of $T_\infty$ is defined to be the shadow on $V$.
\end{definition}

The following property is important in our approach.
\begin{remark}\label{rmk:indep_shadow}
	\cite[Proposition 3.5]{DS18} says that the shadow $T_\infty^h$ of $T_\infty$ is independent of the choice of $\Omega$. 
\end{remark}

In the rest of this section, based on Remark \ref{rmk:independence_of_admissible_map}, we consider the following case: a domain $U\subset\C^N$ and a complex linear submanifold $V\subset U$ of codimension $n$ such that $V$ is bounded and simply connected with smooth boundary. General complex manifolds will be treated in Section \ref{sec:compact}. 
\medskip

We will use the coordinates $x=(x', x'')\in U$ such that $x'=(x_1, \ldots, x_{N-n})$, $x''=(x_{N-n+1}, \ldots, x_N)$ and $V=\{x\in U: x''=0\}$. The projective compactification of the normal bundle of $V$ in $U$ can be written as $\overline{E}=V\times\P^n$ with the canonical projection map $\pi_V:\overline{E}\to V$ of $\overline{E}$ onto $V$. Also, in our case, it makes sense to consider the projection onto the fiber space $\P^n$, which we denote by $\pi_F:\overline{E}\to\P^n$. We can extend the coordinates $(x', x'')$ to $V\times\P^n$ in the following way. We use the coordinates $\left(x', [x'': t]\right)$ for a point in $V\times \P^n$. We identify $(x', x'')\in V\times \C^n=E$ with $(x', [x'': 1])\in V\times \P^n=\overline{E}$. 
Hence, we may consider $U$ as a subset of $\overline{E}$ and the holomorphic admissible map $\tau: U \to \overline{E}$ becomes the inclusion map. In particular $\tau_*T=\mathbf{1}_UT$ in $\overline{E}$ for $T\in\Cc_p(U)$.\medskip 

When $n<N$, we use the K\"ahler form $\omega_V=dd^c|x'|^2$ on $V$ and the Fubini-Study form $\omega_F=\frac{1}{2}dd^c\log\left(|x''|^2+|t|^2\right)$ on each fiber $\P^n$ of $\pi_V:\overline{E}\to V$. Then, $\omega:=\pi_V^*\omega_V+\pi_F^*\omega_F$ defines a K\"ahler form on $\overline{E}$. When $n=N$, we just use $\omega:=\pi_F^*\omega_F$.\medskip

Together with $\omega_F^i$, we will also use the form $\Omega_i$ on the fiber space $\P^n$ for $i=1, \ldots, n$, defined below, which is one of the main ingredients in this work. 
\begin{lemma}
	\label{lem:useful_form} Let $M>0$ be a sufficiently large fixed real number. Let $[x'':t]$ denote the homogeneous coordinates for $\P^n$. We identify $x''\in\C^n$ with $[x'':1]\in\P^n$. Let $\alpha$ be a smooth positive closed $(1, 1)$-form on $\P^n$ defined by 
	\begin{align*}
		\alpha:=\left\{\begin{array}{ll}
			dd^c\chi\left(\left(\log\left|x''\right|\right)+M\right) & \textrm{ for } x''\in \C^n\\
			dd^c\log\left|x''\right| & \textrm{ for } [x'':t] \textrm{ in a neighborhood of } \P^n\setminus \C^n \textrm{ in }\P^n.
		\end{array}\right.
	\end{align*}
	For each $i=1, \ldots, n$, let $\Omega_i$ be a smooth form defined by
	\begin{align*}
		\Omega_i:=\alpha\wedge\left(dd^c\log\left|x''\right|\right)^{i-1}.
	\end{align*}
	Then, $\Omega_i$ is a smooth positive closed $(i, i)$-form in the cohomology class $\left\{\omega_F^i\right\}$ with support away from the set $\{x''=0\}$. In particular, $\Omega_n$ has compact support in $\C^n \subset\P^n$.
\end{lemma}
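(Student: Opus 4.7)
The plan is to verify the stated properties of $\alpha$ and $\Omega_i$ one by one, exploiting the structure of $\P^n = \C^n \cup \{t = 0\}$ and the behavior of the cutoff function $\chi$. First I would check that the two cases defining $\alpha$ agree on overlap and that $\alpha$ is a smooth positive closed $(1,1)$-form on $\P^n$. In the chart $\{t \neq 0\} \cong \C^n$, whenever $\log|x''| + M \geq 1$ one has $\chi(\log|x''|+M) = \log|x''|+M$, so $dd^c \chi(\log|x''|+M) = dd^c\log|x''|$, matching the second case. Smoothness in $\C^n$ is automatic since $\alpha \equiv 0$ on $\{|x''| \leq e^{-M-1}\}$ (where $\chi \equiv 0$) and $\chi(\log|x''|+M)$ is smooth elsewhere; smoothness near $\{t=0\}$ follows because the only singularity of $dd^c\log|x''|$ on $\P^n$ is at the point $[0{:}\cdots{:}0{:}1] \in \C^n$, which is disjoint from $\{t=0\}$. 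Positivity and closedness are immediate since locally $\alpha = dd^c$ of a psh function ($\chi$ is convex increasing and $\log|x''|$ is psh).

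Next I would verify that $\Omega_i$ is a smooth positive closed $(i,i)$-form with support in $\{|x''| \geq e^{-M-1}\}$, hence away from $\{x''=0\}$. The key observation is that $\alpha$ vanishes in a neighborhood of the point $\{x''=0\}$, while on the complement $\P^n \setminus \{x''=0\}$ the form $(dd^c\log|x''|)^{i-1}$ is smooth, positive, and closed (being a power of the pullback of a Fubini--Study form from $\P^{n-1}$ under the natural projection $\C^n \setminus \{0\} \to \P^{n-1}$). Consequently the wedge $\Omega_i$ is a globally smooth positive closed form on $\P^n$.

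For the cohomology class $[\Omega_i] = [\omega_F^i]$, I would set $F := \log|x''| - \chi(\log|x''|+M)$, so $F + M$ has compact support in $\C^n$ (it vanishes on $\{|x''| \geq e^{-M+1}\}$). Using closedness of $(dd^c\log|x''|)^{i-1}$ as a current on $\P^n$ together with $dd^c(\mathrm{const}) = 0$, one obtains
\begin{align*}
(dd^c\log|x''|)^i - \Omega_i = dd^c F \wedge (dd^c\log|x''|)^{i-1} = dd^c\bigl[(F+M) \cdot (dd^c\log|x''|)^{i-1}\bigr],
\end{align*}
which is exact on $\P^n$. To finish, I would identify $[(dd^c\log|x''|)^i] = [\omega_F^i]$ via the smooth approximation $\psi_\epsilon := \tfrac{1}{2}\log(|x''|^2+\epsilon^2)$: the difference $\psi_\epsilon - \tfrac{1}{2}\log(|x''|^2+1)$ extends smoothly to $\P^n$, so $dd^c \psi_\epsilon$ is a smooth closed $(1,1)$-form on $\P^n$ in the class $[\omega_F]$, and passing to the weak limit $\epsilon \to 0$ of $(dd^c \psi_\epsilon)^i$ yields the claim.

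Finally, for $\Omega_n$, a direct calculation shows $(dd^c\log|x''|)^n \equiv 0$ pointwise on $\C^n \setminus \{0\}$, since $dd^c\log|x''|$ is a semipositive $(1,1)$-form of rank $n-1$ (its kernel is the radial direction). Thus on $\{|x''| \geq e^{-M+1}\}$, where $\alpha = dd^c\log|x''|$, one gets $\Omega_n = (dd^c\log|x''|)^n = 0$; combined with the vanishing on $\{|x''| \leq e^{-M-1}\}$, this forces $\supp \Omega_n \subset \{e^{-M-1} \leq |x''| \leq e^{-M+1}\}$, a compact subset of $\C^n$. The main obstacle I anticipate is the current-theoretic manipulation in the cohomology step: specifically, one must justify that $(F+M) \cdot (dd^c\log|x''|)^{i-1}$ is a well-defined $L^1_{\loc}$ current on $\P^n$ (the factor $F+M$ behaves like $\log|x''|$ near $\{x''=0\}$, which remains integrable against $(dd^c\log|x''|)^{i-1}$ precisely because $i \leq n$) and that the integration by parts on $\P^n$ legitimately produces the exactness above.
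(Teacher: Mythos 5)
Your proposal is correct and follows essentially the same route as the paper, whose own proof only sketches these verifications (gluing of the two definitions of $\alpha$ on the overlap $\{|x''|>e^{1-M}\}$, locality of positivity and closedness, vanishing of $\alpha$ on $\{|x''|<e^{-1-M}\}$, and "direct computations" for the cohomology class and the compact support of $\Omega_n$). The one step where you add real substance beyond the paper — exhibiting $(dd^c\log|x''|)^i-\Omega_i$ as $dd^c\bigl[(F+M)(dd^c\log|x''|)^{i-1}\bigr]$ and identifying $[(dd^c\log|x''|)^i]=\{\omega_F^i\}$ via the decreasing regularization $\psi_\epsilon$ — is sound, since $\log|x''|$ is integrable against the trace of $(dd^c\log|x''|)^{i-1}$ for $i\le n$ (the unbounded locus is a single point), so the Demailly--Bedford--Taylor product and the integration by parts you invoke are justified.
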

	
\begin{proof}
	For $x''\in\C^n$ with $|x''|>e^{1-M}$, $\alpha = dd^c\log|x''|$. Hence, $\alpha$ is a well-defined smooth $(1, 1)$-form on $\P^n$. Positivity and closedness are local properties and therefore, $\Omega_i$ is positive, closed and of bidegree $(i, i)$. Notice that $\alpha=0$ if $x''\in\C^n$ satisfies $|x''|<e^{-1-M}$. So, $\Omega_i$ has support away from the set $\{x''=0\}$. From direct computations, one can check that $\Omega_i\in\left\{\omega_F^i\right\}$ and that $\Omega_n$ has compact support in $\C^n \subset\P^n$.
\end{proof}	

We introduce some more functions and forms. We define
\begin{align*}
	u(x)=\log |x''|=\frac{1}{2}\log\left(\sum_{i=N-n+1}^N|x_i|^2\right)
\end{align*}
on $U$. King's residue formula in \cite{King} says that we have 
\begin{align}\label{eq:King_residue}
	(dd^c u)^n=[V]
\end{align}
in the sense of currents. For $\theta\in\C^*$ with $|\theta|\ll 1$, we define 
$$u^\Kmc_\theta:=\chi(u-\log |\theta|)+\log |\theta|\quad\textrm{ and }\quad u^\Tmc_\theta:=\frac{1}{2}\log\left(|x''|^2+|\theta|^2\right).$$
Both $u^\Kmc_\theta$ and $u^\Tmc_\theta$ decreasingly converge to $u$ as $|\theta|\rightarrow 0$. With these approximating functions, we consider related approximations of $(dd^c u)^i$ for $i=1, \cdots, n$.
\begin{enumerate}
	\item $\Kmc^i_\theta:=(dd^c u^\Kmc_\theta)\wedge(dd^c u)^{i-1}$ for $i=1, \ldots, n$;
	\item $\Tmc^i_\theta:=(dd^c u^\Tmc_\theta)^i$ for $i=1, \ldots, n$.\medskip
\end{enumerate}

The following proposition best describes our approach. We relate tangent currents to complex Monge-Amp\`ere type currents.

\begin{proposition}\label{prop:main_true_general}
Let $T\in\Cc_p(U)$ admit a tangent current $T_\infty$ along $V$. Let $m\in\{\max\{n-p, 0\}, \ldots, \min\{N-p, n\}\}$. Suppose that the family $\left(T\wedge \Kmc_\theta^m\wedge \pi_V^*\omega_V^{N-p-m}\right)_{0<|\theta|\ll 1}$ of currents of order $0$ of maximal bidegree has locally uniformly bounded mass in $U$. Then, for every smooth test $(N-p-m, N-p-m)$-form $\varphi$ on $V$, there exists a sequence $\left(\theta_k\right)_{k\in\N}\subset\C^*$ converging to $0$ such that the sequence $\left(T\wedge \Kmc_{\theta_k}^m\wedge \pi_V^*\varphi\right)_{k\in\N}$ converges to a current $L_\varphi$ of order $0$ of maximal bidegree satisfying
	\begin{align}\label{eq:limit_vs_shadow}
		\left\langle\mathbf{1}_VL_\varphi, 1\right\rangle=\left\langle(\pi_V)_*\left(T_\infty\wedge \pi_F^*\Omega_m\right), \varphi\right\rangle,
	\end{align}
	where $\Omega_m$ is the smooth positive closed $(m, m)$-form as in Lemma \ref{lem:useful_form}.
\end{proposition}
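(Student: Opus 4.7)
My plan is to transform $\Kmc^m_\theta$ into the pullback $A_\lambda^*\pi_F^*\Omega_m$ under the fiber rescaling, so that the tangent-current convergence $(A_{\lambda_k})_*(\mathbf{1}_U T)\to T_\infty$ on $E$ may be converted into the claimed statement about $L_\varphi$. The key identity is $A_\lambda^*\pi_F^*\Omega_m=\Kmc^m_\theta$ with $|\theta|=e^{-M}/|\lambda|$, which is precisely why $\Omega_m$ is defined as in Lemma~\ref{lem:useful_form}.

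First, since any smooth $(N-p-m,N-p-m)$-form $\varphi$ on $V$ is locally dominated in modulus by a multiple of $\omega_V^{N-p-m}$, the hypothesis produces a locally uniform mass bound on the family $(T\wedge\Kmc^m_\theta\wedge\pi_V^*\varphi)_{0<|\theta|\ll1}$; by Banach--Alaoglu, extract a sequence $\theta_k\to0$ along which this family weakly converges to a current $L_\varphi$ of order $0$ and maximal bidegree on $U$. By passing to further subsequences I synchronize $(\theta_k)$ with the tangent-current sequence $(\lambda_k)$, imposing $|\theta_k|=e^{-M}/|\lambda_k|$ with matching phases. A direct computation in the coordinates $(x',x'')$, using $A_\lambda(x',x'')=(x',\lambda x'')$, $\pi_V(x',x'')=x'$ and $\pi_F(x',x'')=[x'':1]$, gives
\[
A_\lambda^*\pi_F^*\Omega_m=dd^c\chi\bigl(\log|x''|+\log|\lambda|+M\bigr)\wedge(dd^c\log|x''|)^{m-1}=\Kmc^m_\theta,
\]
together with $A_\lambda^*\pi_V^*\varphi=\pi_V^*\varphi$; hence the core identity
\[
\int_U T\wedge\Kmc^m_{\theta_k}\wedge\pi_V^*\varphi=\bigl\langle(A_{\lambda_k})_*(\mathbf{1}_U T),\,\pi_F^*\Omega_m\wedge\pi_V^*\varphi\bigr\rangle.
\]

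Next I take the limit on $\overline E=V\times\P^n$. For $\varphi$ with compact support in $V$, $\pi_F^*\Omega_m\wedge\pi_V^*\varphi$ is smooth and compactly supported on $\overline E$. By positive-current compactness there is, on a further subsequence, a limit $\widetilde T_\infty$ of $(A_{\lambda_k})_*(\mathbf{1}_U T)$ on $\overline E$, whose restriction to $E$ is $T_\infty$ and whose restriction to $H_\infty$ is $S_\infty=p_*(\mathbf{1}_{U\setminus V}T)$ under the direction-at-infinity map $p\colon(x',x'')\mapsto(x',[x''])$. Since $\alpha$ extends smoothly across $H_\infty$ as $\omega_F|_{\P^{n-1}}$, one has $\Omega_m|_{H_\infty}=(\omega_F|_{\P^{n-1}})^m$ and $p^*(\omega_F^m|_{\P^{n-1}})=(dd^c\log|x''|)^m$ on $U\setminus V$, so
\[
\langle S_\infty,\pi_F^*\Omega_m\wedge\pi_V^*\varphi\rangle=\int_{U\setminus V}T\wedge(dd^c u)^m\wedge\pi_V^*\varphi,
\]
while $\langle T_\infty,\pi_F^*\Omega_m\wedge\pi_V^*\varphi\rangle=\langle(\pi_V)_*(T_\infty\wedge\pi_F^*\Omega_m),\varphi\rangle$. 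On the other hand, on any $K\Subset U\setminus V$, $\Kmc^m_{\theta_k}|_K=(dd^c u)^m|_K$ for large $k$ (because $dd^c u^\Kmc_{\theta_k}$ agrees with $dd^c u$ outside the shrinking annulus $\{|\theta_k|e^{-1}\le|x''|\le|\theta_k|e\}$), so $L_\varphi|_{U\setminus V}=T\wedge(dd^c u)^m\wedge\pi_V^*\varphi$. The total LHS of the core identity thus equals $\int_{U\setminus V}L_\varphi+\langle\mathbf{1}_V L_\varphi,1\rangle$; cancelling the equal bulk contributions on both sides yields exactly $\langle\mathbf{1}_V L_\varphi,1\rangle=\langle(\pi_V)_*(T_\infty\wedge\pi_F^*\Omega_m),\varphi\rangle$.

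I expect the main technical obstacle to be the identification of $S_\infty$ with $p_*(\mathbf{1}_{U\setminus V}T)$ and the matching of $\Omega_m|_{H_\infty}$ with $p^*\omega_F^m$, both of which require careful analysis of the projective compactification $\overline E$ and of the smooth extension of $(dd^c\log|x''|)^m$ across $H_\infty$. This is what ultimately produces the crucial cancellation between the boundary contribution at $H_\infty$ and the smooth bulk $\int_{U\setminus V}T\wedge(dd^c u)^m\wedge\pi_V^*\varphi$, leaving only the $V$-concentrated mass of $L_\varphi$; for $m=n$ the obstacle evaporates since $\Omega_n$ is already compactly supported in $\C^n$ by Lemma~\ref{lem:useful_form}, and the test form is compactly supported in $E$.
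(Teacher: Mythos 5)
Your starting point is correct and is exactly the paper's key identity: $A_{\lambda}^{*}\left(\pi_F^*\Omega_m\right)=\Kmc^m_{e^{-M}\lambda^{-1}}$, so that $\int_U T\wedge \Kmc^m_{\theta_k}\wedge\pi_V^*\varphi$ is the pairing of $\left(A_{\lambda_k}\right)_*\left(\mathbf{1}_UT\right)$ with $\pi_F^*\Omega_m\wedge\pi_V^*\varphi$. The gap is in how you pass to the limit. First, extracting a limit $\widetilde T_\infty$ of $\left(A_{\lambda_k}\right)_*\left(\mathbf{1}_UT\right)$ on all of $\overline E$ requires a uniform mass bound near $H_\infty$ over compact subsets of $V$; by the computation in Proposition \ref{prop:tangent_2_shadows} this amounts to bounding $\int T\wedge\Tmc^i_{\lambda^{-1}}\wedge\pi_V^*\omega_V^{N-p-i}$ for \emph{all} $i$, whereas the hypothesis only controls the single family $\left(T\wedge\Kmc^m_\theta\wedge\pi_V^*\omega_V^{N-p-m}\right)$. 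The definition of the tangent current gives convergence only on $E$, and mass can escape to $H_\infty$ without bound. Second, even granting such a limit, the identification $\mathbf{1}_{H_\infty}\widetilde T_\infty=p_*\left(\mathbf{1}_{U\setminus V}T\right)$ is unjustified: $p$ is not proper on $U\setminus V$, so this pushforward is not defined, and your formula $\left\langle S_\infty,\pi_F^*\Omega_m\wedge\pi_V^*\varphi\right\rangle=\int_{U\setminus V}T\wedge(dd^cu)^m\wedge\pi_V^*\varphi$ presupposes that the right-hand side is finite, which is not part of the hypotheses (only \emph{local} boundedness in $U$ is assumed). Third, for $m<n$ the measure $T\wedge\Kmc^m_{\theta_k}\wedge\pi_V^*\varphi$ is supported on $\pi_V^{-1}(\supp\varphi)\cap U$, which is not relatively compact in $U$, so weak convergence to $L_\varphi$ does not give $\lim_k\int_U T\wedge\Kmc^m_{\theta_k}\wedge\pi_V^*\varphi=\int_U L_\varphi$; your "total LHS" decomposition is therefore not available, and the cancellation of the two (possibly infinite, possibly non-convergent) bulk terms is exactly the step that fails.

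The paper's proof is designed to avoid precisely these three issues: it inserts a fiber-direction cutoff $\chi_F$ with $\overline{W\times\supp\chi_F}\subset U$ so that all integrals are over compact subsets of $U$, and it replaces your exact decomposition at $H_\infty$ by two one-sided inequalities. The contribution near $H_\infty$ is isolated with a cutoff $\chi^\infty_\varepsilon$ and bounded above, using positivity and $\left(A_{\lambda_k}\right)^*(dd^cu)^m=(dd^cu)^m$, by $\int_{U\setminus V}(\pi_F^*\chi_F)T\wedge(dd^cu)^m\wedge\pi_V^*\varphi$, which is finite thanks to the cutoff; letting $\varepsilon\to0$ and using that $T_\infty$ has no mass on $H_\infty$ gives one inequality, and a cutoff $\chi^0_\delta$ near $V$ gives the reverse. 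If you want to salvage your argument, you would need to either add hypotheses guaranteeing mass boundedness on $\overline E$ and finiteness of $\int_{U\setminus V}T\wedge(dd^cu)^m\wedge\pi_V^*\varphi$, or rework the limit passage with cutoffs as the paper does. Your remark that the difficulty disappears for $m=n$ is correct, since then $\pi_F^*\Omega_n\wedge\pi_V^*\varphi$ is compactly supported in $E$ and $\Kmc^n_\theta$ is supported in a shrinking tube around $V$.
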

\begin{remark}
	 In Proposition \ref{prop:main_true_general}, the case $n=N$ is allowed. When $n=N$, the constraint on $m$ forces $N-p-m=0$ and we can take as $\varphi$ a function. Then, the same proof works.
\end{remark}

\begin{remark}
	When $m=N-p-h_T$ for the $h$-dimension $h_T$ of $T_\infty$, the form $\Omega_{N-p-h_T}$ in the right hand side of \eqref{eq:limit_vs_shadow} can be replaced with any smooth closed $(N-p-h_T, N-p-h_T)$-form in the de Rham cohomology class $\{\omega_F^{N-p-h_T}\}$ as in Remark \ref{rmk:indep_shadow}. However, this may not be true if $m<N-p-h_T$.
\end{remark}

\begin{proof}	
	We may suppose that $\varphi$ is positive as any smooth test form can be written as a linear combination of positive smooth test forms. Let $\left(\lambda_k\right)_{k\in\N}\subset\C^*$ be a sequence such that $\left(A_{\lambda_k}\right)_*\left(\mathbf{1}_UT\right)\to T_\infty$ in $E$ and $\lambda_k\to\infty$ as $k\to\infty$. Let $M>0$ be the constant as in Lemma \ref{lem:useful_form}. From the hypothesis, the sequence $\left(T\wedge \Kmc_{e^{-M}\lambda_k^{-1}}^m\wedge\pi_V^*\varphi\right)_{k\in\N}$ has locally uniformly bounded mass, and therefore, there exists a subsequence $(\lambda_{k_l})$ such that $\left(T\wedge \Kmc_{e^{-M}\lambda_{k_l}^{-1}}^m\wedge\pi_V^*\varphi\right)_{l\in\N}$ converges in $U$ in the sense of currents. By passing to this convergent subsequence, we assume that $\left(T\wedge \Kmc_{e^{-M}\lambda_k^{-1}}^m\wedge\pi_V^*\varphi\right)_{k\in\N}$ converges and let $L_\varphi$ denote the limit current, which is positive and therefore, of order $0$. Notice that we have $\mathbf{1}_{U\setminus V}L_\varphi=\mathbf{1}_{U\setminus V}\left(T\wedge (dd^cu)^m\wedge\pi_V^*\varphi\right)$ as $dd^cu$ is smooth outside $V$.
	\medskip
	
	We first prove $\left\langle\mathbf{1}_VL_\varphi, 1\right\rangle\le\left\langle(\pi_V)_*\left(T_\infty\wedge \pi_F^*\Omega_m\right), \varphi\right\rangle$.
	Let $W\subset V$ be a relatively compact open subset of $V$ such that $\supp\,\varphi\subseteq W$. Let $\chi_F:\C^n\to [0,1]$ be a smooth function with compact support defined on the fiber space such that $\chi_F\equiv 1$ in a neighborhood of $0\in \C^n$ and that $\overline{W \times \supp\,\chi_F}\subset U$. It is not difficult to see from Definition \ref{def:tangent_current} that in $E\cap \pi_V^{-1}(W)$, the sequence $\left(\left(A_{\lambda_k}\right)_*(\left(\pi_F^*\chi_F\right)\mathbf{1}_UT)\right)_{k\in\N}$ converges to the same tangent current $T_\infty$.
	\medskip
	
	Over $U$, for each $k\in\N$, we have
	\begin{align}
		\label{eq:key_idea}		&\left(A_{\lambda_k}\right)^*(\pi_F^*\Omega_m)
		=dd^c\chi\left(\left(\log\left|\lambda_kx''\right|\right)+M\right) \wedge(dd^c\log|\lambda_kx''|)^{i-1}\\
		\notag&\quad\quad\quad=dd^c\chi\left(\log\left|x''\right|+\log\left|e^M\lambda_k\right|\right) \wedge(dd^c\log|x''|)^{i-1}\\
		\notag&\quad\quad\quad=dd^c\left(\chi\left(\log\left|x''\right|+\log\left|e^M\lambda_k\right|\right)-\log\left|e^M\lambda_k\right|\right) \wedge(dd^c\log|x''|)^{i-1}=\Kmc^m_{e^{-M}\lambda_k^{-1}}.
	\end{align}
	Hence, for each $k\in\N$, we have
	\begin{align*}
		&\int_{\overline{E}}\left(A_{\lambda_k}\right)_*\left(\left(\pi_F^*\chi_F\right)\mathbf{1}_UT\right)\wedge\pi_F^*\Omega_m\wedge\pi_V^*\varphi=\int_{\overline{E}}\left(\pi_F^*\chi_F\right)\mathbf{1}_UT\wedge \left(A_{\lambda_k}\right)^*\left(\pi_F^*\Omega_m\right)\wedge\pi_V^*\varphi\\
		&\quad\quad\quad=\int_{U}\left(\pi_F^*\chi_F\right)T\wedge\left(A_{\lambda_k}\right)^*\left(\pi_F^*\Omega_m\right)\wedge\pi_V^*\varphi=\int_{U}T\wedge \Kmc^m_{e^{-M}\lambda_k^{-1}} \wedge\left[\left(\pi_F^*\chi_F\right)\left(\pi_V^*\varphi\right)\right].
	\end{align*}
	Let $\varepsilon>0$ be given such that $\varepsilon\ll 1$. Let $\chi^{\infty}_\varepsilon:\P^n\to [0,1]$ be a smooth function such that $\chi^\infty_\varepsilon\equiv 1$ in a neighborhood of $\P^n\setminus \C^n$ and its support sits inside the $\varepsilon$-neighborhood of $\P^n\setminus \C^n$, where the distance is with respect to the norm associated with the form $\omega_F$. Then, we can write
	\begin{align}
		\label{eq:1_total}&\int_{U}T\wedge \Kmc^m_{e^{-M}\lambda_k^{-1}} \wedge\left[\left(\pi_F^*\chi_F\right)\left(\pi_V^*\varphi\right)\right]=\int_{\overline{E}}\left(A_{\lambda_k}\right)_*\left(\left(\pi_F^*\chi_F\right)\mathbf{1}_UT\right)\wedge\pi_F^*\Omega_m\wedge\pi_V^*\varphi\\
		\label{eq:1_near_infty}&=\int_{\overline{E}}\pi_F^*\chi_\varepsilon^\infty\left(A_{\lambda_k}\right)_*\left(\left(\pi_F^*\chi_F\right)\mathbf{1}_UT\right)\wedge\pi_F^*\Omega_m\wedge\pi_V^*\varphi\\
		\label{eq:1_off_infty}&\quad\quad\quad\quad\quad+\int_{\overline{E}}(1-\pi_F^*\chi_\varepsilon^\infty)\left(A_{\lambda_k}\right)_*\left(\left(\pi_F^*\chi_F\right)\mathbf{1}_UT\right)\wedge\pi_F^*\Omega_m\wedge\pi_V^*\varphi.
	\end{align}
	For \eqref{eq:1_near_infty}, we have
	\begin{align*}
		\int_{\overline{E}}\pi_F^*\chi_\varepsilon^\infty\left(A_{\lambda_k}\right)_*\left(\left(\pi_F^*\chi_F\right)\mathbf{1}_UT\right)\wedge\pi_F^*\Omega_m\wedge\pi_V^*\varphi=\int_{\overline{E}}\left(\pi_F^*\chi_F\right)\mathbf{1}_UT\wedge\left(A_{\lambda_k}\right)^*\left(\pi_F^*\left(\chi_\varepsilon^\infty\Omega_m\right)\right)\wedge\pi_V^*\varphi.
	\end{align*}
	We have $\pi_F^*\Omega_m=(dd^cu)^m$ on the support of $\pi_F^*\chi_\varepsilon^\infty$ and $\left(A_{\lambda_k}\right)^*(dd^cu)^m=(dd^cu)^m$ on $\overline{E}\setminus V$. Hence, we get
	\begin{align*}
		&\int_{\overline{E}}\pi_F^*\chi_\varepsilon^\infty\left(A_{\lambda_k}\right)_*\left(\left(\pi_F^*\chi_F\right)\mathbf{1}_UT\right)\wedge\pi_F^*\Omega_m\wedge\pi_V^*\varphi\\
		&\quad\quad\quad\quad\quad=\int_{A_{\lambda_k}^{-1}\left(\supp\,\left(\pi_F^*\chi_\varepsilon^\infty\right)\right)}\left(\left(\pi_F^*\chi_\varepsilon^\infty\right)\circ A_{\lambda_k}\right)\mathbf{1}_UT\wedge (dd^cu)^m\wedge \left[\left(\pi_F^*\chi_F\right)\left(\pi_V^*\varphi\right)\right].
	\end{align*}
	Since $0\le \left(\pi_F^*\chi_\varepsilon^\infty\right)\circ A_{\lambda_k}\le 1$ and $A_{\lambda_k}^{-1}\left(\supp\,\left(\pi_F^*\chi_\varepsilon^\infty\right)\right)\cap V=\emptyset$ for all $k\in\N$, positivity implies
	\begin{align*}
		\int_{\overline{E}}\pi_F^*\chi_\varepsilon^\infty\left(A_{\lambda_k}\right)_*\left(\left(\pi_F^*\chi_F\right)\mathbf{1}_UT\right)\wedge\pi_F^*\Omega_m\wedge\pi_V^*\varphi\le \int_{U\setminus V}\left(\pi_F^*\chi_F\right)T\wedge (dd^cu)^m\wedge \pi_V^*\varphi.
	\end{align*}
	Since $\left(T\wedge \Kmc^m_{e^{-M}\lambda_k^{-1}}\wedge\pi_V^*\varphi\right)_{k\in\N}$ converges in $U$ and the support of $\left(1-\pi_F^*\chi_\varepsilon^\infty\right)\left(\pi_V^*\varphi\right)$ is compact, we let $k\to\infty$ to get from \eqref{eq:1_total} that
	\begin{align*}
		\int_U \left(\pi_F^*\chi_F\right)L_\varphi&\le \int_{U\setminus V}\left(\pi_F^*\chi_F\right)T\wedge(dd^cu)^m\wedge \pi_V^*\varphi+ \int_{\overline{E}} \left(1-\pi_F^*\chi_\varepsilon^\infty\right)T_\infty\wedge \pi_F^*\Omega_m\wedge \pi_V^*\varphi.
	\end{align*}
	The convergence of $\left(\left(A_{\lambda_k}\right)_*\left(\left(\pi_F^*\chi_F\right)\mathbf{1}_UT\right)\right)_{k\in\N}$ to $T_\infty$ in \eqref{eq:1_off_infty} is from the discussion in the above. Letting $\varepsilon \to 0$ and applying the argument in the beginning of the proof, we have
	\begin{align}\label{eq:2_oneside}
		\int_U \mathbf{1}_VL_\varphi \le \int_E T_\infty\wedge \pi_F^*\Omega_m\wedge \pi_V^*\varphi.
	\end{align}
	For $T_\infty$ has no mass on $\overline{E}\setminus E$.
	\medskip
	
	We prove the other direction. As $T_\infty$ trivially extends across $H_\infty=\overline{E}\setminus E$ to $\overline{E}$, we have
	\begin{align*}
		\int_E T_\infty\wedge\pi_F^*\Omega_m\wedge\pi_V^*\varphi &= \lim_{\varepsilon\to 0}\int_{\overline{E}}(1-\pi_F^*\chi_\varepsilon^\infty) T_\infty\wedge\pi_F^*\Omega_m\wedge\pi_V^*\varphi\\
		&=\lim_{\varepsilon\to 0}\lim_{k\to\infty}\int_{\overline{E}}(1-\pi_F^*\chi_\varepsilon^\infty)\left(A_{\lambda_k}\right)_*\left(\mathbf{1}_UT\right)\wedge\pi_F^*\Omega_m\wedge\pi_V^*\varphi.
	\end{align*}
	Let $\delta>0$ be given. Let $\chi_\delta^0:U \to [0,1]$ be another smooth function with compact support such that $\chi_\delta^0\equiv 1$ in a neighborhood of $\overline{W}$ in $U$ and that $\supp\,\chi_\delta^0$ lies inside the $\delta$-neighborhood of $V$, where the distance is with respect to the Euclidean metric. Then, arguing as previously, we have
	\begin{align*}
		&\int_{\overline{E}}(1-\pi_F^*\chi_\varepsilon^\infty)\left(A_{\lambda_k}\right)_*\left(\mathbf{1}_UT\right)\wedge\pi_F^*\Omega_m\wedge\pi_V^*\varphi=\int_{\overline{E}}(1-\left(\pi_F^*\chi_\varepsilon^\infty\right)\circ A_{\lambda_k})\left(\mathbf{1}_UT\right)\wedge\Kmc^m_{e^{-M}\lambda_k^{-1}}\wedge \pi_V^*\varphi\\
		&\quad\quad\quad\quad\quad\le \int_{\overline{E}}\chi_\delta^0\mathbf{1}_UT\wedge \Kmc^m_{e^{-M}\lambda_k^{-1}} \wedge \pi_V^*\varphi = \int_U\chi_\delta^0T\wedge \Kmc^m_{e^{-M}\lambda_k^{-1}} \wedge \pi_V^*\varphi
	\end{align*}
	for all sufficiently large $k\in\N$ as the support of $(1-\left(\pi_F^*\chi_\varepsilon^\infty\right)\circ A_{\lambda_k})\pi_V^*\varphi$ shrinks to $W$. Letting $k\to\infty$ and $\varepsilon\to 0$ in this order, we obtain
	\begin{align*}
		\int_E T_\infty\wedge\pi_F^*\Omega_m\wedge\pi_V^*\varphi\le \int_U\chi_\delta^0 L_\varphi
	\end{align*}
	for all sufficiently small $\delta>0$. Finally, we let $\delta\to 0$ and we get
	\begin{align}\label{eq:2_the_other_side}
		\int_E T_\infty\wedge\pi_F^*\Omega_m\wedge\pi_V^*\varphi\le \int_U\mathbf{1}_V L_\varphi.
	\end{align}
	Take $\theta_k={e^{-M}\lambda_k^{-1}}$ for $k\in\N$ and the inequalities \eqref{eq:2_oneside} and \eqref{eq:2_the_other_side} conclude the desired equality.
\end{proof}

Under a slightly stronger condition, a similar proof to that of Proposition \ref{prop:main_true_general} gives a more explicit characterization as below. Because of the similarity, we omit the proof.
\begin{proposition}\label{prop:main_general}
	Let $T\in\Cc_p(U)$ admit a tangent current $T_\infty$ along $V$. Let $m\in\{0, \ldots, \min\{N-p, n\}\}$. 
	Suppose that the family $\left(T\wedge \Kmc_\theta^m\right)_{0<|\theta|\ll 1}$ has locally uniformly bounded mass in $U$. Then, there exists a positive closed $(p+m, p+m)$-current $T_\Kmc^{p+m}$ on $U$ such that $T_\Kmc^{p+m}$ is a limit current of the family $\left(T\wedge \Kmc_\theta^m\right)_{0<|\theta|\ll 1}$ satisfying
	\begin{align*}
		\left(\pi_V\right)_*\left(\mathbf{1}_VT_\Kmc^{p+m}\right)=(\pi_V)_*\left(T_\infty\wedge \pi_F^*\Omega_m\right),
	\end{align*}
	where $\Omega_m$ is the smooth positive closed $(m, m)$-form as in Lemma \ref{lem:useful_form}.
	In particular, when $m=N-p-h_T$ for the $h$-dimension $h_T$ of $T_\infty$, then $\left(\pi_V\right)_*\left(\mathbf{1}_VT_\Kmc^{N-h_T}\right)$ equals the shadow $T^h_\infty$ of $T_\infty$.
\end{proposition}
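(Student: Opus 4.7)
The plan is to run essentially the same argument as in the proof of Proposition \ref{prop:main_true_general}, but to extract a convergent subsequence at the level of the currents $T \wedge \Kmc_\theta^m$ themselves, rather than at the level of the maximal-bidegree currents $T \wedge \Kmc_\theta^m \wedge \pi_V^*\varphi$ separately for each test form $\varphi$. This relaxation is exactly what the strengthened mass hypothesis permits.

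First I would let $(\lambda_k)_{k \in \N} \subset \C^*$ be a sequence realizing $T_\infty$ with $\lambda_k \to \infty$, set $\theta_k := e^{-M}\lambda_k^{-1}$, and invoke the hypothesis to conclude that the family of positive closed $(p+m, p+m)$-currents $(T \wedge \Kmc_{\theta_k}^m)_{k \in \N}$ has locally uniformly bounded mass on $U$. Weak compactness then yields a subsequence, still denoted $(\theta_k)$, along which this family converges in the sense of currents to a positive closed $(p+m, p+m)$-current $T_\Kmc^{p+m}$ on $U$, closedness being inherited from the approximants. Next, for any smooth test $(N-p-m, N-p-m)$-form $\varphi$ on $V$, the pullback $\pi_V^*\varphi$ is smooth, so wedging against the weakly convergent sequence preserves convergence and gives
\begin{align*}
    T \wedge \Kmc_{\theta_k}^m \wedge \pi_V^*\varphi \longrightarrow T_\Kmc^{p+m} \wedge \pi_V^*\varphi.
\end{align*}
The local mass bound on $T \wedge \Kmc_\theta^m$ trivially dominates that on $T \wedge \Kmc_\theta^m \wedge \pi_V^*\omega_V^{N-p-m}$, so the hypothesis of Proposition \ref{prop:main_true_general} holds along our subsequence. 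Running its proof verbatim produces a limit $L_\varphi$ of $T \wedge \Kmc_{\theta_k}^m \wedge \pi_V^*\varphi$ satisfying \eqref{eq:limit_vs_shadow}, and by uniqueness of limits of currents $L_\varphi = T_\Kmc^{p+m} \wedge \pi_V^*\varphi$. Rewriting the left-hand side of \eqref{eq:limit_vs_shadow} as $\langle (\pi_V)_*(\mathbf{1}_V T_\Kmc^{p+m}), \varphi\rangle$ by the definition of pushforward, and noting that $\varphi$ was arbitrary, yields the asserted identity of currents on $V$.

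For the final assertion with $m = N - p - h_T$, Lemma \ref{lem:useful_form} places $\Omega_m$ in the cohomology class $\{\omega_F^m\}$ on each fiber of $\pi_V$, so by Remark \ref{rmk:indep_shadow} the right-hand side of our identity coincides with the shadow $T_\infty^h$. The principal subtlety I expect to face is the compatibility of the subsequence extractions: in Proposition \ref{prop:main_true_general} the chosen subsequence depends on $\varphi$, whereas here it must be fixed once and for all, uniformly in $\varphi$. The stronger mass control is precisely the ingredient that allows such a uniform choice; any further $\varphi$-dependent extraction occurring inside the proof of \ref{prop:main_true_general} can be bypassed, since weak convergence at the level of $T \wedge \Kmc_{\theta_k}^m$ is already in hand. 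Every remaining ingredient — the key identification $(A_{\lambda_k})^*\pi_F^*\Omega_m = \Kmc_{e^{-M}\lambda_k^{-1}}^m$ from \eqref{eq:key_idea} and the two-sided inequality argument comparing $\mathbf{1}_V L_\varphi$ with $T_\infty \wedge \pi_F^*\Omega_m$ — transports across without modification.
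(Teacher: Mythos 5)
Your proposal is correct and follows exactly the route the paper intends (the paper omits this proof as being "similar" to that of Proposition \ref{prop:main_true_general}): the stronger hypothesis lets you fix one subsequence $\theta_k=e^{-M}\lambda_k^{-1}$ along which $T\wedge\Kmc_{\theta_k}^m$ converges, after which weak convergence against the smooth forms $\pi_V^*\varphi$ makes the $\varphi$-dependent extraction in Proposition \ref{prop:main_true_general} unnecessary and identifies $L_\varphi$ with $T_\Kmc^{p+m}\wedge\pi_V^*\varphi$ for every $\varphi$. You also correctly isolate the only real subtlety, namely that the subsequence must be chosen uniformly in $\varphi$, so nothing further is needed.
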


When $m=n$, we obtain the explicit characterization as well.
\begin{proposition}\label{cor:max_bidegree}
	Let $T\in\Cc_p(U)$ admit a tangent current $T_\infty$ along $V$. 
	Suppose that the positive integers $N$, $n$ and $p$ satisfy $n\le N-p$ and that the family $\left(T\wedge\Kmc_\theta^n\wedge\pi_V^*\omega_V^{N-p-n}\right)_{0<|\theta|\ll 1}$ has locally uniformly bounded mass in $U$. Then, the family $\left(T\wedge\Kmc_\theta^n\right)_{0<|\theta|\ll 1}$ has locally uniformly bounded mass in $U$ and there exists a positive closed $(p+n, p+n)$-current $T_\Kmc^{p+n}$ on $U$ such that $T_\Kmc^{p+n}$ is a limit current of the family $\left(T\wedge\Kmc_\theta^n\right)_{0<|\theta|\ll 1}$ satisfying
	\begin{align*}
		(\pi_V)_*T_\Kmc^{p+n}=(\pi_V)_*\left(T_\infty\wedge\pi_F^*\Omega_n\right),
	\end{align*}
	where $\Omega_n$ is the smooth positive closed $(n, n)$-form as in Lemma \ref{lem:useful_form}.
	In particular, when the $h$-dimension of $T_\infty$ is minimal, then $\left(\pi_V\right)_*T_\Kmc^{p+n}$ equals the shadow $T^h_\infty$ of $T_\infty$.
\end{proposition}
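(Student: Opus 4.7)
The plan is to deduce the conclusions in three stages: establish the mass bound for $(T\wedge\Kmc_\theta^n)_\theta$ via a bidegree cancellation, extract a weak limit to define $T_\Kmc^{p+n}$, and identify its pushforward with the shadow using Proposition~\ref{prop:main_true_general}.

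The crucial observation for the mass bound is that $u=\log|x''|$ and $u_\theta^\Kmc=\chi(u-\log|\theta|)+\log|\theta|$ are functions of the fiber coordinates $x''$ alone, whence $\Kmc_\theta^n$ is a scalar multiple of $dx''_1\wedge\cdots\wedge dx''_n\wedge d\bar{x}''_1\wedge\cdots\wedge d\bar{x}''_n$ and therefore annihilates any form carrying an extra fiber differential. Taking as reference K\"ahler form $\omega=\pi_V^*\omega_V+\pi_F^*\omega_F$ on $\overline{E}$ and expanding $\omega^{N-p-n}$ by the binomial formula, every term containing at least one factor of $\pi_F^*\omega_F$ is killed when wedged against $\Kmc_\theta^n$. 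Hence
\begin{align*}
	T\wedge\Kmc_\theta^n\wedge\omega^{N-p-n}=T\wedge\Kmc_\theta^n\wedge\pi_V^*\omega_V^{N-p-n},
\end{align*}
and the hypothesis yields a locally uniform mass bound for $(T\wedge\Kmc_\theta^n)_\theta$.

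Next, I fix a sequence $(\lambda_k)_{k\in\N}$ realizing $T_\infty$ with $\lambda_k\to\infty$, set $\theta_k=e^{-M}\lambda_k^{-1}$, and pass to a subsequence along which $(T\wedge\Kmc_{\theta_k}^n)_k$ converges weakly to a positive closed $(p+n,p+n)$-current $T_\Kmc^{p+n}$. On any compact $K\subset U\setminus V$ one has $u_\theta^\Kmc=u$ for $|\theta|$ small, so $\Kmc_\theta^n=(dd^cu)^n=0$ on $K$ by King's formula \eqref{eq:King_residue}; hence $\supp T_\Kmc^{p+n}\subset V$. For any smooth test $(N-p-n,N-p-n)$-form $\varphi$ on $V$, weak continuity of wedging with the smooth form $\pi_V^*\varphi$ gives
\begin{align*}
	T\wedge\Kmc_{\theta_k}^n\wedge\pi_V^*\varphi\to T_\Kmc^{p+n}\wedge\pi_V^*\varphi=:L_\varphi.
\end{align*}
Applying Proposition~\ref{prop:main_true_general} with $m=n$ along this subsequence (any further subsequence extraction the proposition requires produces the same limit by uniqueness), one obtains
\begin{align*}
	\langle(\pi_V)_*T_\Kmc^{p+n},\varphi\rangle=\langle L_\varphi,1\rangle=\langle\mathbf{1}_VL_\varphi,1\rangle=\langle(\pi_V)_*(T_\infty\wedge\pi_F^*\Omega_n),\varphi\rangle,
\end{align*}
where the middle equality uses $\supp T_\Kmc^{p+n}\subset V$.

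For the final assertion, when $h_T=N-p-n$ is minimal the shadow may be computed from any smooth closed $(n,n)$-form on $\overline{E}$ whose fiberwise restriction is cohomologous to a linear subspace of $\P^n$. By Lemma~\ref{lem:useful_form}, $\Omega_n$ is in the class $\{\omega_F^n\}$, which is the class of a point; thus $\pi_F^*\Omega_n$ is admissible, and Remark~\ref{rmk:indep_shadow} gives $T_\infty^h=(\pi_V)_*(T_\infty\wedge\pi_F^*\Omega_n)=(\pi_V)_*T_\Kmc^{p+n}$. The main technical point is the bidegree cancellation, which converts the apparently one-sided hypothesis into a full mass bound; everything else is a direct appeal to Proposition~\ref{prop:main_true_general} together with the support property of $T_\Kmc^{p+n}$.
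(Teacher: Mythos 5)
Your proof is correct and follows essentially the same route as the paper: the heart of the matter in both is the bidegree cancellation $T\wedge\Kmc_\theta^n\wedge\omega^{N-p-n}=T\wedge\Kmc_\theta^n\wedge\pi_V^*\omega_V^{N-p-n}$ (because $\Kmc_\theta^n$ saturates all $2n$ fiber differentials), followed by the observation that the limit currents are supported in $V$ so the indicator $\mathbf{1}_V$ can be dropped. The only cosmetic difference is that the paper invokes Proposition~\ref{prop:main_general} directly, whereas you re-derive its content for $m=n$ from Proposition~\ref{prop:main_true_general} together with the support property; since the proof of Proposition~\ref{prop:main_general} is omitted in the paper, your version is, if anything, slightly more self-contained.
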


\begin{proof}
	For the first assertion, it is enough to observe that for a bidegree reason, we have
	\begin{align*}
		T\wedge\Kmc_\theta^n\wedge\omega^{N-p-n}=T\wedge\Kmc_\theta^n\wedge\pi_V^*\omega_V^{N-p-n}.
	\end{align*}
	Then, we apply Proposition \ref{prop:main_general}. 
	Since the limit currents of $\left(T\wedge\Kmc_\theta^n\right)_{0<|\theta|\ll 1}$ are supported in $V$, we have $\mathbf{1}_VT_\Kmc^{p+n}=T_\Kmc^{p+n}$.
\end{proof}

We obtain a characterization of the $h$-dimension as a direct consequence of Proposition \ref{prop:main_true_general}.
\begin{theorem}\label{thm:h_dim_K}
	Let $T\in\Cc_p(U)$ admit a tangent current $T_\infty$ along $V$. Let $h_T\in \{\max\{N-p-n, 0\}, \ldots, \min\{N-p, N-n\}\}$ be such that the family 
	\(\left(T\wedge \Kmc_\theta^i\wedge \pi_V^*\omega_V^{N-p-i}\right)_{0<|\theta|\ll 1}\) 
	has locally uniformly bounded mass in \(U\) for 
	\(i=\max\{0,n-p\},\ldots,N-p-h_T\). Suppose	that the limit currents of 
	\(\left(T\wedge \Kmc_\theta^i\wedge \pi_V^*\omega_V^{N-p-i}\right)_{0<|\theta|\ll 1}\) 
	for \(i=\max\{0,n-p\},\ldots,N-p-h_T-1\) have no mass on \(V\) and that there exists a limit current of 
	\(\left(T\wedge \Kmc_\theta^{N-p-h_T}\wedge \pi_V^*\omega_V^{h_T}\right)_{0<|\theta|\ll 1}\) 
	having mass on \(V\). Then, the $h$-dimension of $T_\infty$ is $h_T$. If $h_T=\max\{N-p-n, 0\}$ and the limit currents of $\left(T\wedge \Kmc_\theta^i\wedge \pi_V^*\omega_V^{N-p-i}\right)_{0<|\theta|\ll 1}$ for $i=\max\{n-p, 0\},\ldots,\min\{N-p, n\}$ have no mass on $V$, then the $h$-dimension of $T_\infty$ is $0$.
\end{theorem}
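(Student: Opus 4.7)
The plan is to use Proposition~\ref{prop:main_true_general} to convert the hypotheses about limit currents into vanishing (resp.\ non-vanishing) statements for the positive closed currents $S_i := (\pi_V)_*(T_\infty\wedge\pi_F^*\Omega_i)$ on $V$, and then rule out every $h$-dimension different from $h_T$ by comparing $S_{N-p-h'_T}$ with the shadow formula. The key structural fact is Lemma~\ref{lem:useful_form}: each $\pi_F^*\Omega_j$ is a smooth closed form of the correct bidegree whose restriction to every fiber is cohomologous to a linear subspace, so it qualifies in Definition~\ref{def:shadow} and by Remark~\ref{rmk:indep_shadow} computes the shadow whenever $j=N-p-h$ matches the actual $h$-dimension.

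First I would translate hypotheses (b) and (c). For each admissible $i$ and each nonnegative smooth compactly supported $\chi$ on $V$, apply Proposition~\ref{prop:main_true_general} with $m=i$ and test form $\varphi=\chi\,\omega_V^{N-p-i}$. Since $\pi_V^*\varphi=\pi_V^*\chi\cdot\pi_V^*\omega_V^{N-p-i}$, by extracting a joint subsequence the limit $L_\varphi$ produced by the proposition coincides with $\pi_V^*\chi$ times the corresponding limit $L$ of the family $T\wedge\Kmc_\theta^i\wedge\pi_V^*\omega_V^{N-p-i}$. Therefore the hypothesis that $L$ has no mass on $V$ yields $\langle\mathbf{1}_V L_\varphi,1\rangle=0$, and the identity in the proposition becomes $\langle S_i\wedge\omega_V^{N-p-i},\chi\rangle=0$ for every such $\chi$. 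Hence the positive Borel measure $S_i\wedge\omega_V^{N-p-i}$ on $V$ vanishes, and since $S_i$ is positive closed of bidegree $(p+i-n,p+i-n)$ on the $(N-n)$-dimensional K\"ahler manifold $V$, with $\omega_V^{N-p-i}$ the K\"ahler power producing its trace, this forces $S_i=0$. Applying this for $i=\max\{n-p,0\},\ldots,N-p-h_T-1$ gives $S_i=0$ on that range, while the analogous argument run in the opposite direction for hypothesis~(c) gives $S_{N-p-h_T}\neq 0$.

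Now let $h'_T$ be the actual $h$-dimension of $T_\infty$; I rule out $h'_T\neq h_T$ by two contradictions. \emph{Suppose $h'_T>h_T$.} Using $h'_T\le\min\{N-p,N-n\}$ one checks $N-p-h'_T\in[\max\{n-p,0\},N-p-h_T-1]$, so the first step gives $S_{N-p-h'_T}=0$. On the other hand, the definition of $h'_T$ gives $T_\infty\wedge\pi_V^*\omega_V^{h'_T}\neq 0$ and $T_\infty\wedge\pi_V^*\omega_V^{h'_T+1}=0$. Expanding $\omega^{N-p-h'_T}=(\pi_V^*\omega_V+\pi_F^*\omega_F)^{N-p-h'_T}$ binomially and wedging with $T_\infty\wedge\pi_V^*\omega_V^{h'_T}$, every term carrying a positive power of $\pi_V^*\omega_V$ is killed, leaving only $T_\infty\wedge\pi_V^*\omega_V^{h'_T}\wedge\pi_F^*\omega_F^{N-p-h'_T}$, whose total mass is therefore positive; by the projection formula and Remark~\ref{rmk:indep_shadow} (applied to the admissible shadow forms $\pi_F^*\omega_F^{N-p-h'_T}$ and $\pi_F^*\Omega_{N-p-h'_T}$), this mass equals $\int\omega_V^{h'_T}\wedge S_{N-p-h'_T}$, contradicting $S_{N-p-h'_T}=0$. \emph{Suppose $h'_T<h_T$.} Then $T_\infty\wedge\pi_V^*\omega_V^{h_T}=0$, so wedging with $\pi_F^*\Omega_{N-p-h_T}$ and pushing down by $\pi_V$ yields $\omega_V^{h_T}\wedge S_{N-p-h_T}=0$; the same trace-measure reasoning forces $S_{N-p-h_T}=0$, contradicting hypothesis~(c). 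Hence $h'_T=h_T$.

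The second assertion follows by the same dichotomy: the strengthened hypothesis gives $S_i=0$ for every $i$ in the full range $[\max\{n-p,0\},\min\{N-p,n\}]$, so any $h'_T>0$ would, via Remark~\ref{rmk:indep_shadow}, produce a shadow equal to one of these vanishing $S_{N-p-h'_T}$, which is impossible for a nonzero tangent current of finite $h$-dimension. The main obstacle is the cutoff bookkeeping in the translation step—one needs to verify that a diagonal subsequence can be chosen so that the limits appearing in Proposition~\ref{prop:main_true_general} genuinely arise from the same family as in the theorem's hypothesis—but this is routine, and once it is in place the entire argument reduces to the clean contradiction scheme above.
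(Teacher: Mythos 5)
Your overall strategy --- converting the mass-on-$V$ hypotheses into statements about the currents $S_i:=(\pi_V)_*(T_\infty\wedge\pi_F^*\Omega_i)$ via Proposition~\ref{prop:main_true_general} and then comparing with the shadow at the true $h$-dimension $h'_T$ --- is exactly what the paper intends (it gives no written proof, calling the theorem a direct consequence of that proposition). The half of your argument ruling out $h'_T>h_T$, and the second assertion, are correct: since \emph{all} limit currents at the levels $i\le N-p-h_T-1$ are assumed to have no mass on $V$, this in particular applies to limits along the distinguished sequence $\theta_k=e^{-M}\lambda_k^{-1}$ for which the identity of Proposition~\ref{prop:main_true_general} is actually established, and your trace-measure argument then gives $S_i=0$ and the contradiction with the nonvanishing of the shadow at level $N-p-h'_T$.

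The gap is in the opposite inequality $h'_T\ge h_T$, i.e.\ in your use of hypothesis (c). Proposition~\ref{prop:main_true_general} does not identify $\langle\mathbf{1}_VL_\varphi,1\rangle$ with $\langle S_m,\varphi\rangle$ for \emph{every} limit current of the family $\left(T\wedge\Kmc_\theta^m\wedge\pi_V^*\varphi\right)_{0<|\theta|\ll1}$: its proof yields the identity only for limits taken along subsequences of the specific sequence $e^{-M}\lambda_k^{-1}$ attached to the given tangent current $T_\infty$. Hypothesis (c) merely asserts that \emph{some} limit current of $\left(T\wedge\Kmc_\theta^{N-p-h_T}\wedge\pi_V^*\omega_V^{h_T}\right)_{0<|\theta|\ll1}$, along \emph{some} sequence $\theta_j\to0$, carries mass on $V$; that sequence need not be related to $(\lambda_k)$, and the theorem's hypotheses (mass bounds only for $i\le N-p-h_T$) do not guarantee that $(A_{e^{-M}\theta_j^{-1}})_*(\mathbf{1}_UT)$ subconverges to a tangent current to which the proposition could be applied --- and even if it did, you would learn about the $h$-dimension of \emph{that} tangent current, not of $T_\infty$. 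So from $S_{N-p-h_T}=0$ you may conclude that the distinguished limits have no mass on $V$, but not that all limits do, and the claimed contradiction with (c) does not follow. To close this you must either read ``limit current'' in the hypotheses as ``limit along the sequence defining $T_\infty$,'' or supply an argument that all subsequential limits of $T\wedge\Kmc_\theta^{N-p-h_T}\wedge\pi_V^*\omega_V^{h_T}$ have the same restriction to $V$ (e.g.\ full convergence of the family, as holds under Condition $(\mathrm{K})$ by Lemma~\ref{lem:basic_conv_KT}). Your closing remark treats the subsequence bookkeeping as routine; it is routine for the vanishing direction but not for this one.
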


If we use $\omega_F^i$ and $\left(\Tmc_\theta^i\right)_{i=1, \ldots, n}$ instead of $\Omega_i$ and $\left(\Kmc_\theta^i\right)_{i=1, \ldots, n}$, we obtain a sufficient condition for the existence of tangent currents as in Theorem \ref{thm:existence_tangent_currents}.
\begin{proposition}\label{prop:tangent_2_shadows}
	Let $T\in\Cc_p(U)$. Let $\varphi:U\to \R_{\ge 0}$ be a smooth function with compact support. Then, $\left(\left(A_{\lambda}\right)_*(\varphi T)\right)_{1\ll|\lambda|}$ has uniformly bounded mass in $\overline{E}$ if and only if the family $\Big(\varphi T\wedge\Tmc_{\lambda^{-1}}^i\wedge \pi_V^*\omega_V^{N-p-i}\Big)_{1\ll|\lambda|}$ has uniformly bounded mass in $U$ for every $i=\max\{n-p, 0\}, \ldots, \min\{N-p, n\}$. 
\end{proposition}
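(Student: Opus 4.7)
The plan is to express the mass of $(A_\lambda)_*(\varphi T)$ on $\overline{E}$ as an integral against the total K\"ahler form $\omega=\pi_V^*\omega_V+\pi_F^*\omega_F$, open up the $(N-p)$-th power by the binomial theorem, and pull each resulting summand back to $U$ via the projection formula. After identifying $A_\lambda^*\pi_F^*\omega_F$ with the smooth form $\Tmc^1_{\lambda^{-1}}$, each summand will match exactly one of the quantities in the statement, and positivity will close the argument.

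More concretely, I would start from
\[
	\int_{\overline{E}}(A_\lambda)_*(\varphi T)\wedge \omega^{N-p}
	=\sum_{i=0}^{N-p}\binom{N-p}{i}\int_{\overline{E}}(A_\lambda)_*(\varphi T)\wedge \pi_F^*\omega_F^{\,i}\wedge \pi_V^*\omega_V^{N-p-i}
\]
and note that for bidegree reasons the summand vanishes unless $i\le n$ (the fibers of $\pi_V$ have complex dimension $n$) and $N-p-i\le N-n$ (the base $V$ has complex dimension $N-n$), which pins the range of $i$ down to exactly $\{\max\{n-p,0\},\ldots,\min\{N-p,n\}\}$. Since $A_\lambda$ is a biholomorphism of $E$ and $\varphi T$ has compact support in $U\subset E$, the projection formula rewrites each remaining summand as $\int_U \varphi T\wedge A_\lambda^*\pi_F^*\omega_F^{\,i}\wedge A_\lambda^*\pi_V^*\omega_V^{N-p-i}$.

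The only real computation is to identify $A_\lambda^*\pi_F^*\omega_F$ with $\Tmc^1_{\lambda^{-1}}$. I would use $\pi_V\circ A_\lambda=\pi_V$, so $A_\lambda^*\pi_V^*\omega_V^{N-p-i}=\pi_V^*\omega_V^{N-p-i}$, and work in the affine chart $t=1$ of $\P^n$, where $\omega_F=\tfrac12 dd^c\log(|x''|^2+1)$. Then, since $\pi_F\circ A_\lambda$ acts on each fiber by multiplication by $\lambda$,
\[
	A_\lambda^*\pi_F^*\omega_F=\tfrac12 dd^c\log(|\lambda x''|^2+1)=\tfrac12 dd^c\log\bigl(|x''|^2+|\lambda^{-1}|^2\bigr)=dd^c u^\Tmc_{\lambda^{-1}}=\Tmc^1_{\lambda^{-1}},
\]
the middle equality using that $\log|\lambda|^2$ is pluriharmonic and killed by $dd^c$. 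Taking $i$-th wedge powers gives $A_\lambda^*\pi_F^*\omega_F^{\,i}=\Tmc^i_{\lambda^{-1}}$, so the $i$-th summand becomes $\int_U \varphi T\wedge \Tmc^i_{\lambda^{-1}}\wedge \pi_V^*\omega_V^{N-p-i}$.

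The conclusion follows by positivity: every summand is a nonnegative number (all factors being positive currents and $\varphi\ge 0$), the binomial coefficients are strictly positive, and a finite positive-coefficient sum of nonnegative quantities is uniformly bounded in $\lambda$ if and only if each summand is. This yields the stated equivalence. There is no serious obstacle here; the only point that needs a careful check is the bidegree bookkeeping that isolates exactly the claimed range of $i$, together with the verification that the projection formula applies on the (non-compact) ambient space $\overline{E}$, which is legitimate because $\varphi T$ is compactly supported inside $E$ and $A_\lambda$ is a biholomorphism there.
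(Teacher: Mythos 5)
Your proposal is correct and follows essentially the same route as the paper: expand $\omega^{N-p}=(\pi_V^*\omega_V+\pi_F^*\omega_F)^{N-p}$ with positive coefficients, use that $(A_\lambda)_*(\varphi T)$ is supported in $E$ to pull back each summand, identify $(A_\lambda)^*(\pi_F^*\omega_F)=dd^cu_{\lambda^{-1}}^\Tmc$, discard the terms that vanish for bidegree reasons, and conclude by positivity. The only cosmetic difference is that you make the binomial coefficients and the affine-chart computation explicit where the paper writes generic nonnegative constants $c_i$.
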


\begin{proof}
	The restriction of $\omega_F$ to $\C^n$ can be written as $\frac{1}{2}dd^c\log(1+|x''|^2)$. We have
	\begin{align*}
		\left(A_\lambda\right)^*(\pi_F^*\omega_F)&=\left(A_\lambda\right)^*\left[\frac{1}{2}dd^c \log (1+|x''|^2)\right]
		=\frac{1}{2}dd^c \log \left(|x''|^2+|\lambda|^{-2}\right)=dd^c u_{\lambda^{-1}}^\Tmc.
	\end{align*}
	Since the support of $\left(A_{\lambda}\right)_*(\varphi T)$ stays in $E$, we have
	\begin{align*}
		&\left\langle \left(A_{\lambda}\right)_*(\varphi T), \omega^{N-p}\right\rangle
		=\sum_{i=0}^n c_i\int_{E} \left(A_{\lambda}\right)_*(\varphi T)\wedge \left(\pi_F^*\omega_F\right)^i\wedge (\pi_V^*\omega_V)^{N-p-i}\\
		&=\sum_{i=0}^n c_i\int_{U} \varphi T\wedge \left[\left(A_{\lambda}\right)^*(\pi_F^*\omega_F)^i\right]\wedge \left(\pi_V^*\omega_V\right)^{N-p-i}=\sum_{i=0}^n c_i\int_{U} \varphi T\wedge\Tmc_{\lambda^{-1}}^i\wedge \left(\pi_V^*\omega_V\right)^{N-p-i},
	\end{align*}
	where $c_i$'s are some non-nagative constants independent of $\lambda\in\C^*$. All the terms other than $i=\max\{n-p, 0\}, \ldots, \min\{N-p, n\}$ vanish for a bidegree reason.
\end{proof}

\begin{theorem}\label{thm:existence_tangent_currents}
	Let $T\in\Cc_p(U)$. Suppose that for every $i=\max\{n-p, 0\}, \ldots, \min\{N-p, n\}$, the family $\left(T\wedge \Tmc_{\lambda^{-1}}^i\wedge\pi_V^*\omega_V^{N-p-i}\right)_{1\ll|\lambda|}$ has locally uniformly bounded mass in $U$. Then, tangent currents of $T$ along $V$ exist.
\end{theorem}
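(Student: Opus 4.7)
My plan is to use Proposition \ref{prop:tangent_2_shadows} to turn the hypothesis into a uniform mass bound for pushforwards $(A_\lambda)_*(\varphi T)$ on $\overline E$, extract a weakly convergent subsequence, and paste the resulting limits across an exhaustion of $U$ into a single tangent current.

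First, I fix an exhaustion $V\cap U=\bigcup_{j\in\N} W_j$ by relatively compact open subsets $W_j\Subset W_{j+1}$, and for each $j$ choose a cutoff $\varphi_j\in C^\infty_c(U,[0,1])$ with $\varphi_j\equiv 1$ on an open neighborhood $W_j'\Subset U$ of $\overline{W_j}$. Since $\varphi_j$ has compact support in $U$ and $0\le\varphi_j\le 1$, the local uniform bound in the hypothesis upgrades to
\begin{align*}
	\sup_{|\lambda|\gg 1}\bigl\|\varphi_j T\wedge\Tmc_{\lambda^{-1}}^i\wedge\pi_V^*\omega_V^{N-p-i}\bigr\|_{U}<\infty
\end{align*}
for every admissible $i$, and Proposition \ref{prop:tangent_2_shadows} then gives $\sup_{|\lambda|\gg 1}\|(A_\lambda)_*(\varphi_j T)\|_{\overline E}<\infty$.

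For an arbitrary sequence $\lambda_k\to\infty$, weak sequential compactness of positive currents of bounded mass on $\overline E$ together with a Cantor diagonal extraction produces a single subsequence (still denoted $\lambda_k\to\infty$) along which $S^{(j)}:=\lim_{k\to\infty}(A_{\lambda_k})_*(\varphi_j T)$ exists on $\overline E$ for every $j$. To patch these into a tangent current, I use the following geometric observation: $(\varphi_j-\mathbf{1}_U)T$ is supported in $U\setminus W_j'$, hence at Euclidean distance $\ge\delta_j>0$ from $V\cap W_j$ in the fiber direction; under $A_\lambda$ this support is pushed to distance $\ge|\lambda|\delta_j$ from $V$, i.e., into an arbitrarily small neighborhood of $H_\infty$ in $\overline E$. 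Consequently, on any compact $K\subset\pi_V^{-1}(W_j\cap V)\setminus H_\infty$, for $k$ sufficiently large the currents $(A_{\lambda_k})_*(\varphi_j T)$ and $(A_{\lambda_k})_*(\mathbf{1}_U T)$ coincide on a neighborhood of $K$. This makes the $S^{(j)}$'s mutually compatible and lets them paste into a positive current $T_\infty$ on $\pi_V^{-1}(V\cap U)\setminus H_\infty$ realizing the limit of $(A_{\lambda_k})_*(\mathbf{1}_U T)$. The same argument, applied to $d(\varphi_j T)=d\varphi_j\wedge T$ which is also supported away from $V\cap W_j$, shows that $T_\infty$ is closed. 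Hence $T_\infty$ meets Definition \ref{def:tangent_current} with the inclusion $\tau=\mathrm{id}$.

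The main obstacle is this compatibility step: I must ensure that the cutoffs $\varphi_j$ introduced only for mass control do not contaminate the limiting tangent current. The resolution is precisely the dilation-to-infinity mechanism above—any region bounded away from $V$ is pushed by $A_\lambda$ into a shrinking neighborhood of $H_\infty$, so modifications of $T$ outside a fixed tubular neighborhood of $V$ become invisible on fixed compact subsets of $E$ as $|\lambda|\to\infty$.
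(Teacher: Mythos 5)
Your proposal is correct and follows essentially the same route as the paper's proof: cutoffs over an exhaustion of $V$, Proposition \ref{prop:tangent_2_shadows} to convert the hypothesis into uniform mass bounds for $(A_\lambda)_*(\varphi_j T)$, a diagonal extraction, and the observation that the cutoff becomes invisible on fixed compact subsets of $E$ as $|\lambda|\to\infty$. One minor imprecision in your compatibility step: the support of $(\varphi_j-\mathbf{1}_U)T$ need not lie at positive fiber-distance from $V$ (it can meet $V$ itself over base points outside $W_j'$), so it is not literally pushed toward $H_\infty$; the clean statement — and the one the paper uses — is that for a compact $K\subset\pi_V^{-1}(W_j\cap V)\setminus H_\infty$ one has $A_{\lambda_k}^{-1}(K)\subseteq\{\varphi_j\equiv 1\}$ for all large $k$, because the base coordinates of $A_{\lambda_k}^{-1}(K)$ stay in the compact set $\pi_V(K)\Subset W_j$ while the fiber coordinates shrink by the factor $\lambda_k^{-1}$.
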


\begin{proof}
	Let $(V_j)_{j\in\N}$ be a sequence of relatively compact open subsets in $V$ such that $\overline{V_j}\subset V_{j+1}$ for every $j\in\N$ and $\bigcup_{j\in\N}V_j=V$. Let $\chi_{V_j}:U\to [0, 1]$ be a smooth function with compact support such that $\chi_{V_j}\equiv 1$ in a neighborhood of $\overline{V_j}$ in $U$. Then, by Proposition \ref{prop:tangent_2_shadows}, the sequence $\left(\left(A_\lambda\right)_*(\chi_{V_j}T)\right)_{\lambda\in\C^*}$ has uniformly bounded mass in $E$ for each $j\in\N$. Let $K$ be a compact subset of $E$. Then, there exists an $j\in\N$ such that $K\subset \pi_V^{-1}(V_j)$. Also, for all $\lambda\in\C^*$ \textcolor{black}{with} sufficiently large $|\lambda|$, $\left(A_\lambda\right)_*\chi_{V_j}\equiv 1$ on $K$. Hence, we can find a sequence $\left(\lambda_k\right)$ such that $\left(A_{\lambda_k}\right)_*\left(\mathbf{1}_UT\right)$ converges on $K$.\medskip
	
	We find a sequence $\left(E_l\right)_{l\in\N}$ of relatively compact open subsets in $E$ such that $\bigcup_{l\in\N}E_l = E$. For each $l\in\N$, we apply the above arguments with $K$ replaced by $E_l$ and then, we use the classical argument to obtain a tangent current.
\end{proof}

The following lemma will be used several times later when we discuss integrability conditions. The point is that $dd^cu_\theta^\Tmc $ is strictly positive, but $dd^cu$ lacks strict positivity in one direction. However, due to the properties of the exterior algebra, multiplying a single $dd^cu_\theta^\Tmc $ completes the missing direction and we obtain the estimate.
\begin{lemma}\label{lem:T_vs_K}
	There exists a constant $C>0$ such that for every $i=1, \ldots, n$, we have
	\begin{align*}
		\Tmc_\theta^i\le Cdd^cu_\theta^\Tmc \wedge(dd^cu)^{i-1}
	\end{align*}
	in $U\setminus V$ for all $\theta\in\C^*$ with $|\theta|\ll1$. Here, the positivity is the strong positivity for forms.
\end{lemma}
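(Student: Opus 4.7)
The plan is a direct pointwise computation in unitary coordinates adapted to each point. Fix $x_0 \in U\setminus V$ and set $r := |x''(x_0)| > 0$. Since $u$ and $u_\theta^\Tmc$ depend only on $|x''|$, they are invariant under unitary changes of coordinates in the fiber direction $x''$, so I would first apply such a rotation to arrange that in the new unitary fiber coordinates $y=(y_1,\ldots,y_n)$ one has $y(x_0)=(r,0,\ldots,0)$. Setting $\eta := i\,dy_1\wedge d\bar y_1$ and $\omega' := \sum_{j=2}^n i\,dy_j\wedge d\bar y_j$, a direct calculation at $x_0$ gives, up to a uniform positive constant $c$ depending only on the convention for $dd^c$,
\begin{align*}
dd^cu \;=\; \tfrac{c}{r^2}\,\omega', \qquad dd^cu_\theta^\Tmc \;=\; c\bigl(A\,\omega'+B\,\eta\bigr),
\end{align*}
with $A=(r^2+|\theta|^2)^{-1}$ and $B=|\theta|^2(r^2+|\theta|^2)^{-2}$. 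The geometric content of the lemma is already visible here: $dd^cu$ degenerates precisely in the radial direction spanned by $\eta$, and the term $B\,\eta$ in $dd^cu_\theta^\Tmc$ is exactly what fills in the missing direction.

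Using $\eta\wedge\eta = 0$, the binomial expansion collapses to just two terms,
\begin{align*}
\Tmc_\theta^i \;=\; c^i\bigl(A^i(\omega')^i \,+\, iA^{i-1}B\,\eta\wedge(\omega')^{i-1}\bigr),
\end{align*}
while
\begin{align*}
dd^cu_\theta^\Tmc\wedge(dd^cu)^{i-1} \;=\; c^i\Bigl(\tfrac{A}{r^{2(i-1)}}(\omega')^i \,+\, \tfrac{B}{r^{2(i-1)}}\,\eta\wedge(\omega')^{i-1}\Bigr),
\end{align*}
with the convention $(\omega')^n=0$ since $\omega'$ lives on $n-1$ complex dimensions. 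Both forms are therefore nonnegative real combinations of the two pieces $(\omega')^i$ and $\eta\wedge(\omega')^{i-1}$, and expanding $\omega'$ presents each piece as a nonnegative sum of elementary strongly positive forms of the type $\bigwedge_{k\in K}i\,dy_k\wedge d\bar y_k$ in the chosen unitary frame.

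Finally, I would compare the two pairs of coefficients. The ratios simplify to $A^{i-1}r^{2(i-1)} = \bigl(r^2/(r^2+|\theta|^2)\bigr)^{i-1} \le 1$ for the $(\omega')^i$-coefficient and $iA^{i-1}r^{2(i-1)} \le i$ for the $\eta\wedge(\omega')^{i-1}$-coefficient, both bounds being uniform in $x_0$ and $\theta$. Taking $C := n$, the difference $C\,dd^cu_\theta^\Tmc\wedge(dd^cu)^{i-1} - \Tmc_\theta^i$ becomes a nonnegative real combination of the same elementary strongly positive basis at $x_0$. Since $x_0\in U\setminus V$ and $\theta\in\C^*$ were arbitrary and the constant $n$ depends on neither, the desired inequality would hold globally on $U\setminus V$ for all $\theta$. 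The main subtle point I would watch is that the comparison is in the \emph{strong} positivity order (not merely the weak one); but this comes for free here, because both sides are expanded on the same strongly positive elementary basis in the adapted unitary frame, so coefficient-wise nonnegativity immediately promotes to strong positivity.
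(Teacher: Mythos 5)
Your proof is correct, and it takes a genuinely different and in fact cleaner route than the paper's. The paper first splits $dd^cu_\theta^\Tmc \le \frac{|\theta|^2}{(|w|^2+|\theta|^2)^2}\sum_j \sqrt{-1}\,dw_j\wedge d\overline{w_j} + dd^cu$ and then reduces the lemma to estimating powers of the first summand against (first summand)$\wedge(dd^cu)^{i-1}$; this is carried out in the non-unitary coordinates $v_1=w_1$, $v_j=w_j/w_1$ on sectors where $|w_j|<2|w_1|$, producing two families of elementary terms that each require a separate comparison. Your argument instead exploits the unitary invariance of $|x''|$ to diagonalize both forms at an arbitrary point $x_0$: after rotating so that $x''(x_0)=(r,0,\ldots,0)$, both $dd^cu$ and $dd^cu_\theta^\Tmc$ become explicit nonnegative combinations of $\eta=\sqrt{-1}\,dy_1\wedge d\overline{y_1}$ and $\omega'=\sum_{j\ge 2}\sqrt{-1}\,dy_j\wedge d\overline{y_j}$, the binomial expansion of $\Tmc_\theta^i$ collapses to two terms because $\eta\wedge\eta=0$, and the whole lemma reduces to the two scalar ratios $A^{i-1}r^{2(i-1)}\le 1$ and $iA^{i-1}r^{2(i-1)}\le i$. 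What this buys you is an explicit uniform constant $C=n$ and a transparent identification of the geometric mechanism (the $B\eta$ term of $dd^cu_\theta^\Tmc$ supplies exactly the radial direction that $dd^cu$ misses), at the cost of nothing: the pointwise unitary frame is legitimate because the asserted inequality is a pointwise statement about strong positivity, which is invariant under unitary changes of fiber coordinates, and your final observation that coefficient-wise nonnegativity on the elementary basis $\bigwedge_{k\in K}\sqrt{-1}\,dy_k\wedge d\overline{y_k}$ promotes to strong positivity is exactly right, since each such elementary form is a product of positive $(1,1)$-forms and hence strongly positive.
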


\begin{proof}
	For notational convenience, in this proof, we will use $w$ instead of $x''$, that is, $w_1=x_{N-n+1}, \ldots, w_j=x_{N-n+j}, \ldots, w_n=x_{N}$.	
	On $\{w\ne 0\}$, we have
	\begin{align*}
		dd^cu &=\frac{\sum_{j=1}^n \sqrt{-1}dw_j\wedge d\overline{w_j}}{|w|^2}- \frac{\sqrt{-1}\left(\sum_{j=1}^n\overline{w_j}dw_j\right)\wedge\left(\sum_{j=1}^nw_jd\overline{w_j}\right)}{|w|^4}\quad\textrm{ and }\\
		dd^cu^\Tmc_\theta &=\frac{\sum_{j=1}^n \sqrt{-1}dw_j\wedge d\overline{w_j}}{|w|^2+|\theta|^2}- \frac{\sqrt{-1}\left(\sum_{j=1}^n\overline{w_j}dw_j\right)\wedge\left(\sum_{j=1}^nw_jd\overline{w_j}\right)}{\left(|w|^2+|\theta|^2\right)^2}\\
		&=\frac{|\theta|^2}{\left(|w|^2+|\theta|^2\right)^2}\sum_{j=1}^n \sqrt{-1}dw_j\wedge d\overline{w_j}+\frac{|w|^4}{\left(|w|^2+|\theta|^2\right)^2}dd^cu.\\
	\end{align*}
	Since $dd^cu$ is positive, we have $$0\le dd^cu_\theta^\Tmc \le \frac{|\theta|^2}{\left(|w|^2+|\theta|^2\right)^2}\sum_{j=1}^n \sqrt{-1}dw_j\wedge d\overline{w_j}+dd^cu$$
	on $\{w\ne 0\}$. Hence, for the proof of the lemma, it is enough to show that for each $i=1, \ldots, n$, there exists a constant $C_i>0$ such that on $\{w\ne 0\}$, we have
	\begin{align*}
		\left(\frac{|\theta|^2}{\left(|w|^2+|\theta|^2\right)^2}\sum_{j=1}^n \sqrt{-1}dw_j\wedge d\overline{w_j}\right)^i\le C_i \left(\frac{|\theta|^2}{\left(|w|^2+|\theta|^2\right)^2}\sum_{j=1}^n \sqrt{-1}dw_j\wedge d\overline{w_j}\right)\wedge (dd^c u)^{i-1}.
	\end{align*}
	
	Without loss of generality, we consider $0<|w_1|<1$, $|w_j|<2|w_1|$ for $j=2, \ldots, n$ and use the coordinates $(v_1, \ldots, v_n)$, where $v_1=w_1$ and $v_j=w_j/w_1$ for $j=2, \ldots, n$. For $k=2,\ldots, n$, we have	
	\begin{align*}
		dw_k=d(v_1v_k)=v_1dv_k+v_kdv_1\quad\textrm{ and }\quad d\overline{w_k}=d(\overline{v_1}\overline{v_k})=\overline{v_1}d\overline{v_k}+\overline{v_k}d\overline{v_1}.
	\end{align*}
	Since $\sqrt{-1}(v_1dv_k-v_kdv_1)\wedge(\overline{v_1}d\overline{v_k}-\overline{v_k}d\overline{v_1})$ is a positive form for $k=2, \ldots, n$, we have
	\begin{align*}
		&\frac{|\theta|^2}{\left(|w|^2+|\theta|^2\right)^2}(\sqrt{-1}dw_k\wedge d\overline{w_k})\wedge\left(\frac{|\theta|^2}{\left(|w|^2+|\theta|^2\right)^2}\sum_{j=1}^n \sqrt{-1}dw_j\wedge d\overline{w_j}\right)^{i-1}\\
		&\le\frac{|\theta|^2}{\left(|w|^2+|\theta|^2\right)^2}\sqrt{-1}(v_1dv_k+v_kdv_1)\wedge(\overline{v_1}d\overline{v_k}+\overline{v_k}d\overline{v_1})\wedge\left(\frac{|\theta|^2}{\left(|w|^2+|\theta|^2\right)^2}\sum_{j=1}^n \sqrt{-1}dw_j\wedge d\overline{w_j}\right)^{i-1}\\
		&\quad\quad\quad+\frac{|\theta|^2}{\left(|w|^2+|\theta|^2\right)^2}\sqrt{-1}(v_1dv_k-v_kdv_1)\wedge(\overline{v_1}d\overline{v_k}-\overline{v_k}d\overline{v_1})\wedge\left(\frac{|\theta|^2}{\left(|w|^2+|\theta|^2\right)^2}\sum_{j=1}^n \sqrt{-1}dw_j\wedge d\overline{w_j}\right)^{i-1}\\
		&=\frac{2|\theta|^2}{\left(|w|^2+|\theta|^2\right)^2}\sqrt{-1}(|v_1|^2dv_k\wedge d\overline{v_k}+|v_k|^2dv_1\wedge d\overline{v_1})\wedge\left(\frac{|\theta|^2}{\left(|w|^2+|\theta|^2\right)^2}\sum_{j=1}^n \sqrt{-1}dw_j\wedge d\overline{w_j}\right)^{i-1}.
	\end{align*}
	
	Note that with respect to the coordinates in our consideration, $|v_k|$'s are bounded by $2$ for $k=2, \ldots, n$. Inductively applying the above inequality, we see that on $\{w\ne 0\}$, the form $\displaystyle \left(\frac{|\theta|^2}{\left(|w|^2+|\theta|^2\right)^2}\sum_{j=1}^n \sqrt{-1}dw_j\wedge d\overline{w_j}\right)^i$ is bounded by a linear combination of $(i, i)$-forms of the following types with positive coefficients: 
	\begin{align}
		\label{eq:(1)}&\left(\sqrt{-1}\right)^i\left(\frac{|\theta|^2}{\left(|w|^2+|\theta|^2\right)^2}\right)^i|v_1|^{2i-2}dv_1\wedge d\overline{v_1}\wedge dv_{j_1}\wedge d\overline{v_{j_1}}\wedge \cdots \wedge dv_{j_{i-1}}\wedge d\overline{v_{j_{i-1}}},\\ \label{eq:(2)}&\left(\sqrt{-1}\right)^i\left(\frac{|\theta|^2}{\left(|w|^2+|\theta|^2\right)^2}\right)^i|v_1|^{2i}dv_{j_1}\wedge d\overline{v_{j_1}}\wedge \cdots \wedge dv_{j_{i}}\wedge d\overline{v_{j_{i}}},
	\end{align}
	where $j_1, \ldots, j_i\ge 2$ and $|w|^2=|v_1|^2\left(1+\sum_{j=2}^n|v_j|^2\right)$.
	\medskip
	
	We consider \eqref{eq:(1)}. Since $v_1=w_1$ and $|v_j|<2$ for $j=2, \ldots, n$, from the relationship between the arithmetic and geometric means, we have
	\begin{align*}
		\eqref{eq:(1)}&\le \left(\sqrt{-1}\right)^i\left(\frac{|\theta|^2}{\left(|w|^2+|\theta|^2\right)^2}\right)\left|\frac{v_1}{w}\right|^{2i-2}dv_1\wedge d\overline{v_1}\wedge dv_{j_1}\wedge d\overline{v_{j_1}}\wedge \cdots \wedge dv_{j_{i-1}}\wedge d\overline{v_{j_{i-1}}}\\
		&\le \left(\sqrt{-1}\right)^i\left(\frac{|\theta|^2}{\left(|w|^2+|\theta|^2\right)^2}\sum_{j=1}^n dw_j\wedge d\overline{w_j}\right)\wedge dv_{j_1}\wedge d\overline{v_{j_1}}\wedge \cdots \wedge dv_{j_{i-1}}\wedge d\overline{v_{j_{i-1}}}.
	\end{align*}
	With respect to $v_1, \ldots, v_n$, $dd^cu$ is written as
	\begin{align*}
		dd^c u=\frac{1}{2}dd^c\log\left(1+\sum_{j=2}^n|v_j|^2\right)
	\end{align*}
	and on a bounded domain $\displaystyle \left\{(v_2, \ldots, v_n)\in\C^{n-1}:|v_2|<2,\ldots, |v_n|<2\right\}$, $\displaystyle \frac{1}{2}dd^c\log\left(1+\sum_{j=2}^n|v_j|^2\right)$ is equivalent to $\displaystyle dd^c\left(\sum_{j=2}^n|v_j|^2\right)$. Hence, we see that there exists a constant $c_1>0$ such that
	\begin{align*}
		\eqref{eq:(1)}\le c_1 \left(\frac{|\theta|^2}{\left(|w|^2+|\theta|^2\right)^2}\sum_{j=1}^n \sqrt{-1}dw_j\wedge d\overline{w_j}\right)\wedge (dd^c u)^{i-1}.
	\end{align*}
	\medskip
	
	We consider \eqref{eq:(2)}. For $k=2, \ldots, n$, we have
	\begin{align*}
		&2\sqrt{-1}dw_k\wedge d\overline{w_k}+ 2\sqrt{-1}|v_k|^2dv_1\wedge d\overline{v_1}\\
		&\quad=\sqrt{-1}|v_1|^2dv_k\wedge d\overline{v_k}+ \sqrt{-1}\left(|v_1|^2dv_k\wedge d\overline{v_k} + 2v_1\overline{v_k}dv_k\wedge d\overline{v_1}+ 2v_k\overline{v_1}dv_1\wedge d\overline{v_k}+4|v_k|^2dv_1\wedge d{\overline{v_1}}\right)\\
		&\quad=\sqrt{-1}|v_1|^2dv_k\wedge d\overline{v_k}+\sqrt{-1}(v_1dv_k+2v_kdv_1)\wedge(\overline{v_1}d\overline{v_k}+2\overline{v_k}d\overline{v_1})\ge \sqrt{-1}|v_1|^2dv_k\wedge d\overline{v_k}.
	\end{align*}
	We have $|v_k|<2$ for $k=2, \ldots, n$. Arguing as in (1), we obtain from the above inequality that there exists $c_2>0$ such that 
	\begin{align*}
		\eqref{eq:(2)}&\le\left(\sqrt{-1}\right)^i\left(\frac{|\theta|^2}{\left(|w|^2+|\theta|^2\right)^2}\right)\left|\frac{v_1}{w}\right|^{2i-2}|v_1|^2dv_{j_1}\wedge d\overline{v_{j_1}}\wedge \cdots dv_{j_{i}}\wedge d\overline{v_{j_{i}}}\\
		&\le c_2 \left(\frac{|\theta|^2}{\left(|w|^2+|\theta|^2\right)^2}\sum_{j=1}^n \sqrt{-1}dw_j\wedge d\overline{w_j}\right)\wedge (dd^c u)^{i-1}.
	\end{align*}
	The constants $c_1$ and $c_2$ are independent of $\theta$. The above two inequalities prove the claim.
\end{proof}

\section{Integrability Conditions}\label{sec:integrability}

In Section \ref{sec:tangent_currents}, we have seen that once the product of $T$ with $(dd^cu)^m$ is defined via the family $\left(\Kmc_\theta^m\right)_{0<|\theta|\ll 1}$ for some $m\in\{1, \ldots, n\}$, we can study properties of tangent currents of $T$ along $V$ such as the $h$-dimension and existence of tangent currents.\medskip

In this section, we introduce two natural integrability conditions for the product: one associated with the classical complex Monge-Amp\`ere type product, and the other with the relative non-pluripolar product. For the relative non-pluripolar product on compact K\"ahler manifolds, see \cite{VU21-1}. We employ the notations in Section \ref{sec:tangent_currents}. We work on a domain $U\subset\C^N$ and a complex linear submanifold $V\subset U$ of codimension $n$ such that $V$ is bounded and simply connected with smooth boundary.\medskip

The main theorem of this section, Theorem \ref{thm:uniqueness_tangent_current}, states that if a positive closed current on $U$ satisfies an integrability condition called Condition $(\mathrm{K}-\max)$ along $V$, then it admits a unique tangent current along $V$ with the minimal $h$-dimension, whose shadow can be described by an integral representation.
\medskip

Below is a notation for a product of a current with $(dd^cu)^m$ via the family $\left(\Kmc_\theta^m\right)_{0<|\theta|\ll 1}$.
\begin{definition}\label{def:K-product}
	Let $T'\in\Cc_{p'}(U)$. Let $m'\in\{0, \ldots, \min\{N-p', n\}\}$.
	When $T'$ is positive, the positive $(p'+m', p'+m')$-current $\left\langle T'\wedge (dd^cu)^{m'}\right\rangle_\Kmc$ on $U$ is defined to be the limit
	\begin{align*}
		\left\langle T'\wedge(dd^cu)^{m'}\right\rangle_\Kmc:=\lim_{\theta\to 0}T'\wedge \Kmc^{m'}_\theta
	\end{align*}
	in the sense of currents, provided that the limit exists. When $T'$ is a current that can be written as a linear combination $\sum_{i=1}^{n_{T'}}\alpha_iT'_i$ of positive currents $T'_i$ for $i=1, \ldots, n_{T'}$, we define 
	\begin{align*}
		\left\langle T'\wedge(dd^cu)^{m'}\right\rangle_\Kmc:=\sum_{i=1}^{n_{T'}}\alpha_i\left\langle T'_i\wedge(dd^cu)^{m'} \right\rangle_\Kmc,
	\end{align*}
	provided that all the limits $\left\langle T'_i\wedge(dd^cu)^{m'}\right\rangle_\Kmc$ for $i=1, \ldots, n_{T'}$ exist.
\end{definition}

\begin{remark}
	If $T'$ is closed and $\left\langle T'\wedge(dd^cu)^{m'}\right\rangle_\Kmc$ exists, then $\left\langle T'\wedge(dd^cu)^{m'}\right\rangle_\Kmc$ is closed.
\end{remark}

\subsection{Classical complex Monge-Amp\`ere type product}\label{subsec:classical} In this subsection, we assume $n<N$.

\begin{lemma}\label{lem:basic_conv_KT}
	Let $T\in\Cc_p(U)$. Let $i\in\{\max\{n-p, 1\}, \ldots, \min\{N-p, n\}\}$. Suppose that the current $\left\langle T\wedge(dd^cu)^{i-1}\wedge \pi_V^*\psi\right\rangle_\Kmc$ of order $0$ of maximal bidegree on $U$ is well-defined for every smooth $(N-p-i+1, N-p-i+1)$-form $\psi$ on $V$ and that 
	\begin{align*}
		u\in L_\loc^1\left(\left\langle T\wedge(dd^cu)^{i-1}\wedge \pi_V^*\omega_V^{N-p-i+1}\right\rangle_\Kmc\right).
	\end{align*}
	Then, the current $\left\langle T\wedge(dd^cu)^i\wedge \pi_V^*\varphi\right\rangle_\Kmc$ of order $0$ of maximal bidegree on $U$ is well-defined for every smooth $(N-p-i, N-p-i)$-form $\varphi$ on $V$. 
	For $\theta\in\C^*$ with $|\theta|\ll 1$, the limit
	\begin{align*}
		\left\langle \left(T\wedge dd^c u_\theta^\Tmc \right)\wedge (dd^cu)^{i-1}\wedge \pi_V^*\varphi\right\rangle_\Kmc=\lim_{|\theta'|\to 0}\left(T\wedge dd^cu_\theta^\Tmc \right)\wedge \Kmc^{i-1}_{\theta'} \wedge\pi_V^*\varphi
	\end{align*}
	exists and we have 
	\begin{align*}
		\lim_{\theta\to 0}\left\langle \left(T\wedge dd^c u_\theta^\Tmc \right)\wedge (dd^cu)^{i-1}\wedge  \pi_V^*\varphi\right\rangle_\Kmc =\left\langle T\wedge(dd^cu)^i\wedge \pi_V^*\varphi\right\rangle_\Kmc.
	\end{align*}
\end{lemma}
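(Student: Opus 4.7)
The plan is to reduce the $i$-level convergence of $T\wedge\Kmc^i_\theta\wedge\pi_V^*\varphi$ to the $(i-1)$-level hypothesis via Stokes' theorem, and then pass to the outer limit by dominated convergence using the $L_\loc^1$ hypothesis on $u$. Let $\rho$ be a smooth compactly supported function on $U$ and set $\eta := \rho\,\pi_V^*\varphi$; fix $\theta\in\C^*$ with $|\theta|\ll1$. Since $dd^cu^\Kmc_\theta$ vanishes on $\{|x''|\le|\theta|/e\}$ and $\Kmc^{i-1}_{\theta'}=(dd^cu)^{i-1}$ on $\{|x''|\ge|\theta'|e\}$, for $|\theta'|\ll|\theta|$ one has the exact identity
\[
\int T\wedge\Kmc^i_\theta\wedge\eta \;=\; \int T\wedge\Kmc^{i-1}_{\theta'}\wedge dd^cu^\Kmc_\theta\wedge\eta.
\]
The current $T\wedge\Kmc^{i-1}_{\theta'}$ is positive closed (hence both $d$- and $d^c$-closed), so Stokes applied to the smooth compactly supported form $u^\Kmc_\theta\,\eta$ further rewrites this as $\int u^\Kmc_\theta\,T\wedge\Kmc^{i-1}_{\theta'}\wedge dd^c\eta$.

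Next, I would expand $dd^c(\rho\,\pi_V^*\varphi)$ via Leibniz: besides the pure-pullback pieces $dd^c\rho\wedge\pi_V^*\varphi$ and $\rho\,\pi_V^*(dd^c\varphi)$, the mixed first-order term $d\rho\wedge\pi_V^*(d^c\varphi)-d^c\rho\wedge\pi_V^*(d\varphi)$ equals $d\bigl(\rho\,\pi_V^*(d^c\varphi)\bigr)-d^c\bigl(\rho\,\pi_V^*(d\varphi)\bigr)-2\rho\,\pi_V^*(dd^c\varphi)$. A secondary Stokes against the closed current $T\wedge\Kmc^{i-1}_{\theta'}$ transfers $d$ and $d^c$ onto $u^\Kmc_\theta$, and iterating this device regroups the entire integrand, modulo smooth coefficients depending on $\rho$ and its derivatives, into pairings of $T\wedge\Kmc^{i-1}_{\theta'}$ against $\pi_V^*\psi_\alpha$ for smooth $(N-p-i+1,N-p-i+1)$-forms $\psi_\alpha$ on $V$ built from $\varphi$. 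The $(i-1)$-level hypothesis then delivers
\[
\lim_{\theta'\to 0}\int u^\Kmc_\theta\,T\wedge\Kmc^{i-1}_{\theta'}\wedge dd^c\eta \;=\; \sum_\alpha \int u^\Kmc_\theta\,\xi_\alpha\,\langle T\wedge(dd^cu)^{i-1}\wedge\pi_V^*\psi_\alpha\rangle_\Kmc,
\]
where the $\xi_\alpha$ are smooth compactly supported forms.

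For the outer limit $\theta\to 0$, the bound $\chi(t)\ge t-1$ yields $u-1\le u^\Kmc_\theta\le u^\Kmc_{\theta_0}$ whenever $|\theta|\le|\theta_0|$, so $u^\Kmc_\theta$ is uniformly dominated by an integrable function against each measure $\langle T\wedge(dd^cu)^{i-1}\wedge\pi_V^*\psi_\alpha\rangle_\Kmc$ (each of which is bounded on compact subsets by a constant multiple of $\langle T\wedge(dd^cu)^{i-1}\wedge\pi_V^*\omega_V^{N-p-i+1}\rangle_\Kmc$, against which $u$ is $L_\loc^1$ by hypothesis). Dominated convergence gives the existence of $\lim_{\theta\to 0}\int T\wedge\Kmc^i_\theta\wedge\eta$, proving that $\langle T\wedge(dd^cu)^i\wedge\pi_V^*\varphi\rangle_\Kmc$ is well-defined. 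The $\Tmc$-statement follows from the identical scheme with $u^\Tmc_\theta$ in place of $u^\Kmc_\theta$, since $u^\Tmc_\theta$ is smooth with $u\le u^\Tmc_\theta\le u^\Tmc_{\theta_0}$ and $u^\Tmc_\theta\downarrow u$; after the inner limit, both approximations reduce to $\sum_\alpha\int u\,\xi_\alpha\,\langle T\wedge(dd^cu)^{i-1}\wedge\pi_V^*\psi_\alpha\rangle_\Kmc$ in the $\theta\to 0$ limit, yielding the claimed equality. The main obstacle I anticipate is the careful bookkeeping of the mixed first-order contributions in $dd^c(\rho\,\pi_V^*\varphi)$, which are not pullbacks from $V$: the secondary Stokes step must be arranged so that after finitely many iterations every piece takes the form of a $\pi_V^*\psi_\alpha$-pairing, since only such pairings are governed by the $(i-1)$-level hypothesis.
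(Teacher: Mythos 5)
Your skeleton---the identity $\int T\wedge\Kmc^i_\theta\wedge\eta=\int u^\Kmc_\theta\,T\wedge\Kmc^{i-1}_{\theta'}\wedge dd^c\eta$ for $|\theta'|\ll|\theta|$, the use of the $(i-1)$-level hypothesis for the inner limit, and monotone/dominated convergence in $\theta$ for the outer limit---is the same as the paper's. The gap is in your treatment of a general test function $\rho$. The hypothesis only controls pairings of $T\wedge\Kmc^{i-1}_{\theta'}$ against forms of the type $f\,\pi_V^*\psi$ with $\psi$ smooth on $V$ and $f$ continuous. After expanding $dd^c(\rho\,\pi_V^*\varphi)$, the term $dd^c\rho\wedge\pi_V^*\varphi$, which you label a ``pure-pullback piece,'' is not of this type: $dd^c\rho$ contains fiber-direction differentials $dx''_j\wedge d\overline{x''_k}$ and is supported up to $V$, so the hypothesis says nothing about its pairing with $T\wedge\Kmc^{i-1}_{\theta'}$ (controlling it would amount to assuming $\langle T\wedge(dd^cu)^{i-1}\rangle_\Kmc$ exists as a full $(p+i-1,p+i-1)$-current, which is the strictly stronger Condition $(\mathrm{K}^*)$-type hypothesis, not the one given). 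Your proposed fix for the mixed first-order terms---a ``secondary Stokes'' pushing $d$ and $d^c$ onto $u^\Kmc_\theta$---makes matters worse: it produces integrands containing $du^\Kmc_\theta$ and $d^cu^\Kmc_\theta$, which blow up like $|x''|^{-1}$ near $V$ and are not governed by the $L^1_{\loc}$ hypothesis on $u$; no finite iteration turns these back into $u^\Kmc_\theta$-weighted pairings against $\pi_V^*\psi_\alpha$. The obstacle you flag at the end is therefore a genuine obstruction to your scheme, not a bookkeeping issue.

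The paper circumvents exactly this difficulty by two devices your proposal lacks. First, it only integrates against cutoffs of product form $(\pi_V^*\chi_V)(\pi_F^*\chi_F)$, with $\chi_V$ a function on $V$ and $\chi_F$ a function of $x''$ equal to $1$ near $0$: then every derivative term is either an honest pullback $\pi_V^*(dd^c\chi_V)$ wedged with $\pi_V^*$ of a form on $V$ (dominated by $\pi_V^*\omega_V^{N-p-i+1}$, hence covered by the hypothesis), or is supported at a fixed positive distance from $V$, where $dd^cu$ is smooth and convergence is trivial. Second, to pass from product-type test functions to arbitrary ones, it uses that the limit currents are positive and of maximal bidegree together with Lemma \ref{lem:restriction_lem}: the restriction to $V$ of any limit measure pairs with $\rho$ exactly as it does with $\pi_V^*(\rho|_V)$, while all limit currents automatically agree on $U\setminus V$. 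You would need both devices (or a substitute, e.g.\ a Chern--Levine--Nirenberg-type estimate controlling $\int -u^\Kmc_\theta\, du\wedge d^cu\wedge\cdots$, which is nowhere established) to close the argument.
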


For the proof, we need the following lemma.
\begin{lemma}\label{lem:restriction_lem}
	Let $T'$ be a positive current of maximal bidegree on $U$. Let $T'|_V:=\mathbf{1}_V T'$ be the restriction of $T'$ to $V$. Then, for every smooth test function $f$ on $U$, we have
	\begin{align*}
		\left\langle T'|_V, f\right\rangle = \left\langle T'|_V, \pi_V^*f|_V\right\rangle.
	\end{align*}
\end{lemma}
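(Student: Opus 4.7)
The statement is essentially the tautology that a measure supported on $V$ is determined by the values of the integrand on $V$, combined with the fact that $\pi_V$ restricted to $V$ is the identity. My plan is to unwind the definitions and let positivity/maximal bidegree collapse everything to integration of a bounded Borel function against a Radon measure.

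First I would note that, since $T'$ is a positive $(N,N)$-current on $U\subset\C^N$, it is a positive Radon measure on $U$. Hence $T'|_V:=\mathbf{1}_VT'$ is the restricted Radon measure, which is a positive Borel measure concentrated on $V$. In particular, for any bounded Borel function $g$ on $U$, the pairing $\langle T'|_V,g\rangle$ equals $\int g\,d(T'|_V)=\int_V g\,dT'$, and therefore only depends on the restriction $g|_V$.

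Second, I would invoke the setup from Section \ref{sec:tangent_currents}: under the identification of a neighborhood of $V$ in $U$ with a neighborhood of $V$ in $E=V\times\C^n$ (the inclusion is a holomorphic admissible map in the sense defined above), the projection $\pi_V:\overline{E}\to V$ restricts to the identity on $V\subset\overline{E}$. Thus for $x\in V$ we have
\begin{equation*}
\bigl(\pi_V^*(f|_V)\bigr)(x)=(f|_V)(\pi_V(x))=(f|_V)(x)=f(x),
\end{equation*}
i.e.\ the smooth functions $f$ and $\pi_V^*(f|_V)$ agree pointwise on $V$.

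Combining the two observations, $\langle T'|_V,f\rangle$ and $\langle T'|_V,\pi_V^*(f|_V)\rangle$ are both integrals over $V$ of functions that coincide on $V$, hence are equal. There is no genuine obstacle here; the only subtle point is bookkeeping, namely to verify that $\pi_V^*(f|_V)$ is well-defined and smooth on the relevant neighborhood (which is immediate because $f|_V$ is smooth on $V$ and $\pi_V$ is a smooth submersion) so that the pairing $\langle T'|_V,\pi_V^*(f|_V)\rangle$ makes sense as a current-test function pairing in the first place.
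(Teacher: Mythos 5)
Your proof is correct and rests on the same key observation as the paper's: the measure $\mathbf{1}_VT'$ is carried by $V$, and the functions $f$ and $\pi_V^*(f|_V)$ agree pointwise on $V$ because $\pi_V$ is the identity there. The paper merely packages this as a quantitative estimate $\bigl|\int_{V_\varepsilon}(f-\pi_V^*f|_V)\,T'\bigr|\le\varepsilon\|f\|_{C^1}\|T'\|_{\supp f}$ over shrinking $\varepsilon$-neighborhoods of $V$, whereas you invoke the measure-theoretic restriction directly; the two arguments are interchangeable here.
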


\begin{proof}
	Let $\varepsilon>0$ be such that $\varepsilon\ll 1$. Then, we have
	\begin{align*}
		\left|\int_{V_\varepsilon} (f-\pi_V^*f|_V)T'\right|\le \sup_{V_\varepsilon}\left|f-\pi_V^*f|_V\right|\int_{V_\varepsilon\cap \supp\,f}T'\le \varepsilon\|f\|_{C^1}\|T'\|_{\supp\,f},
	\end{align*}
	where $V_\varepsilon$ is the $\varepsilon$-neighborhood of $V$ in $U$.
	Hence, as $\varepsilon$ shrinks to $0$, we obtain the equality.
\end{proof}

\begin{proof}[Proof of Lemma \ref{lem:basic_conv_KT}] We may assume that $\varphi$ is positive. 
	We first prove that the family
	\begin{align*}
		\left(T\wedge \Kmc^i_\theta\wedge \pi_V^*\varphi\right)_{0<|\theta|\ll 1}
	\end{align*}
	has locally uniformly bounded mass in $U$. Since $u$ is smooth in $U\setminus V$ and $u_\theta^\Kmc$ decreasingly converges to $u$, it suffices to prove that the family has locally uniformly bounded mass around $V$. Let $\chi_V:V\to[0,1]$ and $\chi_F:\C^n\to[0,1]$ be smooth functions with compact support such that $\chi_F\equiv 1$ in a neighborhood of $0$ and that $\supp\,\pi_V^*\chi_V\cap \supp\,\pi_F^*\chi_F$ is compact in $U$. It is enough to show that the integral $\int (\pi_F^*\chi_F)(\pi_V^*\chi_V)T\wedge \Kmc^i_\theta\wedge \pi_V^*\omega_V^{N-p-i}$ is uniformly bounded. 
	Since $dd^cu_\theta^\Kmc=0$ near $V$, for $\theta'\in\C^*$ with $|\theta'|\ll|\theta|$, we have
	\begin{align}
		\notag&\int (\pi_F^*\chi_F)(\pi_V^*\chi_V)T\wedge \Kmc^i_\theta\wedge \pi_V^*\omega_V^{N-p-i}=\int(\pi_F^*\chi_F)(\pi_V^*\chi_V)T\wedge dd^c u_\theta^\Kmc \wedge \Kmc_{\theta'}^{i-1}\wedge \pi_V^*\omega_V^{N-p-i}\\
		\label{eq:King_bounded_1}&=\int u_\theta^\Kmc (\pi_F^*\chi_F) dd^c(\pi_V^*\chi_V)\wedge T\wedge \Kmc_{\theta'}^{i-1}\wedge \pi_V^*\omega_V^{N-p-i}\\
		\label{eq:King_bounded_2}&\quad\quad\quad +\int u_\theta^\Kmc \left(dd^c\left((\pi_V^*\chi_V)(\pi_F^*\chi_F)\right)-(\pi_F^*\chi_F) dd^c(\pi_V^*\chi_V)\right) \wedge T\wedge \Kmc_{\theta'}^{i-1}\wedge  \pi_V^*\omega_V^{N-p-i}.
	\end{align}
	The region of integration in \eqref{eq:King_bounded_2} remains uniformly separated from $V$ with respect to $\theta$ and the form $dd^cu$ is smooth there. We see that \eqref{eq:King_bounded_2} converges as $\theta'\to 0$ and $\theta\to 0$ in this order. Hence, for all $\theta\in\C^*$ with $|\theta|\ll 1$, \eqref{eq:King_bounded_2} is uniformly bounded.
	\medskip
	
	We look into \eqref{eq:King_bounded_1}. Since $\chi_V$ is smooth, the measure
	\begin{align*}
		u_\theta^\Kmc (\pi_F^*\chi_F)dd^c(\pi_V^*\chi_V)\wedge T\wedge \Kmc_{\theta'}^{i-1}\wedge \pi_V^*\omega_V^{N-p-i}
	\end{align*}
	can be written as a difference of two positive measure, each of which is dominated by $-u_\theta^\Kmc (\pi_F^*\chi_F)T\wedge  \Kmc_{\theta'}^{i-1}\wedge \pi_V^*\left(\widetilde{\chi_V}\omega_V^{N-p-i+1}\right)$ up to a multiplicative constant independent of $\theta$, where $\widetilde{\chi_V}:V\to [0,1]$ is a smooth function with compact support such that $\widetilde{\chi_V}\equiv 1$ on the support of $\chi_V$ and that $\supp\,\pi_V^*\widetilde{\chi_V}\cap \supp\,\pi_F^*\chi_F$ is compact in $U$ after modifying $\chi_F$ if necessary. From the hypotheses, $u_\theta^\Kmc (\pi_F^*\chi_F)T\wedge  \Kmc_{\theta'}^{i-1}\wedge \pi_V^*\left(\widetilde{\chi_V}\omega_V^{N-p-i+1}\right)$ converges as $\theta'\to 0$ and then $\theta\to 0$. So, for all $\theta\in\C^*$ with $|\theta|\ll 1$, \eqref{eq:King_bounded_1} is uniformly bounded. (When $1<n-p$ and $i=n-p$, \eqref{eq:King_bounded_1} vanishes.)
	\medskip
	
	We show that the family $\left(T\wedge \Kmc_\theta^i\wedge \pi_V^*\varphi\right)_{0<|\theta|\ll 1}$ of positive currents of maximal bidegree on $U$ has a unique limit current. Observe that every limit current coincides on $U\setminus V$. Hence, we are interested in the restrictions of the limit currents to $V$. The limit currents are positive, and therefore, by Lemma \ref{lem:restriction_lem}, it suffices to check whether $\left\langle\mathbf{1}_VL, \pi_V^*g\right\rangle$ is independent of $L$, where $L$ is a limit current of the family $\left(T\wedge \Kmc_\theta^i\wedge \pi_V^*\varphi\right)_{0<|\theta|\ll 1}$ and $g:V\to \R$ is a smooth test function.	Let $L^0:=\mathbf{1}_{U\setminus V}L$. Then, we have
	\begin{align*}
		\left\langle \mathbf{1}_VL, \pi_V^*g\right\rangle=\left\langle \mathbf{1}_VL, \left(\pi_F^*\chi_F\right)\left(\pi_V^*g\right)\right\rangle=\left\langle L, \left(\pi_F^*\chi_F\right)\left(\pi_V^*g\right)\right\rangle-\left\langle L^0, \left(\pi_F^*\chi_F\right)\left(\pi_V^*g\right)\right\rangle,
	\end{align*}
	with the function $\chi_F:\C^n\to [0,1]$ modified so that $\supp\pi_V^*g\cap \supp\pi_F^*\chi_F$ is a compact subset of $U$, if necessary.
	As pointed out previously, every limit current of $\left(T\wedge \Kmc_\theta^i\wedge \pi_V^*\varphi\right)_{0<|\theta|\ll 1}$ coincides on $U\setminus V$ and the second term $\left\langle L^0, \left(\pi_F^*\chi_F\right)\left(\pi_V^*g\right)\right\rangle$ is independent of the choice of the limit current $L$. So, it is enough to show that the first term $\left\langle L, \left(\pi_F^*\chi_F\right)\left(\pi_V^*g\right)\right\rangle$ is independent of the choice of the limit current $L$. It is equivalent to verifying that $\displaystyle \lim_{\theta\to 0}\left\langle\left(\pi_F^*\chi_F\right)T\wedge \Kmc_{\theta}^i\wedge \pi_V^*\varphi, \pi_V^*g\right\rangle$ exists. 
	\medskip
	
	As previously, for $\theta'\in\C^*$ with $|\theta'|\ll|\theta|$, we have
	\begin{align}
		\notag&\left\langle\left(\pi_F^*\chi_F\right)T\wedge \Kmc_{\theta}^i\wedge \pi_V^*\varphi, \pi_V^*g\right\rangle=\int \left(\pi_F^*\chi_F\right)\left(\pi_V^*\left(g\varphi\right)\right)\wedge T\wedge dd^c u_{\theta}^\Kmc\wedge \Kmc_{\theta'}^{i-1}\\
		\label{eq:King_convergence_1}&\quad\quad\quad=\int u_{\theta}^\Kmc\left(\pi_F^*\chi_F\right)dd^c\left(\pi_V^*\left(g\varphi\right)\right)\wedge T\wedge \Kmc_{\theta'}^{i-1}\\
		\label{eq:King_convergence_2}&\quad\quad\quad\quad\quad\quad+\int u_{\theta}^\Kmc \left[dd^c\left(\left(\pi_F^*\chi_F\right)\left(\pi_V^*\left(g\varphi\right)\right)\right)-\left(\pi_F^*\chi_F\right)dd^c\left(\pi_V^*\left(g\varphi\right)\right)\right]\wedge T\wedge \Kmc_{\theta'}^{i-1}.
	\end{align}
	In the same way as above, \eqref{eq:King_convergence_2} converges as $\theta'\to 0$ and then $\theta\to 0$. From the hypotheses, \eqref{eq:King_convergence_1} also converges as $\theta'\to 0$ and then $\theta\to 0$. (When $1<n-p$ and $i=n-p$, \eqref{eq:King_convergence_1} vanishes.)
	\medskip

	The assertions for $u_\theta^\Tmc $ can be proved in the same way as above. We simply replace $u_\theta^\Kmc$ by $u_\theta^\Tmc $.
\end{proof}

Below, we introduce an integrability condition, which arises naturally in the inductive definition of a product of $T$ with $(dd^cu)^n$.
\begin{definition}[Condition $(\mathrm{K})$]\label{defn:Condition(I)}
	Let $T\in\Cc_p(U)$. Let $m\in\{\max\{n-p, 1\}, \ldots, \min\{N-p, n\}\}$. We say that $T$ satisfies {Condition $(\mathrm{K})$ along $V$ up to codimension $m$} if
	\begin{align}\label{eq:local_integrability}
		u\in L_{\loc}^1\left(\left\langle T\wedge (dd^cu)^{i-1}\wedge\pi_V^*\omega_V^{N-p-i+1}\right\rangle_\Kmc\right)
	\end{align}
	in $U$ inductively holds from $i=\max\{n-p, 1\}$ through $m$. When $m=\min\{N-p, n\}$, $T$ is said to satisfy Condition $(\mathrm{K}-\max)$ along $V$. 	
\end{definition}

\begin{remark}\label{rmk:integrability}
	For each $i\in\{\max\{n-p, 1\}, \ldots, m\}$, the integrability condition \eqref{eq:local_integrability} is equivalent to saying that the positive measure $\left\langle T\wedge (dd^cu)^{i-1}\wedge\pi_V^*\omega_V^{N-p-i+1}\right\rangle_\Kmc$ has no mass on $V$ and that the positive measure $-uT\wedge (dd^cu)^{i-1}\wedge\pi_V^*\omega_V^{N-p-i+1}$ on $U\setminus V$ is locally integrable around $V$. Indeed, by definition, for every compact subset $K\subset U$, we have
	\begin{align*}
		\int_K u\left\langle T\wedge (dd^cu)^{i-1}\wedge\pi_V^*\omega_V^{N-p-i+1}\right\rangle_\Kmc=\lim_{\theta\to 0}\int_K u^\Kmc_\theta\left\langle T\wedge (dd^cu)^{i-1}\wedge\pi_V^*\omega_V^{N-p-i+1}\right\rangle_\Kmc.
	\end{align*}
\end{remark}

The following proposition is straightforward.
\begin{proposition}
	Let $T\in\Cc_p(U)$ satisfy Condition $(\mathrm{K})$ along $V$ up to codimension $m$ for some $m\in \{\max\{n-p, 1\}, \ldots, \min\{N-p, n\}\}$. Let $\varphi$ be a smooth closed $(q, q)$-form on $V$, where $q\in \{0, \ldots, \min\{N-p, n\}-m\}$. Then, the product $T\wedge \pi_V^*\varphi$ also satisfies Condition $(\mathrm{K})$ along $V$ up to codimension $m$.
\end{proposition}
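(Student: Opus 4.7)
The plan is to verify the integrability condition in Definition \ref{defn:Condition(I)} for $T':=T\wedge\pi_V^*\varphi$ at every step $i\in\{\max\{n-p-q,1\},\ldots,m\}$ by rewriting it in terms of $T$ via the identity
\begin{align*}
T'\wedge\Kmc_\theta^{i-1}\wedge\pi_V^*\omega_V^{N-p-q-i+1}=T\wedge\Kmc_\theta^{i-1}\wedge\pi_V^*\bigl(\varphi\wedge\omega_V^{N-p-q-i+1}\bigr)
\end{align*}
and reducing to the corresponding step for $T$. Since $\varphi$ is a smooth $(q,q)$-form on $V$, on each relatively compact subset of $V$ one can choose $C>0$ so that $C\omega_V^q\pm\varphi$ are strongly positive, and the decomposition $\varphi=\tfrac{1}{2}(C\omega_V^q+\varphi)-\tfrac{1}{2}(C\omega_V^q-\varphi)$ together with the linearity built into Definition \ref{def:K-product} reduces the problem to the analogous statement with $\varphi$ replaced by a smooth strongly positive $(q,q)$-form $\varphi_+$ satisfying $\varphi_+\le C\omega_V^q$ in the strong positivity sense.

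For each such $i$, the form $\varphi_+\wedge\omega_V^{N-p-q-i+1}$ on $V$ is smooth of bidegree $(N-p-i+1,N-p-i+1)$. When $i\le n-p$, this bidegree strictly exceeds the complex dimension $N-n$ of $V$, so the form vanishes identically, the underlying current is zero, and the integrability at step $i$ for $T'$ is vacuous. When $i\ge n-p+1$, wedging $\varphi_+\le C\omega_V^q$ with the smooth strongly positive form $\omega_V^{N-p-q-i+1}$ preserves the inequality to give $\varphi_+\wedge\omega_V^{N-p-q-i+1}\le C\,\omega_V^{N-p-i+1}$ on $V$, whence
\begin{align*}
0\le T\wedge\Kmc_\theta^{i-1}\wedge\pi_V^*\bigl(\varphi_+\wedge\omega_V^{N-p-q-i+1}\bigr)\le C\,T\wedge\Kmc_\theta^{i-1}\wedge\pi_V^*\omega_V^{N-p-i+1}
\end{align*}
as positive measures of maximal bidegree on $U$, uniformly in $\theta\in\C^*$ with $|\theta|\ll 1$.

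The conditions $i\ge n-p+1$ and $i\ge 1$ place step $i$ inside the range $\{\max\{n-p,1\},\ldots,m\}$ of Condition $(\mathrm{K})$ for $T$. Hence, by Lemma \ref{lem:basic_conv_KT} applied inductively to $T$ up to step $i$, the $\Kmc$-limits of both sides above exist as $\theta\to 0$, yielding the positive measures $\langle T\wedge(dd^cu)^{i-1}\wedge\pi_V^*(\varphi_+\wedge\omega_V^{N-p-q-i+1})\rangle_\Kmc$ and $C\langle T\wedge(dd^cu)^{i-1}\wedge\pi_V^*\omega_V^{N-p-i+1}\rangle_\Kmc$, and the inequality persists in the limit. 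Since $-u\ge 0$ is locally integrable against the right-hand measure by Condition $(\mathrm{K})$ for $T$ at step $i$, the same holds on the left, which is the required integrability for $T'$ at step $i$. The only care needed is the bookkeeping of bidegrees and ensuring Lemma \ref{lem:basic_conv_KT} can be invoked at the appropriate step, both of which follow from the integrabilities for $T$ at the previous steps.
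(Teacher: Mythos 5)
Your argument is correct: reducing to strongly positive $\varphi_+\le C\omega_V^q$, noting the steps $i\le n-p$ are vacuous for bidegree reasons, and dominating $\left\langle T\wedge(dd^cu)^{i-1}\wedge\pi_V^*(\varphi_+\wedge\omega_V^{N-p-q-i+1})\right\rangle_\Kmc$ by $C\left\langle T\wedge(dd^cu)^{i-1}\wedge\pi_V^*\omega_V^{N-p-i+1}\right\rangle_\Kmc$ via Lemma \ref{lem:basic_conv_KT} gives exactly the required integrabilities. The paper omits the proof entirely (it labels the proposition ``straightforward''), and your domination-by-$C\omega_V^q$ argument is precisely the intended one.
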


The next proposition finds a relationship between the $h$-dimension and Condition $(\mathrm{K})$. 
\begin{proposition}\label{prop:h-dim}
	Let $T\in\Cc_p(U)$ admit a tangent current $T_\infty$ along $V$. Let $m\in \{\max\{n-p, 1\}, \ldots, \min\{N-p, n\}\}$. Then, if $T$ satisfies Condition $(\mathrm{K})$ along $V$ up to codimension $m$, then the $h$-dimension of $T_\infty$ is at most $N-p-m$. In particular, if $T$ satisfies Condition $(\mathrm{K}-\max)$ along $V$, then the $h$-dimension of $T_\infty$ is minimal.
\end{proposition}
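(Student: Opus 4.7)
My plan is to argue by contradiction via Proposition \ref{prop:main_true_general} applied at the index $k:=N-p-h_T$. Suppose $h_T\ge N-p-m+1$, so that $k\le m-1$; the general bounds on the $h$-dimension force $k\in\{\max\{n-p,0\},\ldots,\min\{N-p,n\}\}$, which is the admissible range for Proposition \ref{prop:main_true_general}. The strategy is to feed Condition $(\mathrm{K})$ into the identity of Proposition \ref{prop:main_true_general} to show that a certain pairing vanishes, while the very definition of $h_T$ forces it to be strictly positive.

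To set up the vanishing, I would apply Remark \ref{rmk:integrability} at $i=k+1\in\{\max\{n-p,1\},\ldots,m\}$: the positive measure $\left\langle T\wedge(dd^cu)^{k}\wedge\pi_V^*\omega_V^{h_T}\right\rangle_\Kmc$ is well-defined and carries no mass on $V$. Inductive use of Lemma \ref{lem:basic_conv_KT} then guarantees that $\left(T\wedge\Kmc_\theta^{k}\wedge\pi_V^*\omega_V^{h_T}\right)_{0<|\theta|\ll 1}$ has locally uniformly bounded mass and a unique limit, so the hypotheses of Proposition \ref{prop:main_true_general} are met at $k$. For $\chi\ge 0$ smooth with compact support in $V$, I would take $\varphi:=\chi\omega_V^{h_T}$; writing $T\wedge\Kmc_\theta^{k}\wedge\pi_V^*\varphi=\pi_V^*\chi\cdot T\wedge\Kmc_\theta^{k}\wedge\pi_V^*\omega_V^{h_T}$ and passing to the limit, the limit $L_\varphi=\pi_V^*\chi\cdot L_{\omega_V^{h_T}}$ still has no mass on $V$, giving
\begin{equation*}
0=\left\langle\mathbf{1}_VL_\varphi,1\right\rangle=\left\langle(\pi_V)_*(T_\infty\wedge\pi_F^*\Omega_k),\varphi\right\rangle.
\end{equation*}

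The main obstacle, and the heart of the contradiction, is to exhibit $\chi$ for which the right-hand side is strictly positive. Since $k=N-p-h_T$ matches the true $h$-dimension, Remark \ref{rmk:indep_shadow} identifies $(\pi_V)_*(T_\infty\wedge\pi_F^*\Omega_k)$ with the shadow $T_\infty^h$ and allows me to replace $\pi_F^*\Omega_k$ by $\pi_F^*\omega_F^{N-p-h_T}$. Nonvanishing of this shadow is the delicate step; I would extract it directly from the defining maximality $T_\infty\wedge\pi_V^*\omega_V^{h_T+1}=0$. Expanding $\omega^{N-p-h_T}$ for the K\"ahler form $\omega:=\pi_V^*\omega_V+\pi_F^*\omega_F$ on $\overline E$ by the binomial theorem, every cross-term involving $\pi_V^*\omega_V^{h_T+j}$ for $j\ge 1$ is annihilated by this maximality, so
\begin{equation*}
T_\infty\wedge\pi_V^*\omega_V^{h_T}\wedge\omega^{N-p-h_T}=T_\infty\wedge\pi_V^*\omega_V^{h_T}\wedge\pi_F^*\omega_F^{N-p-h_T}.
\end{equation*}
Strict positivity of $\omega$ forces the left, hence the right, to be a nonzero positive top-degree measure on $\overline E$; choosing $\chi\ge 0$ compactly supported so that $\pi_V^*\chi$ is positive on a region where this measure has positive mass produces $\left\langle(\pi_V)_*(T_\infty\wedge\pi_F^*\omega_F^{N-p-h_T}),\varphi\right\rangle>0$, contradicting the previous display.

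This establishes $h_T\le N-p-m$. The ``in particular'' clause is then immediate: Condition $(\mathrm{K}-\max)$ is exactly the case $m=\min\{N-p,n\}$, which yields $h_T\le\max\{0,N-p-n\}$, the minimal possible value of the $h$-dimension.
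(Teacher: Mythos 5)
Your argument is correct and follows essentially the same route as the paper: the paper simply invokes Theorem \ref{thm:h_dim_K} (itself stated as a direct consequence of Proposition \ref{prop:main_true_general}), whereas you inline that deduction, using Remark \ref{rmk:integrability} at step $i=k+1$ to kill the mass of $L_\varphi$ on $V$ and the trace-measure/binomial argument to show the shadow at level $k=N-p-h_T$ cannot vanish. The extra details you supply (in particular the non-vanishing of $T_\infty\wedge\pi_V^*\omega_V^{h_T}\wedge\pi_F^*\omega_F^{N-p-h_T}$ via strict positivity of $\omega$ and the maximality $T_\infty\wedge\pi_V^*\omega_V^{h_T+1}=0$) are exactly what the paper leaves implicit, and they check out.
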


\begin{proof}
	Condition $(\mathrm{K})$ along $V$ up to codimension $m$ implies that $\displaystyle \lim_{\theta\to 0}T\wedge \Kmc_\theta^i\wedge \pi_V^*\omega_V^{N-p-i}$ exists for $i=1, \ldots, m$ and each limit current has no mass on $V$ for $i=0, \ldots, m-1$. Hence, Theorem \ref{thm:h_dim_K} says that the $h$-dimension of $T_\infty$ is at most $N-p-m$.
\end{proof}

We further look into the relationship between Condition $(\mathrm{K}-\max)$ and tangent currents. The following theorem is the main result related to Condition $(\mathrm{K}-\max)$ and we obtain Theorem \ref{thm:main_intersection_1} as its particular case later in Section \ref{sec:intersection}.
\begin{theorem}
\label{thm:uniqueness_tangent_current}
	Let $T\in\Cc_p(U)$ satisfy Condition $(\mathrm{K}-\max)$ along $V$. Then, there exists a unique tangent current $T_\infty$ of $T$ along $V$, and its $h$-dimension $h_T$ is minimal. Furthermore, for a smooth test $(h_T, h_T)$-form $\varphi$ on $V$, its shadow $T_\infty^h$ satisfies
	\begin{align*}
		\left\langle T_\infty^h, \varphi\right\rangle=\langle \mathbf{1}_V\langle T\wedge (dd^cu)^{N-p-h_T}\wedge\pi_V^*\varphi\rangle_\Kmc, 1\rangle.
	\end{align*}
	In particular, if $p\le N-n$, for a smooth test $(N-p-n, N-p-n)$-form $\varphi$ on $V$, its shadow $T_\infty^h$ is computed as
	\begin{align*}
		\left\langle T_\infty^h, \varphi\right\rangle=\int_{U\setminus V} T\wedge \left(u \left(dd^c u\right)^{n-1}\right)\wedge dd^c\Phi,
	\end{align*}
	where $\Phi$ is a smooth $(N-p-n, N-p-n)$-form on $U$ with compact support such that $\Phi=\pi_V^*\varphi$ in a neighborhood of $V$.
\end{theorem}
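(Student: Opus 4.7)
The plan is to execute three stages: existence of $T_\infty$ with minimal $h$-dimension, the shadow formula via the $\Kmc$-product, and the integral representation with the auxiliary form $\Phi$. Throughout, Condition $(\mathrm{K}-\max)$ together with Lemma~\ref{lem:basic_conv_KT} supplies, inductively on $i\in\{0,\ldots,\min\{N-p,n\}\}$, a well-defined $\Kmc$-product $\langle T\wedge(dd^c u)^i\wedge\pi_V^*\varphi\rangle_\Kmc$ against every smooth test form $\varphi$ of the appropriate degree.

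\emph{Existence and minimality.} I would first apply Theorem~\ref{thm:existence_tangent_currents}. For each $i\in\{\max\{n-p,0\},\ldots,\min\{N-p,n\}\}$, Lemma~\ref{lem:T_vs_K} gives the pointwise domination $\Tmc_{\lambda^{-1}}^i\le C\,dd^c u_{\lambda^{-1}}^\Tmc\wedge(dd^c u)^{i-1}$ on $U\setminus V$. Since $T$ has no mass on $V$ and $\Tmc_{\lambda^{-1}}^i$ is smooth, the mass of $T\wedge\Tmc_{\lambda^{-1}}^i\wedge\pi_V^*\omega_V^{N-p-i}$ over any compact $K\subset U$ equals its mass over $K\setminus V$, which is in turn dominated by the corresponding mass of $T\wedge dd^c u_{\lambda^{-1}}^\Tmc\wedge(dd^c u)^{i-1}\wedge\pi_V^*\omega_V^{N-p-i}$. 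Approximating $(dd^c u)^{i-1}$ by $\Kmc_{\theta'}^{i-1}$ on compacta of $U\setminus V$ and applying Lemma~\ref{lem:basic_conv_KT} then shows that this majorant remains uniformly bounded as $|\lambda|\to\infty$. Theorem~\ref{thm:existence_tangent_currents} therefore produces a tangent current $T_\infty$, and Proposition~\ref{prop:h-dim} forces $h_T=\max\{N-p-n,0\}$.

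\emph{Shadow formula and uniqueness.} Set $m:=N-p-h_T=\min\{N-p,n\}$. By Condition $(\mathrm{K}-\max)$ and Lemma~\ref{lem:basic_conv_KT}, the full family $(T\wedge\Kmc_\theta^m\wedge\pi_V^*\varphi)_{0<|\theta|\ll 1}$ converges to $\langle T\wedge(dd^c u)^m\wedge\pi_V^*\varphi\rangle_\Kmc$, so the subsequential limit $L_\varphi$ of Proposition~\ref{prop:main_true_general} must equal this $\Kmc$-product. The identity \eqref{eq:limit_vs_shadow} then yields
\begin{align*}
\langle T_\infty^h,\varphi\rangle=\bigl\langle\mathbf{1}_V\langle T\wedge(dd^c u)^{N-p-h_T}\wedge\pi_V^*\varphi\rangle_\Kmc,\,1\bigr\rangle.
\end{align*}
Because the right-hand side depends only on $T$, every tangent current of $T$ along $V$ has the same shadow; combined with the minimality of $h_T$, the uniqueness-from-shadow principle (Proposition~\ref{prop:uniqueness_from_shadow}) forces $T_\infty$ to be unique.

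\emph{Integral representation.} Suppose now $p\le N-n$, so $h_T=N-p-n$ and $m=n$. Fix $\Phi$ as in the statement. For $|\theta|$ small, the support of $dd^c u_\theta^\Kmc$ lies in the annulus $\{|\theta|e^{-1}\le|x''|\le e|\theta|\}$, which is contained in the neighborhood of $V$ where $\Phi=\pi_V^*\varphi$; hence $\int T\wedge\Kmc_\theta^n\wedge\pi_V^*\varphi=\int T\wedge\Kmc_\theta^n\wedge\Phi$. Since $T\wedge(dd^c u)^{n-1}$ is a positive closed current on $U$, $u_\theta^\Kmc$ is smooth, and $\Phi$ is compactly supported, Stokes' theorem gives
\begin{align*}
\int T\wedge\Kmc_\theta^n\wedge\Phi=\int u_\theta^\Kmc\,T\wedge(dd^c u)^{n-1}\wedge dd^c\Phi.
\end{align*}
The family $u_\theta^\Kmc$ decreases to $u$ as $|\theta|\to 0$. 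Condition $(\mathrm{K}-\max)$ provides local integrability of $|u|$ against $T\wedge(dd^c u)^{n-1}\wedge\pi_V^*\omega_V^{N-p-n+1}$ near $V$; since $dd^c\Phi$ is smooth and, near $V$, is a pullback from $V$ locally dominated by a constant multiple of $\pi_V^*\omega_V^{N-p-n+1}$, this yields integrability against both the positive and negative parts of the signed measure $T\wedge(dd^c u)^{n-1}\wedge dd^c\Phi$. Monotone convergence on each part then delivers the claimed identity. The chief technical obstacle is exactly this final Stokes and dominated-convergence step: one must reconcile the fact that $u_\theta^\Kmc$ is constant (equal to $\log|\theta|$) near $V$ while agreeing with $u$ far from $V$, and translate the $\Kmc$-integrability of Condition $(\mathrm{K}-\max)$ into integrability against the signed smooth density $dd^c\Phi$.
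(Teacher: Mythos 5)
Your first two stages reproduce the paper's argument almost verbatim: Lemma~\ref{lem:T_vs_K} plus Lemma~\ref{lem:basic_conv_KT} to get the uniform mass bounds for $\left(T\wedge\Tmc_{\lambda^{-1}}^i\wedge\pi_V^*\omega_V^{N-p-i}\right)$, then Theorem~\ref{thm:existence_tangent_currents} and Proposition~\ref{prop:h-dim} for existence and minimality, then Proposition~\ref{prop:main_true_general} together with the convergence of the full family $\left(T\wedge\Kmc_\theta^m\wedge\pi_V^*\varphi\right)$ to pin down the shadow, and Proposition~\ref{prop:uniqueness_from_shadow} for uniqueness. That part is correct and is the same route the paper takes.

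The integral representation is where your write-up has a genuine gap. Your Stokes identity
\begin{align*}
\int T\wedge\Kmc_\theta^n\wedge\Phi=\int u_\theta^\Kmc\,T\wedge(dd^c u)^{n-1}\wedge dd^c\Phi
\end{align*}
is predicated on the assertion that ``$T\wedge(dd^c u)^{n-1}$ is a positive closed current on $U$.'' That is not something Condition $(\mathrm{K}-\max)$ gives you: $(dd^cu)^{n-1}$ is singular along $V$, and what Lemma~\ref{lem:basic_conv_KT} establishes is only the existence of $\left\langle T\wedge(dd^cu)^{n-1}\wedge\pi_V^*\psi\right\rangle_\Kmc$ for test forms pulled back from $V$ --- not a closed $(p+n-1,p+n-1)$-current on all of $U$ against which you may integrate by parts across $V$. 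The paper (Proposition~\ref{prop:compute_tangent_currents}) avoids exactly this: it keeps the smooth regularization $\Kmc_{\theta'}^{n-1}$ inside the Stokes computation, splits the integral with a fiber cutoff $\pi_F^*\chi_F$ into a piece near $V$ (where $dd^c\Phi=\pi_V^*dd^c\varphi$ is dominated by a multiple of $\pi_V^*\omega_V^{N-p-n+1}$, so Lemma~\ref{lem:basic_conv_KT} and the no-mass-on-$V$ property of Remark~\ref{rmk:integrability} apply) and a piece supported away from $V$ (where everything is smooth), and only then lets $\theta'\to 0$ and $\theta\to 0$. Your final monotone-convergence step is fine in spirit, but as written it rests on the unjustified closedness of $T\wedge(dd^cu)^{n-1}$ on $U$; note also that replacing $\pi_V^*\varphi$ by $\Phi$ and then dropping the additive constant $\log|\theta|$ from $u_\theta^\Kmc$ in the integrated-by-parts expression both silently use $\int T\wedge(dd^cu)^{n-1}\wedge dd^c\Phi=0$, which is again the same unproved claim. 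You correctly flag this step as the chief obstacle, but the proof needs the double-regularization-and-cutoff argument to close it.
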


For the proof, we use Proposition \ref{prop:uniqueness_from_shadow}. Its proof is straightforward from \cite[Lemma 3.4]{DS18}.
\begin{proposition}\label{prop:uniqueness_from_shadow}
	Suppose that $T'$ admits tangent currents along $V$, all of which have the minimal $h$-dimension over $V$. Then, the current $T'$ admits a unique tangent current along $V$ if and only if the shadow is independent of the choice of tangent current.
\end{proposition}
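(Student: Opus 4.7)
The plan is to invoke \cite[Lemma 3.4]{DS18}, which characterizes $V$-conic positive closed currents on $\overline{E}$ of minimal $h$-dimension in terms of their shadows.

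The ``only if'' direction is immediate: a unique tangent current trivially has a unique shadow. For the ``if'' direction, I would take two tangent currents $T_\infty^{(1)}$ and $T_\infty^{(2)}$ of $T'$ along $V$ sharing the same shadow and argue they must coincide. The first step is to verify that each $T_\infty^{(j)}$ is $V$-conic, i.e., $(A_\mu)_* T_\infty^{(j)} = T_\infty^{(j)}$ for every $\mu \in \C^*$. By Definition \ref{def:tangent_current}, $T_\infty^{(j)}$ is a limit of a sequence $(A_{\lambda_k^{(j)}})_* (\tau_i)_* T'$ with $\lambda_k^{(j)} \to \infty$ on each chart $\pi_V^{-1}(U_i \cap V) \setminus H_\infty$; applying $(A_\mu)_*$ and using $A_\mu \circ A_{\lambda_k^{(j)}} = A_{\mu\lambda_k^{(j)}}$, together with $\mu\lambda_k^{(j)} \to \infty$, gives the same limit, i.e., $T_\infty^{(j)}$.

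Second, by the hypothesis of the proposition, both $T_\infty^{(1)}$ and $T_\infty^{(2)}$ have minimal $h$-dimension. Applying \cite[Lemma 3.4]{DS18}, which asserts that a $V$-conic positive closed current on $\overline{E}$ of minimal $h$-dimension is uniquely determined by its shadow, the equality of shadows forces $T_\infty^{(1)} = T_\infty^{(2)}$. The main work is therefore not in the present proposition but in \cite[Lemma 3.4]{DS18} itself, which supplies the structural theorem; granted that, the present proof is a short application after the routine verification of conic invariance. It is worth noting that the minimal $h$-dimension hypothesis is essential, since without it distinct $V$-conic positive closed currents can share a shadow and the conclusion fails.
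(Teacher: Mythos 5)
Your overall route coincides with the paper's: the statement is meant to follow from \cite[Lemma 3.4]{DS18}, applied to tangent currents that are $V$-conic and of minimal $h$-dimension, and the paper itself offers no more detail than that. The gap is in your verification of $V$-conicity. Continuity of $(A_\mu)_*$ does give $(A_\mu)_*T_\infty^{(j)}=\lim_k (A_{\mu\lambda_k^{(j)}})_*(\tau_i)_*T'$, and since $\mu\lambda_k^{(j)}\to\infty$ this limit is \emph{a} tangent current of $T'$ along $V$; but nothing in that computation shows it is the \emph{same} tangent current $T_\infty^{(j)}$. Asserting that the limit along the reindexed sequence agrees with the limit along the original one is itself a uniqueness statement, which is exactly what the proposition is trying to establish, so the step as written is circular. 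That conicity is not automatic is already visible in the classical case $V=\{0\}$: there, conicity of a tangent current is the equality case in Lelong's monotonicity formula, not a consequence of reparametrizing the dilation sequence, and distinct non-conic limits along different sequences are precisely what one must rule out.

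The repair is to cite, rather than rederive, the established fact that every tangent current is $V$-conic (this is part of the basic theory developed in \cite{DS18}; see also \cite{KV} and \cite{Vu} for the setting with holomorphic admissible maps used here), whose proof rests on a monotonicity-type argument. With that citation in place your argument is exactly the intended one: the ``only if'' direction is trivial; for the ``if'' direction, two tangent currents are $V$-conic, have minimal $h$-dimension by hypothesis and equal shadows by assumption, hence coincide by \cite[Lemma 3.4]{DS18}. You are also right that the minimal $h$-dimension hypothesis cannot be dropped.
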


\begin{proof}[Proof of Theorem \ref{thm:uniqueness_tangent_current}]
	Lemma \ref{lem:basic_conv_KT} and Condition $(\mathrm{K}-\max)$ imply that $\big(\big\langle \left(T \wedge dd^cu_\theta^\Tmc \right)\wedge (dd^cu)^{i-1}\wedge \pi_V^*\omega_V^{N-p-i}\big\rangle_\Kmc\big)_{0<|\theta|\ll 1}$ has locally uniformly bounded mass for $i=\max\{n-p, 1\}, \ldots, \min\{N-p, n\}$. Notice that for a bidegree reason, $T$ has no mass on $V$. By Lemma \ref{lem:T_vs_K}, 
	the family $\left(T\wedge \Tmc_\theta^i \wedge\pi_V^*\omega_V^{N-p-i}\right)_{0<|\theta|\ll 1}$ has locally uniformly bounded mass for $i=\max\{n-p, 1\}, \ldots, \min\{N-p, n\}$. By Theorem \ref{thm:existence_tangent_currents}, $T$ has a tangent current $T_\infty$ along $V$. Lemma \ref{lem:T_vs_K}, Lemma \ref{lem:basic_conv_KT}, Proposition \ref{prop:h-dim} and Condition $(\mathrm{K}-\max)$ imply that its $h$-dimension is minimal.\medskip
	
	Let $T_\infty$ be a tangent current. Let $m=\min\{N-p, n\}$. Then, Condition $(\mathrm{K}-\max)$ and Lemma \ref{lem:basic_conv_KT} imply that for every smooth test $(N-p-m, N-p-m)$-form $\varphi$, the sequence $\langle T\wedge\Kmc_\theta^m\wedge\pi_V^*\varphi\rangle$ converges to $\langle T\wedge (dd^cu)^m\wedge\pi_V^*\varphi\rangle_\Kmc$. So, Proposition \ref{prop:main_true_general} implies that
	\begin{align*}
		\langle T_\infty^h, \varphi\rangle = \langle \left(\pi_V\right)_*(T_\infty\wedge\pi_F^*\Omega_m),\varphi\rangle = \langle \mathbf{1}_V\langle T\wedge (dd^cu)^m\wedge\pi_V^*\varphi\rangle_\Kmc, 1\rangle,
	\end{align*}
	which means the shadow is independent of the choice of $T_\infty$. Then, Proposition \ref{prop:uniqueness_from_shadow} says that the tangent current is unique.
	\medskip
	
	The integral representation of the shadow of the unique tangent current in the case of $p\le N-n$ will be completed in the following proposition.	
\end{proof}

\begin{proposition}\label{prop:compute_tangent_currents}
	Let $T$ be as in Theorem \ref{thm:uniqueness_tangent_current} with $p\le N-n$. Then, for a smooth test $(N-p-n, N-p-n)$-form $\varphi$ on $V$, we have
	\begin{align*}
		\left\langle \mathbf{1}_V \left\langle T\wedge (dd^cu)^n\wedge\pi_V^*\varphi\right\rangle_\Kmc, 1\right\rangle=\left\langle\left\langle T\wedge (dd^cu)^n\wedge\pi_V^*\varphi\right\rangle_\Kmc, 1\right\rangle=\int_{U\setminus V} T\wedge \left(u \left(dd^c u\right)^{n-1}\right)\wedge dd^c\Phi,
	\end{align*}
	where $\Phi$ is a smooth $(N-p-n, N-p-n)$-form on $U$ with compact support such that $\Phi=\pi_V^*\varphi$ in a neighborhood of $V$.
\end{proposition}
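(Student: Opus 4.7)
The plan is to unfold $\left\langle T\wedge (dd^cu)^n\wedge \pi_V^*\varphi\right\rangle_\Kmc$ as a limit of the smooth currents $T\wedge \Kmc^n_\theta\wedge \pi_V^*\varphi$ and then shift the $dd^c$ operator from $u_\theta^\Kmc$ onto $\Phi$ via Stokes' theorem, finally letting $\theta\to 0$ and invoking Condition $(\mathrm{K}-\max)$ for integrability.

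\medskip

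For the first equality, observe that on $U\setminus V$ the function $u=\log|x''|$ has its complex Hessian $dd^cu$ of rank at most $n-1$: the radial direction in the fiber $x''$ lies in its kernel, so $(dd^cu)^n\equiv 0$ pointwise on $U\setminus V$. Thus $\Kmc_\theta^n$ converges smoothly to $0$ on $U\setminus V$ as $|\theta|\to 0$, and the limit current $\left\langle T\wedge (dd^cu)^n\wedge \pi_V^*\varphi\right\rangle_\Kmc$ is supported on $V$. This immediately produces the first equality.

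\medskip

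For the second equality, note that the support of $dd^cu_\theta^\Kmc$, and hence of $\Kmc_\theta^n$, lies in the shrinking tube $\left\{\,|\theta|/e\le |x''|\le e|\theta|\,\right\}$. For $|\theta|$ sufficiently small this tube sits inside the neighborhood of $V$ on which $\Phi=\pi_V^*\varphi$, so
\begin{align*}
\int_U T\wedge \Kmc_\theta^n\wedge \pi_V^*\varphi=\int_U T\wedge \Kmc_\theta^n\wedge \Phi.
\end{align*}
Since $(dd^cu)^{n-1}$ is not smooth on $U$, we first smooth it by $(dd^cu_{\theta'}^\Tmc)^{n-1}$ for $|\theta'|\ll|\theta|$. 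The form $u_\theta^\Kmc(dd^cu_{\theta'}^\Tmc)^{n-1}\wedge \Phi$ is then smooth with compact support, $T$ is $dd^c$-closed, and the mixed terms $du_\theta^\Kmc\wedge d^c\beta$, $d^cu_\theta^\Kmc\wedge d\beta$ pair with $T$ against forms of non-admissible bidegree $(N\pm 1,N\mp 1)$ and so vanish. This yields
\begin{align*}
\int T\wedge dd^cu_\theta^\Kmc\wedge (dd^cu_{\theta'}^\Tmc)^{n-1}\wedge \Phi=\int T\wedge u_\theta^\Kmc(dd^cu_{\theta'}^\Tmc)^{n-1}\wedge dd^c\Phi.
\end{align*}
Letting $\theta'\to 0$ is harmless on the left-hand side, where $dd^cu_\theta^\Kmc$ vanishes in a neighborhood of $V$ and $u_{\theta'}^\Tmc\to u$ smoothly on that annular support; on the right-hand side the convergence is furnished by iteratively applying Lemma \ref{lem:basic_conv_KT} under Condition $(\mathrm{K}-\max)$. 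We obtain
\begin{align*}
\int T\wedge\Kmc_\theta^n\wedge\Phi=\int T\wedge u_\theta^\Kmc(dd^cu)^{n-1}\wedge dd^c\Phi.
\end{align*}

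\medskip

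Finally, I let $\theta\to 0$. Since $u_\theta^\Kmc\searrow u$ decreasingly and Condition $(\mathrm{K}-\max)$ supplies $u\in L_{\loc}^1\big(\langle T\wedge (dd^cu)^{n-1}\wedge\pi_V^*\omega_V^{N-p-n+1}\rangle_\Kmc\big)$, and $dd^c\Phi$ is smooth with compact support and therefore dominated by a constant multiple of $\pi_V^*\omega_V^{N-p-n+1}$ on $\supp\Phi$, monotone/dominated convergence gives
\begin{align*}
\int T\wedge u_\theta^\Kmc(dd^cu)^{n-1}\wedge dd^c\Phi\longrightarrow\int T\wedge u(dd^cu)^{n-1}\wedge dd^c\Phi.
\end{align*}
By Remark \ref{rmk:integrability}, the same integrability forces the positive measure underlying $T\wedge(dd^cu)^{n-1}\wedge dd^c\Phi$ to carry no mass on $V$, so integration over $U$ coincides with integration over $U\setminus V$, yielding the asserted integral representation. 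The main obstacle will be the Stokes step together with the commutation of the two limits $\theta'\to 0$ and $\theta\to 0$; both hinge on the fact that $dd^cu_\theta^\Kmc$ is compactly supported away from $V$ for fixed $\theta$, which uniformizes all convergences on the relevant annulus.
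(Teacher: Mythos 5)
Your overall strategy is exactly the paper's: unfold $\left\langle T\wedge (dd^cu)^n\wedge\pi_V^*\varphi\right\rangle_\Kmc$ as $\lim_\theta T\wedge\Kmc_\theta^n\wedge\pi_V^*\varphi$, use the fact that $\supp\,\Kmc_\theta^n$ shrinks to $V$ (which gives the first equality and lets you replace $\pi_V^*\varphi$ by $\Phi$), regularize the inner factor $(dd^cu)^{n-1}$, integrate by parts to move $dd^c$ from $u_\theta^\Kmc$ onto $\Phi$, and then pass to the limit using Condition $(\mathrm{K}-\max)$ and Remark \ref{rmk:integrability}. The Stokes step and the final limit $\theta\to0$ are handled as in the paper.

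The one point where you deviate is also the one genuine gap: you smooth $(dd^cu)^{n-1}$ by $(dd^cu_{\theta'}^\Tmc)^{n-1}=\Tmc_{\theta'}^{n-1}$, whereas the paper uses $\Kmc_{\theta'}^{n-1}=dd^cu_{\theta'}^\Kmc\wedge(dd^cu)^{n-2}$, and your justification that the right-hand side $\int T\wedge u_\theta^\Kmc(dd^cu_{\theta'}^\Tmc)^{n-1}\wedge dd^c\Phi$ converges, as $\theta'\to 0$, to $\int T\wedge u_\theta^\Kmc(dd^cu)^{n-1}\wedge dd^c\Phi$ ``by iteratively applying Lemma \ref{lem:basic_conv_KT}'' does not follow from that lemma. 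Lemma \ref{lem:basic_conv_KT} only controls products with at most one factor $dd^cu_\theta^\Tmc$, the remaining factors being regularized through the $\Kmc$-family; it says nothing about convergence of the pure powers $T\wedge\Tmc_{\theta'}^{n-1}$ (the paper only ever extracts a \emph{mass bound} for these via Lemma \ref{lem:T_vs_K}, never convergence). Different regularizations of Monge--Amp\`ere-type products need not have the same limit --- any discrepancy would be a positive current charging $V$, and since your intermediate identity weights it by $u_\theta^\Kmc\approx\log|\theta|$ near $V$, such a discrepancy would wreck the subsequent limit $\theta\to 0$. The fix is simply to use $\Kmc_{\theta'}^{n-1}$ as the inner regularization (which is closed and smooth, so your Stokes argument applies verbatim); then the $\theta'\to0$ limit is precisely what Lemma \ref{lem:basic_conv_KT} provides, after splitting the integral with a cut-off $\pi_F^*\chi_F$ so that near $V$ one tests against $dd^c(\pi_V^*\varphi)$ and away from $V$ everything is smooth. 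A second, minor imprecision: $dd^c\Phi$ is dominated by $C\,\pi_V^*\omega_V^{N-p-n+1}$ only on the neighborhood of $V$ where $\Phi=\pi_V^*\varphi$ (elsewhere it involves $\pi_F^*\omega_F$ components), but that is where it matters, since $u$ is locally bounded off $V$.
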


\begin{proof}
	Condition $(\mathrm{K}-\max)$ and Lemma \ref{lem:basic_conv_KT} imply that the current $\left\langle T\wedge (dd^cu)^n\wedge\pi_V^*\varphi\right\rangle_\Kmc$ is well-defined. Since the support of $\Kmc_\theta^n$ shrinks to $V$ as $\theta\to 0$, we have
	\begin{align*}
		&\left\langle \mathbf{1}_V \left\langle T\wedge (dd^cu)^n\wedge\pi_V^*\varphi\right\rangle_\Kmc, 1\right\rangle=\left\langle\left\langle T\wedge (dd^cu)^n\wedge\pi_V^*\varphi\right\rangle_\Kmc, 1\right\rangle=\lim_{\theta\to 0}\int_U T\wedge \Kmc_\theta^n\wedge \pi_V^*\varphi\\
		&\quad\quad\quad=\lim_{\theta\to 0}\int_U T\wedge \Kmc_\theta^n\wedge \Phi=\lim_{\theta\to 0}\lim_{\theta'\to 0}\int_U T\wedge (dd^cu_\theta^\Kmc)\wedge\Kmc_{\theta'}^{n-1}\wedge \Phi\\
		&\quad\quad\quad=\lim_{\theta\to 0}\lim_{\theta'\to 0}\left(\int_U \left(\pi_F^*\chi_F\right)u_\theta^\Kmc T\wedge\Kmc_{\theta'}^{n-1}\wedge dd^c\left(\pi_V^*\varphi\right) + \int_U \left(1-\pi_F^*\chi_F\right)u_\theta^\Kmc T\wedge\Kmc_{\theta'}^{n-1}\wedge dd^c\Phi\right)\\
		&\quad\quad\quad=\lim_{\theta\to 0}\int_{U\setminus V} u_\theta^\Kmc  (dd^cu)^{n-1}\wedge T \wedge dd^c\Phi=\int_{U\setminus V} u(dd^cu)^{n-1}\wedge T \wedge dd^c\Phi.
	\end{align*}
	Here, $\chi_F:\C^n\to [0,1]$ is a smooth function with sufficiently small compact support such that $\chi_F\equiv 1$ in a neighborhood of $0$. The second to last equality is from Lemma \ref{lem:basic_conv_KT}, Remark \ref{rmk:integrability} and the fact that $V \cap \supp \left(1-\pi_F^*\chi_F\right)=\emptyset$. The last convergence is from the integrability condition.
\end{proof}

We extract from the above proof the following:
\begin{align}\label{eq:convergence_V}
	\left\langle\left\langle T\wedge (dd^cu)^n\wedge\pi_V^*\varphi\right\rangle_\Kmc, 1\right\rangle=\lim_{\theta\to 0}\int_U T\wedge \Kmc_\theta^n\wedge \pi_V^*\varphi
\end{align}
Observe that for a bidegree reason, we have $\left(T\wedge \Kmc_\theta^n\right)\wedge \psi\ne 0$ only when $\psi$ contains a linear combination of smooth forms of the type $f\pi_V^*\gamma$, where $f$ is a smooth function on $U$ and $\gamma$ is a smooth test $(N-p-n, N-p-n)$-form on $V$. So, \eqref{eq:convergence_V} implies that $\left(T\wedge\Kmc_\theta^n\right)_{0<|\theta|\ll 1}$ has locally uniformly bounded mass in $U$. The limit currents of $\left(T\wedge\Kmc_\theta^n\right)_{0<|\theta|\ll 1}$ have support in $V$. Hence, Lemma \ref{lem:restriction_lem} implies that \eqref{eq:convergence_V} proves the convergence of $\left(\left\langle T\wedge \Kmc_\theta^n, f\pi_V^*\gamma\right\rangle\right)_{0<|\theta|\ll 1}$ as $\theta\to 0$, where $f$ is a smooth function on $U$ and $\gamma$ is a smooth test $(N-p-n, N-p-n)$-form on $V$. Hence, summarizing the discussion, we obtain the following proposition.
\begin{proposition}\label{prop:convergence_V}
	Let $T$ be as in Proposition \ref{prop:compute_tangent_currents}. Then, $\left(T\wedge \Kmc_\theta^n\right)_{0<|\theta|\ll 1}$ converges as $\theta\to 0$. 
\end{proposition}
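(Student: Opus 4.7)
The plan is to verify weak-$*$ convergence of the positive currents $(T\wedge\Kmc_\theta^n)_{0<|\theta|\ll 1}$ by combining the local uniform mass bound, which is established in the discussion immediately preceding Proposition \ref{prop:convergence_V} via \eqref{eq:convergence_V}, with uniqueness of every subsequential limit. Fix a smooth compactly supported test $(N-p-n,N-p-n)$-form $\psi$ on $U$; the goal is to show that $\lim_{\theta\to 0}\langle T\wedge\Kmc_\theta^n,\psi\rangle$ exists.

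The first step is a bidegree reduction: since $u=\log|x''|$ depends only on the fiber coordinates, so does $u_\theta^\Kmc$, and therefore $\Kmc_\theta^n$ has full fiber bidegree $(n,n)$. Writing $\psi=\psi^{(0,0)}+\psi^{(>0)}$ according to fiber bidegree, the summand $\psi^{(>0)}$ is annihilated by $\Kmc_\theta^n$ for dimensional reasons, while $\psi^{(0,0)}$ admits an expansion $\sum_\alpha f_\alpha\pi_V^*\gamma_\alpha$ with $f_\alpha\in\mathcal{C}^\infty_c(U)$ and $\gamma_\alpha$ a smooth $(N-p-n,N-p-n)$-form on $V$, which, after splitting into strongly positive pieces, I may take to be strongly positive. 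For each such term, any weak-$*$ limit $L=\lim_k T\wedge\Kmc_{\theta_k}^n$ is a positive $(p+n,p+n)$-current supported in $V$, since $\supp\Kmc_\theta^n$ shrinks to $V$. Hence $L\wedge\pi_V^*\gamma_\alpha$ is a positive $V$-supported current of maximal bidegree, to which Lemma \ref{lem:restriction_lem} applies:
\begin{align*}
\langle L, f_\alpha\pi_V^*\gamma_\alpha\rangle
=\langle L\wedge\pi_V^*\gamma_\alpha, f_\alpha\rangle
=\langle L\wedge\pi_V^*\gamma_\alpha, \pi_V^*(f_\alpha|_V)\rangle
=\langle L, \pi_V^*((f_\alpha|_V)\gamma_\alpha)\rangle.
\end{align*}

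The final pairing equals $\lim_k\langle T\wedge\Kmc_{\theta_k}^n, \pi_V^*((f_\alpha|_V)\gamma_\alpha)\rangle$, which by \eqref{eq:convergence_V} converges along the \emph{entire} family and is therefore independent of the subsequence $(\theta_k)$. Summing over $\alpha$ shows that $\langle L,\psi\rangle$ is independent of the choice of limit $L$, yielding uniqueness and hence convergence of the full family. The main subtle point is the correct application of Lemma \ref{lem:restriction_lem}, which requires the current to be positive of maximal bidegree; this forces the preliminary decomposition of each $\gamma_\alpha$ into strongly positive pieces, but is otherwise a direct packaging of the observations already assembled in the discussion preceding the proposition.
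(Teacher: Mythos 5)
Your proposal is correct and follows essentially the same route as the paper: the paper's own justification is precisely the discussion surrounding \eqref{eq:convergence_V}, namely the bidegree reduction to test forms of type $f\pi_V^*\gamma$, the resulting local mass bound, the fact that all limit currents are supported in $V$, and Lemma \ref{lem:restriction_lem} to replace $f$ by $\pi_V^*(f|_V)$ so that \eqref{eq:convergence_V} identifies every subsequential limit. Your extra care in splitting $\gamma_\alpha$ into strongly positive pieces before invoking Lemma \ref{lem:restriction_lem} is a reasonable explicit packaging of a step the paper leaves implicit.
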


When $T$ is smooth, $T\wedge \Kmc_\theta^n$ converges to $T\wedge [V]$. Inspired by this, we define the following.
\begin{definition}\label{def:wedge_C}
	Let $T$ be as in Proposition \ref{prop:compute_tangent_currents}.	We define the current $T\wedge_C[V]$ on $U$ by
	\begin{align}\label{eq:def_classical_product}
		T\wedge_C[V]:=\left\langle T\wedge (dd^cu)^n\right\rangle_\Kmc=\lim_{\theta\to 0}T\wedge \Kmc_\theta^n
	\end{align}
	in the sense of currents.
\end{definition}

Note that $T_\infty\wedge[V]$ can be understood as a double current in de Rham's language (\cite{deRham}).
\begin{proposition}\label{prop:T_wedge_V}
	Let $T$ be as in Proposition \ref{prop:compute_tangent_currents}. Let $T_\infty$ be the unique tangent current of $T$. Then, we have
	\begin{align*}
		T_\infty\wedge [V]=T\wedge_C [V]
	\end{align*}
	in the sense of currents on $U$. 
	
\end{proposition}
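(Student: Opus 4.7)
The plan is to interpret $T_\infty\wedge [V]$ as the weak limit $\lim_{\theta\to 0}T_\infty\wedge \Kmc_\theta^n$ on $\overline{E}$ (the zero section $V_0\subset E$ being identified with $V\subset U$ via the inclusion $U\subset E$, which in our linear setup is an admissible map) and to show that both $T_\infty\wedge[V]$ and $T\wedge_C[V]$ pair with test forms through the shadow $T_\infty^h$. Both currents are positive of bidegree $(p+n,p+n)$ with support in $V$: for $T\wedge_C[V]$ this is Proposition \ref{prop:convergence_V}, while for $T_\infty\wedge[V]$ it follows from $\supp \Kmc_\theta^n$ shrinking to the zero section. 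As in Lemma \ref{lem:restriction_lem}, it therefore suffices to compare pairings against test forms of the form $\pi_V^*\varphi$, where $\varphi$ is a smooth compactly supported $(N-p-n,N-p-n)$-form on $V$.

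For the left-hand side, Definition \ref{def:wedge_C} together with Proposition \ref{prop:compute_tangent_currents} and Theorem \ref{thm:uniqueness_tangent_current} immediately give
\begin{align*}
\langle T\wedge_C[V],\pi_V^*\varphi\rangle=\left\langle\left\langle T\wedge(dd^cu)^n\wedge \pi_V^*\varphi\right\rangle_\Kmc,1\right\rangle=\langle T_\infty^h,\varphi\rangle.
\end{align*}

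For the right-hand side, I would exploit the $A_\lambda$-invariance $(A_\lambda)_*T_\infty=T_\infty$ of the tangent current together with the scaling identity $A_\lambda^*(\pi_F^*\Omega_n)=\Kmc_{e^{-M}\lambda^{-1}}^n$ from \eqref{eq:key_idea}. Setting $\theta=e^{-M}\lambda^{-1}$ and using $\pi_V\circ A_\lambda=\pi_V$ (so that $A_\lambda^*\pi_V^*\varphi=\pi_V^*\varphi$) together with the compactness of $\supp\, \pi_F^*\Omega_n\cap \pi_V^{-1}(\supp\,\varphi)$ in $E$ afforded by Lemma \ref{lem:useful_form}, the computation
\begin{align*}
\langle T_\infty\wedge \Kmc_\theta^n,\pi_V^*\varphi\rangle
&=\langle T_\infty,A_\lambda^*\bigl(\pi_F^*\Omega_n\wedge \pi_V^*\varphi\bigr)\rangle
=\langle (A_\lambda)_*T_\infty,\pi_F^*\Omega_n\wedge \pi_V^*\varphi\rangle\\
&=\langle T_\infty\wedge \pi_F^*\Omega_n,\pi_V^*\varphi\rangle
\end{align*}
is legitimate and its value is independent of $\theta$. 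Since $\pi_F^*\Omega_n$ meets the hypothesis of Definition \ref{def:shadow} (its fiber class being $\{\omega_F^n\}$ by Lemma \ref{lem:useful_form}), the last pairing equals $\langle T_\infty^h,\varphi\rangle$ by Remark \ref{rmk:indep_shadow}. Thus the family $(T_\infty\wedge \Kmc_\theta^n)_{0<|\theta|\ll 1}$ converges, defines $T_\infty\wedge[V]$, and satisfies $\langle T_\infty\wedge[V],\pi_V^*\varphi\rangle=\langle T_\infty^h,\varphi\rangle=\langle T\wedge_C[V],\pi_V^*\varphi\rangle$, yielding the claimed identity on $U$.

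The main obstacle will be to make the exchange $\langle T_\infty, A_\lambda^*(\cdot)\rangle=\langle (A_\lambda)_*T_\infty, \cdot\rangle$ fully rigorous as a statement about currents rather than a formal manipulation. This relies on verifying that the test form $\pi_F^*\Omega_n\wedge\pi_V^*\varphi$ has compact support in the part of $\overline{E}$ where $T_\infty$ acts genuinely (i.e., away from $H_\infty$, where $T_\infty$ has no mass) and on locally uniform mass bounds for $T_\infty$ in a neighborhood of this support. The compact support of $\Omega_n$ inside $\C^n\subset\P^n$ from Lemma \ref{lem:useful_form} is precisely what localizes the argument away from infinity, while the $A_\lambda$-invariance handles the interplay between the limit $\theta\to 0$ and the approximating forms $\Kmc_\theta^n$.
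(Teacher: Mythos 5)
Your argument is correct and runs parallel to the paper's, with one genuine difference in how the two sides are linked. The paper also reduces everything to pushforwards under $\pi_V$ and to test forms $\pi_V^*\varphi$ via Lemma \ref{lem:restriction_lem}, and it also hinges on the identity $A_\lambda^*(\pi_F^*\Omega_n)=\Kmc^n_{e^{-M}\lambda^{-1}}$ from \eqref{eq:key_idea}; but it applies $A_\lambda$ to the \emph{defining} sequence $(A_{\lambda})_*(\mathbf{1}_UT)\to T_\infty$, so that after pulling $\Omega_n$ back the limit is read off directly as $T\wedge_C[V]$ using Condition $(\mathrm{K}-\max)$ (the interpretation of $T_\infty\wedge[V]$ there rests on the observation that $T_\infty$, having minimal $h$-dimension, is a current in $x'$ only, so the wedge with $[V]$ is a double current and Remark \ref{rmk:indep_shadow} lets one replace $[V]$ by $\pi_F^*\Omega_n$). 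You instead apply $A_\lambda$ to $T_\infty$ itself and invoke the conicity $(A_\lambda)_*T_\infty=T_\infty$, routing both sides through $T_\infty^h$; this cleanly decouples the two halves of the identity, but the invariance is nowhere stated in the paper and must be justified — in your setting it follows in one line from the uniqueness of the tangent current furnished by Theorem \ref{thm:uniqueness_tangent_current}, since $(A_\mu)_*T_\infty=\lim_k(A_{\mu\lambda_k})_*(\mathbf{1}_UT)$ is again a tangent current, hence equals $T_\infty$ (alternatively cite the $V$-conicity of tangent currents from \cite{DS18}). Two small points to tidy up: you should note that your reading of $T_\infty\wedge[V]$ as $\lim_\theta T_\infty\wedge\Kmc_\theta^n$ agrees with the double-current interpretation the paper intends (this is immediate once $T_\infty$ is a pullback from $V$), and for the uniform mass bound needed to pass from the pairings against $\pi_V^*\varphi$ to convergence of currents you should use cutoffs of the form $\pi_V^*\chi_V$ so that they commute with $A_\lambda^*$. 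With these additions the proof is complete.
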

\begin{proof}
	The current $T_\infty$ is a current in $x'$, where $(x', x'')\in E$ is as in Section \ref{sec:tangent_currents}. So, we have
	\begin{align*}
		&\left(\pi_V\right)_*\left(T_\infty\wedge[V]\right)=\left(\pi_V\right)_*\left(T_\infty\wedge\pi_F^*\Omega_n\right)=\lim_{\lambda\to\infty}\left(\pi_V\right)_*\left(\left(A_\lambda\right)_*\left(\mathbf{1}_UT\right)\wedge\pi_F^*\Omega_n\right)\\
		&\quad=\lim_{\lambda\to\infty}\left(\pi_V\right)_*\left(\mathbf{1}_UT\wedge\left(A_\lambda\right)^*\left(\pi_F^*\Omega_n\right)\right)=\lim_{\lambda\to\infty}\left(\pi_V\right)_*\left(\mathbf{1}_UT\wedge \Kmc_{e^{-M}\lambda^{-1}}^n\right)=\left(\pi_V\right)_*\left(\mathbf{1}_UT\wedge_C[V]\right),
	\end{align*}
	where $\Omega_n$ is the form as in Lemma \ref{lem:useful_form}. The last equality comes from Condition $(\mathrm{K}-\max)$. Then, as in the argument for Proposition \ref{prop:convergence_V}, Lemma \ref{lem:restriction_lem} implies the desired equality.
\end{proof}

The following can be considered as a version of \cite[Proposition 3.5 and Remark 4.9]{DS18}. We will need this later in Subsection \ref{subsec:slicing} for the study of the relationship between Condition $(\mathrm{K}-\max)$ and slicing theory. The proof is straightforward and so omitted.
\begin{proposition}\label{prop:T_to_measure}
	Let $T$ be as in Proposition \ref{prop:compute_tangent_currents}. 
	Let $\varphi$ be a smooth positive closed $(N-p-n, N-p-n)$-form on $U$ such that $\varphi|_V\ne 0$. Then, the family $\left((A_{\lambda})_*(\mathbf{1}_UT\wedge\varphi)\right)_{1\ll|\lambda|}$ admits a unique tangent current $(T\wedge\varphi)_\infty$ along $V$ with the minimal $h$-dimension. Furthermore, its shadow is a Radon measure that satisfies
	\begin{align*}
		(T\wedge\varphi)_\infty^h = T_\infty^h\wedge \varphi|_V.
	\end{align*}
\end{proposition}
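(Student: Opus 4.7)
The strategy is to use the projection formula for the biholomorphism $A_\lambda\colon E\to E$ to decouple the wedge product:
\[
(A_{\lambda})_*(\mathbf{1}_UT\wedge\varphi) = (A_{\lambda})_*(\mathbf{1}_UT)\wedge A_{\lambda^{-1}}^*\varphi.
\]
By Theorem \ref{thm:uniqueness_tangent_current} applied to $T$, the first factor converges (as a full family, not merely along subsequences) to the unique tangent current $T_\infty$, whose $h$-dimension $h_T = N-p-n$ is minimal. So the problem reduces to identifying the smooth limit of $A_{\lambda^{-1}}^*\varphi$ and then recognising the limit of the product as a tangent current of $T\wedge\varphi$ satisfying the claimed shadow formula.

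First I would verify that $A_{\lambda^{-1}}^*\varphi \to \pi_V^*(\varphi|_V)$ in $C^\infty_{\loc}$ on a neighborhood of $V$ in $E$. In local coordinates $(x',x'')\in V\times\C^n$, the map $A_{\lambda^{-1}}$ sends $(x',x'')$ to $(x',\lambda^{-1}x'')$, so coefficients of $\varphi$ are evaluated at points tending to $V$ while each $dx_j$ with $j>N-n$ picks up a factor $\lambda^{-1}$; only the ``horizontal'' pieces of $\varphi$ survive and assemble to $\pi_V^*(\varphi|_V)$. Wedging a weakly convergent family of positive closed currents with a smoothly convergent family of smooth forms then yields
\[
(A_{\lambda})_*(\mathbf{1}_UT\wedge\varphi)\longrightarrow T_\infty\wedge\pi_V^*(\varphi|_V),
\]
so $(T\wedge\varphi)_\infty := T_\infty\wedge\pi_V^*(\varphi|_V)$ is the unique tangent current.

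Next I would check that the $h$-dimension is minimal (i.e., zero, since $T\wedge\varphi$ has bidegree $(N-n,N-n)$) and then compute the shadow. For any $k\ge 1$, the smooth positive form $\varphi|_V\wedge\omega_V^k$ on $V$ is locally dominated by a constant multiple of $\omega_V^{N-p-n+k}$; since $T_\infty$ has $h$-dimension $N-p-n$, the current $T_\infty\wedge\pi_V^*\omega_V^{N-p-n+k}$ vanishes, and positivity forces $T_\infty\wedge\pi_V^*(\varphi|_V\wedge\omega_V^k)=0$ as well. Hence $(T\wedge\varphi)_\infty\wedge\pi_V^*\omega_V^k=0$ for all $k\ge 1$, giving minimal $h$-dimension. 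Choosing $\Omega = \pi_F^*\Omega_n$ to compute the shadow and applying the projection formula along $\pi_V$ yields
\[
(T\wedge\varphi)_\infty^h = (\pi_V)_*\bigl(T_\infty\wedge\pi_F^*\Omega_n\bigr)\wedge\varphi|_V = T_\infty^h\wedge\varphi|_V,
\]
where the last identification invokes Proposition \ref{prop:compute_tangent_currents} at $m=N-p-h_T=n$; since this current sits in top bidegree on $V$, it is a Radon measure.

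The main obstacle I expect is the first step: making the smooth convergence of $A_{\lambda^{-1}}^*\varphi$ precise enough on a neighborhood of $V$ (away from $H_\infty$, where $T_\infty$ is concentrated in the ``horizontal'' sense) to justify commuting the limit with the wedge product against the weakly convergent $(A_{\lambda})_*(\mathbf{1}_UT)$. Once this smooth convergence is granted, the remaining steps are a routine application of the projection formula together with the positivity argument controlling the higher $\omega_V$-powers.
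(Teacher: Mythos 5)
Your proposal is correct, and it supplies exactly the argument the paper leaves out (the author states the proof is ``straightforward and so omitted,'' citing \cite[Proposition 3.5 and Remark 4.9]{DS18}): the projection formula $(A_\lambda)_*(\mathbf{1}_UT\wedge\varphi)=(A_\lambda)_*(\mathbf{1}_UT)\wedge A_{\lambda^{-1}}^*\varphi$, the $C^\infty_{\loc}$ convergence $A_{\lambda^{-1}}^*\varphi\to\pi_V^*(\varphi|_V)$ on compact subsets of $E$ combined with the locally uniform mass bound on $(A_\lambda)_*(\mathbf{1}_UT)$, the domination of $\varphi|_V$ by a multiple of $\omega_V^{N-p-n}$ (after decomposing into strongly positive pieces) to kill the higher $\omega_V$-powers, and the projection formula along $\pi_V$ with $\Omega=\pi_F^*\Omega_n$ for the shadow identity. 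The only cosmetic slip is that the final identification $T_\infty^h=(\pi_V)_*(T_\infty\wedge\pi_F^*\Omega_n)$ is just Definition \ref{def:shadow} together with Remark \ref{rmk:indep_shadow}, not Proposition \ref{prop:compute_tangent_currents}.
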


\subsection{Relative non-pluripolar product}\label{subsec:general_CondII} We consider another integrability condition related to the existence of the limit $\left\langle T\wedge (dd^cu)^m \right\rangle_\Kmc$ for $m\in \{1, \ldots, \min\{N-p, n\}\}$. 
Here, we allow $n=N$ and do not assume $n<N$. Also, notice that in Subsection \ref{subsec:classical}, it was not clear whether $\mathbf{1}_{U\setminus V}T\wedge(dd^cu)^m$ can be trivially extended across $V$ when $u\in L_{\loc}^1\left(\left\langle T\wedge (dd^cu)^{m-1}\wedge\pi_V^*\omega_V^{N-p-m+1}\right\rangle_\Kmc\right)$, for $m\in\{\max\{n-p, 1\}, \ldots, \min\{N-p, n-1\}\}$.

\begin{definition}[Condition $(\mathrm{K}^*)$]\label{def:cond_NP}
	Let $T\in\Cc_p(U)$. Let $m\in\{1, \ldots, \min\{N-p, n\}\}$. We say that $T$ satisfies Condition $(\mathrm{K}^*)$ along $V$ at codimension $m$ if the Radon measure $-uT\wedge (dd^cu)^{m-1}\wedge\omega^{N-p-m+1}$ on $U\setminus V$ is locally integrable around $V$. We say that $T$ satisfies Condition $(\mathrm{K}^*)$ along $V$ up to codimension $m$ if it satisfies Condition $(\mathrm{K}^*)$ along $V$ in codimensions ranging from $1$ to $m$. When $m=\min\{N-p, n\}$, $T$ is said to satisfy Condition $(\mathrm{K}^*-\max)$ along $V$. 	
\end{definition}


\begin{proposition}\label{prop:Kmc}
	Let $T\in\Cc_p(U)$. Let $m\in\{1, \ldots, \min\{N-p, n\}\}$. Suppose that $T$ satisfies Condition $(\mathrm{K}^*)$ along $V$ at codimension $m$. 
	Then, the limit 
	\begin{align*}
		\langle T\wedge (dd^cu)^m\rangle_\Kmc:=\lim_{\theta\to 0}T\wedge \Kmc^{m}_\theta
	\end{align*}
	exists in the sense of currents on $U$.
\end{proposition}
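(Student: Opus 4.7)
The plan is to check weak convergence of the family $(T\wedge \Kmc^m_\theta)_{0<|\theta|\ll 1}$ by pairing with an arbitrary smooth compactly supported test $(N-p-m, N-p-m)$-form $\psi$ on $U$, performing an integration by parts to move $dd^c$ off the smooth potential $u^\Kmc_\theta$ and onto $\psi$, and then passing to the limit by dominated convergence driven by Condition $(\mathrm{K}^*)$.

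The first step is to view $T':=T\wedge(dd^cu)^{m-1}$ as a positive closed current on all of $U$. On $U\setminus V$ the factor $(dd^cu)^{m-1}$ is a smooth closed form, so $T'$ is a bona fide positive closed current there. Condition $(\mathrm{K}^*)$ at codimension $m$ asserts that $-u\,T'\wedge \omega^{N-p-m+1}$ is locally integrable around $V$; since $-u\to+\infty$ near $V$, this forces $T'\wedge \omega^{N-p-m+1}$ itself to have locally finite mass near $V$. Hence $T'$ extends trivially across $V$, and by the Skoda--El Mir theorem the resulting trivial extension (still denoted $T'$) is a positive closed current on $U$. This is the step where Condition $(\mathrm{K}^*)$ is used structurally.

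The second step is integration by parts. Because $\chi\equiv 0$ on $(-\infty,-1]$, the function $u^\Kmc_\theta$ is constant equal to $\log|\theta|$ on the neighborhood $\{|x''|\le|\theta|/e\}$ of $V$, so $dd^cu^\Kmc_\theta$ and hence $\Kmc^m_\theta$ vanish in that neighborhood. In particular $\Kmc^m_\theta$ is globally smooth on $U$, and $T\wedge\Kmc^m_\theta = T'\wedge dd^c u^\Kmc_\theta$ as currents on $U$. Since $T'$ is closed and $\psi$ has compact support, Stokes' theorem gives
\begin{align*}
	\int_U T\wedge\Kmc^m_\theta\wedge\psi \;=\; \int_U u^\Kmc_\theta\cdot T'\wedge dd^c\psi.
\end{align*}

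For the passage to the limit, convexity of $\chi$ gives $\chi(t)\ge t$, hence $u^\Kmc_\theta\ge u$, and a case analysis on the regions $\{u\ge 1+\log|\theta|\}$, $\{u\le -1+\log|\theta|\}$, and the intermediate annulus yields the uniform bound $|u^\Kmc_\theta|\le |u|+2$ together with the pointwise decrease $u^\Kmc_\theta\searrow u$. On $\supp\psi$ the signed measure $T'\wedge dd^c\psi$ is dominated in absolute value by $C\,T'\wedge\omega^{N-p-m+1}$, and Condition $(\mathrm{K}^*)$ together with the boundedness of $u$ away from $V$ makes $(|u|+2)\,T'\wedge\omega^{N-p-m+1}$ finite on $\supp\psi$. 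Dominated convergence then yields
\begin{align*}
	\lim_{\theta\to 0}\int_U T\wedge\Kmc^m_\theta\wedge\psi \;=\; \int_U u\cdot T'\wedge dd^c\psi,
\end{align*}
which depends only on $\psi$, so the limit current $\langle T\wedge(dd^cu)^m\rangle_\Kmc$ exists. The main obstacle is the first step: justifying that $T\wedge(dd^cu)^{m-1}$ really lives on $U$ (not merely on $U\setminus V$) as a positive closed current, since this is what makes Stokes' theorem applicable with a test form whose support may touch $V$. Once Skoda--El Mir is invoked, the rest is a routine dominated convergence argument enabled by the logarithmic estimate $|u^\Kmc_\theta|\le |u|+2$.
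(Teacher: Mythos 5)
Your proof is correct and follows essentially the same route as the paper's: trivially extend $T\wedge(dd^cu)^{m-1}$ across $V$ using the mass bound implicit in Condition $(\mathrm{K}^*)$, exploit that $dd^cu^\Kmc_\theta$ vanishes near $V$ to integrate by parts against the test form, and conclude by dominated convergence from the integrability of $u$ against the trace measure. Your explicit appeal to Skoda--El Mir for the closedness of the trivial extension and the bound $|u^\Kmc_\theta|\le|u|+2$ merely spell out steps the paper leaves implicit.
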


\begin{proof}
	Since $u<-1$ near $V$, both the currents $T\wedge(dd^cu)^{m-1}$ and $uT\wedge(dd^cu)^{m-1}$ defined on $U\setminus V$ have locally finite mass around $V$. So, they extend trivially across $V$ and the extensions are denoted by $\langle T\wedge (dd^cu)^{m-1}\rangle_0$ and $\langle uT\wedge (dd^cu)^{m-1}\rangle_0$, respectively. The function $u$ is locally integrable with respect to the trace measure of $\langle T\wedge(dd^cu)^{m-1}\rangle_0$.
	\medskip
	
	Let $\varphi$ be a smooth test $(N-p-m, N-p-m)$-form on $U$. Since $V\cap\supp\left(dd^cu_\theta^\Kmc\right)=\emptyset$, we have
	\begin{align*}
		\langle T\wedge \Kmc_\theta^m, \varphi\rangle&=\int T\wedge (dd^cu_\theta^\Kmc)\wedge(dd^cu)^{m-1}\wedge\varphi\\
		&=\int (dd^cu_\theta^\Kmc)\wedge \langle T\wedge(dd^cu)^{m-1}\rangle_0\wedge\varphi=\int u_\theta^\Kmc \langle T\wedge(dd^cu)^{m-1}\rangle_0\wedge dd^c\varphi
	\end{align*}
	and the integrability condition says that the limit converges as $\theta\to 0$.
\end{proof}

Together with Proposition \ref{prop:main_general}, Proposition \ref{prop:Kmc} provides a method to compute the shadow of tangent currents as below. 
\begin{theorem}
\label{thm:shadow_general}
	Let $T\in\Cc_p(U)$ admit a tangent current $T_\infty$ along $V$. Let $h_T$ be the $h$-dimension of $T_\infty$. Suppose that $T$ satisfies Condition $(\mathrm{K}^*)$ along $V$ at codimension $N-p-h_T$. Then, the shadow $T_\infty^h$ of $T_\infty$ can be described as follows:
	\begin{align*}
		&T_\infty^h=\left(\pi_V\right)_*\left(\mathbf{1}_V\langle T\wedge\left(dd^cu\right)^{N-p-h_T}\rangle_\Kmc\right),
	\end{align*}
	where we have $\displaystyle \left\langle \left\langle T\wedge(dd^cu)^{N-p-h_T}\right\rangle_\Kmc, \Phi \right\rangle = \int_{U\setminus V}u T\wedge (dd^cu)^{N-p-h_T-1}\wedge dd^c\Phi$ for a smooth test $(h_T, h_T)$-form $\Phi$ on $U$.
\end{theorem}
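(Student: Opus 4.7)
The plan is to read the conclusion as an immediate combination of Proposition \ref{prop:Kmc} (the Condition $(\mathrm{K}^*)$ integrability gives the $\Kmc$-limit) with Proposition \ref{prop:main_general} (the $\Kmc$-limit computes the shadow in bidegree $N-p-h_T$), and then to unpack the $\Kmc$-limit through the integration-by-parts formula that already appears in the proof of Proposition \ref{prop:Kmc}. For notational ease set $m:=N-p-h_T$ throughout the argument.

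First I would apply Proposition \ref{prop:Kmc} at codimension $m$: Condition $(\mathrm{K}^*)$ along $V$ at codimension $m$ guarantees that the full (not merely subsequential) limit
\begin{align*}
\left\langle T\wedge (dd^cu)^m\right\rangle_\Kmc=\lim_{\theta\to 0}T\wedge \Kmc_\theta^m
\end{align*}
exists as a current on $U$. In particular the family $(T\wedge \Kmc_\theta^m)_{0<|\theta|\ll 1}$ has locally uniformly bounded mass, so Proposition \ref{prop:main_general} applies with this value of $m$, and any subsequential limit $T_\Kmc^{p+m}$ there is forced to equal $\left\langle T\wedge (dd^cu)^m\right\rangle_\Kmc$. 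The final assertion of Proposition \ref{prop:main_general}, which handles precisely the case $m=N-p-h_T$, then yields
\begin{align*}
T_\infty^h=(\pi_V)_*\bigl(T_\infty\wedge \pi_F^*\Omega_m\bigr)=(\pi_V)_*\bigl(\mathbf{1}_V\left\langle T\wedge (dd^cu)^m\right\rangle_\Kmc\bigr),
\end{align*}
which is the first part of the theorem.

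For the integral representation, I would reuse the computation from the proof of Proposition \ref{prop:Kmc}. Since $u_\theta^\Kmc$ is smooth and $dd^cu_\theta^\Kmc$ is supported away from $V$, for a smooth test $(h_T,h_T)$-form $\Phi$ on $U$ integration by parts gives
\begin{align*}
\langle T\wedge \Kmc_\theta^m,\Phi\rangle=\int u_\theta^\Kmc\,\bigl\langle T\wedge (dd^cu)^{m-1}\bigr\rangle_0\wedge dd^c\Phi,
\end{align*}
where $\langle T\wedge (dd^cu)^{m-1}\rangle_0$ denotes the trivial extension across $V$ of $T\wedge(dd^cu)^{m-1}$ from $U\setminus V$ (it agrees with the classical product outside $V$ and puts no mass on $V$). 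As $\theta\to 0$ the smooth functions $u_\theta^\Kmc$ decrease to $u$; decomposing $dd^c\Phi$ into a finite combination of smooth positive forms controlled by a multiple of $\omega^{N-p-m+1}$, Condition $(\mathrm{K}^*)$ provides the integrable dominating measure $-uT\wedge(dd^cu)^{m-1}\wedge\omega^{N-p-m+1}$ needed to pass to the limit. Combined with the fact that the trace measure of $\langle T\wedge(dd^cu)^{m-1}\rangle_0$ carries no mass on $V$, dominated/monotone convergence delivers
\begin{align*}
\bigl\langle \left\langle T\wedge(dd^cu)^m\right\rangle_\Kmc, \Phi \bigr\rangle=\int_{U\setminus V}u\,T\wedge (dd^cu)^{m-1}\wedge dd^c\Phi,
\end{align*}
as required.

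The main technical point, and the only one not already formalized in the earlier results, is the passage to the limit in the integration-by-parts formula; this is handled by Condition $(\mathrm{K}^*)$ exactly as in the proof of Proposition \ref{prop:Kmc}, so essentially no new work is needed. Everything else reduces to correctly identifying the value $m=N-p-h_T$ with the shadow-producing bidegree in Proposition \ref{prop:main_general} and invoking uniqueness of the $\Kmc$-limit from Proposition \ref{prop:Kmc} to replace the a priori subsequential limit $T_\Kmc^{p+m}$ by $\langle T\wedge(dd^cu)^m\rangle_\Kmc$.
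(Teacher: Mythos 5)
Your proposal is correct and follows essentially the same route the paper intends: the paper states this theorem without a separate proof, presenting it as the direct combination of Proposition \ref{prop:Kmc} (existence of the full limit $\langle T\wedge(dd^cu)^{N-p-h_T}\rangle_\Kmc$ under Condition $(\mathrm{K}^*)$, via the integration-by-parts identity against the trivial extension $\langle T\wedge(dd^cu)^{m-1}\rangle_0$) with Proposition \ref{prop:main_general} (identification of $(\pi_V)_*(\mathbf{1}_V T_\Kmc^{p+m})$ with the shadow when $m=N-p-h_T$). Your additional observations — that existence of the full limit of these positive currents forces locally uniformly bounded mass and pins down the a priori subsequential limit in Proposition \ref{prop:main_general} — are exactly the glue the paper leaves implicit.
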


The same proof as in Proposition \ref{prop:main_true_general} gives the following theorem.
\begin{theorem}\label{thm:K'}
	Let $T\in\Cc_p(U)$ satisfy Condition $(\mathrm{K^*}-\max)$ along $V$. Suppose that the limit currents $\displaystyle \langle T\wedge\left(dd^cu\right)^m\rangle_\Kmc$ for $m=1, \ldots, \min\{N-p, n\}-1$ have no mass on $V$. Then, there exists a unique tangent current of $T$ along $V$ with the minimal $h$-dimension. Its shadow is computed as in Theorem \ref{thm:shadow_general}. In particular, if $n\le N-p$, we have the same integral representation as in Theorem \ref{thm:uniqueness_tangent_current}.
\end{theorem}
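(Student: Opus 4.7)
The plan is to mimic the structure of the proof of Theorem \ref{thm:uniqueness_tangent_current}, substituting Condition $(\mathrm{K}^*-\max)$ for Condition $(\mathrm{K}-\max)$. First, Proposition \ref{prop:Kmc} applied at each codimension $m\in\{1,\ldots,\min\{N-p,n\}\}$ shows that all the currents $\langle T\wedge(dd^cu)^m\rangle_\Kmc$ are well-defined, so the objects appearing in Theorems \ref{thm:h_dim_K} and \ref{thm:shadow_general} make sense.

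Next, to produce a tangent current I would invoke Theorem \ref{thm:existence_tangent_currents}, for which it suffices to show that the family $(T\wedge\Tmc_\theta^i\wedge\pi_V^*\omega_V^{N-p-i})_{0<|\theta|\ll 1}$ has locally uniformly bounded mass for $i\in\{\max\{n-p,0\},\ldots,\min\{N-p,n\}\}$. By Lemma \ref{lem:T_vs_K}, this mass is dominated on $U\setminus V$ by that of $T\wedge dd^cu_\theta^\Tmc\wedge(dd^cu)^{i-1}\wedge\pi_V^*\omega_V^{N-p-i}$. An integration-by-parts argument against a smooth cutoff near $V$ (with the inner $(dd^cu)^{i-1}$ factor regularized by $\Tmc_{\theta'}^{i-1}$ and $\theta'\to 0$, in the spirit of the proof of Lemma \ref{lem:basic_conv_KT}) shifts the $dd^c$ off $u_\theta^\Tmc$, reducing the desired bound to the local integrability of $u$ against $T\wedge(dd^cu)^{i-1}\wedge\omega^{N-p-i+1}$ on $U\setminus V$; this is exactly Condition $(\mathrm{K}^*)$ at codimension $i$. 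Hence a tangent current $T_\infty$ exists.

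The minimal $h$-dimension follows from Theorem \ref{thm:h_dim_K}: the previous steps yield the existence of all required $\Kmc$-type limits, while the hypothesis that $\langle T\wedge(dd^cu)^m\rangle_\Kmc$ has no mass on $V$ for $m<\min\{N-p,n\}$ supplies the intermediate no-mass conditions. Theorem \ref{thm:shadow_general} then gives the shadow formula. Since this formula depends only on $T$ and not on any defining sequence $(\lambda_k)$, every tangent current has the same shadow, and Proposition \ref{prop:uniqueness_from_shadow} forces uniqueness. For the case $n\le N-p$, the integral representation on $U\setminus V$ follows essentially verbatim from the computation in Proposition \ref{prop:compute_tangent_currents}, whose only ingredients are the existence of $\langle T\wedge(dd^cu)^n\rangle_\Kmc$ and the integrability of $u$ against $T\wedge(dd^cu)^{n-1}$ near $V$, both guaranteed by Condition $(\mathrm{K}^*-\max)$.

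The principal obstacle is the uniform mass estimate on the $\Tmc$-family in the existence step: under $(\mathrm{K}-\max)$ this followed cleanly from Lemma \ref{lem:basic_conv_KT}, but under $(\mathrm{K}^*-\max)$ the integrability is given directly on $U\setminus V$ rather than through an inductive $\Kmc$-integrability, so the integration-by-parts argument must be organized to relate the cutoff-derivative integrals back to the raw hypothesis without relying on the inductive $\Kmc$-framework. Once this bookkeeping is carried out, the rest of the proof is essentially a transcription of Proposition \ref{prop:main_true_general}.
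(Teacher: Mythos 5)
Your proposal is correct and follows essentially the route the paper intends: the paper's own ``proof'' is just the remark that the argument of Proposition \ref{prop:main_true_general} carries over, and your outline supplies exactly the chain used for the parallel Theorem \ref{thm:uniqueness_tangent_current} (existence via Lemma \ref{lem:T_vs_K} and Theorem \ref{thm:existence_tangent_currents}, minimality via Theorem \ref{thm:h_dim_K} and the no-mass hypothesis, the shadow via Theorem \ref{thm:shadow_general}, and uniqueness via Proposition \ref{prop:uniqueness_from_shadow}). The mass bound you flag as the main obstacle is handled exactly as in the proof of Proposition \ref{prop:Kmc}: Condition $(\mathrm{K}^*)$ at codimension $i$ lets one take the trivial (Skoda--El Mir) extension of $T\wedge(dd^cu)^{i-1}$ and integrate by parts against the cutoff, so no new idea is needed.
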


\begin{remark}
	In general, although the limit currents $\left\langle T\wedge(dd^cu)^i\right\rangle_\Kmc$ for $i=1, \ldots, \min\{N-p, n\}$ exist, it is not clear whether the tangent current is uniquely determined.
\end{remark}


\section{Intersection of Positive Closed Currents}\label{sec:intersection}

We apply the existence of the unique tangent current with the minimal $h$-dimension to the study of the intersection--the Dinh-Sibony product--of positive closed currents on domains.\medskip

We recall the Dinh-Sibony product. With the definition of the tangent current as in Definition \ref{def:tangent_current}, the Dinh-Sibony product can be defined on an arbitrary complex manifold.
\begin{definition}[Definition 5.9 in \cite{DS18}]\label{defn:DSproduct}
	Let $X$ be a complex manifold of dimension $n$. Let $S_i\in\Cc_{s_i}(X)$ for $i=1, \ldots, k$, where $1\le s:=s_1+ \cdots+ s_k\le n$. Consider a positive closed current $\pi_1^*S_1\wedge\cdots\wedge \pi_k^*S_k$ on $X^k$, where $\pi_i:X^k\to X$ denotes the canonical projection onto the $i$-th factor for $i=1, \ldots, k$. Let $\Delta$ denote the diagonal submanifold in $X^k$. Assume that there is a unique tangent current $(\pi_1^*S_1\wedge\cdots\wedge \pi_k^*S_k)_\infty$ of $\pi_1^*S_1\wedge\cdots\wedge \pi_k^*S_k$ along $\Delta$ and that its $h$-dimension is minimal. The Dinh-Sibony product of $S_1, \ldots, S_k$ is defined to be the shadow $(\pi_1^*S_1\wedge\cdots\wedge \pi_k^*S_k)^h_\infty$ of $(\pi_1^*S_1\wedge\cdots\wedge \pi_k^*S_k)_\infty$.
\end{definition}

Based on Remark \ref{rmk:independence_of_admissible_map}, throughout this section, we assume $X$ to be a bounded simply connected domain $D$ with smooth boundary in $\C^n$. Let $\omega_{\rm euc}$ denote the Euclidean K\"ahler form on $\C^n$.\medskip

We apply the results in Sections \ref{sec:tangent_currents} and \ref{sec:integrability} to the following case. We take $U=D^k$ and $N=kn$. Let $\pi_i:D^k\to D$ denote the canonical projection onto the $i$-th factor for $i=1, \ldots, k$. We use the coordinates $(x_1,  \ldots, x_k)\in D^k$ so that $x_i\in D\subset \C^n$ and $\pi_i(x_1, \ldots, x_k)=x_i$ for $i=1,\ldots, k$. We take as $V$ the diagonal submanifold $\Delta$ of $D^k$ defined by $\Delta:=\{(x,  \ldots, x)\in D^k: x\in D\}$. We take $E=\Delta\times \C^{(k-1)n}$ and $\overline{E}=\Delta\times \P^{(k-1)n}$. Let $\pi_\Delta: \overline{E}\to \Delta$ and $\pi_F:\overline{E}\to \P^{(k-1)n}$ denote the canonical projections onto $\Delta$ and $\P^{(k-1)n}$, respectively. We use coordinates $(x, w_1, \ldots, w_{k-1})\in E$ and $(x, [w_1: \cdots: w_{k-1}: t])\in \overline{E}$. We identify $(x, w_1, \ldots, w_{k-1})\in E$ with $(x, [w_1: \cdots: w_{k-1}: 1])\in \overline{E}$. Then, we have $\pi_{\Delta}\left(x, [w_1: \cdots: w_{k-1}: t]\right)=x$ and $\pi_F\left(x, [w_1: \cdots: w_{k-1}: t]\right)=[w_1: \cdots: w_{k-1}: t]$. We regard $D^k$ as a subset of $\Delta\times \C^{(k-1)n}$. The inclusion map $\iota:D^k\to E$ is written as $\iota(x_1, \cdots, x_k)=(x_k, x_1-x_k, \ldots, x_{k-1}-x_k)$. We use $\omega_{\Delta}=dd^c|x|^2$ on $\Delta$, $\omega_F=\frac{1}{2}dd^c\log\left(\sum_{i=1}^{k-1}|w_i|^2+|t|^2\right)$ on $\P^{(k-1)n}$ and $\omega=\pi_{\Delta}^*\omega_{\Delta}+\pi_F^*\omega_F$ on $\overline{E}$. We use 
$\omega_{D^k}=\pi_1^*\omega_{\rm euc}+\cdots+\pi_k^*\omega_{\rm euc}$. In these coordinates, we may say $\pi_{\Delta}^*\omega_{\Delta}=\pi_k^*\omega_{\rm euc}$ in $D^k\subset \overline{E}$. We take $\displaystyle u=\frac{1}{2}\log \sum_{i=1}^{k-1} |x_i-x_k|^2$. We consider $S_i\in\Cc_{s_i}(D)$ for $i=1, \ldots, k$ and $T=\pi_1^*S_1\wedge\cdots\wedge \pi_k^*S_k$, where $1\le s:=s_1+\cdots+ s_k\le n$.\medskip

Condition $(\mathrm{K}-\max)$ for the existence of the unique tangent current with the minimal $h$-dimension leads to a condition for the Dinh-Sibony product of $S_i\in\Cc_{s_i}(D)$ for $i=1, \ldots, k$. 
Note that $\pi_1^*S_1 \wedge\cdots\wedge\pi_k^*S_k$ is well-defined as $\pi_1^*S_1\otimes\cdots\otimes\pi_k^*S_k$. For a rigorous treatment of this, refer to the notion of double current in \cite{deRham}.
\begin{definition}\label{defn:K-integrable_many}
	We say that $S_1, \ldots, S_k$ satisfy Condition $(\mathrm{I})$ if $\pi_1^*S_1\wedge\cdots \wedge\pi_k^*S_k$ satisfies Condition $(\mathrm{K}-\max)$ along $\Delta$ as in Definition \ref{defn:Condition(I)}.
\end{definition}

\begin{remark}
	In our coordinates, for each $j=1, \ldots, k$, Condition $(\mathrm{I})$ is equivalent to saying
	\begin{align*}
		u\in L_{\loc}^1\left(\left\langle \pi_1^*S_1\wedge\cdots\wedge \pi_{j}^*\left(S_j\wedge \omega_{\rm euc}^{kn-s-i+1}\right)\wedge\cdots \wedge\pi_k^*S_k\wedge (dd^cu)^{i-1}\right\rangle_\Kmc\right)
	\end{align*}
	in $D^k$ inductively from $i=1$ through $(k-1)n$.
\end{remark}

Propositions \ref{prop:compute_tangent_currents} and \ref{prop:convergence_V} can be interpreted as follows:
\begin{proposition}\label{prop:1stMainProp}
	Let $S_1, \ldots, S_k$ satisfy Condition $(\mathrm{I})$.
	Let $\varphi$ be a smooth test $(n-s, n-s)$-form with compact support on $D$. Let $\Phi$ be a smooth $(n-s, n-s)$-test form with compact support on $D^k$ such that $\Phi=\pi_k^*(\varphi)$ in a neighborhood of $\Delta$. 
	Then, the following integral is well-defined and its value is independent of the choice of $\Phi$.
	\begin{align}\label{eq:King_wedge}
		\left\langle \left(S_1\wedge \cdots\wedge S_k\right)_K, \varphi\right\rangle:=\int_{D^k\setminus \Delta} \pi_1^*S_1 \wedge \cdots \wedge \pi_k^*S_k\wedge\left(u(dd^cu)^{(k-1)n-1}\right)\wedge dd^c\Phi,
	\end{align}
	which defines a positive closed $(s, s)$-current on $D$.
\end{proposition}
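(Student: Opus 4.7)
The plan is to specialize Theorem \ref{thm:uniqueness_tangent_current} and Proposition \ref{prop:compute_tangent_currents} to the present setup. Set $U := D^k$, $V := \Delta$, so $N = kn$ and $V$ has codimension $(k-1)n$, and let $T := \pi_1^*S_1\wedge\cdots\wedge\pi_k^*S_k\in\Cc_s(D^k)$. Since $s\le n$, we have $p = s \le kn-(k-1)n$, so the hypothesis $p\le N-n_{\mathrm{codim}}$ of Proposition \ref{prop:compute_tangent_currents} is satisfied. By Definition \ref{defn:K-integrable_many}, Condition $(\mathrm{I})$ for $S_1,\ldots,S_k$ is precisely Condition $(\mathrm{K}-\max)$ for $T$ along $\Delta$.

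First I would invoke Theorem \ref{thm:uniqueness_tangent_current} applied to $T$. This produces a unique tangent current $T_\infty$ of $T$ along $\Delta$ whose $h$-dimension is minimal, and its shadow $T_\infty^h$ is a positive closed $(s,s)$-current on $\Delta$. Under the identification of $\Delta$ with $D$ via $\pi_k|_\Delta$ (equivalently, via any $\pi_j|_\Delta$), and using that $\pi_\Delta^*\varphi = \pi_k^*\varphi$ on $D^k \subset \overline{E}$ in the coordinates fixed at the beginning of the section, the current $T_\infty^h$ becomes a positive closed $(s,s)$-current on $D$. This is the candidate for $(S_1\wedge\cdots\wedge S_k)_K$.

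Next I would apply Proposition \ref{prop:compute_tangent_currents} to $T$ with codimension $n_{\mathrm{codim}} = (k-1)n$. For any smooth test $(n-s,n-s)$-form $\varphi$ on $D$ and any smooth compactly supported $(n-s,n-s)$-form $\Phi$ on $D^k$ with $\Phi = \pi_k^*\varphi$ in a neighborhood of $\Delta$, that proposition simultaneously supplies absolute convergence of the integral (via Condition $(\mathrm{I})$ together with Remark \ref{rmk:integrability}) and the identity
\begin{align*}
\left\langle T_\infty^h,\varphi\right\rangle = \int_{D^k\setminus\Delta} \pi_1^*S_1\wedge\cdots\wedge\pi_k^*S_k\wedge\bigl(u(dd^cu)^{(k-1)n-1}\bigr)\wedge dd^c\Phi.
\end{align*}
Since the left-hand side depends only on $\varphi$, the right-hand side is automatically independent of the choice of $\Phi$. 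Defining $(S_1\wedge\cdots\wedge S_k)_K := T_\infty^h$ then gives \eqref{eq:King_wedge} together with positivity, closedness and independence of $\Phi$.

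There is no genuine obstacle, since the analytic content was already carried out in Section \ref{sec:integrability}. The only bookkeeping concerns translating the coordinates $(x',x'')$ of Section \ref{sec:tangent_currents} into the present coordinates $(x_k,\, x_1-x_k,\ldots,x_{k-1}-x_k)$ on the normal bundle of $\Delta$ in $D^k$, and verifying that Condition $(\mathrm{I})$ as unpacked in the remark following Definition \ref{defn:K-integrable_many} is exactly the inductive integrability hypothesis that Lemma \ref{lem:basic_conv_KT} requires at each codimension up to $(k-1)n$, so that Propositions \ref{prop:compute_tangent_currents} and \ref{prop:convergence_V} apply verbatim.
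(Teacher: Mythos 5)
Your proposal is correct and follows essentially the same route as the paper, which states the proposition as a direct reinterpretation of Propositions \ref{prop:compute_tangent_currents} and \ref{prop:convergence_V} in the setting $U=D^k$, $V=\Delta$, $T=\pi_1^*S_1\wedge\cdots\wedge\pi_k^*S_k$, with Condition $(\mathrm{I})$ translating into Condition $(\mathrm{K}-\max)$ along $\Delta$ and the independence of $\Phi$ coming from the identity with $\left\langle\left\langle T\wedge (dd^cu)^{(k-1)n}\wedge\pi_\Delta^*\varphi\right\rangle_\Kmc, 1\right\rangle$. Your additional detour through Theorem \ref{thm:uniqueness_tangent_current} and the shadow is harmless and consistent with how the paper packages the same facts in Theorem \ref{thm:main_intersection_1}.
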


\begin{definition}\label{defn:King_Def_wedge}
	Let $S_1, \ldots, S_k$ satisfy Condition $(\mathrm{I})$. The wedge product of $S_1, \ldots, S_k$ in the sense of King's residue formula is defined by \eqref{eq:King_wedge} in Proposition \ref{prop:1stMainProp} and denoted by $\left(S_1\wedge\cdots\wedge S_k\right)_K$.
\end{definition}

Theorem \ref{thm:uniqueness_tangent_current} gives us the main theorem of this work on the intersection of positive closed currents, which shows that Definition \ref{defn:King_Def_wedge} is actually intrinsic.
\smallskip

\noindent{\bf Theorem \ref{thm:main_intersection_1}.} {\it
	Let $S_1, \ldots, S_k$ satisfy Condition $(\mathrm{I})$. Then, there exists a unique tangent current of $\pi_1^*S_1 \wedge \ldots\wedge \pi_k^*S_k$ along $\Delta$ with the minimal $h$-dimension. Its shadow is exactly the positive closed $(s, s)$-current $\left(S_1\wedge\ldots\wedge S_k\right)_K$ on $D$, where we have
\begin{align*}
	\left\langle \left(S_1\wedge \cdots\wedge S_k\right)_K, \varphi\right\rangle:=\int_{D^k\setminus \Delta} \pi_1^*S_1 \wedge \cdots \wedge \pi_k^*S_k\wedge\left(u(dd^cu)^{(k-1)n-1}\right)\wedge dd^c\Phi
\end{align*}
for a smooth test $(n-s, n-s)$-form $\varphi$ on $D$ and for a smooth $(n-s, n-s)$-form $\Phi$ with compact support on $D^k$ such that $\Phi=\pi_k^*\varphi$ in a neighborhood of $\Delta$. In particular, the Dinh-Sibony product of $S_1, \ldots, S_k$ is well-defined and equal to $\left(S_1\wedge \cdots \wedge S_k\right)_K$.
}
\medskip

The following proposition is straightforward.
\begin{proposition}
	Suppose that $S_1, \ldots, S_{j-1}, S_{j+1}, \ldots, S_k$ are smooth for some $j=1, \ldots, k$. Then $S_1, \ldots, S_k$ satisfy Condition $(\mathrm{I})$ and we have $\left(S_1\wedge \cdots \wedge S_k\right)_K=S_1\wedge\cdots\wedge S_k$.
\end{proposition}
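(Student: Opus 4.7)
The plan is to exploit the factorization of the total current. Let $\Psi := \bigwedge_{i\neq j}\pi_i^*S_i$, which is a smooth closed form on $D^k$ by the smoothness hypothesis, so that $T := \pi_1^*S_1\wedge\cdots\wedge\pi_k^*S_k = \pi_j^*S_j \wedge \Psi$. Every current appearing in Condition $(\mathrm{I})$ then takes the form of $\pi_j^*S_j$ wedged with a smooth closed form and a power of $dd^cu$, so the entire analysis reduces to understanding the interaction of the single positive closed current $\pi_j^*S_j$ with the Monge-Amp\`ere type currents $(dd^cu)^i$ along $\Delta$.

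To verify Condition $(\mathrm{I})$, I would work locally in the linear coordinates $(y, z) := (x_1-x_k, \ldots, x_{k-1}-x_k, x_k)$, so that $\Delta = \{y = 0\}$ and $u = \log|y|$ with $y \in \C^{(k-1)n}$. In these coordinates $\pi_j^*S_j$ is the pullback of $S_j$ under a smooth submersion, with coefficients depending only on $z$ when $j = k$ and only on $(y_j, z)$ when $j < k$. The currents $\pi_j^*S_j\wedge\Psi\wedge(dd^cu)^{i-1}$ are well-defined by the classical intersection theory of Bedford-Taylor and Demailly, since the smooth factor $\Psi$ absorbs the non-transverse directions. Their trace measures factor essentially as a locally finite measure in $z$ coming from $S_j$ times $(dd^c\log|y|)^{i-1}$ wedged with a bounded smooth form in $y$, and the local integrability of $u = \log|y|$ against such a product follows from the familiar fact that $\log|y|$ is locally integrable against $(dd^c\log|y|)^{i-1}\wedge\omega_{\rm euc}^{(k-1)n-i+1}$ on $\C^{(k-1)n}$ whenever $i-1 < (k-1)n$, which is precisely the range required by Condition $(\mathrm{I})$. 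Lemma \ref{lem:basic_conv_KT} is invoked at each inductive step to guarantee the existence of the successive $\Kmc$-products.

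For the equality, I would start from the integral representation of Theorem \ref{thm:main_intersection_1}, $\langle(S_1\wedge\cdots\wedge S_k)_K, \varphi\rangle = \int_{D^k\setminus\Delta} T \wedge u(dd^cu)^{(k-1)n-1}\wedge dd^c\Phi$, and regularize $u$ by $u_\theta^\Kmc$ so that the integrand becomes smooth off $\Delta$. Stokes' theorem transfers the Laplacian from $\Phi$ onto $u$, producing essentially $\int_{D^k} T\wedge (dd^cu_\theta^\Kmc)\wedge (dd^cu)^{(k-1)n-1}\wedge\Phi$, while the closedness of $T$ and of the intermediate $(dd^cu)$-factors kills the boundary terms. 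Letting $\theta \to 0$, the Monge-Amp\`ere mass concentrates onto $(dd^cu)^{(k-1)n} = [\Delta]$ by King's residue formula, so the integral becomes $\int_\Delta T \wedge \pi_k^*\varphi$. Under the identification $\Delta\cong D$, and because all $S_i$ with $i \ne j$ are smooth, this last integral equals $\int_D S_j\wedge\bigwedge_{i\ne j}S_i\wedge\varphi = \langle S_1\wedge\cdots\wedge S_k, \varphi\rangle$, yielding the claimed equality.

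The main technical obstacle is the rigorous justification of these Stokes and limit operations for currents with singularities along the high-codimensional set $\Delta$; the smoothness of $\Psi$ is decisive here, as it allows free redistribution of exterior derivatives and reduces the analysis to the classical behavior of $\log|y|$ in Euclidean space. An alternative route that sidesteps this computation is to identify the tangent current of $T$ along $\Delta$ directly: since $\Psi$ is smooth, the family $(A_\lambda)_*(\mathbf{1}_{D^k}T)$ admits an explicit limit whose shadow is readily seen to be the classical wedge product $S_1\wedge\cdots\wedge S_k$, and Theorem \ref{thm:main_intersection_1} then identifies this shadow with $(S_1\wedge\cdots\wedge S_k)_K$.
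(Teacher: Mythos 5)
The paper does not actually supply a proof of this proposition (it is dismissed as ``straightforward''), so your write-up has to stand on its own. The overall strategy is the right one, and the key quantitative input you identify --- local integrability of $\log|y|$ against $(dd^c\log|y|)^{i-1}\wedge\omega_{\rm euc}^{(k-1)n-i+1}$ on $\C^{(k-1)n}$ for $i\le (k-1)n$ --- is exactly what makes Condition $(\mathrm{I})$ hold in this range. But two steps are not justified as written and need repair.

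First, the well-definedness of $\pi_j^*S_j\wedge\Psi\wedge(dd^cu)^{i-1}$ does \emph{not} follow from the Bedford--Taylor/Demailly theory: the unbounded locus of $u$ is $\Delta$, of dimension $n$, while the bidimension of $T\wedge(dd^cu)^{i-1}$ is $kn-s-i+1$, so the Hausdorff-dimension hypothesis of \cite[Theorem III.4.5]{Demailly} fails as soon as $i\ge (k-1)n-s+2$; the smooth factor $\Psi$ does not ``absorb'' anything here. What actually saves the argument is a genuine tensor-product structure, and your coordinates $(y,z)=(x_1-x_k,\ldots,x_{k-1}-x_k,x_k)$ only exhibit it when $j=k$: for $j<k$ the current $\pi_j^*S_j$ depends on $y_j+z$, so its trace measure does not factor as ``a measure in $z$ times a form in $y$'' as you claim. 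The fix is to take $(y,x_j)$ as coordinates instead; then $u=\log|y|$ depends only on $y$ and $\pi_j^*S_j$ only on $x_j$, so $\pi_j^*S_j\wedge(dd^cu)^{i-1}$ is an honest double current $S_j\otimes(dd^c\log|y|)^{i-1}$, the $\Kmc$-limits in Lemma \ref{lem:basic_conv_KT} exist by Fubini, and the integrability of $u$ reduces to the one-variable fact you quote.

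Second, in the identification of the product, the step ``letting $\theta\to 0$ the Monge--Amp\`ere mass concentrates onto $[\Delta]$, so the integral becomes $\int_\Delta T\wedge\pi_k^*\varphi$'' cannot be deduced from King's formula alone: for a general $T$ satisfying Condition $(\mathrm{I})$ the limit of $T\wedge\Kmc_\theta^{(k-1)n}\wedge\Phi$ is by definition $(S_1\wedge\cdots\wedge S_k)_K$ paired with $\varphi$, and there is no naive restriction of $T$ to $\Delta$. Here again the tensor structure is what closes the argument: in the coordinates $(y,x_j)$ the pairing equals $\int_{x_j}S_j\wedge\bigl(\int_y\Kmc_\theta^{(k-1)n}(y)\wedge(\Psi\Phi)(y,x_j)\bigr)$, and the inner integral is a mollification of the smooth form $(\Psi\Phi)(\cdot,x_j)$ against an approximate identity of total mass $1$ supported on $\{e^{-1}|\theta|\le|y|\le e|\theta|\}$, converging locally uniformly to $(\Psi\Phi)(0,x_j)=\bigwedge_{i\ne j}S_i(x_j)\wedge\varphi(x_j)$. (Alternatively, once Condition $(\mathrm{I})$ is established, Theorem \ref{thm:conv_many} applied with the regularization on the smooth factors gives the equality immediately, since $\Lc^{\Kmc_\theta^n}(S_i)\to S_i$ locally uniformly for smooth $S_i$.) With these two corrections your argument is complete.
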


The following looks intuitively obvious and the proof is straightforward. So, we omit its proof. The general case where $S_3$ is not smooth needs to be further investigated.
\begin{proposition}\label{prop:smooth_associativity}
	Suppose that $S_1\in\Cc_{s_1}(D)$ and $S_2\in\Cc_{s_2}(D)$ satisfy Condition $(\mathrm{I})$  and $S_3\in \Cc_{s_3}(D)$ is smooth, where $1\le s_1+s_2+s_3\le n$. Then, $S_1$, $S_2$, $S_3$ satisfy Condition $(\mathrm{I})$ , $S_1$ and $\left(S_2\wedge S_3\right)$ satisfy Condition $(\mathrm{I})$ , and we have
	\begin{align*}
		\left(S_1\wedge S_2\wedge S_3\right)_K =\left(S_1\wedge \left( S_2\wedge S_3\right)\right)_K=\left(\left(S_1\wedge S_2\right)_K\wedge S_3\right)_K.
	\end{align*}
\end{proposition}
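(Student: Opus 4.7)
The proof has two parts: (a) verifying Condition $(\mathrm{I})$ for each of the three product configurations, and (b) establishing the three-way equality of the resulting currents. Throughout, smoothness of $S_3$ is used to transfer Condition $(\mathrm{I})$ across the configurations and to pass smooth factors through the $K$-product and tangent-current operations.

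For (a): since $S_3$ is smooth, $S_2 \wedge S_3$ on $D$ is a positive closed current whose local potentials differ from those of $S_2$ only by smooth functions, so Condition $(\mathrm{I})$ for $(S_1, S_2\wedge S_3)$ on $D^2$ follows directly from that of $(S_1, S_2)$. Condition $(\mathrm{I})$ for $((S_1\wedge S_2)_K, S_3)$ on $D^2$ is automatic by the preceding proposition (smooth second factor). For $(S_1, S_2, S_3)$ on $D^3$, use adapted coordinates $(x, w_1, w_2)$ near $\Delta_3$ (with $x = x_3$, $w_i = x_i - x_3$), in which $u_3 = \frac{1}{2}\log(|w_1|^2 + |w_2|^2)$ and $\pi_3^*S_3 = \pi_\Delta^*(S_3|_{\Delta_3})$ depends only on the base variable $x$. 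Locally bounding the smooth factor, the integrability of $u_3$ against the relevant $K$-product currents on $D^3$ reduces to integrability conditions involving only $(S_1, S_2)$ and the fiber variables $(w_1, w_2)$, which in turn follow from Condition $(\mathrm{I})$ for $(S_1, S_2)$.

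For (b): both non-trivial identities reduce to the central equality $(S_1\wedge S_2\wedge S_3)_K = (S_1\wedge S_2)_K \wedge S_3$, where the right-hand side is the ordinary wedge of a positive closed current with a smooth form. Indeed, $((S_1\wedge S_2)_K \wedge S_3)_K = (S_1\wedge S_2)_K \wedge S_3$ by the preceding proposition, while $(S_1 \wedge (S_2 \wedge S_3))_K = (S_1\wedge S_2)_K \wedge S_3$ follows from Proposition \ref{prop:T_to_measure} applied to $T = \pi_1^*S_1 \wedge \pi_2^*S_2$ on $D^2$ with the smooth positive closed form $\pi_2^*S_3$ (wedging with a suitable power of $\omega_{\rm euc}$ if needed to match bidegree): the shadow of $(T\wedge\pi_2^*S_3)_\infty$ along $\Delta_2$ is $T_\infty^h \wedge S_3 = (S_1\wedge S_2)_K \wedge S_3$. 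The central equality itself is established from the $k=3$ integral formula of Theorem \ref{thm:main_intersection_1}: in the adapted coordinates, the integral over $D^3$ factors by Fubini as an integral over the base $x$ of $S_3(x)$ times a fiber integral in $(w_1, w_2)$, and at each $x$ the fiber integral is recognized as the $D^2$ pairing defining $(S_1\wedge S_2)_K$ at $x$.

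\textbf{Main obstacle.} The crux is the fiber-wise identification of the $D^3$ integral with $(S_1\wedge S_2)_K$ at each base point $x \in \Delta_3 \cong D$. The function $u_3$ does not split as a sum of $D^2$-type functions, but after the $dd^c$ operations and using King's residue $(dd^cu_3)^{2n} = [\Delta_3]$, the smoothness of $S_3$ justifies the Fubini interchange and limit manipulations that reproduce the $D^2$ pairing on each fiber. This bookkeeping, though routine, is the essential technical step and reflects why the stated proof is ``straightforward and omitted.''
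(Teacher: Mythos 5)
The paper itself omits the proof of this proposition (it is declared straightforward), so I can only assess your proposal on its merits. Your overall architecture is sound: Condition $(\mathrm{I})$ for $\left((S_1\wedge S_2)_K, S_3\right)$ is indeed immediate from the preceding proposition; both displayed identities do reduce to the single equality $(S_1\wedge S_2\wedge S_3)_K=(S_1\wedge S_2)_K\wedge S_3$; and your treatment of $(S_1\wedge(S_2\wedge S_3))_K$ via (an intermediate-bidegree version of) Proposition \ref{prop:T_to_measure} is workable, since $(A_\lambda)_*(\pi_2^*S_3)\to\pi_\Delta^*S_3$ locally uniformly, so every tangent current of $T\wedge\pi_2^*S_3$ equals $T_\infty\wedge\pi_\Delta^*S_3$. (Alternatively, this identity drops out of the integral representation of Theorem \ref{thm:main_intersection_1} by choosing $\Phi'=\pi_2^*(S_3\wedge\varphi)$ near $\Delta$ and using $dd^c(S_3\wedge\varphi)=S_3\wedge dd^c\varphi$.) One phrasing should be repaired: ``local potentials of $S_2\wedge S_3$ differ from those of $S_2$ by smooth functions'' is not meaningful for bidegree $(s_2,s_2)$ with $s_2\ge 2$; the correct and easy justification for Condition $(\mathrm{I})$ of $(S_1, S_2\wedge S_3)$ is the local domination $S_3\le C\,\omega_{\rm euc}^{s_3}$, which bounds each integrand in Definition \ref{defn:Condition(I)} for the pair $(S_1,S_2\wedge S_3)$ by the corresponding one for $(S_1,S_2)$ at the same index $i$ (the extra indices being vacuous for bidegree reasons).

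The genuine gap is the verification of Condition $(\mathrm{I})$ for the triple $(S_1,S_2,S_3)$ on $D^3$, which is the only place where real content lives, and which you assert rather than prove. The $D^3$ condition requires integrability of $u_3=\frac12\log(|w_1|^2+|w_2|^2)$ against $K$-products involving $(dd^cu_3)^{i-1}$ for $i$ up to $(k-1)n=2n$ near the codimension-$2n$ diagonal, whereas Condition $(\mathrm{I})$ for the pair only controls powers of $dd^c\log|x_1-x_2|$ up to $n$ near the codimension-$n$ diagonal of $D^2$; these are different kernels on different spaces, and ``locally bounding the smooth factor'' only removes $\pi_3^*S_3$, leaving you with the assertion that $\pi_1^*S_1\wedge\pi_2^*S_2$ satisfies Condition $(\mathrm{K}-\max)$ along $\Delta_3$ in $D^3$ --- which is precisely what must be proved. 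The paper's only mechanism for trading the joint kernel $(dd^cu_3)^{2n}$ for the product kernel $\Kmc^{n,1}_\theta\wedge\Kmc^{n,2}_\theta$ (proof of Theorem \ref{thm:conv_many}, via Remark \ref{rmk:indep_shadow}) presupposes Condition $(\mathrm{I})$ for the triple, so invoking it here would be circular; you need an independent comparison (in the spirit of Lemma \ref{lem:T_vs_K}, or a slicing-in-$x_3$ argument reducing to Lelong-number finiteness of $\pi_1^*S_1\wedge\pi_2^*S_2$ at points of $\Delta_2$). Your Fubini argument for the central equality suffers from the same problem, as you concede that $u_3$ does not split; once Condition $(\mathrm{I})$ for the triple is actually established, a cleaner route is: Theorem \ref{thm:conv_many} with $k=3$ gives $S_1\wedge\Lc^{\Kmc^n_\theta}(S_2)\wedge\Lc^{\Kmc^n_\theta}(S_3)\to(S_1\wedge S_2\wedge S_3)_K$, while $S_1\wedge\Lc^{\Kmc^n_\theta}(S_2)\to(S_1\wedge S_2)_K$ weakly by the $k=2$ case and $\Lc^{\Kmc^n_\theta}(S_3)\to S_3$ in $C^\infty_{\loc}$ because $S_3$ is smooth, so the product converges to $(S_1\wedge S_2)_K\wedge S_3$.
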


\section{Tangent Currents and Intersections of Positive Closed Currents \\on Complex Manifolds}\label{sec:compact}

We apply our results to 
general complex manifolds including compact K\"ahler manifolds. In addition, we present a local version of superpotentials.
\subsection{Tangent currents and intersections of positive closed currents} Let $X$ be a complex manifold of dimension $N$ and $V\subset X$ a complex submanifold of codimension $n$. 
Let $T\in\Cc_p(X)$. 
\begin{definition}
	A decent cover of $V$ in $X$ is a collection $(U_i)_{i\in I}$ of open subsets of $X$ with the following properties:
	\begin{enumerate}
		\item each $U_i$ is biholomorphic to a domain $D_{V,i}$ in $\C^N$ via $\phi_i:U_i\to D_{V,i}$ such that $\phi_i(V\cap U_i)$ is a bounded simply connected linear complex submanifold of $\phi_i(U_i)$ with smooth boundary;
		\item we have $V\subset\bigcup_{i\in I}U_i$;
		\item for each $x\in V$, there are only finitely many $i\in I$ with $x\in U_i$.
	\end{enumerate}
	For simplicity, a decent cover of $X$ in $X$ is called a decent cover of $X$.
\end{definition}
We can always find a decent cover of $V$ in $X$. The statements in the rest of this section are straight from Remark \ref{rmk:independence_of_admissible_map} and corresponding statements in Sections \ref{sec:tangent_currents} and \ref{sec:integrability}.
\begin{theorem}
	Suppose that there exists a decent cover $(U_i)_{i\in I}$ of $V$ in $X$ such that on each $U_i$, Theorem \ref{thm:existence_tangent_currents} holds. Then, there exists a tangent current of $T$ along $V$.
\end{theorem}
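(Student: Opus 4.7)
The plan is to extract, via a diagonal argument, a single sequence $\lambda_m \to \infty$ along which the push-forwards $(A_{\lambda_m})_*(\tau_{i_k})_* T$ converge simultaneously on every chart of the decent cover, and then to invoke Remark \ref{rmk:independence_of_admissible_map} to glue the local limits into a globally defined positive closed current on $\overline{E}$. As a preliminary step I would pass to a countable subcover: since $V$ is second countable as a submanifold of $X$, from $(U_i)_{i\in I}$ I can extract a subfamily $(U_{i_k})_{k\in\N}$ still covering $V$ and still satisfying the three properties of a decent cover, in particular local finiteness. Each biholomorphism $\phi_{i_k}: U_{i_k}\to D_{V, i_k}\subset \C^N$ identifies $V\cap U_{i_k}$ with a bounded simply connected linear submanifold, and after a preliminary affine change of coordinates on $D_{V, i_k}$ one gets a local holomorphic admissible map $\tau_{i_k}:U_{i_k}\to E$ in the sense of Section \ref{sec:tangent_currents}.

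On each patch the hypothesis that Theorem \ref{thm:existence_tangent_currents} applies produces the local existence. Concretely, for every $k$ the family $\bigl((A_\lambda)_*(\tau_{i_k})_* T\bigr)_{1\ll|\lambda|}$ has locally uniformly bounded mass on $\pi_V^{-1}(V\cap U_{i_k})\cap E$. I would fix an exhaustion of this set by relatively compact open subsets $(E_{k,l})_{l\in\N}$ and, for each $k$, use weak compactness of positive currents of bounded mass to select a sequence $\lambda_m^{(k)}\to\infty$ along which $(A_{\lambda_m^{(k)}})_*(\tau_{i_k})_* T$ converges on every $E_{k,l}$; this is exactly the argument used in the proof of Theorem \ref{thm:existence_tangent_currents}. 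A standard Cantor diagonal extraction then produces a single sequence $\lambda_m\to\infty$ such that $(A_{\lambda_m})_*(\tau_{i_k})_* T$ converges on $\pi_V^{-1}(V\cap U_{i_k})\setminus H_\infty$ for every $k$, with limit $T_\infty^{(k)}$.

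Finally, Remark \ref{rmk:independence_of_admissible_map} asserts that tangent currents are independent of the choice of holomorphic admissible map wherever the limits exist; applied to each nonempty overlap $\pi_V^{-1}(V\cap U_{i_k}\cap U_{i_{k'}})$ along the common sequence $(\lambda_m)$, this forces $T_\infty^{(k)}=T_\infty^{(k')}$ on the overlap. The local pieces therefore glue to a positive closed current $T_\infty$ on $E$, which extends trivially across $H_\infty$ to the sought tangent current on $\overline{E}$. The main obstacle in this plan is precisely the compatibility on overlaps: one must guarantee that a single sequence $(\lambda_m)$ produces simultaneous limits on all charts, so that Remark \ref{rmk:independence_of_admissible_map} can be invoked everywhere at once. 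The local finiteness of the decent cover keeps this manageable, reducing the overlap check at each point of $V$ to finitely many pairwise comparisons, and the diagonal procedure above provides the common subsequence.
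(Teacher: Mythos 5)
Your proposal is correct and follows essentially the route the paper intends: the paper states that the results of Section 5 follow directly from Remark \ref{rmk:independence_of_admissible_map} together with the corresponding local statements, which in this case means exactly your combination of the localized mass bounds from Theorem \ref{thm:existence_tangent_currents} (via Proposition \ref{prop:tangent_2_shadows}), a diagonal extraction over a countable subcover and an exhaustion by relatively compact sets, and the independence of admissible maps to identify the local limits on overlaps. The only point worth flagging is that you correctly isolate the one genuine issue — producing a single sequence $(\lambda_m)$ valid on all charts simultaneously, as Definition \ref{def:tangent_current} requires — and resolve it in the standard way.
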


We can also describe the $h$-dimension in terms of the local language. Below is an example induced from Proposition \ref{prop:h-dim}.
\begin{proposition}
	Let $T$ admit a tangent current $T_\infty$ along $V$. Suppose that there exist an $m\in \{\max\{n-p, 0\}, \ldots, \min\{N-p, n\}\}$ and a decent cover $(U_i)_{i\in I}$ of $V$ in $X$ such that the current $T$ satisfies Condition $(\mathrm{K})$ along $V$ up to codimension $m$ on $U_i$ for each $i\in I$. Then, the $h$-dimension of $T_\infty$ is at most $N-p-m$. In particular, if $T$ satisfies Condition $(\mathrm{K}-\max)$ along $V$ on $U_i$ for each $i\in I$, then the $h$-dimension of $T_\infty$ is minimal. 
\end{proposition}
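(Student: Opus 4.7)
The plan is to reduce the global statement to Proposition \ref{prop:h-dim} on each chart of the decent cover via the invariance property of tangent currents stated in Remark \ref{rmk:independence_of_admissible_map}. The key observation is that the $h$-dimension is characterized by a purely local non-vanishing condition on the total space of the normal bundle, so it suffices to verify the bound $h_T\le N-p-m$ on each $\pi_V^{-1}(V\cap U_i)$ separately and then patch over $i\in I$.

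First, I would fix $i\in I$ and transport the problem to the chart $D_{V,i}\subset \C^N$ via the biholomorphism $\phi_i:U_i\to D_{V,i}$. Let $V_i:=\phi_i(V\cap U_i)$, which by the definition of a decent cover is a bounded simply connected linear submanifold of $\phi_i(U_i)$ with smooth boundary, so the setup of Section \ref{sec:tangent_currents} applies verbatim. The composition of $\phi_i$ with the standard inclusion into the (trivialized) normal bundle of $V_i$ is a holomorphic admissible map in the sense of Section \ref{sec:tangent_currents}. By Remark \ref{rmk:independence_of_admissible_map}, the tangent current obtained from $(\phi_i)_*T$ along $V_i$ in the chart agrees, on $\pi_{V_i}^{-1}(V_i)\setminus H_\infty$, with the restriction of $T_\infty$ to $\pi_V^{-1}(V\cap U_i)$ under the natural identification of normal bundles; in particular the two $h$-dimensions match.

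Second, the hypothesis that $T$ satisfies Condition $(\mathrm{K})$ along $V$ up to codimension $m$ on $U_i$ is already a local statement on the chart, so Proposition \ref{prop:h-dim} applied to $(\phi_i)_*T$ on $D_{V,i}$ directly yields that the $h$-dimension of $T_\infty$ over $V\cap U_i$ is at most $N-p-m$. Equivalently, $T_\infty\wedge \pi_V^*(\omega_V^{N-p-m+1})=0$ on $\pi_V^{-1}(V\cap U_i)$. Since $\{V\cap U_i\}_{i\in I}$ covers $V$ and vanishing of a current is a local property, we conclude $T_\infty\wedge \pi_V^*(\omega_V^{N-p-m+1})=0$ on $\pi_V^{-1}(V)$, i.e.\ $h_T\le N-p-m$ globally. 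The ``in particular'' clause for Condition $(\mathrm{K}-\max)$ is just the case $m=\min\{N-p,n\}$, which forces $h_T\le \max\{N-p-n,0\}$, the minimum allowed value.

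There is no substantive obstacle beyond careful bookkeeping. The only minor subtlety is that the K\"ahler form $\omega_V$ appearing in the definition of $h$-dimension is intrinsic to $V$ while Proposition \ref{prop:h-dim} is phrased with a Euclidean K\"ahler form in the chart; but any two K\"ahler forms on a relatively compact open subset of $V$ are comparable from both sides, so $\omega_V^{h}$ vanishes against $T_\infty$ on $\pi_V^{-1}(V\cap U_i)$ if and only if the chart-K\"ahler-power does, making the localization harmless. The entire argument rests on the invariance encoded in Remark \ref{rmk:independence_of_admissible_map}, which is precisely the reason the author advertises the proposition as being ``straight from'' that remark together with the results of Sections \ref{sec:tangent_currents} and \ref{sec:integrability}.
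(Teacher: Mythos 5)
Your proposal is correct and follows exactly the route the paper intends: the paper gives no separate proof, stating only that the results of Section \ref{sec:compact} follow directly from Remark \ref{rmk:independence_of_admissible_map} together with the corresponding local statements, here Proposition \ref{prop:h-dim}. Your localization to each chart of the decent cover, identification of tangent currents via the invariance under holomorphic admissible maps, and patching of the vanishing $T_\infty\wedge\pi_V^*(\omega_V^{N-p-m+1})=0$ over the cover is precisely that argument, and the remark about comparability of K\"ahler forms is a correct (if minor) point of bookkeeping.
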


From Theorem \ref{thm:uniqueness_tangent_current}, we obtain the following:
\begin{theorem}
	 Suppose that there exists a decent cover $(U_i)_{i\in I}$ of $V$ in $X$ such that on each $U_i$, $T$ satisfies Condition $(\mathrm{K}-\max)$ along $V$. Then, there exists a unique tangent current $T_\infty$ of $T$ along $V$ and its $h$-dimension is minimal. Its local representation is given as in Theorem \ref{thm:uniqueness_tangent_current}.
\end{theorem}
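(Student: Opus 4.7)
The plan is to reduce the global statement to the local statement already established in Theorem \ref{thm:uniqueness_tangent_current} via the decent cover, using Remark \ref{rmk:independence_of_admissible_map} as the glue. First, for each $i\in I$, the biholomorphism $\phi_i:U_i\to D_{V,i}$ identifies $T|_{U_i}$ with a positive closed $(p,p)$-current on the domain $D_{V,i}\subset\C^N$ satisfying Condition $(\mathrm{K}-\max)$ along the linear submanifold $\phi_i(V\cap U_i)$. Since $\phi_i$ can be composed with a linear change of coordinates so that its differential along $V\cap U_i$ becomes the identity in normal fiber directions, it produces a holomorphic admissible map in the sense of Section \ref{sec:tangent_currents}. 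Theorem \ref{thm:uniqueness_tangent_current} then yields, on each such chart, a unique positive closed current $T_{\infty,i}$ on $\pi_V^{-1}(V\cap U_i)\subset E$ with minimal $h$-dimension, obtained as $\lim_{k\to\infty}(A_{\lambda_k^{(i)}})_*(\phi_i)_*(T|_{U_i})$ for some sequence $\lambda_k^{(i)}\to\infty$, together with the explicit integral representation of its shadow.

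Second, I would globalize. The cover $(\pi_V^{-1}(V\cap U_i))_{i\in I}$ of $\pi_V^{-1}(V)\subset E$ admits, by the local finiteness in the definition of a decent cover, a countable locally finite refinement, so I may assume $I$ is countable. Applying a diagonal extraction across $I$ to the sequences $\lambda_k^{(i)}$, I obtain a single sequence $\lambda_k\to\infty$ such that $(A_{\lambda_k})_*(\phi_i)_*(T|_{U_i})$ converges on $\pi_V^{-1}(V\cap U_i)$ for every $i\in I$; this uses only that the masses on compact sets are uniformly bounded, which is ensured by the local Condition $(\mathrm{K}-\max)$ together with Lemma \ref{lem:T_vs_K} and Theorem \ref{thm:existence_tangent_currents}. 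Remark \ref{rmk:independence_of_admissible_map} guarantees that the limits agree on the overlaps $\pi_V^{-1}(V\cap U_i\cap U_j)$, so they patch into a single positive closed current $T_\infty$ on $E$ satisfying Definition \ref{def:tangent_current}.

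Third, I would prove uniqueness and minimality of the $h$-dimension. Both the $h$-dimension and the shadow are defined through integration against compactly supported test forms on open subsets of $V$, hence are local properties. Proposition \ref{prop:h-dim}, applied within each chart $U_i$, gives that the $h$-dimension of $T_\infty$ over $V\cap U_i$ is at most $\max\{N-p-n,0\}$; varying $i$ shows the global $h$-dimension is minimal. For uniqueness, let $T_\infty'$ be another tangent current of $T$ along $V$; it also has minimal $h$-dimension, and on each chart its shadow is pinned down by the local integral representation of Theorem \ref{thm:uniqueness_tangent_current}. Since the shadows of $T_\infty$ and $T_\infty'$ coincide on each $V\cap U_i$ and the $U_i$ cover $V$, the two shadows agree globally. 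Proposition \ref{prop:uniqueness_from_shadow}, which is itself a local-to-global statement through its reliance on \cite[Lemma~3.4]{DS18}, then yields $T_\infty=T_\infty'$. The asserted local representation on each $U_i$ is inherited verbatim from Theorem \ref{thm:uniqueness_tangent_current}.

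The main obstacle I anticipate is neither the local analysis nor the diagonalization, which are standard, but verifying the cocycle condition on overlaps: one must confirm that the local tangent currents obtained through possibly distinct admissible trivializations $\phi_i$ and $\phi_j$ genuinely produce the same current on $\pi_V^{-1}(V\cap U_i\cap U_j)$. This compatibility is precisely the content of \cite[Proposition~2.5]{KV} recorded in Remark \ref{rmk:independence_of_admissible_map}, and its availability is what turns the argument into a routine patching procedure rather than a delicate geometric construction.
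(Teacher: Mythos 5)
Your proposal is correct and follows essentially the same route as the paper, which derives this theorem directly from Remark \ref{rmk:independence_of_admissible_map} together with the local results of Sections \ref{sec:tangent_currents} and \ref{sec:integrability} (in particular Theorem \ref{thm:uniqueness_tangent_current}, Proposition \ref{prop:h-dim} and Proposition \ref{prop:uniqueness_from_shadow}). You have simply made explicit the diagonal extraction and the patching on overlaps that the paper leaves implicit.
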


In particular, the intersection of positive closed currents can also be described. To avoid potential confusion, in this section, we denote by $Y$ a complex manifold of dimension $n_Y$, on which we consider the intersection of positive closed currents. 
\begin{theorem}\label{thm:intersection_cpt}
	Let $S_i\in \Cc_{s_i}(Y)$ for $i=1, \ldots, k$, where $1\le s:=s_1+\cdots+s_k\le n_Y$. Let $(U_j)_{j\in J}$ be a locally finite cover of $Y$ such that each $U_j$ is biholomorphic to a bounded simply connnected domain in $\C^{n_Y}$ with smooth boundary. Suppose that on each $U_j$, $S_1, \ldots, S_k$ satisfy Condition $(\mathrm{I})$. Then, the Dinh-Sibony product of $S_1, \ldots, S_k$ is well-defined. Its local representation is as in Theorem \ref{thm:main_intersection_1}.
\end{theorem}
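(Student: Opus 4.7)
The plan is to glue the local results of Theorem \ref{thm:main_intersection_1} across the charts $(U_j)_{j\in J}$, using Remark \ref{rmk:independence_of_admissible_map} to ensure compatibility on overlaps. First, observe that $(U_j^k)_{j\in J}$ covers the diagonal $\Delta\subset Y^k$: for any $y\in Y$, choose $j$ with $y\in U_j$, and then $(y,\ldots,y)\in U_j^k$. Each biholomorphism $\phi_j:U_j\to D_j\subset\C^{n_Y}$ extends to a product biholomorphism $\phi_j^k:U_j^k\to D_j^k$ carrying $\Delta\cap U_j^k$ onto the diagonal $\Delta_{D_j}$ of $D_j^k$. After the same linear change of coordinates on $D_j^k$ as used in Section \ref{sec:intersection} (which places $\Delta_{D_j}$ in the standard horizontal/fiber form of Section \ref{sec:tangent_currents}), the map $\phi_j^k$ becomes a holomorphic admissible map from a neighborhood of $\Delta\cap U_j^k$ in $Y^k$ into the normal bundle $E$ of $\Delta$ in $Y^k$.

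Next, I would transport the problem to each $D_j^k$. Condition $(\mathrm{I})$ for $S_1,\ldots,S_k$ on $U_j$ becomes Condition $(\mathrm{I})$ for the pushed-forward currents $(\phi_j)_*S_1,\ldots,(\phi_j)_*S_k$ on $D_j$, so Theorem \ref{thm:main_intersection_1} applies on each $D_j^k$ and yields a unique tangent current of $\pi_1^*(\phi_j)_*S_1\wedge\cdots\wedge\pi_k^*(\phi_j)_*S_k$ along $\Delta_{D_j}$ with minimal $h$-dimension, together with the integral representation of its shadow. Pulling back through $\phi_j^k$ produces a positive closed current $T_\infty^{(j)}$ on $\pi_\Delta^{-1}(\Delta\cap U_j^k)\subset E$ and, on $\Delta\cap U_j^k$ (identified with $U_j$ via $\pi_\Delta$), a positive closed $(s,s)$-current given by the local formula of Theorem \ref{thm:main_intersection_1}.

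The gluing is then provided by Remark \ref{rmk:independence_of_admissible_map} (i.e., \cite[Proposition 2.5]{KV}): on any overlap $U_i\cap U_j$, the holomorphic admissible maps $\phi_i^k$ and $\phi_j^k$ are defined in a common neighborhood of $\Delta\cap U_i^k\cap U_j^k$, so the tangent currents they produce must agree, giving $T_\infty^{(i)}=T_\infty^{(j)}$ on $\pi_\Delta^{-1}(\Delta\cap U_i^k\cap U_j^k)$. The local finiteness of $(U_j)_{j\in J}$ then yields a well-defined global positive closed current $T_\infty$ on $E$. Both the minimality of the $h$-dimension and the uniqueness among tangent currents with minimal $h$-dimension are local properties on the charts $U_j^k$, so they descend from the local uniqueness of Theorem \ref{thm:main_intersection_1} to the global object $T_\infty$. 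By Definition \ref{defn:DSproduct}, the shadow of $T_\infty$ is the Dinh-Sibony product of $S_1,\ldots,S_k$, and its restriction to each $U_j$ is the integral representation inherited from Theorem \ref{thm:main_intersection_1}.

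The step I expect to be the main obstacle is the overlap compatibility: one has to verify that the two a priori distinct biholomorphisms $\phi_i^k$ and $\phi_j^k$, once normalized to the standard form near the diagonal, genuinely qualify as holomorphic admissible maps in the sense used in Remark \ref{rmk:independence_of_admissible_map}, so that the cited invariance in \cite{KV} indeed applies. Once this normalization is checked, the remaining pieces — transferring Condition $(\mathrm{I})$ under $\phi_j$, applying Theorem \ref{thm:main_intersection_1} on $D_j^k$, and globalizing via local finiteness — are routine.
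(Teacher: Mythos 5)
Your proposal is correct and follows exactly the route the paper intends: the paper gives no separate proof, stating only that Theorem \ref{thm:intersection_cpt} is ``straight from Remark \ref{rmk:independence_of_admissible_map} and corresponding statements in Sections \ref{sec:tangent_currents} and \ref{sec:integrability},'' i.e.\ apply Theorem \ref{thm:main_intersection_1} on each chart and glue using the independence of holomorphic admissible maps. Your writeup supplies the details of that localization and gluing, and correctly isolates the only point needing verification (that the normalized chart maps are admissible in the sense of \cite[Proposition 2.5]{KV}).
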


\begin{remark}\label{rmk:singularity_K}
	As an analogy, one can view the Green quasi-potential kernel in \cite{DS09} as a holomorphic image of a quasi-potential of $\delta_0-\eta^n=0$ on $\P^n$, where $\delta_0$ is the Dirac measure at the origin $0\in\C^n\subset \P^n$ and $\eta$ is a smooth positive closed $(1, 1)$ form cohomologous to a generic hyperplane. For the coordinates $[z_1: \cdots: z_n: t]\in\P^n$, King's residue formula says that $\delta_0=\left(dd^c u\right)^n$ in the sense of currents, where $u=\log\sqrt{\sum_{i=1}^n|z_i|^2}$. We have $\delta_0-\eta^n=\sum_{i=1}^n\left(dd^cu-\eta\right)\wedge(dd^cu)^{i-1}\wedge\eta^{n-i}$ and its quasi-potential is the sum of $\widetilde{u}(dd^cu)^{i-1}\wedge\eta^{n-i}$ for $i=1, \ldots, n$, where $\widetilde{u}$ is a quasi-potential of $dd^cu-\eta$ so that $u - \widetilde{u}$ is a smooth function on $\P^n$. Condition $(\mathrm{I})$ may be regarded as a condition in terms of the collection $\left(\widetilde{u}(dd^cu)^{i-1}\wedge\eta^{n-i}\right)_{i=1, \ldots, n}$. 
	We expect that the collection $\left(\Fc_{S, j}^i\right)_{i\in\{1, \ldots, n\}, j\in J}$ of local potential superfunctions, introduced below, is to a superpotential as the collection of locally defined plurisubharmonic functions is to a globally defined quasi-plurisubharmonic function.
\end{remark}

\begin{remark}
	Other results such as Theorems \ref{thm:h_dim_K}, \ref{thm:shadow_general} and \ref{thm:K'} in Sections \ref{sec:tangent_currents} and \ref{sec:integrability} also generalize to general complex manifolds in the same way as above.
\end{remark}

\subsection{Local potential superfunctions}\label{subsec:superfunctions}
We introduce a collection of functions associated with a positive closed current, which resembles superpotentials on complex projective spaces. The difference is that its regularity is not as strong as that of superpotentials on complex projective spaces since it is not clear how Stokes' theorem can be applied in our local approach.\medskip

Let $Y$ be a complex manifold of dimension $n_Y$ as previously. Let $\mathfrak{Y}=Y_1\times Y_2$ denote the product of two copies of $Y$ and $\pi_l:\mathfrak{Y}\to Y_l$ the canonical projection onto the $l$-th factor for $l=1, 2$. Let $\Delta$ denote the diagonal submanifold of $\mathfrak{Y}$. Let $\left(U_j^i\right)_{j\in J}$ for $i=0, \ldots, n$ be coverings of $Y$ such that each $U_j^i$ is biholomorphic to a bounded simply connected domain in $\C^n$ with smooth boundary and that $\overline{U_j^i}\subset U_j^{i-1}$ for each $j\in J$ and for each $i=1, \ldots, n$. For each $j\in J$, we use the same coordinate chart for $U_j^0, \ldots, U_j^{n}$ and on this coordinate chart, we consider the Euclidean K\"ahler form $\omega_{\rm euc}$ and the function $u$ as defined in Section \ref{sec:intersection} when $k=2$. For each $i=1, \ldots, n$ and for each $j\in J$, let $\chi_j^i:U_j^{i-1}\times U_j^{i-1}\to [0,1]$ be a smooth function with compact support satisfying $\overline{U_j^i\times U_j^i}\subset\{\chi_j^i\equiv 1\}$. For notational convenience, we denote by $[U_j^i]$ and $[\chi_j^i]$ the families $\left(U_j^i\right)_{i\in \{0, 1, \ldots, n\}, j\in J}$ and $\left(\chi_j^i\right)_{i\in\{1, \ldots, n\}, j\in J}$, respectively. We can always find such $[U_j^i]$ and $[\chi_j^i]$.
\medskip

Let $S\in\Cc_s(Y)$. Let $j\in J$. We consider the following superfunctions, by which we mean functions whose domain and range are a subset of the space of positive closed currents and a subset of the set of extended real numbers, respectively. For $R\in\Cc_{n-s+1}(Y)$, we define
\begin{align*}
	\Fc_{S, j, \theta}^1(R):=\int_{U^0_j\times U^0_j} \chi_j^1u_\theta^\Kmc  \pi_1^*S\wedge \pi_2^*(R\wedge \omega_{\rm euc}^{n-1}).
\end{align*}
This family of functions is decreasing as $|\theta|$ decreases to $0$. We allow $-\infty$ as value and we take the limit as $\theta\to 0$. We denote by $\Fc_{S, j}^1$ the limit function and by $\Dc_{S, j}^1:=\{R\in\Cc_{n-s+1}(Y): \Fc_{S, j}^1(R)>-\infty\}$ its domain of finite values. For $R\in\Dc_{S, j}^1$, according to Lemma \ref{lem:basic_conv_KT}, $\big\langle \pi_1^*S\wedge \pi_2^*\left(R\wedge\omega_{\rm euc}^{n-2}\right)\wedge dd^cu\big\rangle_\Kmc$ is well-defined on $U_j^1\times U_j^1$. Then, for $R\in \Dc_{S, j}^1$ (not on the whole space of $\Cc_{n-s+1}(Y)$), we define
\begin{align*}
	\Fc_{S, j, \theta}^2(R):=\int_{U_j^1\times U_j^1} \chi_j^2u_\theta^\Kmc  \left\langle\pi_1^*S\wedge \pi_2^*\left(R\wedge \omega_{\rm euc}^{n-2}\right)\wedge dd^cu \right\rangle_\Kmc,
\end{align*}
which decreases as $|\theta|$ decreases. As previously, we take the limit and denote by $\Fc_{S, j}^2$ the limit function on $\Dc_{S, j}^1$; we define its associated set $\Dc_{S, j}^2$. Inductively, we define $\Fc_{S, j, \theta}^i$, $\Fc_{S, j}^i$ and $\Dc_{S, j}^i$ for $i=1, \ldots, n$. For $i=1, \ldots, n$ and for $R\in \Dc_{S, j}^i$, we have
\begin{align*}
	\Fc_{S, j, \theta}^i(R)&=\int_{U_j^{i-1}\times U_j^{i-1}} \chi_j^iu_\theta^\Kmc  \left\langle\pi_1^*S\wedge \pi_2^*\left(R\wedge \omega_{\rm euc}^{n-i}\right)\wedge (dd^cu)^{i-1} \right\rangle_\Kmc\\
	&=\int_{\left(U_j^{i-1}\times U_j^{i-1}\right)\setminus \Delta} \chi_j^iu_\theta^\Kmc\left(dd^cu\right)^{i-1}\wedge\pi_1^*S\wedge \pi_2^*\left(R\wedge \omega_{\rm euc}^{n-i}\right)
\end{align*}
and
\begin{align*}
	\Fc_{S, j}^i(R)&=\int_{U_j^{i-1}\times U_j^{i-1}} \chi_j^iu\left\langle\pi_1^*S\wedge \pi_2^*\left(R\wedge \omega_{\rm euc}^{n-i}\right)\wedge (dd^cu)^{i-1} \right\rangle_\Kmc\\
	&=\int_{\left(U_j^{i-1}\times U_j^{i-1}\right)\setminus \Delta} \chi_j^iu\left(dd^cu\right)^{i-1}\wedge\pi_1^*S\wedge \pi_2^*\left(R\wedge \omega_{\rm euc}^{n-i}\right).
\end{align*}

Each $\Fc^i_{S, j, \theta}$ and $\Fc^i_{S, j}$ have the following regularity property.
\begin{proposition}\label{prop:usc_local}
	For $\theta\in\C^*$ with $|\theta|\ll1$, for $i\in\{1, \ldots, n\}$ and for $j\in J$, $\Fc^i_{S, j, \theta}$ and $\Fc^i_{S, j}$ are upper-semicontinuous on $\Dc_{S, j}^i$ in the following sense. Let $R\in\Dc_{S,j}^i$ be a current and $\left(R_k\right)_{k\in\N}\subset\Dc^i_{S,j}$ be a sequence of currents such that $R_k\to R$ as $k\to\infty$ in the sense of currents. Then, we have
	\begin{align*}
		\limsup_{k\to \infty}\Fc_{S, j, \theta}^i(R_k)\le \Fc_{S, j, \theta}^i(R)\quad\textrm{ and }\quad\limsup_{k\to \infty}\Fc_{S, j}^i(R_k)\le \Fc_{S, j}^i(R).
	\end{align*}
\end{proposition}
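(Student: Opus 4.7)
The plan is two-step: first I would establish upper semicontinuity of $\Fc^i_{S, j, \theta}$ for each fixed $\theta$ with $|\theta|\ll 1$, and then deduce the same property for $\Fc^i_{S, j}$ as an infimum of USC functions. The second reduction is immediate: by construction, the family $\{\Fc^i_{S, j, \theta}(R)\}_{\theta}$ is monotone decreasing in $|\theta|$ (as $u^\Kmc_\theta$ decreases to $u$ as $|\theta|\to 0$ and is integrated against a positive measure), so $\Fc^i_{S, j} = \inf_\theta \Fc^i_{S, j, \theta}$, and an infimum of upper semicontinuous functions is upper semicontinuous.

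For the first step, at a fixed $\theta$, I would approximate the interior $\Kmc$-product using Lemma \ref{lem:basic_conv_KT}, writing
\begin{align*}
\Fc^i_{S, j, \theta}(R) = \lim_{|\theta'|\to 0} \int \chi^i_j\, u^\Kmc_\theta \cdot \pi_1^*S \wedge \pi_2^*\bigl(R \wedge \omega_{\rm euc}^{n-i}\bigr) \wedge \Kmc^{i-1}_{\theta'}.
\end{align*}
For each fixed $\theta'$, the form $\chi^i_j\, u^\Kmc_\theta \cdot \Kmc^{i-1}_{\theta'}$ is smooth with compact support in $U^{i-1}_j\times U^{i-1}_j$ because $dd^c u^\Kmc_{\theta'}$ vanishes on a neighborhood of $\Delta$, restricting the classical powers $(dd^c u)^{i-2}$ appearing in $\Kmc^{i-1}_{\theta'}$ to the region where they are smooth. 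Hence the $\theta'$-approximant is a continuous linear functional of $R$: if $R_k \to R$ as currents on $Y$, then $\pi_1^*S \wedge \pi_2^*(R_k \wedge \omega_{\rm euc}^{n-i}) \to \pi_1^*S \wedge \pi_2^*(R \wedge \omega_{\rm euc}^{n-i})$ as currents on $\mathfrak{Y}$ by the continuity of the tensor product with a fixed factor, and pairing with a fixed smooth compactly supported test form is then continuous.

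To upgrade pointwise convergence in $\theta'$ to USC of the limit functional, I would apply Stokes' theorem iteratively to transfer the $dd^c u$ factors onto the smooth factor $\chi^i_j u^\Kmc_\theta$. Each integration by parts (valid because $\chi^i_j u^\Kmc_\theta$ has compact support in $U^{i-1}_j\times U^{i-1}_j$ and the remaining factors are $d$- and $d^c$-closed off $\Delta$) replaces a $dd^c u$ by $u$ paired against $dd^c$ of the evolving smooth factor. After $(i-1)$ iterations, $\Fc^i_{S, j, \theta}(R)$ is representable as a sum of integrals whose integrands involve powers of $u$ (plurisubharmonic, hence upper semicontinuous) wedged with smooth compactly supported forms against $\pi_1^*S \wedge \pi_2^*(R \wedge \omega_{\rm euc}^{n-i})$. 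A standard Fatou-type argument for integration of USC integrands against weakly convergent positive closed currents then yields $\limsup_k \Fc^i_{S, j, \theta}(R_k) \le \Fc^i_{S, j, \theta}(R)$.

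The principal obstacle is rigorously executing the iterated integration by parts near the singular locus $\Delta$: boundary terms from Stokes' theorem must be shown to vanish under a cutoff limit, which requires the integrability conditions defining $\Dc^i_{S, j}$ and an induction on $i$ to maintain enough continuity of the intermediate $\Kmc$-products against smooth test forms so that each subsequent integration by parts is justified.
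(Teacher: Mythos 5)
Your first step---deducing upper semicontinuity of $\Fc^i_{S,j}$ from that of $\Fc^i_{S,j,\theta}$ because the family decreases as $|\theta|\downarrow 0$---is exactly what the paper does. The gap is in your second step, for $i\ge 2$. The iterated integration by parts cannot deliver what you claim: one integration by parts converts $\int \chi^i_j u^\Kmc_\theta\, T\wedge(dd^cu)^{i-1}$ into $\int u\, dd^c(\chi^i_j u^\Kmc_\theta)\wedge T\wedge(dd^cu)^{i-2}$, but the residual factor $(dd^cu)^{i-2}$ does not disappear under further integrations by parts. Indeed, for a smooth $d$- and $d^c$-closed form $\psi$ one has $dd^c(u\psi)=dd^cu\wedge\psi$ off $\Delta$, so transferring another $dd^cu$ onto the ``evolving factor'' $u\psi$ merely reproduces the same current; you never arrive at integrands of the form (USC function)$\,\cdot\,$(smooth compactly supported form)$\,\wedge\,\pi_1^*S\wedge\pi_2^*(R\wedge\omega_{\rm euc}^{n-i})$. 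Consequently the Fatou-type step has no continuous-in-$R$ reference measure against which to pass to the limit: the map $R\mapsto \pi_1^*S\wedge\pi_2^*(R\wedge\omega_{\rm euc}^{n-i})\wedge(dd^cu)^{j}$ for $j\ge 1$ is precisely the object whose continuity is in question, and your closing paragraph defers this (``the principal obstacle'') without resolving it. That obstacle is not a technicality to be patched; it is the entire content of the statement for $i\ge 2$.

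The paper's proof circumvents continuity altogether. After discarding the trivial case $\limsup_k\Fc^i_{S,j,\theta}(R_k)=-\infty$, the assumed lower bound $-M_R$ gives a uniform mass bound for the negative measures $\chi^i_j u^\Kmc_\theta\left\langle\pi_1^*S\wedge\pi_2^*(R_k\wedge\omega_{\rm euc}^{n-i})\wedge(dd^cu)^{i-1}\right\rangle_\Kmc$, all supported in the fixed compact set $\supp\chi^i_j$. Extract a weak-$*$ limit $L$ of a subsequence and decompose $L=\mathbf{1}_\Delta L+\mathbf{1}_{(U^{i-1}_j\times U^{i-1}_j)\setminus\Delta}L$. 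On the open set off $\Delta$ the form $(dd^cu)^{i-1}$ is smooth, so there the convergence $R_k\to R$ does pass through the wedge product and identifies $\mathbf{1}_{(U^{i-1}_j\times U^{i-1}_j)\setminus\Delta}L$ with the corresponding measure for $R$ (which carries no mass on $\Delta$ since $R\in\Dc^i_{S,j}$); the remaining piece $\mathbf{1}_\Delta L$ is negative and can only decrease the total mass, which yields $\limsup_k\Fc^i_{S,j,\theta}(R_k)\le\Fc^i_{S,j,\theta}(R)$ directly. If you want to salvage your outline, replace the iterated integration by parts and Fatou step with this compactness-plus-decomposition argument.
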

\begin{proof}
	It is enough to show that $\Fc^i_{S, j, \theta}(R)$ is upper-semicontinuous on $\Dc_{S, j}^i$ as $\Fc_{S, j}^i$ is the decreasing limit of $\left(\Fc_{S, j, \theta}^i(R)\right)_{0<|\theta|\ll 1}$ on $\Dc^i_{S, j}$.
	\medskip
	
	If $\limsup_{k\to \infty}\Fc_{S, j, \theta}^i(R_k)=-\infty$, then the inequality trivially holds. Since $R\in\Dc_{S,j}^i$, we may assume that there exists $M_R>0$ satisfying $\Fc_{S, j, \theta}^i(R_k)>-M_R$ for all $k\in\N$ and $\Fc_{S, j, \theta}^i(R)>-M_R$.\medskip
	
	Since $\Fc_{S, j, \theta}^i(R_k)>-M_R$ for all $k\in\N$, the sequence $\displaystyle \left( \chi_j^iu_\theta^\Kmc  \left\langle\pi_1^*S\wedge \pi_2^*\left(R_k\wedge \omega_{\rm euc}^{n-i}\right)\wedge (dd^cu)^{i-1} \right\rangle_\Kmc\right)_{k\in\N}$ of Radon measures has uniformly bounded mass. Their supports sit in a fixed compact subset $\supp\,\chi_j^i$. Hence, there exists a convergent subsequence and we will denote by $L$ its limit. Since $L$ is negative, $L$ can be decomposed into $L=\mathbf{1}_\Delta L+ \mathbf{1}_{\left(U_j^{i-1}\times U_j^{i-1}\right)\setminus \Delta} L$. Since $dd^cu$ is smooth outside $\Delta$ and $\chi_j^i u_\theta^\Kmc \left\langle\pi_1^*S\wedge \pi_2^*\left(R\wedge \omega_{\rm euc}^{n-i}\right)\wedge (dd^cu)^{i-1} \right\rangle_\Kmc$ has no mass on $\Delta$, we have
	\begin{align*}
		\mathbf{1}_{\left(U_j^{i-1}\times U_j^{i-1}\right)\setminus \Delta} L=\chi_j^iu_\theta^\Kmc \left\langle\pi_1^*S\wedge \pi_2^*\left(R\wedge \omega_{\rm euc}^{n-i}\right)\wedge (dd^cu)^{i-1} \right\rangle_\Kmc.
	\end{align*}
	Since $\mathbf{1}_\Delta L$ is negative, we see that $L\le \Fc_{S, j, \theta}^i(R)$.
\end{proof}

\begin{definition}
	Given localizing data $\left([U_j^i], [\chi_j^i]\right)$, we call $\left(\Fc_{S, j}^i\right)_{i\in\{1, \ldots, n\}, j\in J}$ the collection of local potential superfunctions associated with $\left([U_j^i], [\chi_j^i]\right)$, and $\Fc_{S, j}^i$ is called the local potential superfunction on $U_j^i$. If the localizing data are understood, we simply call it a collection of local potential superfunctions.
\end{definition}

The following two propositions are paraphrases of Theorem \ref{thm:intersection_cpt} when $k=2$.
\begin{proposition}
	Let $S\in\Cc_s(Y)$ and $R\in\Cc_r(Y)$, where $1\le s+r\le n$. Then, $Y$ admits a locally finite cover such that Theorem \ref{thm:intersection_cpt} holds with $k=2$ if and only if there exist localizing data $\left([U_j^i], [\chi_j^i]\right)$ such that $\Fc_{S, j}^i(R\wedge \omega_{\rm euc}^{n-s-r+1})>-\infty$ for every $i\in \{1, \ldots, n\}$ and for every $j\in J$. 
\end{proposition}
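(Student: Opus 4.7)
The plan is to observe that this proposition is essentially a translation of Theorem \ref{thm:intersection_cpt} (with $k=2$) into the language of local potential superfunctions, so the argument reduces to matching definitions. For $k=2$ we have $(k-1)n = n$, so Condition $(\mathrm{I})$ for $S \in \Cc_s(Y)$ and $R \in \Cc_r(Y)$ on a chart $U \times U$ requires, inductively for $i = 1, \dots, n$, that
\begin{align*}
u \in L^1_{\loc}\!\left(\left\langle \pi_1^*S \wedge \pi_2^*\!\bigl(R \wedge \omega_{\rm euc}^{2n - s - r - i + 1}\bigr) \wedge (dd^cu)^{i-1}\right\rangle_\Kmc\right).
\end{align*}
On the other hand, substituting the argument $R \wedge \omega_{\rm euc}^{n-s-r+1}$ into the definition of $\Fc^i_{S,j}$ produces
\begin{align*}
\Fc^i_{S,j}\!\bigl(R\wedge \omega_{\rm euc}^{n-s-r+1}\bigr) = \int_{U_j^{i-1}\times U_j^{i-1}} \chi_j^i u \left\langle \pi_1^*S \wedge \pi_2^*\!\bigl(R \wedge \omega_{\rm euc}^{2n-s-r-i+1}\bigr)\wedge (dd^cu)^{i-1}\right\rangle_\Kmc,
\end{align*}
since $\omega_{\rm euc}^{n-s-r+1}\wedge\omega_{\rm euc}^{n-i} = \omega_{\rm euc}^{2n-s-r-i+1}$. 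So the two conditions differ only by the cutoff $\chi_j^i$ and the underlying chart, and the finiteness of $\Fc^i_{S,j}(R\wedge \omega_{\rm euc}^{n-s-r+1})$ is by Remark \ref{rmk:integrability} precisely a local form of the integrability condition in Condition $(\mathrm{I})$.

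For the forward direction, suppose $(U_j)_{j\in J}$ is a locally finite cover of $Y$ on which Theorem \ref{thm:intersection_cpt} applies, i.e., $\pi_1^*S\wedge \pi_2^*R$ satisfies Condition $(\mathrm{K}-\max)$ along $\Delta$ on each $U_j\times U_j$. I would choose relatively compact nested open subsets $U_j^n\Subset U_j^{n-1}\Subset\cdots\Subset U_j^0\Subset U_j$ (each biholomorphic to a bounded simply connected domain with smooth boundary, all sharing the coordinate chart of $U_j$) and smooth cutoffs $\chi_j^i:U_j^{i-1}\times U_j^{i-1}\to[0,1]$ with compact support such that $\chi_j^i\equiv 1$ on a neighborhood of $\overline{U_j^i\times U_j^i}$. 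Since the iterated $\Kmc$-product exists and $u$ is locally integrable with respect to it on $U_j\times U_j$ at every inductive step, and since $\chi_j^i$ has compact support in $U_j^{i-1}\times U_j^{i-1}\Subset U_j\times U_j$, the integrals defining $\Fc^i_{S,j,\theta}(R\wedge\omega_{\rm euc}^{n-s-r+1})$ are uniformly bounded below, and the monotone limit $\Fc^i_{S,j}(R\wedge\omega_{\rm euc}^{n-s-r+1})$ is finite for every $i$ and every $j$. Inductively this also shows $R\wedge\omega_{\rm euc}^{n-s-r+1}\in\Dc^i_{S,j}$ for all $i$, so the superfunctions are in fact defined at this argument.

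For the backward direction, given localizing data $([U_j^i],[\chi_j^i])$ and the finiteness assumption, I would take as the locally finite cover the collection $(U_j^n)_{j\in J}$ and verify Condition $(\mathrm{I})$ on each chart $U_j^n\times U_j^n$. Since $\chi_j^i\equiv 1$ on $U_j^i\times U_j^i\supset U_j^n\times U_j^n$, the finiteness of $\Fc^i_{S,j}(R\wedge\omega_{\rm euc}^{n-s-r+1})$, rewritten as in the first display, gives that $u$ is integrable against
$\left\langle \pi_1^*S \wedge \pi_2^*(R \wedge \omega_{\rm euc}^{2n-s-r-i+1}) \wedge (dd^cu)^{i-1}\right\rangle_\Kmc$ on any compact subset of $U_j^n\times U_j^n$. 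By Remark \ref{rmk:integrability}, this is exactly the inductive integrability condition constituting Condition $(\mathrm{I})$ on $U_j^n\times U_j^n$. Applying Theorem \ref{thm:intersection_cpt} with $k=2$ to this cover then yields the Dinh-Sibony product of $S$ and $R$, completing the equivalence.

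The only subtlety worth checking carefully, which I expect to be the main point rather than an obstacle, is the inductive well-definedness: each $\Fc^i_{S,j}$ is defined only on $\Dc^{i-1}_{S,j}$, so the statement ``$\Fc^i_{S,j}(R\wedge\omega_{\rm euc}^{n-s-r+1})>-\infty$ for every $i$'' already builds in the chain of nested finiteness conditions, which is precisely what is needed so that Lemma \ref{lem:basic_conv_KT} can be applied at every step to define the next $\Kmc$-product, matching the inductive structure of Condition $(\mathrm{I})$. Once this bookkeeping is laid out, both implications follow by direct unwinding.
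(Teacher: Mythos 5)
Your proof is correct and matches the paper's intent exactly: the paper offers no written argument, declaring the proposition a ``paraphrase'' of Theorem \ref{thm:intersection_cpt} with $k=2$, and your careful unwinding (matching $\omega_{\rm euc}^{n-s-r+1}\wedge\omega_{\rm euc}^{n-i}=\omega_{\rm euc}^{2n-s-r-i+1}$ to the exponent $kn-s-i+1$ in Condition $(\mathrm{I})$, using the cutoffs $\chi_j^i\equiv 1$ on $U_j^i\times U_j^i\supset U_j^n\times U_j^n$, and noting that finiteness of $\Fc^i_{S,j}$ at each stage is what licenses Lemma \ref{lem:basic_conv_KT} at the next) is precisely that paraphrase made explicit. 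The only points you elide are routine: that the shrunken families $(U_j^i)_j$ can be chosen to remain (locally finite) covers, and that boundedness of $u$ from above on compact sets reduces finiteness of $\Fc^i_{S,j}$ to integrability of $u^-$.
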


Further, for each $i=1, \ldots, n$, we define
\begin{align*}
	\Dc_S^i:=\bigcap_{j\in J}\Dc_{S, j}^i.
\end{align*}
By convention, $\Dc_S^0=\Cc_{n-s+1}(Y)$. By construction, we have $\Dc_S^{i+1}\subseteq \Dc_S^i$. So, $\Dc_S^n$ can be used to define the collection of currents satisfying Condition $(\mathrm{I})$ with the current $S$. The following proposition is straightforward.
\begin{proposition}\label{prop:cpt_supftn}
	Let $(Y,\omega_Y)$ be a (not necessarily compact) K\"ahler manifold of dimension $n_Y$. Let $S\in\Cc_s(Y)$ and $R\in\Cc_r(Y)$, where $1\le s+r\le n$. Then, $Y$ admits a locally finite cover such that Theorem \ref{thm:intersection_cpt} holds with $k=2$ if and only if there exist localizing data $\left([U_j^i], [\chi_j^i]\right)$ such that
	$$R\wedge \omega_Y^{n-s-r+1}\in\Dc_S^n.$$
	In this case, for a smooth test $(n-s-r, n-s-r)$-form $\varphi$ on $Y$, we have
	\begin{align*}
		\left\langle (S\wedge R)_K, \varphi\right\rangle = \sum_{j\in J}\left(\Fc_{S, j}^n(R\wedge (M_{\varphi, j} \omega_Y^{n-s-r+1}+dd^c(\chi_j\varphi)))-\Fc_{S, j}^n(R\wedge (M_{\varphi, j} \omega_Y^{n-s-r+1}))\right),
	\end{align*}
	where the collection $(\chi_j)_{j\in J}$ is a partition of unity subordinate to the cover $\left(U^n_j\right)_{j\in J}$ and $M_{\varphi, j}>0$ is a constant such that $M_{\varphi, j}\omega_Y^{n-s-r+1}+dd^c\left(\chi_j\varphi\right)$ is positive in the sense of forms for each $j\in J$.
\end{proposition}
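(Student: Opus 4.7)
The proof splits into the equivalence of integrability hypotheses and the integral identity. For the equivalence, unfolding the inductive construction of $\Dc_{S,j}^n$, the condition $R\wedge\omega_Y^{n-s-r+1}\in\Dc_{S,j}^n$ states that each decreasing limit $\Fc_{S,j}^i$ is finite at $R\wedge\omega_Y^{n-s-r+1}$ for $i=1,\ldots,n$. By Remark \ref{rmk:integrability}, each such condition is an $L^1_{\loc}$-integrability of $u$ with respect to an iteratively defined $\langle\cdot\rangle_\Kmc$-current on $\supp\chi_j^i\subset (U_j^{i-1})^2$. Since on the relatively compact chart $\overline{U_j^{n-1}}$ the K\"ahler forms $\omega_Y$ and $\omega_{\rm euc}$ are mutually dominated by positive constants as smooth strictly positive $(1,1)$-forms, these are equivalent to Condition $(\mathrm{I})$ from Definition \ref{defn:K-integrable_many} (with $k=2$) holding for $S, R$ on $U_j^n$. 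Both directions of the iff then follow.

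For the integral identity, choose $M_{\varphi,j}>0$ so that $M_{\varphi,j}\omega_Y^{n-s-r+1}+dd^c(\chi_j\varphi)$ is positive, which is possible because $dd^c(\chi_j\varphi)$ is smooth and compactly supported. Both $R\wedge M_{\varphi,j}\omega_Y^{n-s-r+1}$ and $R\wedge(M_{\varphi,j}\omega_Y^{n-s-r+1}+dd^c(\chi_j\varphi))$ are then positive closed and, being dominated by a constant multiple of $R\wedge\omega_Y^{n-s-r+1}$, lie in $\Dc_{S,j}^n$. By the additivity of the integral defining $\Fc_{S,j}^n$ on its domain, the right-hand side of the claimed formula equals
\begin{align*}
\sum_{j\in J}\int_{(U_j^{n-1})^2\setminus\Delta}\chi_j^n\,u(dd^cu)^{n-1}\wedge\pi_1^*S\wedge\pi_2^*R\wedge\pi_2^*dd^c(\chi_j\varphi).
\end{align*}
On the left, decompose $\varphi=\sum_j\chi_j\varphi$ and apply Theorem \ref{thm:main_intersection_1} on each chart $D_j=U_j^0$ using the cutoff $\Phi_j=\chi_j^n\pi_2^*(\chi_j\varphi)$, which has compact support in $(U_j^{n-1})^2$ and agrees with $\pi_2^*(\chi_j\varphi)$ in a neighborhood of $\Delta$. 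This expresses $\langle(S\wedge R)_K,\chi_j\varphi\rangle$ as the integral of $u(dd^cu)^{n-1}\wedge\pi_1^*S\wedge\pi_2^*R\wedge dd^c\Phi_j$ over $D_j^2\setminus\Delta$.

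Comparing the two expressions, after Leibniz-expanding $dd^c\Phi_j$ the identity reduces to showing that the cross terms $\pi_2^*(\chi_j\varphi)\,dd^c\chi_j^n$, $d\chi_j^n\wedge\pi_2^*d^c(\chi_j\varphi)$, and $d^c\chi_j^n\wedge\pi_2^*d(\chi_j\varphi)$, integrated against $u(dd^cu)^{n-1}\wedge\pi_1^*S\wedge\pi_2^*R$, sum to zero over $j\in J$; this is the main obstacle. Each cross term is supported on the shell $\supp\chi_j^n\setminus\{\chi_j^n\equiv 1\}$, bounded away from $\overline{U_j^n\times U_j^n}$; there $u(dd^cu)^{n-1}$ is smooth and $dd^c$-closed by \eqref{eq:King_residue} since $(dd^cu)^n=[\Delta]$ vanishes off $\Delta$. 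Combined with the closedness of $\pi_1^*S$ and $\pi_2^*R$, Stokes' theorem on $D_j^2\setminus\Delta$ converts each cross-term integral into a boundary contribution, and the partition-of-unity identity $\sum_j\chi_j\equiv 1$ forces the global sum of these boundary contributions to vanish, yielding the claimed identity.
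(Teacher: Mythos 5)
The first half of your argument (the equivalence of $R\wedge\omega_Y^{n-s-r+1}\in\Dc_S^n$ with Condition $(\mathrm{I})$ on the shrunk charts, via Remark \ref{rmk:integrability} and the comparability of $\omega_Y$ and $\omega_{\rm euc}$ on relatively compact charts) is sound, as is your reduction of the integral identity: writing the right-hand side as $\sum_j\int\chi_j^n u(dd^cu)^{n-1}\wedge\pi_1^*S\wedge\pi_2^*R\wedge\pi_2^*dd^c(\chi_j\varphi)$ by additivity, and the left-hand side via Theorem \ref{thm:main_intersection_1} with $\Phi_j=\chi_j^n\pi_2^*(\chi_j\varphi)$. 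The paper offers no proof to compare against (it declares the statement straightforward), but your final step --- the claim that the cross terms sum to zero --- is a genuine gap, and it is exactly where the content of the proposition lies.

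Concretely, the discrepancy between the two sides is
\begin{align*}
\sum_{j}\int_{D_j^2\setminus\Delta}u_j(dd^cu_j)^{n-1}\wedge\pi_1^*S\wedge\pi_2^*R\wedge\Bigl[dd^c\bigl(\chi_j^n\pi_2^*(\chi_j\varphi)\bigr)-\chi_j^n\,\pi_2^*dd^c(\chi_j\varphi)\Bigr],
\end{align*}
and none of the mechanisms you invoke makes it vanish. First, the current $u(dd^cu)^{n-1}\wedge\pi_1^*S\wedge\pi_2^*R$ is $dd^c$-closed off $\Delta$ but not $d$-closed (its differential contains $du\wedge(dd^cu)^{n-1}\wedge\cdots$), so Stokes against merely $d$-exact forms gives nothing; and the bracket above, while supported off $\Delta$, is not $dd^c$-exact with a potential compactly supported in $D_j^2\setminus\Delta$ --- the natural potential $(\chi_j^n-1)\pi_2^*(\chi_j\varphi)$ vanishes near $\Delta$ but not near $\partial D_j^2$, so integrating by parts produces uncontrolled boundary terms there (or, if one instead uses the potential $\chi_j^n\pi_2^*(\chi_j\varphi)$, the boundary term at $\Delta$ is precisely $\langle(S\wedge R)_K,\chi_j\varphi\rangle$ and the argument becomes circular). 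Second, the kernels $u_j(dd^cu_j)^{n-1}$ and the cutoffs $\chi_j^n$ are chart-dependent objects, so the terms for different $j$ are not boundary contributions of any single global current, and the identity $\sum_j\chi_j\equiv1$ constrains the partition of unity $\chi_j$, not the auxiliary cutoffs $\chi_j^n$ or the functions $u_j$. Indeed, note that replacing $\chi_j^n$ by another admissible cutoff $\widetilde\chi_j^n$ changes your expression for $\mathrm{RHS}_j$ by $\int(\chi_j^n-\widetilde\chi_j^n)\,u(dd^cu)^{n-1}\wedge\pi_1^*S\wedge\pi_2^*R\wedge\pi_2^*dd^c(\chi_j\varphi)$, an integral concentrated on the off-diagonal region $(U_j^{n-1}\setminus U_j^n)\times\supp(\chi_j\varphi)$ with no evident reason to vanish, whereas the left-hand side is manifestly independent of $\chi_j^n$; so whatever closes the gap must use strictly more than Stokes plus $\sum_j\chi_j\equiv1$. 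You correctly flagged this as the main obstacle, but the proof as written does not overcome it.
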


\section{Regularization of Positive Closed Currents}\label{sec:regularization}
In this section, we consider the compatibility of regularizations of positive closed currents with tangent currents. In particular, we study the relationships among Condition $(\mathrm{I})$, tangent currents, slicing and King's work in \cite{King-1}. Throughout this section, we consider a bounded simply connected domain $D$ with smooth boundary in $\C^n$.
\medskip

In this section, it is convenient to consider coordinates for $\C^n$ and $\C^n\times \C^n$ instead of $D$ and $D^2$. It fits our approach without any issues. We use $x$ for the coordinates of $\C^n$ and $\omega_{\rm euc}=dd^c|x|^2$ for the Euclidean K\"ahler form on $\C^n$. 
Let $\pi_i:{\C^n\times\C^n}\to \C^n$ be the canonical projection onto the $i$-th factor for $i=1, 2$. We will use $(x, y)$ for the coordinates of ${\C^n\times\C^n}$ so that we have $\pi_1(x, y)=x$ and $\pi_2(x, y)=y$. 
Let $\Delta:=\{(x, x)\in {\C^n\times\C^n}\}$ be the diagonal submanifold. We will also use the K\"ahler form $\omega_{x,y}=\pi_1^*\omega_{\rm euc}+\pi_2^*\omega_{\rm euc}$ on $\C^n\times\C^n$.
\subsection{Semi-regular transformations on a domain} Semi-regular transforms on compact K\"ahler manifolds were introduced and used by Dinh-Sibony (\cite{DS10}). Here, we consider essentially the same notion on domains. For our purpose, we define it in terms of regularity instead of blow-up. 
\medskip

Let $q\in \{0, \ldots, n\}$. Let $Q$ be a $(q, q)$-form on $\C^n\times \C^n$ smooth outside $\Delta$ such that for some constant $M_Q>0$, we have
\begin{align}\label{eq:condn_Q}
	Q\le M_Q\omega_{x,y},\,\,|Q|\le -M_Q[\log\dist(\cdot, \Delta)]\dist(\cdot, \Delta)^{2-2n}\,\,\textrm{ and }\,\, |\nabla Q|\le M_Q\dist(\cdot, \Delta)^{1-2n},
\end{align}
where $|Q|$ and $|\nabla Q|$ mean the sum of the absolute values of the coefficients of $Q$ and the sum of the absolute values of the derivatives of the coefficients of $Q$, respectively, and the first inequality is understood in the sense of currents.
\medskip

The following proposition is quite straightforward.
\begin{proposition}\label{prop:semi-regular_integral_rep}
	Let $s\in \{n-q, \ldots, n\}$. Let $S$ be a smooth $(s, s)$-form with compact support in $D$. The form $(\pi_2)_*(\pi_1^*S\wedge Q)$ is a well-defined $(s+q-n, s+q-n)$-form in $\C^n$ and has $C^1$ coefficients. If $Q$ can be written as a form of $x-y=(x_1-y_1, \ldots, x_n-y_n)$, then $(\pi_2)_*(\pi_1^*S\wedge Q)$ has $C^\infty$ coefficients.
	For each $y\in\C^n$, it is computed as
	\begin{align*}
		\Lc^Q(S):=(\pi_2)_*(\pi_1^*S\wedge Q)(y)=\int_{x\in D\setminus \{y\}} S(x)\wedge Q(x, y).
	\end{align*}
\end{proposition}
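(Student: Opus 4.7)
The plan is to identify $(\pi_2)_*(\pi_1^*S\wedge Q)$ with fiber integration over the $\C^n$-fibers of $\pi_2$, yielding the pointwise formula for $\Lc^Q(S)$, and then to extract regularity by differentiating under the integral sign with the help of the three bounds in \eqref{eq:condn_Q}. The bidegree $(s+q-n,s+q-n)$ is automatic: $\deg(\pi_1^*S\wedge Q)=(s+q,s+q)$ and the $\pi_2$-fibers have complex dimension $n$.

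First I would verify absolute convergence of the inner integral for each fixed $y$. With $r=|x-y|$ and polar coordinates in $\C^n\cong\R^{2n}$, the bound $|Q|\le M_Q|\log r|\,r^{2-2n}$ combined with the volume factor $r^{2n-1}\,dr$ produces an integrand of size $|\log r|\cdot r\,dr$ near the diagonal, which is integrable. Hence $\int_{x\in D\setminus\{y\}}S(x)\wedge Q(x,y)$ converges absolutely. Pairing with a smooth test form $\psi$ on $\C^n$ and applying Fubini to $\int_{\C^n\times\C^n}\pi_1^*S\wedge Q\wedge\pi_2^*\psi$ identifies this with the pushforward and justifies the pointwise formula for $\Lc^Q(S)(y)$.

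For $C^1$ regularity of the coefficients, I would use a cutoff argument. Fix a smooth $\chi_\varepsilon\colon\C^n\to[0,1]$ with $\chi_\varepsilon\equiv 1$ on $B(0,\varepsilon)$, $\supp\,\chi_\varepsilon\subset B(0,2\varepsilon)$ and $|\nabla\chi_\varepsilon|\le C\varepsilon^{-1}$, and split $\Lc^Q(S)(y)=A_\varepsilon(y)+B_\varepsilon(y)$ where $A_\varepsilon(y):=\int_{\C^n}(1-\chi_\varepsilon(x-y))\,S(x)\wedge Q(x,y)$. On the support of the $A_\varepsilon$-integrand $Q$ is smooth, so $A_\varepsilon\in C^\infty$ by differentiation under the integral sign. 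Using $|\nabla\chi_\varepsilon|\le C\varepsilon^{-1}$, $|Q|\le C|\log\varepsilon|\varepsilon^{2-2n}$, $|\nabla Q|\le C\varepsilon^{1-2n}$, and the shell volume bound $\le C\varepsilon^{2n}$, one checks $|B_\varepsilon(y)|\le C\varepsilon^2|\log\varepsilon|$ and $|\partial_y B_\varepsilon(y)|\le C\varepsilon|\log\varepsilon|$ uniformly for $y$ in compacts. Meanwhile the candidate derivative $L(y):=\int_{x\ne y}S(x)\wedge\partial_y Q(x,y)$ converges absolutely (since $r^{1-2n}\cdot r^{2n-1}=r$ is integrable) and is continuous in $y$. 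A uniform-on-compacts approximation argument then shows $\partial_y A_\varepsilon\to L$ uniformly, so $\Lc^Q(S)$ has $C^1$ coefficients with derivative $L$.

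When $Q(x,y)=Q_0(x-y)$, the substitution $z=x-y$ converts the formula into the convolution $\Lc^Q(S)(y)=\int_{\C^n}S(z+y)\wedge Q_0(z)$. Since $S$ is smooth with compact support, every derivative $\partial_y^\alpha\bar\partial_y^\beta[S(z+y)]$ is smooth in $(z,y)$ and uniformly compactly supported in $z$ as $y$ varies over a compact set; combined with local integrability of $Q_0$, iterated dominated convergence yields smoothness of all orders. The main obstacle is the $C^1$ step, where $\chi_\varepsilon(x-y)$ depends on $y$ and $\partial_y$ landing on it produces a shell term $\partial_y\chi_\varepsilon\cdot Q$; the estimate $|\nabla\chi_\varepsilon|\cdot|Q|\cdot\vol\bigl(\{\varepsilon\le|x-y|\le 2\varepsilon\}\bigr)\le C\varepsilon^{-1}\cdot|\log\varepsilon|\varepsilon^{2-2n}\cdot\varepsilon^{2n}=C\varepsilon|\log\varepsilon|\to 0$ shows this contribution is negligible, which is exactly what makes $\Lc^Q(S)$ genuinely $C^1$ rather than merely continuous.
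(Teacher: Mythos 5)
Your proof is correct. The paper offers no argument for this proposition (it is introduced with ``quite straightforward'' and the proof is omitted), so there is nothing to compare against; your write-up supplies exactly the details one would expect. The exponent bookkeeping checks out: $|Q|\lesssim|\log r|\,r^{2-2n}$ against the volume element $r^{2n-1}\,dr$ gives absolute convergence and $|B_\varepsilon|\lesssim\varepsilon^2|\log\varepsilon|$; the shell term $|\nabla\chi_\varepsilon|\cdot|Q|$ contributes $O(\varepsilon|\log\varepsilon|)$; and $|\nabla Q|\lesssim r^{1-2n}$ makes the candidate derivative $L$ absolutely convergent and continuous, so the uniform convergence $A_\varepsilon\to\Lc^Q(S)$, $\partial_yA_\varepsilon\to L$ legitimately yields $C^1$ regularity. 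The convolution argument for the translation-invariant case, with the domain of $z$-integration confined to the compact set $\supp S - y$ so that only local integrability of $Q_0$ is needed, is likewise complete.
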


When $S$ is as in Proposition \ref{prop:semi-regular_integral_rep}, for any smooth test $(2n-s-q, 2n-s-q)$-form $\varphi$ on $D$, we have 
\begin{align}\label{eq:semi-regular_integral}
	\left\langle \Lc^Q(S), \varphi\right\rangle=\int_{(x,y)\in {D^2}\setminus \Delta} S(x)\wedge \varphi(y)\wedge Q(x, y)=\left\langle S, \Lc^{\rho^*Q}(\varphi)\right\rangle,
\end{align}
where $\rho:\C^n\times \C^n\to \C^n\times \C^n$ is the involution defined by $\rho(x, y)=(y, x)$. So, for a general current $S$, we define the semi-regular transform as below:
\begin{definition}\label{defn:general_Q}
	Let $S\in\Cc_s(D)$ have bounded mass on $D$, where $n-q\le s\le n$. The semi-regular transform $\Lc^Q(S)$ of $S$ is an $(s+q-n, s+q-n)$-current defined by
	\begin{align*}
		\left\langle \Lc^Q(S), \varphi\right\rangle:=\left\langle S, \mathbf{1}_D\Lc^{\rho^*Q}(\varphi)\right\rangle
	\end{align*}
	for a smooth test $(2n-s-q, 2n-s-q)$-form $\varphi$ on $D$, where $\rho:\C^n\times \C^n\to \C^n\times \C^n$ is the involution defined by $\rho(x, y)=(y, x)$. The bidegree of the transform is defined to be $(q-n, q-n)$.
	\medskip
	
	If $Q$ is smooth, negative, or closed, then its associated transform $\Lc^Q(\cdot)$ is said to be smooth, negative, or closed, respectively.
\end{definition}
Note that the pairing on the right hand side makes sense since $\Lc^{\rho^*Q}(\varphi)$ has $C^1$ coefficients on $\C^n$ and $S$ is a positive current with bounded mass. Such $\Lc^Q(S)$ defines a current.
\medskip

The following two propositions can be proved via direct computations.
\begin{proposition}\label{prop:Q_representations}
	Let $S$ be as in Definition \ref{defn:general_Q}. Let $\varphi$ be a smooth test $(2n-s-q, 2n-s-q)$-form on $D$. The integrals
	\begin{align*}
		\int_{{D^2}} \pi_1^*S\wedge \pi_2^*\varphi \wedge Q\quad\textrm{ and }\quad\int_{{D^2}\setminus\Delta} \pi_1^*S\wedge \pi_2^*\varphi \wedge Q
	\end{align*}
	are well-defined and equal to $\left\langle\Lc^Q(S), \varphi\right\rangle$. In particular, we can write
	\begin{align*}
		\Lc^Q(S)=\int_{x\in D\setminus\{y\}}S(x)\wedge Q(x, y)=(\pi_2)_*(\pi_1^*S\wedge Q).
	\end{align*}
\end{proposition}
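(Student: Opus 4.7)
The plan is to first establish integrability of $\pi_1^*S\wedge \pi_2^*\varphi\wedge Q$ on a tubular neighborhood of $\Delta$, and then identify the integral with the duality pairing $\langle \Lc^Q(S),\varphi\rangle$ via smooth approximation of $S$ together with a Fubini-type computation.

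For integrability, a direct bidegree count shows that only the term $(\pi_1^*\omega_{\rm euc})^{n-s}\wedge (\pi_2^*\omega_{\rm euc})^n$ survives in $\omega_{x,y}^{2n-s}$, so the trace measure of $\pi_1^*S$ on $D^2$ with respect to $\omega_{x,y}^{2n-s}$ is proportional to the product of the positive measure $S\wedge\omega_{\rm euc}^{n-s}$ on $D_x$ and the Lebesgue measure on $D_y$; in particular it has no mass on $\Delta$. Combining the coefficient bound $|Q|\le -M_Q\log\dist(\cdot,\Delta)\cdot\dist(\cdot,\Delta)^{2-2n}$ with the Lelong-type bound $\int_{|x-y_0|<r}S\wedge\omega_{\rm euc}^{n-s}\le Cr^{2(n-s)}$, and performing the change of coordinates $(u,v)=(x-y,y)$, the estimate reduces to the standard local integrability of $-\log|u|\cdot |u|^{2-2n}$ on $\C^n$. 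This shows that $|\pi_1^*S\wedge \pi_2^*\varphi\wedge Q|$ is integrable on $D^2$, and hence the two integrals in the statement coincide.

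To identify the value with $\langle\Lc^Q(S),\varphi\rangle$, I would approximate $S$ by a sequence of smooth positive closed $(s,s)$-forms $S_k$ obtained by convolution on a slightly larger neighborhood of $D$, with uniformly bounded trace masses. For each smooth $S_k$, ordinary Fubini together with Proposition \ref{prop:semi-regular_integral_rep} yields
\[
\int_{D^2}\pi_1^*S_k\wedge\pi_2^*\varphi\wedge Q=\langle S_k,\Lc^{\rho^*Q}(\varphi)\rangle.
\]
Passing to $k\to\infty$, the right-hand side converges to $\langle S,\mathbf{1}_D\Lc^{\rho^*Q}(\varphi)\rangle=\langle \Lc^Q(S),\varphi\rangle$ because $\Lc^{\rho^*Q}(\varphi)$ is a $C^1$ form on $\C^n$ by Proposition \ref{prop:semi-regular_integral_rep} and $S$ has bounded mass, while the left-hand side converges to $\int_{D^2}\pi_1^*S\wedge \pi_2^*\varphi\wedge Q$ by the uniform integrability established above. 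The ``in particular'' formula is then immediate, since the duality $\langle(\pi_2)_*(\pi_1^*S\wedge Q),\varphi\rangle=\langle\pi_1^*S\wedge Q,\pi_2^*\varphi\rangle$ identifies $\Lc^Q(S)$ with the pushforward $(\pi_2)_*(\pi_1^*S\wedge Q)$.

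The main obstacle is the uniform integrability estimate near $\Delta$. The one-sided bound $Q\le M_Q\omega_{x,y}$ alone does not control the negative part of $Q$ paired with positive closed currents, so one must rely on the stronger pointwise kernel estimate on $|Q|$ together with the tensor-product structure of the trace measure of $\pi_1^*S$. Once this local estimate is in place, the convergence step is a routine passage to the limit under a uniform $L^1$ majorant.
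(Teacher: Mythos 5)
The paper offers no written proof of this proposition (it is dismissed as a ``direct computation''), so the comparison is against what that computation must contain. Your integrability estimate is the right crux and is essentially correct: the bidegree count showing that the trace measure of $\pi_1^*S$ is $\left(S\wedge\omega_{\rm euc}^{n-s}\right)(x)\otimes\Leb(y)$, hence charges no mass on $\Delta$, combined with the pointwise kernel bound on $|Q|$ and integration in $y$ first, reduces everything to the local integrability of $-\log|u|\,|u|^{2-2n}$ on $\C^n$. One caution: the Lelong-type bound $\|S\wedge\omega_{\rm euc}^{n-s}\|_{B(y_0,r)}\le Cr^{2(n-s)}$ that you invoke is not needed for this, and if you tried to use it by integrating in $x$ first against the kernel $|x-y|^{2-2n}$ it would fail for $s\ge 1$ (one would need $2(n-s)>2n-2$); the argument only works in the order ``Lebesgue in $y$ first, then the finite measure in $x$,'' which is what your product-measure reduction actually does, so drop the Lelong bound to avoid suggesting the wrong order.

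For the identification with $\left\langle \Lc^Q(S),\varphi\right\rangle$, your smoothing detour works but is heavier than necessary and has a soft spot. Since $\pi_1^*S$ is literally the double current $S(x)\otimes 1(y)$ in de Rham's sense (which the paper invokes), the absolute integrability you just proved licenses Fubini directly: $\int_{D^2}\pi_1^*S\wedge\pi_2^*\varphi\wedge Q=\int_x S(x)\wedge\left(\int_y \varphi(y)\wedge Q(x,y)\right)=\left\langle S,\mathbf{1}_D\Lc^{\rho^*Q}(\varphi)\right\rangle$, which is the definition of $\left\langle\Lc^Q(S),\varphi\right\rangle$; no regularization is needed. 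If you insist on smoothing, note two issues: Proposition \ref{prop:continuity_transform} and the convolution are stated for $S\in\Cc_s(\overline{D})$, whereas here $S\in\Cc_s(D)$ with bounded mass, so $S_k$ is only defined on relatively compact subsets; and the passage $\left\langle S_k,\Lc^{\rho^*Q}(\varphi)\right\rangle\to\left\langle S,\mathbf{1}_D\Lc^{\rho^*Q}(\varphi)\right\rangle$ pairs weakly convergent currents against a continuous form that is \emph{not} compactly supported in $D$ (only $\varphi$ is; $\Lc^{\rho^*Q}(\varphi)$ need not be), so it requires a no-mass-escape argument at $\partial D$. The direct Fubini route avoids both.
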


\begin{remark}
	When $S$ is positive, $\Lc^Q(S)$ is a form with $L^{1+1/(n-1)}$-coefficients. For instance, see \cite[Theorem 2.3.1]{DS10}.
\end{remark}

\begin{proposition}
	When $Q$ is smooth, then $\Lc^Q(S)$ is smooth.
\end{proposition}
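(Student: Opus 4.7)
The plan is to identify the distributional definition of $\Lc^Q(S)$ with a pointwise integral in $y$ and then to differentiate under that integral. Since $Q$ is smooth on all of $\C^n\times\C^n$, for each fixed $y\in D$ the form $x\mapsto Q(x,y)$ has coefficients that are bounded on $\overline D$, with bounds locally uniform in $y$. Combined with the bounded mass of $S$ on $D$, this gives a well-defined assignment
\begin{align*}
y\mapsto F(y):=\int_{x\in D}S(x)\wedge Q(x,y),
\end{align*}
where the integrand in $x$ is a current of maximal $x$-degree on $D$ and $F$ is a form in $y$ of bidegree $(s+q-n,s+q-n)$, matching the expected bidegree of $\Lc^Q(S)$.

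The next step is to verify that $F$ agrees with $\Lc^Q(S)$ as currents on $D$. For any smooth test $(2n-s-q,2n-s-q)$-form $\varphi$ with compact support in $D$, a Fubini interchange (valid by the bounded mass of $S$ and the local boundedness of $Q$ and $\varphi$) gives
\begin{align*}
\langle F,\varphi\rangle=\left\langle S(x),\int_{y\in D}Q(x,y)\wedge\varphi(y)\right\rangle=\langle S,\mathbf{1}_D\Lc^{\rho^*Q}(\varphi)\rangle,
\end{align*}
which, by Definition \ref{defn:general_Q}, equals $\langle\Lc^Q(S),\varphi\rangle$. Hence $F=\Lc^Q(S)$ as currents on $D$.

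Finally, smoothness of $F$ follows from differentiation under the integral. For each multi-index $\alpha$, the derivative $\partial_y^\alpha Q$ is smooth on $\C^n\times\C^n$ and therefore locally uniformly bounded in $(x,y)$; combined with the finite mass of $S$ on $D$, a standard dominated-convergence argument yields
\begin{align*}
\partial_y^\alpha F(y)=\int_{x\in D}S(x)\wedge\partial_y^\alpha Q(x,y),
\end{align*}
with the right-hand side continuous in $y$. Hence $F\in C^\infty(D)$, which gives the claim. The only genuine point of care is the legitimacy of the Fubini exchange and the commutation of $\partial_y^\alpha$ with the pairing against $S$; both reduce to dominated convergence once the bounded mass of $S$ and the local uniform boundedness of $Q$ (together with all its $y$-derivatives) are invoked, so there is no serious obstacle beyond careful bookkeeping of bidegrees.
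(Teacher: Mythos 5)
Your argument is correct and is exactly the ``direct computation'' the paper has in mind: identify $\Lc^Q(S)$ with the fiber integral $(\pi_2)_*(\pi_1^*S\wedge Q)$ via Fubini (which is Proposition \ref{prop:Q_representations} in the smooth case) and then differentiate under the integral sign, using the bounded mass of $S$ together with the local uniform boundedness of $Q$ and all its $y$-derivatives on $\overline{D}\times K$ for $K\subset D$ compact. The only bookkeeping point worth making explicit is that each coefficient of $F(y)$ is the pairing of the order-zero, finite-mass current $S$ against a bounded continuous form on $D$ (not a compactly supported one), which is legitimate for exactly the reason the paper already invokes after Definition \ref{defn:general_Q}.
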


We recall the standard regularization by convolution for later use. Let $\Cc_s(\overline{D})$ be the set of positive closed $(s, s)$-currents defined in a neighborhood of $\overline{D}$. Let $S\in\Cc_s(\overline{D})$. Let $g:\C^n\to\R_{\ge 0}$ be a smooth function with compact support in the unit ball such that $\int_{\C^n} g\,d\mu=1$, where $\mu$ is the standard Lebesgue measure. For $z\in\C^n$, we define $\tau_z(x)=x-z$. Let $\epsilon>0$ be sufficiently small so that in the $\epsilon$-neighborhood of $\overline{D}$, the current $S$ is well-defined. The standard convolution formula gives us
\begin{align*}
	S_\epsilon(x) = \int_{z\in\C^n}\tau_z^*S(x) \left(\frac{1}{\epsilon^n}g\left(\frac{z}{\epsilon}\right)\right)d\mu(z).
\end{align*}
\begin{proposition}\label{prop:continuity_transform}
	Let $S\in\Cc_s(\overline{D})$. 
	Then, $\lim_{\epsilon \to 0}\Lc^Q(S_\epsilon)=\Lc^Q(S)$ in $D$ in the sense of currents.
\end{proposition}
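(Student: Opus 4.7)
The plan is to test against an arbitrary smooth compactly supported $(2n-s-q, 2n-s-q)$-form $\varphi$ on $D$ with support $K_\varphi\subset D$ and to show $\langle \Lc^Q(S_\epsilon),\varphi\rangle \to \langle \Lc^Q(S),\varphi\rangle$ as $\epsilon\to 0$. Using Proposition \ref{prop:Q_representations}, which applies both to the smooth form $S_\epsilon$ and to the positive closed current $S$, I would rewrite each pairing as
$$\langle \Lc^Q(S_\bullet),\varphi\rangle \;=\; \int_{D^2\setminus\Delta} \pi_1^*S_\bullet \wedge \pi_2^*\varphi \wedge Q,$$
where $S_\bullet$ is either $S_\epsilon$ or $S$. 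Because $Q$ is smooth outside $\Delta$, the only question is whether one may interchange the limit in $\epsilon$ with the integral in the presence of the singularity of $Q$ on $\Delta$.

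First I would truncate near $\Delta$. Let $\chi_\eta$ be a smooth cutoff on $D^2$ vanishing on the $\eta$-neighborhood of $\Delta$ and equal to $1$ outside the $2\eta$-neighborhood. Let $\zeta$ be a smooth cutoff on $\C^n$ equal to $1$ on a neighborhood of $\overline{D}$ and compactly supported in the open neighborhood $U\supset \overline{D}$ on which $S$ and all $S_\epsilon$ (for small $\epsilon$) are defined. The factor $\pi_1^*\zeta$ does not alter the integral over $D^2$, but makes the truncated integrand $\chi_\eta\cdot\pi_1^*\zeta\cdot\pi_2^*\varphi\wedge Q$ a smooth, compactly supported test form on $U^2$, with support staying away from $\Delta$ where $Q$ is smooth. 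Since $S_\epsilon\to S$ in the sense of currents on $U$ (the standard property of the convolution regularization), the truncated integral converges as $\epsilon\to 0$ for every fixed $\eta>0$.

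The remaining tail lives in the $2\eta$-neighborhood of $\Delta$ inside $D\times K_\varphi$, and must be bounded uniformly in $\epsilon$ and made small by $\eta\to 0$. Here I would invoke the size estimate from (\ref{eq:condn_Q}), namely $|Q|\le M_Q|\log\dist(\cdot,\Delta)|\,\dist(\cdot,\Delta)^{2-2n}$. A standard pointwise domination of $\pi_1^*S_\epsilon\wedge\pi_2^*\varphi\wedge Q$ by a scalar multiple of the trace measure $\pi_1^*S_\epsilon\wedge\omega_{x,y}^{2n-s}$ against $|Q|$, followed by a Fubini argument in coordinates normal to $\Delta$ (so that along each $\pi_2$-fiber the current $\pi_1^*S_\epsilon$ restricts to $S_\epsilon$ itself), reduces the tail to integrating $|\log r|\,r^{2-2n}$ against the mass of $S_\epsilon$ on a ball of radius $2\eta$, giving $O(\eta^{2}|\log\eta|)$ times the mass of $S_\epsilon$ near $\overline D$. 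The mass of $S_\epsilon$ on compact subsets of $U$ is bounded by the mass of $S$ on a slightly enlarged compact set, uniformly in small $\epsilon$, so the tail bound is indeed uniform in $\epsilon$. The same bound applies when $S_\epsilon$ is replaced by $S$. Taking $\epsilon\to 0$ first, then $\eta\to 0$, concludes the proof.

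The main obstacle is this uniform-in-$\epsilon$ tail estimate near $\Delta$: one must combine the log-power singularity of $Q$ with the bidegree structure and positivity of $\pi_1^*S_\epsilon$ so that the Fubini/slicing argument yields a bound independent of the regularization parameter. Positivity (preserved by convolution of positive closed currents) and the standard upper-semicontinuity of mass under convolution are the ingredients that ultimately make the estimate uniform.
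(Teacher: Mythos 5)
The paper states this proposition without proof (it is grouped with results the author says follow by ``direct computations''); the intended argument is presumably the one-line duality one: by Definition \ref{defn:general_Q}, $\langle \Lc^Q(S_\epsilon),\varphi\rangle=\langle S_\epsilon,\mathbf{1}_D\Lc^{\rho^*Q}(\varphi)\rangle$, where $\Lc^{\rho^*Q}(\varphi)$ is a fixed form with $C^1$ coefficients by Proposition \ref{prop:semi-regular_integral_rep}, and positive currents with locally uniformly bounded mass that converge weakly also converge against continuous test forms. Your route through the integral representation on $D^2\setminus\Delta$ plus an $\eta$-truncation near the diagonal is a correct, more self-contained alternative: it essentially re-proves the regularity of $\Lc^{\rho^*Q}(\varphi)$ instead of quoting it, at the cost of redoing the singularity analysis of $Q$. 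Both arguments share the same (harmless for the statement ``in $D$'', but worth noticing) boundary caveat coming from the cutoff $\mathbf{1}_D$ in the $x$-variable, since $Q$ does not localize $x$ near $\supp\varphi$.

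One step in your tail estimate is phrased in a way that would fail if taken literally. You propose to slice ``along each $\pi_2$-fiber'' and integrate $|\log r|\,r^{2-2n}$ against the mass of $S_\epsilon$, i.e.\ against the trace measure $\sigma_\epsilon=S_\epsilon\wedge\omega_{\rm euc}^{n-s}$ in the $x$-variable for fixed $y$. For $s\ge 2$ the Lelong mass bound $\sigma(B(x,r))\lesssim r^{2(n-s)}$ is not enough to make $\int_{B(y,2\eta)}|x-y|^{2-2n}\,d\sigma_\epsilon(x)$ finite uniformly in $\epsilon$ (the densities of $\sigma_\epsilon$ blow up as $\epsilon\to0$), so that iterated integral does not directly give $O(\eta^2|\log\eta|)\|S_\epsilon\|$. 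The correct order is forced by the bidegree computation you allude to: $\pi_1^*S_\epsilon\wedge\omega_{x,y}^{2n-s}$ is a constant times $\pi_1^*\sigma_\epsilon\otimes\pi_2^*(\text{Lebesgue})$, and one must integrate the kernel $|\log|x-y||\,|x-y|^{2-2n}$ against \emph{Lebesgue measure in $y$} first, where it is integrable with a bound $O(\eta^2|\log\eta|)$ uniform in $x$; the remaining integral against $d\sigma_\epsilon(x)$ over a fixed compact set is then uniformly bounded. Since the dominating integrand is non-negative, Tonelli makes the two iterated integrals equal, so your stated conclusion is in fact correct; but the proof should perform the $y$-integration first rather than appeal to the $x$-slices.
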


\subsection{A regularization of positive closed currents}

From King's residue formula in \cite{King}, 
we have
\begin{align}\label{eq:king}
	(dd^c u)^n=[\Delta]
\end{align}
in the sense of currents in $\C^n\times\C^n$, where we have $u=\log |x-y|$.
\medskip

We consider two approximations of $u$ in $\C^n\times\C^n$ as in Section \ref{sec:tangent_currents}: $$u_\theta^\Kmc :=\chi(u-\log |\theta|)+\log |\theta|\quad\textrm{ and }\quad u_\theta^\Tmc :=\frac{1}{2}\log\left(|x-y|^2+|\theta|^2\right),$$
where $\theta\in\C^*$ such that $|\theta|\ll 1$. Also, the following forms were defined there:
\begin{enumerate}
	\item $\Kmc^i_\theta:=(dd^c u_\theta^\Kmc )\wedge(dd^c u)^{i-1}$ for $i=1, \cdots, n$;
	\item $\Tmc^i_\theta:=(dd^c u_\theta^\Tmc )^i$ for $i=1, \cdots, n$.
\end{enumerate}
Observe that the forms (1) and (2) are all invariant under the involution $\rho(x, y)=(y, x)$. All the kernels define semi-regular transforms of currents due to their regularity properties.
\begin{proposition}\label{prop:king_general}
	Let $S\in\Cc_s(D)$. 
	Then, the current $\Lc^{\Kmc_\theta^n}(S)$ is smooth, positive and of bidegree $(s, s)$. The family $\left(\Lc^{\Kmc_\theta^n}(S)\right)_{0<|\theta|\ll 1}$ converges to $S$ as $\theta\to 0$ in the sense of currents. 
	On a relatively compact open subset $D'$ of $D$, the restriction of $\Lc^{\Kmc_\theta^n}(S)$ to $D'$ is a smooth positive closed $(s, s)$-current in $D'$ for all $\theta\in\C^*$ with $|\theta|\ll 1$. 
\end{proposition}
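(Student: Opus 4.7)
The plan is to show that $\Kmc_\theta^n$ is itself a globally smooth positive form with compact support inside a thin annulus transverse to $\Delta$, so that $\Lc^{\Kmc_\theta^n}$ reduces to honest convolution with a smooth positive kernel. Because $\chi(t)\equiv 0$ for $t\le -1$ and $\chi(t)=t$ for $t\ge 1$, the function $u_\theta^\Kmc$ is identically $\log|\theta|$ on $\{|x-y|\le|\theta|/e\}$ and equals $u$ on $\{|x-y|\ge e|\theta|\}$. Hence $dd^c u_\theta^\Kmc$ vanishes in a neighborhood of $\Delta$, while on the outer region $dd^c u_\theta^\Kmc\wedge (dd^cu)^{n-1}=(dd^cu)^n$, which is $0$ as a smooth form by \eqref{eq:king}. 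Consequently $\Kmc_\theta^n$ is a $C^\infty$ $(n,n)$-form on $\C^n\times\C^n$ with support in $\{|\theta|/e\le|x-y|\le e|\theta|\}$ depending only on $x-y$. Positivity is immediate since $u_\theta^\Kmc$ is plurisubharmonic (convex increasing composed with a psh function) and $(dd^cu)^{n-1}$ is positive closed and smooth on the support of $dd^c u_\theta^\Kmc$.

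With this analysis in hand, smoothness, positivity and bidegree of $\Lc^{\Kmc_\theta^n}(S)$ follow quickly. Proposition \ref{prop:Q_representations} gives the integral representation
\begin{align*}
\Lc^{\Kmc_\theta^n}(S)(y)=\int_{x\in D}S(x)\wedge\Kmc_\theta^n(x,y),
\end{align*}
and because the kernel is $C^\infty$ with compact support in the difference variable, differentiation under the integral sign produces smoothness in $y$. Positivity is inherited from $S\ge 0$ and $\Kmc_\theta^n\ge 0$, and the bidegree $(s,s)$ is exactly what Definition \ref{defn:general_Q} gives with $q=n$.

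For the convergence $\Lc^{\Kmc_\theta^n}(S)\to S$, I would use the duality $\langle\Lc^{\Kmc_\theta^n}(S),\varphi\rangle=\langle S,\mathbf{1}_D\Lc^{\Kmc_\theta^n}(\varphi)\rangle$, which is valid because $\Kmc_\theta^n$ is invariant under $\rho$ (it depends on $|x-y|$). As $|\theta|\to 0$, $u_\theta^\Kmc$ decreases to $u$ and so $\Kmc_\theta^n\to (dd^cu)^n=[\Delta]$ in the sense of currents. For every smooth compactly supported test form $\varphi$ on $D$ this yields $\Lc^{\Kmc_\theta^n}(\varphi)\to(\pi_2)_*(\pi_1^*\varphi\wedge[\Delta])=\varphi$, uniformly on any compact set once $|\theta|$ is small enough; pairing with $S$, which has locally finite mass, then gives $\Lc^{\Kmc_\theta^n}(S)\to S$.

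Finally, for $D'\Subset D$ I would pick $|\theta|$ small enough that $e|\theta|<\dist(D',\partial D)$. Then for each $y\in D'$ the $x$-support of $\Kmc_\theta^n(\cdot,y)$ stays inside $D$, so on $D'$ the current $\Lc^{\Kmc_\theta^n}(S)$ agrees with the honest convolution of $S$ with a smooth compactly supported kernel $K_\theta$, and closedness follows from $d(S*K_\theta)=(dS)*K_\theta=0$. The main (admittedly mild) obstacle, and the sole reason the closedness statement must be local, is that near $\partial D$ the cutoff $\mathbf{1}_D$ in the integral can introduce boundary contributions spoiling exact closedness even though smoothness and positivity persist on all of $D$; passing to $D'$ is precisely what removes these boundary terms.
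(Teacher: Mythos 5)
Your proposal is correct, and its treatment of the convergence $\Lc^{\Kmc_\theta^n}(S)\to S$ takes a genuinely different route from the paper's. The preliminary analysis is the same in both: $\Kmc_\theta^n$ is a smooth positive closed $(n,n)$-form, a function of $x-y$ alone, supported in the annulus $\{|\theta|/e\le|x-y|\le e|\theta|\}$, which immediately yields smoothness, positivity and the bidegree, and your localized Stokes/convolution argument for closedness on $D'$ is essentially the paper's. For the convergence, however, the paper first treats smooth $S$ by integrating by parts twice to write $\langle\Lc^{\Kmc_\theta^n}(S)-S,\varphi\rangle$ as an integral of $(u_\theta^\Kmc-u)$ against $\pi_1^*S\wedge dd^c[\cdots]\wedge(dd^cu)^{n-1}$, extracts a quantitative bound $O(|\theta|)\,\|\varphi\|_{C^2}\|S\|$, and then passes to general $S$ via the convolution regularization $S_\epsilon$ and Proposition \ref{prop:continuity_transform}; you instead work on the adjoint side, viewing $\Kmc_\theta^n$ as a positive approximate identity and pairing $S$ directly with $\Lc^{\Kmc_\theta^n}(\varphi)$, which avoids the two-step approximation entirely. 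The one step you should make explicit is why $\Lc^{\Kmc_\theta^n}(\varphi)\to\varphi$ \emph{uniformly}: weak convergence $\Kmc_\theta^n\to[\Delta]$ alone does not give this, but combined with the positivity of the kernel, its shrinking support, and the normalization $\int_{\C^n}\Kmc_\theta^n=\int(dd^cu)^n=1$ (which follows from Stokes since $u_\theta^\Kmc-u$ is supported in $\{|x-y|\le e|\theta|\}$), the standard approximate-identity argument does yield uniform convergence of each coefficient on compact sets; since $\Lc^{\Kmc_\theta^n}(\varphi)$ moreover has compact support near $\supp\varphi$, pairing with the order-zero current $S$ is legitimate even without assuming $S$ has finite total mass on $D$. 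With that normalization spelled out, your argument is complete; what you lose relative to the paper is the explicit rate $O(|\theta|)$, which the paper's quantitative estimate provides.
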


\begin{proof}
	The smoothness and positivity of $\Lc^{\Kmc_\theta^n}(S)$ are straightforward. It suffices to prove that for a relatively compact open subset $D'$ of $D$, the restriction of $\Lc^{\Kmc_\theta^n}(S)$ to $D'$ is a smooth positive closed $(s, s)$-current when $|\theta|\ll 1$ and $\Lc^{\Kmc_\theta^n}(S)\to S$ in $D'$ as $\theta\to 0$.\medskip
	
	We first consider smooth $S\in\Cc_s(D)$. Let $\varphi$ be a smooth test $(n-s, n-s)$-form on $D$ such that $\supp\,\varphi\subset D'$. Let $D''$ be another relatively compact open subset of $D$ such that $\overline{D'}\subset D''$. Let $\chi_{D'}:D\to [0,1]$ be a smooth function with compact support in $D''$ such that $\{\chi_{D'}\equiv 1\}$ contains a neighborhood of $\overline{D'}$. We assume that $|\theta|$ be sufficiently small that $\left(\overline{D'}\right)_\theta\subset \{\chi_{D'}\equiv 1\}$. Since $(\pi_1^*\chi_{D'})(\pi_2^*\varphi)$ is a smooth form with compact support in ${{D^2}}$, we have
	\begin{align*}
		\left\langle \Lc^{\Kmc_\theta^n}(S), \varphi\right\rangle &:= \left\langle \mathbf{1}_{D} S, \Lc^{\Kmc_\theta^n}(\varphi)\right\rangle = \left\langle \chi_{D'} S, \Lc^{\Kmc_\theta^n}(\varphi)\right\rangle=\int_{{D^2}}\pi_1^*(\chi_{D'} S)\wedge \pi_2^* \varphi\wedge \Kmc_\theta^n\\
		&=\int_{{D^2}} u_\theta^\Kmc \pi_1^*S \wedge dd^c[(\pi_1^*\chi_{D'})(\pi_2^*\varphi)]\wedge(dd^c u)^{n-1}.
	\end{align*}
	In the same way, we have
	\begin{align*}
		\left\langle S, \varphi\right\rangle = \int_{{D^2}} u\pi_1^*S \wedge dd^c[(\pi_1^*\chi_{D'})(\pi_2^*\varphi)]\wedge(dd^c u)^{n-1}
	\end{align*}
	and therefore, we have
	\begin{align*}
		&\left|\left\langle \Lc^{\Kmc_\theta^n}(S), \varphi\right\rangle-\left\langle S, \varphi\right\rangle\right|=\left|\int_{{D^2}} (u_\theta^\Kmc -u)\pi_1^*S \wedge dd^c[(\pi_1^*\chi_{D'})(\pi_2^*\varphi)]\wedge(dd^c u)^{n-1}\right|\\
		&\quad\quad\quad\quad\quad\le C_1\left\|\pi_1^*(\chi_{D'})\pi_2^*(\varphi)\right\|_{C^2}\int_{{D^2}} (u_\theta^\Kmc -u)\pi_1^*S \wedge (\pi_1^*\chi_{D''})(\pi_2^*\chi_{D''})\omega_{x,y}^{n-s}\wedge(dd^c u)^{n-1},
	\end{align*}
	where $C_1>0$ is a constant independent of $\theta$ and $\chi_{D''}:D\to[0,1]$ is a smooth function with compact support such that $\chi_{D''}\equiv 1$ on $\overline{D''}$. 
	The singularity of $u\wedge(dd^c u)^{n-1}$ is of the form $\left[\log \dist(\cdot, \Delta)\right]\left(\dist(\cdot, \Delta)^{2-2n}\right)$ and the support of $(u_\theta^\Kmc -u)$ is a tubular neighborhood of $\Delta$ whose radius is proportional to $|\theta|>0$. So, the form $(\pi_1)_*[(u_\theta^\Kmc -u)(\pi_1^*\chi_{D''})(\pi_2^*\chi_{D''})\omega_{x,y}^{n-s}\wedge(dd^c u)^{n-1}]$ is a $C^1$-form whose $C^1$-norm is bounded by $|\theta|$ up to a multiplicative constant independent of $\theta$. The support of the form $(\pi_1)_*[(u_\theta^\Kmc -u)(\pi_1^*\chi_{D''})(\pi_2^*\chi_{D''})\omega_{x,y}^{n-s}\wedge(dd^c u)^{n-1}]$ sits inside $\supp\,\chi_{D''}$. Hence, for some constant $C_2>0$ independent of $\theta$, we have
	\begin{align*}
		\left|\left\langle \Lc^{\Kmc_\theta^n}(S), \varphi\right\rangle-\left\langle S, \varphi\right\rangle\right|\le C_2|\theta|\left\|(\pi_1^*\chi_\varphi)(\pi_2^* \varphi)\right\|_{C^2}\|S\|_{\supp\,\chi_{D''}}.
	\end{align*}
	
	For general $S\in\Cc_s(D)$, we use the regularization $S_\epsilon$ of $S$ obtained as above. When $0<\epsilon\ll 1$, we have $\|S_\epsilon\|_{\supp\,\chi_{D''}}\le C_3\|S\|_{D'''}$ for some relatively compact open subset $D'''$ in $D$ containing $\supp\,\chi_{D''}$ and for some constant $C_3>0$ independent of $\epsilon$. So, the convergence of $\Lc^{\Kmc_\theta^n}(S_\epsilon) \to S_\epsilon$ in $D'$ is uniform with respect to $\epsilon$ and therefore, Proposition \ref{prop:continuity_transform} implies that the convergence in $D'$ is true for $S\in\Cc_s(D)$.
	\medskip
	
	We prove the closedness of $\Lc^{\Kmc_\theta^n}(S)$. Assume that $\varphi=d\psi$ for some smooth form $\psi$ with compact support in $D'$. By Proposition \ref{prop:Q_representations}, we have
	\begin{align*}
		\left\langle \Lc^{\Kmc_\theta^n}(S), d\psi\right\rangle=\int_{{D^2}}\pi_1^*S \wedge \pi_2^*(d\psi)\wedge \Kmc_\theta^n=\int_{{D^2}}\pi_1^*S \wedge d\left((\pi_2^*\psi)\wedge \Kmc_\theta^n\right).
	\end{align*}
	Notice that from our choice of $\theta$ in the beginning of the proof, the support of $(\pi_2^*\psi)\wedge\Kmc_\theta^n$ is compact in ${D^2}$. So, the Stokes theorem concludes that $\Lc^{\Kmc_\theta^n}(S)$ is closed when restricted to $D'$.
\end{proof}
\begin{remark}
	If $S$ is just a current in $\Cc_s(D)$, it is not clear whether $\Lc^{\Kmc_\theta^n}(S)$ is closed in $D$. However, Proposition \ref{prop:king_general} implies that if $S$ is a current in $\Cc_s(\overline{D})$, then for all $\theta\in\C^*$ such that $S$ is defined in $\left(\overline{D}\right)_\theta$, $\Lc^{\Kmc_\theta^n}(S)$ is a smooth current in $\Cc_s(D)$.
\end{remark}

Below is a $C^\alpha$-norm estimate of $\Lc^{\Kmc_\theta^n}(S)$.
\begin{proposition}\label{prop:C_alpha_King}
	Let $S\in\Cc_s(D)$. Let $K$ be a compact subset of $D$ and $\alpha$ a non-negative integer. Then, there exists a constant $c_{\alpha, K}>0$ such that for all $\theta\in \C^*$ with $K_\theta\subset D$, we have
	\begin{align*}
		\|\Lc^{\Kmc_\theta^n}(S)\|_{C^\alpha, K}\le c_{\alpha, K} |\theta|^{-2n-\alpha}\|S\|_{K_\theta}.
	\end{align*}
\end{proposition}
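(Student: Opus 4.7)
The plan is to differentiate under the integral in the representation $\Lc^{\Kmc_\theta^n}(S)(y)=(\pi_2)_*(\pi_1^*S\wedge \Kmc_\theta^n)(y)$ of Proposition \ref{prop:Q_representations}, and to bound both $\Kmc_\theta^n$ and its $y$-derivatives pointwise using the explicit structure of this kernel on the thin shell where it is supported.

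First I would locate the support of $\Kmc_\theta^n$. Since $\chi\equiv 0$ on $(-\infty,-1]$ and $\chi(t)=t$ on $[1,\infty)$, on $\{|x-y|\le e^{-1}|\theta|\}$ we have $u_\theta^\Kmc\equiv\log|\theta|$, so $dd^cu_\theta^\Kmc=0$; on $\{|x-y|\ge e|\theta|\}$ we have $u_\theta^\Kmc=u$, and $(dd^cu)^n=0$ as a smooth form on $\{x\ne y\}$ because $dd^c\log|x-y|$ is the pullback of a $(1,1)$-form under the submersion $\{x\ne y\}\to\P^{n-1}$. Hence $\supp\Kmc_\theta^n\subset\Sigma_\theta:=\{(x,y):e^{-1}|\theta|\le|x-y|\le e|\theta|\}$, and for $y\in K$ the fiber slice lies in $K_{e|\theta|}\subset D$, with the factor $e$ absorbed into $c_{\alpha,K}$.

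Second I would estimate the derivatives. Since $\Kmc_\theta^n(x,y)$ depends only on $z:=x-y$, one has $\partial_y^\beta \Kmc_\theta^n=(-1)^{|\beta|}\partial_z^\beta \Kmc_\theta^n$. The coefficients of $dd^c\log|z|$ are $O(|z|^{-2})$ and each $z$-derivative costs an additional factor of $|z|^{-1}$; the same scaling holds for $dd^cu_\theta^\Kmc=\chi'\,dd^c\log|z|+\chi''\,d\log|z|\wedge d^c\log|z|$, since $\chi$ and its derivatives are uniformly bounded on the range $[-1,1]$ relevant on $\Sigma_\theta$. Applying the Leibniz rule to the $n$-fold wedge then yields
\[
|\partial_y^\beta \Kmc_\theta^n(x,y)|\le C_\beta|z|^{-2n-|\beta|}\le C_\beta|\theta|^{-2n-|\beta|}\quad\text{on }\Sigma_\theta,
\]
with $C_\beta$ independent of $\theta$.

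Finally, for $y\in K$ and $|\beta|\le\alpha$, each coefficient of $\partial_y^\beta \Lc^{\Kmc_\theta^n}(S)(y)$ in the $dy^I\wedge d\bar y^J$-basis can be written as a pairing $\langle S,\psi_{y,\beta,I,J}\rangle$, where $\psi_{y,\beta,I,J}$ is a smooth $(n-s,n-s)$-form in $x$, supported in $\{x:e^{-1}|\theta|\le|x-y|\le e|\theta|\}$, with $\|\psi_{y,\beta,I,J}\|_\infty\le C_\beta|\theta|^{-2n-|\beta|}$. Invoking the standard bound $|\langle S,\psi\rangle|\le c_n\|\psi\|_\infty\|S\|_{\supp\psi}$ for a positive $(s,s)$-current paired with a smooth form of complementary bidegree (obtained by decomposing $\psi$ into a uniformly bounded linear combination of elementary strongly positive forms each dominated by a constant multiple of $\omega_{\rm euc}^{n-s}$) then yields the claimed estimate. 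The only substantive technical point is Step 2: the derivative accounting for the wedge $dd^cu_\theta^\Kmc\wedge(dd^cu)^{n-1}$, which is elementary but must be carried out with patient use of the chain and Leibniz rules in order to keep every bound uniform in $\theta$.
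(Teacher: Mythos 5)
Your proof is correct and follows essentially the same route as the paper's: localize the kernel to the shell $e^{-1}|\theta|\le|x-y|\le e|\theta|$, bound the $C^\alpha$-norms of $dd^cu_\theta^\Kmc$ and $(dd^cu)^{n-1}$ there by $|\theta|^{-2-\alpha}$ and $|\theta|^{2-2n-\alpha}$ respectively, and pair with the mass of $S$ over the corresponding neighborhood of $K$; you merely spell out the Leibniz bookkeeping that the paper leaves implicit. The only (cosmetic) discrepancy, which the paper's own proof shares, is that the relevant neighborhood is really $K_{e|\theta|}$ rather than $K_\theta$, a factor that cannot literally be absorbed into the constant but is harmless for the intended use.
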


\begin{proof}
	We have $\supp\,\Kmc_\theta^n\subset \{(x, y)\in {D^2}: e^{-1}|\theta|\le |x-y|\le e|\theta|\}=:W_\theta$. Over the region $W_\theta$, $dd^c u$ is smooth. We have 
	$\|(dd^c u)^{n-1}\|_{C^\alpha, W_\theta\cap \pi_2^{-1}(K)}\le c'_{\alpha, K} |\theta|^{2-2n-\alpha}$ and $\|dd^cu_\theta^\Kmc \|_{C^\alpha, W_\theta\cap \pi_2^{-1}(K)}\le c'_{\alpha, K}|\theta|^{-2-\alpha}$ for some constant $c'_{\alpha, K}>0$ independent of $\theta$. Since $\supp\,\Lc^{\Kmc_\theta^n}(S)\subset K_\theta$, we have
	\begin{align*}
		\|\Lc^{\Kmc_\theta^n}(S)\|_{C^\alpha, K}\le c_{\alpha, K} |\theta|^{-2n-\alpha}\|S\|_{K_\theta}
	\end{align*}
	for some constant $c_{\alpha, K}>0$ independent of $\theta$.
\end{proof}

The regularizing semi-regular transform $\Lc^{\Kmc_\theta^n}(\cdot)$ is compatible with the product $(S_1\wedge \cdots\wedge S_k)_K$.
\begin{theorem}\label{thm:conv_many}
	Let $S_i\in\Cc_{s_i}(D)$ for $i=1, \ldots, k$ satisfy Condition $(\mathrm{I})$, where $1\le s:=s_1+\cdots+ s_k\le n$. Then, for each $j=1, \ldots, k$, we have
	\begin{align*}
		\Lc^{\Kmc_\theta^n}(S_1)\wedge \cdots \wedge \Lc^{\Kmc_\theta^n}(S_{j-1})\wedge S_j\wedge \Lc^{\Kmc_\theta^n}(S_{j+1})\wedge\cdots\wedge \Lc^{\Kmc_\theta^n}(S_k)\to \left(S_1\wedge \cdots\wedge S_k\right)_K
	\end{align*}
	in the sense of currents, as $\theta\to 0$.
\end{theorem}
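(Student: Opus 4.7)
The plan is to test against a smooth $(n-s,n-s)$-form $\varphi$ on $D$ and rewrite the pairing as a single integral on $D^k$. By the symmetry of the setup in the indices, it suffices to treat the case $j=k$. Iterating Proposition \ref{prop:Q_representations} on each regularized factor $\Lc^{\Kmc_\theta^n}(S_i)$ together with Fubini's theorem yields
\begin{align*}
\left\langle \Lc^{\Kmc_\theta^n}(S_1)\wedge\cdots\wedge \Lc^{\Kmc_\theta^n}(S_{k-1})\wedge S_k,\varphi\right\rangle = \int_{D^k} \pi_1^*S_1\wedge\cdots\wedge \pi_k^*(S_k\wedge\varphi)\wedge \bigwedge_{i=1}^{k-1} \Kmc_\theta^n(x_i,x_k),
\end{align*}
so the task reduces to showing that this integral converges to $\langle (S_1\wedge\cdots\wedge S_k)_K,\varphi\rangle$ as $\theta\to 0$.

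The $K$-product is characterized, by Theorem \ref{thm:main_intersection_1} or equivalently by iterating Lemma \ref{lem:basic_conv_KT}, as
\begin{align*}
\langle(S_1\wedge\cdots\wedge S_k)_K,\varphi\rangle = \lim_{\theta\to 0}\int_{D^k}\pi_1^*S_1\wedge\cdots\wedge \pi_k^*(S_k\wedge\varphi)\wedge \Kmc_\theta^{(k-1)n},
\end{align*}
where $\Kmc_\theta^{(k-1)n}$ is built from the single plurisubharmonic weight $u=\frac{1}{2}\log\sum_{i<k}|x_i-x_k|^2$. In contrast, the product kernel $\bigwedge_{i=1}^{k-1}\Kmc_\theta^n(x_i,x_k)$ is built from the individual weights $u_i=\log|x_i-x_k|$. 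Since $(dd^c u_i)^{n+1}=0$ (each $u_i$ being a pullback from $\C^n$) and the divisors $\Delta_{i,k}=\{x_i=x_k\}$ intersect transversally along $\Delta$, King's residue formula applied either to $u$ or factor-by-factor to the $u_i$'s gives $\prod_i(dd^cu_i)^n=[\Delta]=(dd^cu)^{(k-1)n}$. Thus both approximations target the same limit current $[\Delta]$.

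The main obstacle is to make this coincidence of limits rigorous when paired with the singular currents $\pi_1^*S_1\wedge\cdots\wedge\pi_k^*S_k$. The approach I would take is an incremental replacement argument paralleling the proof of Lemma \ref{lem:basic_conv_KT}: one factor at a time, convert $dd^c u_{i,\theta}^\Kmc\wedge(dd^cu_i)^{n-1}$ into $dd^cu_\theta^\Kmc\wedge(dd^cu)^{n-1}$ via integration by parts, using that $u_{i,\theta}^\Kmc-u_\theta^\Kmc$ is locally bounded up to additive constants near $\Delta$, so that the resulting error terms are controlled by the integrability supplied by Condition $(\mathrm{I})$. The uniform mass bounds on the intermediate families, granted by Condition $(\mathrm{I})$ via Lemma \ref{lem:basic_conv_KT}, allow one to extract the limit at every stage; after the full replacement one recognizes the standard $K$-product approximation, and Theorem \ref{thm:main_intersection_1} identifies the common limit with $\langle(S_1\wedge\cdots\wedge S_k)_K,\varphi\rangle$, yielding the weak convergence of currents in the statement.
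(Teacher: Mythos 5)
Your reduction is on target: testing against $\varphi$, iterating Proposition \ref{prop:Q_representations} and Fubini to rewrite the left-hand side as $\int_{D^k}\pi_1^*S_1\wedge\cdots\wedge\pi_k^*(S_k\wedge\varphi)\wedge\bigwedge_{i=1}^{k-1}\Kmc_\theta^n(x_i,x_k)$ is exactly the final step of the paper's argument, and the identification of the target via Theorem \ref{thm:main_intersection_1} is correct. The gap is in the central step, where you propose to convert the product kernel $\bigwedge_i dd^cu_{i,\theta}^\Kmc\wedge(dd^cu_i)^{n-1}$ into the single-weight kernel $dd^cu_\theta^\Kmc\wedge(dd^cu)^{(k-1)n-1}$ by an incremental integration-by-parts. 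Two concrete obstructions: first, the premise that $u_{i,\theta}^\Kmc-u_\theta^\Kmc$ is locally bounded near $\Delta$ is false uniformly in $\theta$, since $u_i-u=\log\bigl(|x_i-x_k|/\bigl(\sum_j|x_j-x_k|^2\bigr)^{1/2}\bigr)$ tends to $-\infty$ on the partial diagonal $\{x_i=x_k\}\setminus\Delta$, so the difference of the truncations is only bounded by $O(|\log|\theta||)$ there and the error terms do not obviously vanish. Second, the telescoping necessarily produces mixed intermediate currents of the form $\pi_1^*S_1\wedge\cdots\wedge\pi_k^*S_k\wedge(dd^cu_{i_1})^{a_1}\wedge\cdots\wedge(dd^cu)^{b}$, and Condition $(\mathrm{I})$ only supplies integrability and mass bounds for powers of the \emph{single} weight $u$; it gives no control over these mixed Monge--Amp\`ere products, so the claim that ``the error terms are controlled by the integrability supplied by Condition $(\mathrm{I})$'' is not justified.

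The paper avoids the kernel comparison entirely by going through the tangent current. Under Condition $(\mathrm{I})$, Theorem \ref{thm:main_intersection_1} gives a unique tangent current of $\pi_1^*S_1\wedge\cdots\wedge\pi_k^*S_k$ along $\Delta$ with minimal $h$-dimension, and by Remark \ref{rmk:indep_shadow} its shadow may be computed with \emph{any} smooth closed form on the fiber $\P^{(k-1)n}$ cohomologous to a linear subspace. Taking the product form $\pi_F^*\Omega_n^1\wedge\cdots\wedge\pi_F^*\Omega_n^{k-1}$ (with $\Omega_n^i$ the form of Lemma \ref{lem:useful_form} in the variable $w_i=x_i-x_k$), the computation \eqref{eq:key_idea} shows $(A_{\theta^{-1}})^*\bigl(\pi_F^*\Omega_n^1\wedge\cdots\wedge\pi_F^*\Omega_n^{k-1}\bigr)=\Kmc_{e^{-M}\theta}^{n,1}\wedge\cdots\wedge\Kmc_{e^{-M}\theta}^{n,k-1}$, which is precisely your product kernel; the convergence to the shadow, which equals $(S_1\wedge\cdots\wedge S_k)_K$, is then automatic, and Fubini splits the pushforward into the wedge of the semi-regular transforms. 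If you want to salvage a direct argument, this cohomological independence of the shadow is the ingredient you are missing.
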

We prove the case of $k=3$ and $j=1$ as the general case can be treated in the same way.
\begin{proof}[Proof of the case of $k=3$ and $j=1$]
	In this proof, we write $D^3=D_1\times D_2\times D_3$, where each $D_i$ is a copy of $D$; we also use subscripts under pairings such as $\langle\cdot\rangle_{D^3}$ and $\langle\cdot\rangle_E$ to denote spaces. We use the notations in Section \ref{sec:intersection} with $k=3$. We denote by $(x, w_1, w_2)$ the point $(x_1, x_2, x_3)\in D^3$, where $x=x_3$, $w_1=x_1-x_3$ and $w_2=x_2-x_3$. We denote by $\Omega_n^1$ and $\Omega_n^2$ the forms $\Omega_n$ as in Lemma \ref{lem:useful_form} with $x''$ replaced by $w_1$ and $w_2$, respectively. 
	\medskip
	
	From the assumption, the current $\pi_1^*S_1\wedge \pi_2^*S_2\wedge \pi_3^*S_3$ satisfies Condition $(\mathrm{K}-\max)$ along $\Delta$. Let $\varphi$ be a smooth test $(n-s, n-s)$-form on $D$. With respect to our coordinates, $\varphi$ can be understood as a form on $\Delta$ and so, we may say $\pi_{\Delta}^*\varphi=\pi_3^*\varphi$. We have $A_\lambda(x, w_1, w_2)=(x, \lambda w_1, \lambda w_2)$. Then, as in the proof of Proposition \ref{prop:main_true_general}, from Theorem \ref{thm:main_intersection_1} together with the definition of the shadow in Definition \ref{def:tangent_current}, we get
	\begin{align*}
		&\left\langle \left(S_1\wedge S_2\wedge S_3\right)_K, \varphi \right\rangle_D=\lim_{\theta\to 0}\left\langle(\pi_{\Delta})_*\left((A_{\theta^{-1}})_*\left(\mathbf{1}_{D^3}\pi_1^*S_1\wedge \pi_2^*S_2\wedge \pi_3^*S_3\right)\wedge \pi_F^*\Omega_n^1\wedge \pi_F^*\Omega_n^2\right), \varphi\right\rangle_\Delta\\
		&\quad\quad\quad=\lim_{\theta\to 0}\left\langle (A_{\theta^{-1}})_*\left(\mathbf{1}_{D^3}\pi_1^*S_1\wedge \pi_2^*S_2\wedge \pi_3^*S_3\right)\wedge \pi_F^*\Omega_n^1\wedge \pi_F^*\Omega_n^2, \pi_\Delta^*\varphi\right\rangle_E\\
		&\quad\quad\quad=\lim_{\theta\to 0}\left\langle \pi_1^*S_1\wedge \pi_2^*S_2\wedge \pi_3^*S_3\wedge (A_{\theta^{-1}})^*\left(\pi_F^*\Omega_n^1\wedge \pi_F^*\Omega_n^2\right), \pi_3^*\varphi\right\rangle_{D^3}\\
		&\quad\quad\quad=\lim_{\theta\to 0}\left\langle \pi_1^*S_1\wedge \pi_2^*S_2\wedge \pi_3^*S_3\wedge \Kmc_{e^{-M}\theta}^{n, 1}\wedge \Kmc_{e^{-M}\theta}^{n, 2}, \pi_3^*\varphi\right\rangle_{D^3},
	\end{align*}
	where $M>0$ is the constant as in Lemma \ref{lem:useful_form} and  $\Kmc_{e^{-M}\theta}^{n, i}$ denotes the form $\Kmc_{e^{-M}\theta}^n$ with the variable $w_i$ for $i=1, 2$. 
	The current $\pi_1^*S_1(x_1)\wedge \pi_2^*S_2(x_2)\wedge \Kmc_{e^{-M}\theta}^{n, 1}(x_1, x_3)\wedge \Kmc_{e^{-M}\theta}^{n, 2}(x_2, x_3)$ can be considered as a smooth form in $x_3$. 
	Hence, the above limit can be written as
	\begin{align*}
		\left\langle \left(S_1\wedge S_2\wedge S_3\right)_K, \varphi\right\rangle_D
		=\lim_{\theta\to 0}\left\langle (\pi_3)_*\left(\pi_1^*S_1\wedge \pi_2^*S_2\wedge \Kmc_{e^{-M}\theta}^{n, 1}\wedge \Kmc_{e^{-M}\theta}^{n, 2}\right)\wedge S_3, \varphi\right\rangle_D.
	\end{align*}
	We observe that for each $x_3\in D_3$, $(\pi_3)_*$ is the integration over the fiber $D_1\times D_2$. So, for each $x_3\in D_3$, since $\pi_1^*S_1\wedge \Kmc_{e^{-M}\theta}^{n, 1}$ is a form with measures in $x_1$ as coefficients and $\pi_2^*S_2\wedge \Kmc_{e^{-M}\theta}^{n, 2}$ is a form with measures in $x_2$ as coefficients, the Fubini theorem or the notion of double currents implies that for each $x_3\in D_3$, we can split the form $\pi_1^*S_1\wedge \pi_2^*S_2\wedge \Kmc_{e^{-M}\theta}^{n, 1}\wedge \Kmc_{e^{-M}\theta}^{n, 2}$ into the product of a form in $x_1$ and another form in $x_2$. So, we get
	\begin{align*}
		\left\langle \left(S_1\wedge S_2\wedge S_3\right)_K, \varphi\right\rangle_D
		&=\lim_{\theta\to 0}\left\langle (\pi_3)_*\left(\pi_1^*S_1\wedge \Kmc_{e^{-M}\theta}^{n, 1}\right)\wedge (\pi_3)_* \left( \pi_2^*S_2\wedge \Kmc_{e^{-M}\theta}^{n, 2}\right)\wedge S_3, \varphi\right\rangle_D\\
		&=\lim_{\theta\to 0}\left\langle \Lc^{\Kmc_{e^{-M}\theta}^n}(S_1)\wedge \Lc^{\Kmc_{e^{-M}\theta}^n}(S_2)\wedge S_3, \varphi\right\rangle_D.
	\end{align*}
	In the above, we abuse notation in the sense that the same notation $\pi_3$ is used for projections $\pi_3:D^3\to D_3$, $\pi_3:D_2\times D_3\to D_3$ and $\pi_3:D_3\times D_1\to D_3$. Or one can say that we identify $D_2\times D_3$ with $\{{x}'_1\}\times D_2\times D_3$ for some ${x}'_1\in D_1$ and $D_3\times D_1$ with $D_1\times \{{x}'_2\}\times D_3$ for some ${x}'_2\in D_2$. In the same way, we used $\pi_1$ and $\pi_2$.
\end{proof}

\begin{proposition}
	Suppose that $S_i\in\Cc_{s_i}(D)$ for $i=1, \ldots, k$ satisfy Condition $(\mathrm{I})$, where $1\le s:=s_1+ \cdots+ s_k\le n$. Then, the product $\left(S_1\wedge \cdots \wedge S_k\right)_K$ is symmetric.
\end{proposition}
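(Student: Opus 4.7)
The plan is to derive the symmetry from Theorem~\ref{thm:conv_many} combined with the commutativity of the wedge product between smooth forms and currents of even total real degree. Since every permutation is a composition of transpositions, it suffices to treat an arbitrary permutation $\sigma$ of $\{1,\ldots,k\}$ directly.

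As a preliminary, I would verify that Condition~$(\mathrm{I})$ is invariant under $\sigma$. The map $\tau_\sigma:D^k\to D^k$, $(x_1,\ldots,x_k)\mapsto(x_{\sigma(1)},\ldots,x_{\sigma(k)})$, is a biholomorphism that fixes $\Delta$ pointwise, and a direct computation shows $\tau_\sigma^*(\pi_1^*S_1\wedge\cdots\wedge\pi_k^*S_k)=\pi_1^*S_{\sigma^{-1}(1)}\wedge\cdots\wedge\pi_k^*S_{\sigma^{-1}(k)}$. Although $\tau_\sigma^*u\ne u$ in general, the pullback $\tau_\sigma^*u$ is again a local defining function of $\Delta$ for which King's residue formula gives $(dd^c\tau_\sigma^*u)^{(k-1)n}=[\Delta]$; combined with the equivalence from the remark following Definition~\ref{defn:K-integrable_many} (allowing the factor $\omega_{\rm euc}^{kn-s-i+1}$ to be placed on any projection), a change-of-variables argument shows that Condition~$(\mathrm{I})$ for $(S_1,\ldots,S_k)$ is equivalent to Condition~$(\mathrm{I})$ for $(S_{\sigma(1)},\ldots,S_{\sigma(k)})$, so in particular $(S_{\sigma(1)}\wedge\cdots\wedge S_{\sigma(k)})_K$ is well-defined. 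This preliminary step is the main obstacle, since Condition~$(\mathrm{I})$ is phrased via a specific $u$ whose pullback under $\tau_\sigma$ is not literally $u$, and one must argue that the inductive integrability depends only on the singularity type of the defining function and not on its precise form.

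For the equality of the two products, fix $j_0\in\{1,\ldots,k\}$ and let $j_0':=\sigma^{-1}(j_0)$, so that $S_{\sigma(j_0')}=S_{j_0}$. Applying Theorem~\ref{thm:conv_many} to the original tuple with unregularized index $j_0$ and to the permuted tuple with unregularized index $j_0'$, one obtains
\begin{align*}
(S_1\wedge\cdots\wedge S_k)_K&=\lim_{\theta\to 0}\,\Lc^{\Kmc_\theta^n}(S_1)\wedge\cdots\wedge S_{j_0}\wedge\cdots\wedge\Lc^{\Kmc_\theta^n}(S_k),\\
(S_{\sigma(1)}\wedge\cdots\wedge S_{\sigma(k)})_K&=\lim_{\theta\to 0}\,\Lc^{\Kmc_\theta^n}(S_{\sigma(1)})\wedge\cdots\wedge S_{j_0}\wedge\cdots\wedge\Lc^{\Kmc_\theta^n}(S_{\sigma(k)}).
\end{align*}
As $i$ runs over $\{1,\ldots,k\}\setminus\{j_0'\}$, $\sigma(i)$ runs bijectively over $\{1,\ldots,k\}\setminus\{j_0\}$, so both wedge products involve the same collection of smooth forms $\{\Lc^{\Kmc_\theta^n}(S_i)\}_{i\ne j_0}$ together with the single current $S_{j_0}$, merely reordered by $\sigma$. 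All factors have bidegree $(s_i,s_i)$, hence even total real degree, so wedge-commutativity holds; the two expressions are literally equal for every $\theta$, and passing to the limit yields $(S_1\wedge\cdots\wedge S_k)_K=(S_{\sigma(1)}\wedge\cdots\wedge S_{\sigma(k)})_K$.
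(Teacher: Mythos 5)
The paper states this proposition without proof, immediately after Theorem \ref{thm:conv_many}, so the evident intent is that symmetry is a direct corollary of that theorem; your second step is exactly this argument, and it is correct and nicely packaged: choosing $j_0'=\sigma^{-1}(j_0)$ makes the two regularized wedge products consist of the same smooth forms $\Lc^{\Kmc_\theta^n}(S_i)$, $i\ne j_0$, together with the same unregularized current $S_{j_0}$, so they coincide for each $\theta$ by commutativity of wedge products of even-degree factors, and the two limits agree.

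The genuine gap is in your preliminary step, and you have correctly located it but not closed it. For $k=2$ there is in fact nothing to do: $u=\tfrac12\log|x_1-x_2|^2$ is itself symmetric, and only the placement of $\omega_{\rm euc}$ changes, which the remark after Definition \ref{defn:K-integrable_many} already handles. For $k\ge 3$, however, $\tau_\sigma^*u\ne u$, and invoking King's residue formula for $\tau_\sigma^*u$ does not suffice: Condition $(\mathrm{I})$ is an inductive statement about the Monge--Amp\`ere-type products $\left\langle \cdot\wedge(dd^cu)^{i-1}\right\rangle_\Kmc$, and the identity $(dd^c\tau_\sigma^*u)^{(k-1)n}=[\Delta]$ says nothing about the intermediate powers or about the convergence of the regularized products built from $\tau_\sigma^*u$. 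What actually closes the gap is the observation that, in the normal coordinates $w=(x_1-x_k,\ldots,x_{k-1}-x_k)$, one has $\tau_\sigma^*u=\log|Aw|$ for an invertible linear map $A$ of $\C^{(k-1)n}$; both $dd^c\log|w|$ and $dd^c\log|Aw|$ are positive $(1,1)$-forms, homogeneous of degree $-2$ in $w$, with the same one-dimensional kernel (the radial direction through $w$), so a compactness argument on the unit sphere gives a two-sided bound $c\,dd^c\log|w|\le dd^c\log|Aw|\le C\,dd^c\log|w|$; since domination of positive $(1,1)$-forms passes to powers, and $u-\tau_\sigma^*u$ is bounded near $\Delta$, one can then re-run the induction of Lemma \ref{lem:basic_conv_KT} (in the spirit of Lemma \ref{lem:T_vs_K}) to transfer Condition $(\mathrm{K}-\max)$ from $u$ to $\tau_\sigma^*u$ and hence obtain Condition $(\mathrm{I})$ for the permuted tuple. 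Without some such comparison your "change-of-variables argument shows" is an assertion, not a proof. An alternative route that avoids regularization altogether is intrinsic: by Theorem \ref{thm:main_intersection_1} the product equals the Dinh--Sibony product, $\tau_\sigma$ fixes $\Delta$ and acts on the normal bundle by the linear map $A$, which commutes with $A_\lambda$ and preserves the cohomology class of linear subspaces in the fibers, so by Remark \ref{rmk:indep_shadow} the shadow is unchanged; but this too requires first knowing that the permuted current admits a unique tangent current of minimal $h$-dimension, which again comes down to the same transfer.
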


\begin{remark}
	In the case of $S\in\Cc_s(D)$ and a complex submanifold $Z\subset D$ of codimension $m$ satisfying \eqref{eq:King_residue} for some $m\in\{1, \ldots, n-s\}$, we have two notions of wedge product: $S\wedge_C[Z]$ and $\left(S\wedge[Z]\right)_K$ provided that both of them are well-defined. Then, one can relate $S\wedge_C[Z]$ to the tangent current of $S$ along $Z$ on the normal bundle of $Z$ in $D$ while $\left(S\wedge[Z]\right)_K$ the tangent current of $\pi_1^*S\wedge\pi_2^*[Z]$ along the diagonal submanifold $\Delta\subset D^2$ on the normal bundle of $\Delta$ in $D^2$. 
	By \cite[Lemma 2.3]{Vu}, we see that they are identical. See also \cite[Lemma 5.4]{DS18}.
\end{remark}

\subsection{Slicing theory and currents defined by analytic varieties}\label{subsec:slicing}

In this subsection, we consider the standard regularization by convolution and slicing theory. The basic idea is to exploit Remark \ref{rmk:indep_shadow} 
together with \eqref{eq:key_idea}. 
We use the notations in Section \ref{sec:intersection} with $k=2$. For slicing theory, the reader is referred to \cite{Federer}.

\begin{theorem}\label{thm:standard_regularization} 
	Let $S_i\in\Cc_{s_i}(D)$ for $i=1, \ldots, k$ satisfy Condition $(\mathrm{I})$, where $1\le s:=s_1+\cdots+ s_k\le n$. Let $\left((S_i)_\epsilon\right)_{0<\epsilon\ll 1}$ denote the standard regularization of $S_i$ by convolution for $i=1, \ldots, k$. Then, for any $j\in\{1, \ldots, k\}$, we have
	\begin{align*}
		(S_1)_\epsilon\wedge \cdots\wedge(S_{j-1})_\epsilon\wedge S_j\wedge (S_{j+1})_\epsilon\wedge\cdots \wedge(S_k)_\epsilon \to (S_1\wedge \cdots\wedge  S_k)_K
	\end{align*}
	as $\epsilon\to 0$ in the sense of currents.
\end{theorem}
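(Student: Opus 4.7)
The plan is to run the argument of Theorem~\ref{thm:conv_many} almost verbatim, with the singular fiber kernel $\Kmc_\theta^n$ replaced by a smooth kernel on $\P^n$ that realizes the mollifier convolution, and to pass between the two via Remark~\ref{rmk:indep_shadow}, which allows exchanging cohomologous representatives when pairing against the tangent current.

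First I would construct a smooth positive $(n,n)$-form $\widetilde{\Omega}$ on $\P^n$ with compact support in $\C^n \subset \P^n$, lying in the cohomology class $\{\omega_F^n\}$, whose restriction to the affine chart is a constant multiple of $g(x'')\bigwedge_{i=1}^n \tfrac{\sqrt{-1}}{2}\,dx_i'' \wedge d\overline{x_i''}$, where $g$ is the mollifier used in the definition of the standard regularization. Being of maximal bidegree, $\widetilde{\Omega}$ is automatically closed; the constant is fixed so that $\int_{\P^n}\widetilde{\Omega}=\int_{\P^n}\omega_F^n$. The key computation, modeled on \eqref{eq:key_idea}, is the scaling identity
\begin{align*}
    (A_\lambda)^*(\pi_F^*\widetilde{\Omega}) = c_n\,|\lambda|^{2n}\,g(\lambda(x-y))\,\bigwedge_{i=1}^n \tfrac{\sqrt{-1}}{2}\,d(x_i-y_i)\wedge d\overline{(x_i-y_i)},
\end{align*}
which, at $\epsilon=|\lambda|^{-1}$, is exactly the standard mollifier kernel (up to the fixed constant $c_n$). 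Hence each $(S_i)_\epsilon$ is realized as the semi-regular transform $c_n^{-1}\,(\pi_2)_*(\pi_1^*S_i \wedge (A_{\epsilon^{-1}})^*(\pi_F^*\widetilde{\Omega}))$.

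Second, I would repeat the computation from the proof of Theorem~\ref{thm:conv_many} (which was carried out there for $k=3$, $j=1$) with $\widetilde{\Omega}$ in place of $\Omega_n$ in each of the $k-1$ fiber factors. By Theorem~\ref{thm:main_intersection_1}, Condition $(\mathrm{I})$ produces a unique tangent current $(\pi_1^*S_1\wedge\cdots\wedge\pi_k^*S_k)_\infty$ along $\Delta$ of minimal $h$-dimension whose shadow is $(S_1\wedge\cdots\wedge S_k)_K$. Since $\widetilde{\Omega}^{(1)}\wedge\cdots\wedge\widetilde{\Omega}^{(k-1)}$ and $\Omega_n^{(1)}\wedge\cdots\wedge\Omega_n^{(k-1)}$ both have total mass $1$ on $\P^{(k-1)n}$ and are of maximal bidegree, they represent the same de Rham class $\{\omega_F^{(k-1)n}\}$; Remark~\ref{rmk:indep_shadow} therefore guarantees that pairing the tangent current against either wedge produces the same shadow $(S_1\wedge\cdots\wedge S_k)_K$. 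Exactly as in the last lines of the proof of Theorem~\ref{thm:conv_many}, the Fubini / double-current factorization splits the pushforward to the $j$-th factor into the wedge $\bigwedge_{i\ne j}(S_i)_\epsilon \wedge S_j$, and letting $\epsilon\to 0$ yields the claimed convergence; the case of general $j$ follows by relabeling.

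The main technical step I expect to require care is the adaptation of Proposition~\ref{prop:main_true_general} from the specific kernel $\Omega_n$ to the smooth kernel $\widetilde{\Omega}$ in the multi-factor setting. This step is, however, largely cosmetic: the integrability hypothesis in Proposition~\ref{prop:main_true_general} was needed only because $\Omega_n$ lives on the boundary of smoothness, whereas with a smooth $\widetilde{\Omega}$ of compact support in $\C^n$ the cutoffs $\chi_\varepsilon^\infty$ near $H_\infty$ become trivial, and the near-$\Delta$ mass bound on $T\wedge (A_{\epsilon^{-1}})^*(\pi_F^*\widetilde{\Omega})$ is automatic from the standard mass preservation property of mollifier convolution. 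The only genuine input beyond these routine adaptations is the cohomological fact recorded in Remark~\ref{rmk:indep_shadow} together with Theorem~\ref{thm:main_intersection_1}.
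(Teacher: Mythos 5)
Your proposal is correct and follows essentially the same route as the paper: the paper also replaces the fiber kernel by the smooth compactly supported form $\frac{1}{n!}g(w)(dd^c|w|^2)^n$ built from the mollifier, uses the scaling identity $(A_{\epsilon^{-1}})^*$ of this form to recover the convolution kernel, and invokes Theorem \ref{thm:main_intersection_1} together with the independence of the shadow from the choice of cohomologous fiber form (Remark \ref{rmk:indep_shadow}, via Proposition \ref{prop:T_to_measure}) before splitting the factors by Fubini as in Theorem \ref{thm:conv_many}. The adaptation of Proposition \ref{prop:main_true_general} you anticipate is in fact not needed, since once the unique tangent current exists one pairs it directly against the smooth compactly supported form $\pi_F^*\widetilde{\Omega}\wedge\pi_\Delta^*\varphi$.
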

We consider the case of $k=2$. The general case is obtained as in the proof of Theorem \ref{thm:conv_many}.
\begin{proof}[Proof of the case of $k=2$]
We use the notations as in Proposition \ref{prop:continuity_transform} for the standard regularization by convolution. Let $\varphi$ be a smooth test $(n-s, n-s)$-form on $D$, where $s:=s_1+s_2$. We may assume that $\varphi=f\Theta$, where $f$ is a positive smooth function with compact support in $D$ and $\Theta$ is a smooth positive closed $(n-s, n-s)$-form on $D$. Indeed, any smooth test $(n-s, n-s)$-form can be written as a linear combination of such forms. 
\medskip

Theorem \ref{thm:main_intersection_1}, Proposition \ref{prop:T_to_measure} and Condition $(\mathrm{I})$ imply that the current $\pi_1^*S_1\wedge \pi_2^*(S_2\wedge\Theta)$ on ${D^2}$ admits a unique tangent current $(S_1\wedge S_2\wedge\Theta)_\infty$ along $\Delta$ and its $h$-dimension is minimal. We choose $\Omega=gd\mu=\frac{1}{n!}g(w)(dd^c|w|^2)^n$, where $g$ is as in Proposition \ref{prop:continuity_transform}. Observe that it has compact support in $\C^n$ and therefore, it can trivially be extended to $\P^n$ and it belongs to the same cohomology class as $[w=0]$ does. Notice that based on the coordinates $(x_2, x_1-x_2)=(x, w)\in E$, we have $\pi_\Delta^*(\left(\pi_2^*f\right)|_\Delta)=\pi_2^*f$, and we have $A_\lambda(x, w)=\left(x, \lambda w\right)$. Then, Proposition \ref{prop:T_to_measure} gives
\begin{align*}
	&\notag\left\langle \left(S_1\wedge S_2\right)_K, f\Theta\right\rangle=\left\langle \left(\pi_1^*S_1\wedge \pi_2^*S_2\right)_\infty^h, f\Theta\right\rangle=\left\langle \left(\pi_1^*S_1\wedge\pi_2^*(S_2\wedge\Theta)\right)_\infty^h, f\right\rangle\\
	\notag&=\left\langle (\pi_\Delta)_*\left((\pi_1^*S_1\wedge\pi_2^*(S_2\wedge\Theta))_\infty\wedge \pi_F^*\Omega\right), (\pi_2^*f)|_\Delta\right\rangle\\
	\notag&=\int_{{E}}\left(\pi_2^*f\right)(\pi_1^*S_1\wedge\pi_2^*(S_2\wedge\Theta))_\infty\wedge \left(\frac{1}{n!}g(w)(dd^c|w|^2)^n\right)\\
	\notag&=\lim_{\epsilon\to 0}\int_{{E}}\left(\pi_2^*f\right)\left(A_{\epsilon^{-1}}\right)_*(\mathbf{1}_{D^2}\pi_1^*S_1\wedge\pi_2^*(S_2\wedge \Theta))\wedge \left(\frac{1}{n!}g(w)(dd^c|w|^2)^n\right)\\
	\notag&=\lim_{\epsilon\to 0}\int_{{D^2}}\pi_1^*S_1\wedge\pi_2^*(S_2\wedge f\Theta)\wedge \left(A_{\epsilon^{-1}}\right)^*\left(\frac{1}{n!}g(w)(dd^c|w|^2)^n\right)\\
	&=\lim_{\epsilon\to 0}\int_{{D^2}}\pi_1^*S_1\wedge\pi_2^*(S_2\wedge f\Theta)\wedge \left(\frac{1}{n!\epsilon^{2n}}g\left(\frac{w}{\epsilon}\right)(dd^c|w|^2)^n\right)\\
	&=\lim_{\epsilon\to 0}\int_{y\in D}\left( \int_{w\in \C^n}S_1(y+w)\wedge \left(\frac{1}{\epsilon^{2n}}g\left(\frac{w}{\epsilon}\right)d\mu(w)\right) \right)\wedge S_2\wedge f\Theta=\lim_{\epsilon\to 0}\left\langle \left(S_1\right)_\epsilon\wedge S_2, f\Theta\right\rangle.	
\end{align*}
\end{proof}

Next, we examine slicing theory. Slicing is a generalization of the restriction of forms to level sets of a holomorphic submersion. Let $X$ and $F$ be two complex manifolds of dimension $N$ and $n$, respectively. Let $\pi_F:X\to F$ be a holomorphic submersion and $\Rc$ a $(N-m, N-m)$-current on $X$ with $m\ge n$. Assume that $\Rc$, $\partial \Rc$ and $\bar\partial \Rc$ are of order $0$. One can define the slice $\langle \Rc, \pi_F, \theta\rangle$ as below for almost every $\theta\in F$. This is a current of dimension $(m-n, m-n)$ on $\pi_F^{-1}(\theta)$. One may consider it as a current on $X$. The slicing commutes with $\partial$ and $\bar \partial$. In particular, if $\Rc$ is closed, then $\langle \Rc, \pi_F, \theta\rangle$ is also closed.
\medskip

Let $z$ denote the coordinates for $F$ and $\mu$ the standard Lebesgue measure. Let $\psi(z)$ be a positive smooth function with compact support such that $\int \psi d\mu =1$. Define $\psi_\varepsilon(z):=\epsilon^{-2n}\psi(\epsilon^{-1}z)$ and $\psi_{\theta, \epsilon}(z):=\psi_\epsilon(z-\theta)$ (the measure $\psi_{\theta, \epsilon}\mu$ approximates the Dirac mass at $\theta$). Then, for every smooth test form $\Psi$ of the right bidegree with compact support in $X$ one has
\begin{align*}
	\langle\Rc, \pi_F, \theta\rangle(\Psi)=\lim_{\epsilon\to 0}\langle \Rc\wedge\pi_F^*(\psi_{\theta, \epsilon}), \Psi\rangle
\end{align*}
when $\langle \Rc, \pi_F, \theta\rangle$ exists. This property holds for all choices of the functions $\psi$ and $\Psi$ such that $\pi_F$ is proper on $\supp\, \Psi \cap \supp\, \Rc$. Conversely, when the previous limit exists and is independent of $\psi$, it defines $\langle \Rc, \pi_F, \theta\rangle$ and one says that $\langle \Rc, \pi_F, \theta\rangle$ is well-defined. 
\medskip

We consider $X=D^2$, $N=2n$, $\pi_F:D^2\subset \overline{E}\to \P^n$, $\Rc=\pi_1^*S_1\wedge\pi_2^*S_2$, $\psi=g$ and $\theta=0$. Notice that Theorem \ref{thm:standard_regularization} is true for any choice of $g$. Together with Lemma \ref{lem:restriction_lem}, the arguments used in Proposition \ref{prop:convergence_V} and Theorem \ref{thm:standard_regularization} imply the following:
\begin{theorem}
	The slice $\left\langle \pi_1^*S_1\wedge \pi_2^*S_2, \pi_F, 0\right\rangle$ exists.
\end{theorem}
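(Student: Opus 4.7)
The plan is to interpret the slicing approximation as a pairing with the unique tangent current furnished by Theorem \ref{thm:main_intersection_1}, in the spirit of the proofs of Proposition \ref{prop:convergence_V} and Theorem \ref{thm:standard_regularization}. Fix a smooth test form $\Psi$ with compact support in $D^2$ of appropriate bidegree, and set $R := \pi_1^*S_1 \wedge \pi_2^*S_2$. The starting observation is the identity
\begin{align*}
\pi_F^*(\psi_{0,\epsilon}\, d\mu) = \frac{1}{n!\,\epsilon^{2n}}\, g\!\left(\frac{w}{\epsilon}\right)(dd^c|w|^2)^n = A_{\epsilon^{-1}}^*(\pi_F^*\Omega),
\end{align*}
where $\Omega := \tfrac{1}{n!}g(w)(dd^c|w|^2)^n$ extends to a smooth positive closed $(n,n)$-form on $\P^n$ with compact support in $\C^n$ cohomologous to a linear subspace of the fiber; this is exactly the identity \eqref{eq:key_idea}, and $\Omega$ is a legitimate choice in Definition \ref{def:shadow}.

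By the standard change-of-variable formula together with $A_{\epsilon^{-1}}^* \circ A_\epsilon^* = \mathrm{id}$, the slicing-approximation pairing rewrites as
\begin{align*}
I_\epsilon := \langle R \wedge \pi_F^*(\psi_{0,\epsilon}), \Psi\rangle = \bigl\langle (A_{\epsilon^{-1}})_*(\mathbf{1}_{D^2}R),\, \pi_F^*\Omega \wedge A_\epsilon^*\Psi \bigr\rangle.
\end{align*}
By Theorem \ref{thm:main_intersection_1}, Condition $(\mathrm{I})$ guarantees that $(A_{\epsilon^{-1}})_*(\mathbf{1}_{D^2}R) \to T_\infty$ weakly, where $T_\infty$ is the unique tangent current of $R$ along $\Delta$ with minimal $h$-dimension. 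Since $\pi_F^*\Omega$ has bounded support in the fiber, I may localize to a compact subset of $E$, and there $A_\epsilon^*\Psi(x, w) = \Psi(x + \epsilon w, x)$ (with $dx_1 \mapsto dx + \epsilon\, dw$ and $dx_2 \mapsto dx$) converges in $C^\infty$ to $\pi_\Delta^*(\Psi|_\Delta)$ uniformly with all derivatives as $\epsilon \to 0$.

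Combining this weak convergence of currents with the uniform $C^\infty$ convergence of the compactly supported test form $\pi_F^*\Omega \wedge A_\epsilon^*\Psi$, together with the uniform mass bound on compact subsets of $E$ provided by Condition $(\mathrm{I})$ through Lemma \ref{lem:T_vs_K} and Proposition \ref{prop:tangent_2_shadows}, the limit exists and equals
\begin{align*}
\lim_{\epsilon \to 0} I_\epsilon = \bigl\langle T_\infty \wedge \pi_F^*\Omega,\, \pi_\Delta^*(\Psi|_\Delta)\bigr\rangle = \langle T_\infty^h, \Psi|_\Delta\rangle = \langle (S_1 \wedge S_2)_K, \Psi|_\Delta\rangle,
\end{align*}
where the last identification is again Theorem \ref{thm:main_intersection_1}. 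Different choices of $\psi$ produce forms $\Omega$ in the same fiberwise cohomology class $\{\omega_F^n\}$, so by Remark \ref{rmk:indep_shadow} the limit is independent of $\psi$, which establishes existence of the slice.

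The main obstacle is the simultaneous limit in the third step: combining weak convergence of currents with smooth convergence of a varying test form. This is handled by the elementary fact that if $R_\epsilon \to R$ weakly with uniformly bounded mass on a compact set and $\alpha_\epsilon \to \alpha$ in $C^\infty$ there, then $\langle R_\epsilon, \alpha_\epsilon\rangle \to \langle R, \alpha\rangle$. Since $\pi_F^*\Omega$ localizes the integrand to a compact subset of $E$, the needed uniform bounds and smooth convergence both hold, and the argument closes.
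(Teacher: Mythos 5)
Your proof is correct and follows essentially the same route as the paper: the paper likewise identifies the slicing kernel $\pi_F^*(\psi_{0,\epsilon}\,d\mu)$ with $A_{\epsilon^{-1}}^*(\pi_F^*\Omega)$ for $\Omega=\tfrac{1}{n!}g(w)(dd^c|w|^2)^n$, and deduces existence of the limit from the unique tangent current of Theorem \ref{thm:main_intersection_1} together with Remark \ref{rmk:indep_shadow} for independence of $\psi$ (this is precisely the content of the proof of Theorem \ref{thm:standard_regularization}, which the paper then cites). The only cosmetic difference is that you treat a general test form $\Psi$ directly via the $C^\infty$-convergence $A_\epsilon^*\Psi\to\pi_\Delta^*(\Psi|_\Delta)$ on a fixed compact subset of $E$, whereas the paper reduces to test forms of the type $\pi_2^*(f\Theta)$ using Lemma \ref{lem:restriction_lem} and the bidegree argument of Proposition \ref{prop:convergence_V}; both devices are valid.
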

Condition $(\mathrm{I})$ is true for all the proper intersections of holomorphic cycles as will be shown in Subsection \ref{subsec:analytic_subset}, which shows that the Dinh-Sibony product, together with Condition $(\mathrm{I})$, extends King's work on the intersection of holomorphic cycles in \cite{King-1} as it uses slicing theory (\cite[Definition 4.1.3]{King-1}).

\section{Classical Examples}\label{sec:examples}
\subsection{Classical Lelong number}\label{subsec:Lelong} The Lelong number corresponds to the case where $V$ is a single point (\cite{DS18}, \cite{Vu}). We check that Theorem \ref{thm:K'} is applicable in this case. Let $U$ be an open subset of $\C^N$ containing $0$. We may assume that $V=\{0\}$ and $u=\log|x|$, where $x$ is the coordinates of $U$. The unbounded locus $L(u)$ of $u$ is $V$, which is of dimension $0$. By \cite[Theorem III.4.5]{Demailly} and \cite[Proposition III.4.9]{Demailly}, the following integral is always finite
\begin{align*}
	\int_U u(dd^c u)^i\wedge T\wedge \omega^{N-p-i}>-\infty
\end{align*}
for $0\le i\le N-p-1$. Hence, $T$ satisfies Condition $(\mathrm{K}^*-\max)$. Theorem \ref{thm:K'} implies that there exists a unique tangent current with $h$-dimension $0$. Harvey's representation of the Lelong number proves that the shadow of the unique tangent current equals the Lelong number.

\subsection{Intersections of analytic subsets}\label{subsec:analytic_subset}We apply Theorem \ref{thm:main_intersection_1} to the proper intersections of analytic subsets on complex manifolds. Here, complex manifolds are not limited to compact complex manifolds or K\"ahler manifolds, but rather general complex manifolds.

\begin{proposition}\label{prop:analytic_subset}
	Let $X$ be a complex manifold of dimension $n$. Let $H_1, \ldots, H_k$ be irreducible analytic subsets of pure codimension $h_1, \ldots, h_k$ defined on $X$, respectively, where $1\le h:=h_1+\cdots+ h_k\le n$. Suppose that the intersection $H_1\cap \cdots \cap H_k$ is an analytic subset of pure codimension $h$. Then, the currents $[H_1], \ldots, [H_k]$ of integration satisfy Condition $(\mathrm{I})$ and we have
	\begin{align*}
		\left([H_1]\wedge \cdots\wedge [H_k]\right)_K=\sum_{\alpha\in A} m_\alpha[C_\alpha],
	\end{align*}
	where $C_\alpha$ for $\alpha\in A$ are the irreducible components of the intersection $H_1\cap \cdots \cap H_k$, and the integer $m_\alpha$ is the multiplicity of intersection of $H_1, \ldots, H_k$ along the component $C_\alpha$.
\end{proposition}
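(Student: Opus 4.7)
The plan is to reduce to the local situation and then verify Condition $(\mathrm{I})$ by an induction that exploits the proper intersection hypothesis. Since Condition $(\mathrm{I})$ and the shape of $(S_1\wedge\cdots\wedge S_k)_K$ are local in nature, by Theorem \ref{thm:intersection_cpt} one may pass to a bounded simply connected domain $D\subset \C^n$ with smooth boundary around an arbitrary point $p\in H_1\cap\cdots\cap H_k$; everywhere else the $[H_i]$ are either disjoint from the diagonal structure or smooth, and the result is trivial. On $D^k$ the current $\pi_1^*[H_1]\wedge\cdots\wedge\pi_k^*[H_k]$ coincides, as a double current, with the current of integration $[H_1\times\cdots\times H_k]$ on the product analytic set.

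The heart of the proof is verifying Condition $(\mathrm{I})$ inductively. The proper intersection hypothesis forces $(H_1\times\cdots\times H_k)\cap\Delta$ to have pure codimension $(k-1)n+h$ in $D^k$, i.e. the codimensions add. At each inductive step $i$, assuming the previous current $T_i:=\left\langle \pi_1^*[H_1]\wedge\cdots\wedge \pi_k^*([H_k]\wedge\omega_{\rm euc}^{kn-h-i+1})\wedge(dd^cu)^{i-1}\right\rangle_\Kmc$ is well defined and is a positive closed current whose support off the diagonal lies in $H_1\times\cdots\times H_k$, I would deduce the $L^1_{\loc}$-integrability of $u$ against $T_i$ from the Chern--Levine--Nirenberg inequalities applied to the smooth plurisubharmonic approximations $u^{\Kmc}_\theta$ of $u$, together with Demailly's comparison theorem for Lelong numbers (\cite{Demailly}): because the component $C_\alpha\subset\{u=-\infty\}$ has the expected codimension in the support of $T_i$, the generic Lelong number of $T_i$ along $\Delta$ is finite, and an El~Mir--Skoda type extension shows that $T_{i+1}$ is again the (trace of the) integration current on the appropriate analytic subset with explicit local multiplicities.

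Once Condition $(\mathrm{I})$ is established, Theorem \ref{thm:main_intersection_1} gives both existence/uniqueness of the tangent current and the integral representation
\begin{align*}
\left\langle ([H_1]\wedge\cdots\wedge[H_k])_K,\varphi\right\rangle=\int_{D^k\setminus\Delta}\pi_1^*[H_1]\wedge\cdots\wedge\pi_k^*[H_k]\wedge u(dd^cu)^{(k-1)n-1}\wedge dd^c\Phi.
\end{align*}
To identify this with $\sum_\alpha m_\alpha[C_\alpha]$ I would use the slicing theorem of Subsection \ref{subsec:slicing}: under Condition $(\mathrm{I})$ the slice $\left\langle\pi_1^*[H_1]\wedge\cdots\wedge\pi_k^*[H_k],\pi_F,0\right\rangle$ at the diagonal exists and agrees with $([H_1]\wedge\cdots\wedge[H_k])_K$. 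On the other hand, for the product cycle $[H_1\times\cdots\times H_k]$ sliced transversely by $\Delta$, King's slicing formula \cite[Definition 4.1.3]{King-1} for proper intersections of holomorphic cycles gives precisely $\sum_\alpha m_\alpha[C_\alpha]$, where $m_\alpha$ is the classical intersection multiplicity along $C_\alpha$. Alternatively, one can run Theorem \ref{thm:standard_regularization}: the standard convolution regularizations $[H_i]_\epsilon$ are smooth positive closed forms, their products converge to $([H_1]\wedge\cdots\wedge[H_k])_K$, and on the smooth level the limit is the well-known Draper intersection cycle with the same multiplicities.

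The main obstacle is the inductive integrability check in the second paragraph: propagating through $(k-1)n$ steps the statement "$T_i$ is an integration-type current supported on an analytic set that meets $\Delta$ properly, and $u\in L^1_{\loc}(T_i)$". One must ensure at each step that the wedge with $dd^c u$ does not produce spurious mass on $\Delta$, which is exactly what proper intersection guarantees, and that the inductive hypothesis of Lemma \ref{lem:basic_conv_KT} is applicable; this is where Demailly's comparison theorem for Lelong numbers and the local structure of proper intersections of analytic sets do the decisive work.
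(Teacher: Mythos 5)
Your overall architecture (localize to a domain $D^k$, verify Condition $(\mathrm{I})$ from the properness of the intersection, then identify the shadow with King's cycle via slicing or via convolution regularization) matches the paper's, and your final identification step is essentially the one used there. However, the central integrability verification as you present it contains a genuine gap. You propose to deduce $u\in L^1_{\loc}(T_i)$ at each inductive step from the finiteness of the generic Lelong number of $T_i$ along $\Delta$, combined with an El Mir--Skoda extension and Demailly's comparison theorem for Lelong numbers. Finiteness of Lelong numbers is automatic for any positive closed current and never by itself implies that $\log\dist(\cdot,\Delta)$ is integrable against its trace measure; the current of integration on a subvariety \emph{contained} in $\{u=-\infty\}$ has finite Lelong numbers everywhere while $u$ is nowhere locally integrable against it. Moreover your inductive hypothesis that each intermediate current $T_i$ is ``again the integration current on an appropriate analytic subset with explicit multiplicities'' is false: for $1\le i<(k-1)n$ the current $\left\langle \pi_1^*[H_1]\wedge\cdots\wedge\pi_k^*[H_k]\wedge(dd^cu)^{i-1}\right\rangle_\Kmc$ is supported on $\widehat H:=H_1\times\cdots\times H_k$ but is not an analytic cycle; it is $(dd^cu)^{i-1}$ restricted to the regular part of $\widehat H$, trivially extended. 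What actually makes the argument work --- and what the paper uses --- is only the support condition: $L(u)\cap\widehat H=\Delta\cap\widehat H$ has real dimension $2(n-h)$, and since $2(n-h)<2(kn-h)-2i+1$ for all $i\le(k-1)n$, Demailly's \cite[Theorem III.4.5]{Demailly} and \cite[Proposition III.4.9]{Demailly} (the Chern--Levine--Nirenberg type mass estimates for Monge--Amp\`ere operators with unbounded loci of small dimension, not the comparison theorem for Lelong numbers) give the finiteness of all the integrals $\int_K u(dd^cu)^{i}\wedge\pi_1^*[H_1]\wedge\cdots\wedge\pi_k^*[H_k]\wedge(\pi_\Delta^*\omega_\Delta)^{kn-h-i}$ in one stroke, with no induction on the structure of the intermediate currents needed.

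Once that substitution is made, the rest of your argument goes through and coincides with the paper's: Condition $(\mathrm{K}-\max)$ along $\Delta$ holds, Theorem \ref{thm:main_intersection_1} produces the unique tangent current with minimal $h$-dimension and the integral representation, and Subsection \ref{subsec:slicing} identifies the resulting product with the slice in the sense of King's work \cite{King-1}, hence with $\sum_{\alpha\in A}m_\alpha[C_\alpha]$. So the defect is not in the strategy but in the specific mechanism you invoke for the integrability of $u$; replace the Lelong-number reasoning by the Hausdorff-measure criterion of \cite[Theorem III.4.5]{Demailly} applied directly to $T=\pi_1^*[H_1]\wedge\cdots\wedge\pi_k^*[H_k]$.
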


\begin{proof}
Based on Remark \ref{rmk:independence_of_admissible_map}, it suffices to consider domains. We use the notations in Section \ref{sec:intersection}. We consider a domain $D\subset\C^n$ as in Section \ref{sec:intersection} and take $T=\pi_1^*[H_1]\wedge\ldots\wedge\pi_k^*[H_k]$. Let $\widehat{H}:=\supp\, T=\pi_1^{-1}(H_1)\cap\cdots\cap\pi_k^{-1}(H_k)$. For compact $K\subset D^k$, we consider the integrals:
\begin{align*}
	\int_{K} \left(\pi_1^*[H_1]\wedge\ldots\wedge\pi_k^*[H_k]\right)\wedge \left(u(dd^c u)^{i}\right)\wedge \left(\pi_{\Delta}^*\omega_{\Delta}\right)^{kn-h-i}\quad\textrm{ for }\,i=0, \ldots, (k-1)n-1.
\end{align*}
The unbounded locus $L(u)$ of $u$ is $\Delta$ and therefore, $L(u)\cap \widehat{H}$ is essentially $H_1\cap \cdots \cap H_k$, which is of complex dimension $n-h$. Since $2(n-h)<2(kn-h)-2(k-1)n+1\le 2(kn-h)-2i+1$, we have $\Hc_{2(kn-h)-2i+1}(H)=0$ for $i=1, \ldots, (k-1)n$. By \cite[Theorem III.4.5]{Demailly} and \cite[Proposition III.4.9]{Demailly}, the above integrals are all finite for every compact $K\subset D^k$ and $\pi_1^*[H_1]\wedge\ldots\wedge\pi_k^*[H_k]$ satisfies Condition $(\mathrm{K}-\max)$ along $\Delta$. Hence, Theorem \ref{thm:main_intersection_1} says that there exists a unique tangent current of $\pi_1^*[H_1]\wedge\ldots\wedge\pi_k^*[H_k]$ along $\Delta$ and its $h$-dimension is minimal. We denote by $[H]_\infty$ the unique tangent current. Subsection \ref{subsec:slicing} proves that the shadow $[H]_\infty^h$ of $[H]_\infty$ coincides with the intersection in King's work in \cite[Definition 4.1.3]{King-1}, which implies that it coincides with $\sum_{\alpha\in A} m_\alpha[C_\alpha]$.
\end{proof}

\begin{remark}\label{rmk:int_for_ex}
We can also use the integral formula in Theorem \ref{thm:main_intersection_1} to show $[H]_\infty^h=\sum_{\alpha\in A} m_\alpha[C_\alpha]$ if none of the irreducible components $C_\alpha$ for $\alpha\in A$ belongs to the singular part of $H_i$ for all $i=1, \ldots, k$. In this case, we have $m_\alpha=1$ for $\alpha\in A$. We compute the shadow $[H]_\infty^h$ of $[H]_\infty$. Let $\varphi$ be a smooth test $(n-h, n-h)$-form on $\Delta$ (or equivalently on $D_k$) and $\Phi$ a smooth form with compact support such that $\Phi=\pi_{\Delta}^*\varphi$ in a neighborhood of $\Delta$.
\medskip

As in Proposition \ref{prop:T_wedge_V}, the support of $[H]_\infty^h$ sits inside an analytic subset of dimension $n-h$ in $D^k$ and $[H]_\infty^h$ is positive and closed. By the support theorem of Siu, $[H]_\infty^h$ is a linear combination of currents $[C_\alpha]$ of integration for $\alpha\in A$. By considering $\varphi$ supported in a smaller open subset $D'\subset D$ if necessary, we may assume that the support of $[H]_\infty^h$ is irreducible in $D'$. Denote by $H$ the irreducible analytic subset. Then, $[H]_\infty^h$ should be a constant multiple of $[H]$. We can determine this constant by looking at the regular part of $H$. So, we may further shrink the support of $\varphi$ and assume that each $H_i\subset D$ is a complex submanifold of codimension $h_i$ and that their pairwise intersection is transversal. From \cite[Proposition III.4.12]{Demailly}, we see that $\pi_1^*[H_1]\wedge\ldots\wedge\pi_k^*[H_k]=\left[\widehat{H}\right]$. Then, the desired shadow is
\begin{align*}
	\left\langle \left[H\right]_\infty^h, \varphi\right\rangle = \int_{D^k} \left[\widehat{H}\right]\wedge \left(u(dd^c u)^{(k-1)n-1}\right) \wedge dd^c \Phi,
\end{align*}
where $\Phi$ is a smooth test form on $D^k$ as above. The set $L(u)\cap \widehat{H}$ is of dimension $n-h$.
As done previously, by \cite[Theorem III.4.5]{Demailly} and \cite[Proposition III.4.9]{Demailly}, we see that the currents $u(dd^c u)^{(k-1)n-1}\wedge \left[\widehat{H}\right]$ on $D^k$ and $u|_{\widehat{H}}(dd^c u|_{\widehat{H}})^{(k-1)n-1}$ on $\widehat{H}$ do not charge any mass on $L(u)\cap \widehat{H}$. Hence, by the same argument as above, we have
\begin{align*}
	&\int_{D^k} u(dd^c u)^{(k-1)n-1}\wedge \left[\widehat{H}\right]\wedge dd^c \Phi=\int_{D^k\setminus \Delta} u(dd^c u)^{(k-1)n-1}\wedge \left[\widehat{H}\right]\wedge dd^c \Phi\\
	&=\int_{\widehat{H}\setminus \Delta} u|_{\widehat{H}}\left(dd^c u|_{\widehat{H}}\right)^{(k-1)n-1}\wedge dd^c \Phi|_{\widehat{H}}=\int_{\widehat{H}} u|_{\widehat{H}}\left(dd^c u|_{\widehat{H}}\right)^{(k-1)n-1}\wedge dd^c \Phi|_{\widehat{H}}\\
	&\quad\quad\quad=\int_{\widehat{H}} \left(dd^c u|_{\widehat{H}}\right)^{(k-1)n}\wedge \Phi|_{\widehat{H}}=\int_{\widehat{H}} [\Delta\cap \widehat{H}]\wedge \Phi|_{\widehat{H}}=\int_{\widehat{H}} [\Delta\cap \widehat{H}]\wedge \left(\pi_{\Delta}^*\varphi\right)\Big|_{\widehat{H}}=\left\langle [H], \varphi\right\rangle.
\end{align*}
The third to last equality comes from King's residue formula. The last equality comes from the biholomorphism of $\Delta$ and $D$. Indeed, in the coordinates as in Section \ref{sec:intersection}, we have the desired equality. Hence, the constant equals $1$ and therefore, the shadow is $[H]$.
\end{remark}

\begin{remark}
	Slightly more generally, when $T$ is given by a current of integration on an analytic subset, which is in a generic position with respect to $V$, we can apply the same argument as above. Then, we see that $T$ has a unique tangent current along $V$ and its $h$-dimension is minimal. Moreover, the unique tangent current is simply the inverse image of the intersection of $T$ and $V$ under the projection of $E$ onto $V$.
\end{remark}

\subsection{Intersections with a positive closed $(1, 1)$-current}\label{subsec:bidegree11} 
Let $D$ be a simply connected domain in $\C^n$ with smooth boundary. We use $x$ for the coordinates of $D$. We use the notations in Section \ref{sec:intersection} with $k=2$. Let $S\in\Cc_1(D)$ and $R\in\Cc_r(D)$, where $1\le r\le n-1$. Let $f$ be a plurisubharmonic function such that $S=dd^cf$. Suppose that $f$ is locally integrable with respect to the trace measure of $R$. That is, the measure $|f|\left(R\wedge \omega_{\rm euc}^{n-r}\right)$ is locally finite. We prove that in this classical case, $S$ and $R$ satisfy Condition $(\mathrm{I})$.

\begin{proposition}\label{prop:(1, 1)}
	Let $S$ and $R$ be as above. Then, $S$ and $R$ satisfy Condition $(\mathrm{I})$  and the Dinh-Sibony product is equal to $dd^c(fR)$.
\end{proposition}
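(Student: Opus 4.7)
The strategy is two-fold. First, I would verify that the pair $(S,R)$ satisfies Condition $(\mathrm{I})$ of Definition \ref{defn:K-integrable_many}. Second, I would identify the Dinh--Sibony product $(S\wedge R)_K$ with the classical Bedford--Taylor product $dd^c(fR)$ by exploiting the convolution regularization afforded by Theorem \ref{thm:standard_regularization}.

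For Condition $(\mathrm{I})$: with $k=2$, $s_1=1$, $s_2=r$ and $s=1+r$, a bidegree count on $D$ shows that $R\wedge\omega_{\rm euc}^{2n-r-i}$ vanishes for every $i<n$, so the inductive condition reduces to the single nontrivial requirement at $i=n$:
\begin{align*}
	u\in L^1_{\loc}\left(\left\langle\pi_1^*S\wedge\pi_2^*(R\wedge\omega_{\rm euc}^{n-r})\wedge(dd^cu)^{n-1}\right\rangle_\Kmc\right).
\end{align*}
Setting $F:=f\circ\pi_1$ (plurisubharmonic on $D^2$), we have $\pi_1^*S=dd^cF$. Near $\Delta$, I would introduce the coordinates $(x_2,w)$ with $w=x_1-x_2$, so that $u=\log|w|$ and $\Delta=\{w=0\}$; the relevant measure then disintegrates along $\pi_2$. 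Integration over the $w$-fibers against $(dd^c\log|w|)^{n-1}$ yields a positive kernel with at most logarithmic singularity along $\{w=0\}$, while integration over the base pairs $f$ against the trace $R\wedge\omega_{\rm euc}^{n-r}$, which is locally finite by hypothesis. Uniform control of $-u$ integrated against $\pi_1^*S\wedge\pi_2^*(R\wedge\omega_{\rm euc}^{n-r})\wedge\Kmc_\theta^{n-1}$ follows from Chern--Levine--Nirenberg-type estimates (in the spirit of \cite[Theorem III.4.5]{Demailly} and \cite[Proposition III.4.9]{Demailly}) applied to the unbounded locus $\Delta$, which sits in $D^2$ with the correct dimension; Lemma \ref{lem:basic_conv_KT} then singles out the unique $\Kmc$-limit.

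Once Condition $(\mathrm{I})$ is in hand, applying Theorem \ref{thm:standard_regularization} with $j=2$ gives $S_\epsilon\wedge R\to(S\wedge R)_K$ as $\epsilon\to 0$, where $S_\epsilon=dd^cf_\epsilon$ is the standard convolution regularization of $S$, with $f_\epsilon$ a smooth plurisubharmonic function decreasing to $f$ (obtained from a symmetric mollifier). Since $f_\epsilon$ is smooth and $R$ is closed, $S_\epsilon\wedge R=dd^cf_\epsilon\wedge R=dd^c(f_\epsilon R)$ classically. Monotone convergence applied to the trace measure $R\wedge\omega_{\rm euc}^{n-r}$ yields $f_\epsilon R\to fR$ in $L^1_{\loc}$, whence $dd^c(f_\epsilon R)\to dd^c(fR)$ as currents; matching the two limits concludes $(S\wedge R)_K=dd^c(fR)$. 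The main obstacle lies in the first step: controlling the interaction of the singular psh current $dd^cF$, whose potential carries the full pole structure of $f$, with the strongly singular current $(dd^cu)^{n-1}$ concentrated near $\Delta$, and showing that the $\Kmc_\theta^{n-1}$-approximations admit a unique weak limit of locally finite mass with $u$-integrable total variation. The disintegration along $\pi_2$ cleanly decouples the two sources of singularity, but the uniform estimates must be carefully justified within the Bedford--Taylor--Demailly framework adapted to the product setting on $D^2$.
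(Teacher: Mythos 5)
Your reduction of Condition $(\mathrm{I})$ to the single nontrivial step
\begin{align*}
u\in L^1_{\loc}\left(\left\langle\pi_1^*S\wedge\pi_2^*\left(R\wedge\omega_{\rm euc}^{n-r}\right)\wedge(dd^cu)^{n-1}\right\rangle_\Kmc\right)
\end{align*}
is correct and matches the paper (all lower steps vanish for bidegree reasons). Your second half is also sound and is a genuinely different route from the paper's: the paper simply cites \cite{HKV} for the identification of the Dinh--Sibony product with $dd^c(fR)$, whereas you derive it internally from Theorem \ref{thm:standard_regularization} with $j=2$, using $dd^cf_\epsilon\wedge R=dd^c(f_\epsilon R)$ and monotone convergence of $f_\epsilon R$ against the trace measure. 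That is a legitimate and arguably more self-contained alternative, \emph{once} Condition $(\mathrm{I})$ is established.

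The gap is in the first half, precisely at the sentence where you claim the uniform control of $-u$ against $\pi_1^*S\wedge\pi_2^*(R\wedge\omega_{\rm euc}^{n-r})\wedge\Kmc_\theta^{n-1}$ ``follows from Chern--Levine--Nirenberg-type estimates applied to the unbounded locus $\Delta$, which sits in $D^2$ with the correct dimension.'' It does not: $\Delta$ has the \emph{wrong} dimension. The current $T=\pi_1^*S\wedge\pi_2^*(R\wedge\omega_{\rm euc}^{n-r})$ has bidimension $(n-1,n-1)$ on $D^2$, and you are wedging it with $n-1$ copies of $dd^cu$ and multiplying by $u$; Demailly's hypotheses (\cite[Theorem III.4.5]{Demailly}) would require $L(u)\cap\supp T$ to have dimension at most $(n-1)-1=n-2$ already for a single factor, whereas $\Delta\cap\supp T$ is generically all of $\Delta$, of dimension $n$. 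If a CLN estimate applied here, Condition $(\mathrm{I})$ would hold for \emph{every} $S\in\Cc_1(D)$ and $R\in\Cc_r(D)$, with no integrability hypothesis on $f$ --- which is false and is exactly the classical obstruction to defining $dd^cf\wedge R$. The hypothesis $f\in L^1_{\loc}(R\wedge\omega_{\rm euc}^{n-r})$ must be the engine of the estimate, not a side remark. The paper's actual argument is: approximate $f$ by smooth psh $f_j\downarrow f$; use King's formula $[\Delta]=(dd^cu)^n$, the closedness of $\pi_2^*(\chi_2R\wedge\omega_{\rm euc}^{n-r})$, and Stokes to convert $\int\pi_1^*(\chi_1f_j)\,\pi_2^*(\chi_2R\wedge\omega_{\rm euc}^{n-r})\wedge[\Delta]=\int\chi_2f_jR\wedge\omega_{\rm euc}^{n-r}$ (bounded below by hypothesis) into the target integral $\int\pi_1^*(\chi_1dd^cf_j)\wedge\pi_2^*(\chi_2R\wedge\omega_{\rm euc}^{n-r})\wedge u(dd^cu)^{n-1}$ plus an error term supported away from $\Delta$, which is uniformly bounded; then pass to the limit $j\to\infty$ by an upper-semicontinuity argument that itself requires a separate continuity lemma (Lemma \ref{lem:n-1_continuity}) for the map $S\mapsto\langle\pi_1^*S\wedge\pi_2^*(R\wedge\omega_{\rm euc}^{n-r})\wedge(dd^cu)^{n-1}\rangle_\Kmc$. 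Your ``disintegration along $\pi_2$'' heuristic gestures at the right mechanism (the fiber integral of $dd^c(\pi_1^*f)$ against $u(dd^cu)^{n-1}$ reproduces $f$ on the diagonal), but as written the decoupling is not justified and the step it is meant to replace is the entire content of the proposition.
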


\begin{lemma}\label{lem:n-1_continuity}
	For $S\in\Cc_1(D)$ and $R\in\Cc_r(D)$, $\left\langle \pi_1^*S\wedge \pi_2^*\left(R\wedge\omega_{\rm euc}^{n-r}\right)\wedge (dd^cu)^{n-1} \right\rangle_\Kmc$ is well-defined and the map $S\to \left\langle \pi_1^*S\wedge \pi_2^*\left(R\wedge\omega_{\rm euc}^{n-r}\right)\wedge (dd^cu)^{n-1} \right\rangle_\Kmc$ is continuous in the sense of currents.
\end{lemma}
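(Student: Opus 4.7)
The plan is to exploit the tensor product structure and reduce the question to a fiberwise Bedford--Taylor statement via slicing over $\pi_2$. Since $\pi_2^*(R\wedge\omega_{\rm euc}^{n-r})$ contains no $dx, d\bar x$ components, only the pure $(n-1, n-1)$-part of $\Kmc_\theta^{n-1}$ in $dx, d\bar x$ contributes to the top-degree wedge. With $u(x,y)=\log|x-y|$ and the substitution $w=x-y$, this pure $dx$-part at fixed $y$ equals $\bigl[(dd^c v_\theta)\wedge (dd^c v)^{n-2}\bigr](w)$, where $v(w)=\log|w|$ and $v_\theta(w)=\chi(\log|w|-\log|\theta|)+\log|\theta|$; as $|\theta|\to 0$ it converges to $(dd^c v)^{n-1}(w)$ in the sense of currents on $\C^n$.

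First, for a smooth test function $\varphi$ on $D^2$, the double-current (Fubini) formalism gives
\begin{equation*}
	\bigl\langle \pi_1^*S \wedge \pi_2^*(R\wedge \omega_{\rm euc}^{n-r}) \wedge \Kmc_\theta^{n-1}, \varphi \bigr\rangle = \int_{y\in D} I_\theta^S(y)\, R(y)\wedge \omega_{\rm euc}^{n-r}(y),
\end{equation*}
where
\[
	I_\theta^S(y) := \int_w \varphi(y+w, y)\, S(y+w)\wedge \bigl[(dd^c v_\theta)\wedge (dd^c v)^{n-2}\bigr](w).
\]
For each fixed $y\in D$, the wedge product $S(y+\cdot)\wedge (dd^c v)^{n-1}$ is a well-defined positive Radon measure in $w$, since $v=\log|w|$ is plurisubharmonic with unbounded locus $\{0\}$ of Hausdorff dimension $0$, so Demailly's criterion \cite[Theorem III.4.5]{Demailly} applies. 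By the classical convergence theorem for decreasing approximating sequences of potentials, $I_\theta^S(y)\to I^S(y):=\int_w \varphi(y+w,y)\, S(y+w)\wedge (dd^c v)^{n-1}(w)$ as $|\theta|\to 0$.

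Second, the Chern--Levine--Nirenberg inequalities provide a uniform bound $|I_\theta^S(y)|\le C\,\|\varphi\|_\infty$ for $|\theta|$ small and $y$ in any compact subset of $D$, with $C$ depending only on the local mass of $S$ in a fixed neighborhood. Dominated convergence against the finite positive measure $R\wedge\omega_{\rm euc}^{n-r}$ restricted to the relevant compact sets yields the existence of $\bigl\langle \pi_1^*S\wedge \pi_2^*(R\wedge \omega_{\rm euc}^{n-r})\wedge (dd^c u)^{n-1}\bigr\rangle_\Kmc$ as a positive Radon measure on $D^2$, represented by $\int_y I^S(y)\, R(y)\wedge \omega_{\rm euc}^{n-r}(y)$.

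Finally, for continuity in $S$: if $S_k\to S$ weakly in $\Cc_1(D)$, Bedford--Taylor continuity gives $S_k(y+\cdot)\wedge (dd^c v)^{n-1}\to S(y+\cdot)\wedge (dd^c v)^{n-1}$ weakly for each $y$, while the Chern--Levine--Nirenberg mass bound remains uniform in $k$ because $\{\|S_k\|\}_k$ is locally uniformly bounded. The same dominated-convergence argument then passes the limit through the outer $y$-integral, yielding the desired continuity. The main challenge will be securing this uniform mass control jointly in the parameters $(\theta, y, k)$; this rests on the standard Chern--Levine--Nirenberg inequalities applied to a slightly enlarged fixed reference compact set that absorbs all translates by $y$ and accounts for the monotone approximation $v_\theta\searrow v$.
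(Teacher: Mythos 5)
Your fiberwise reduction is legitimate and rests on the same bidegree observation the paper uses (since $\pi_2^*(R\wedge\omega_{\rm euc}^{n-r})$ is of maximal bidegree in $y$, only the pure $x$-components of $\Kmc_\theta^{n-1}$ survive), and the well-definedness half of your argument is essentially sound: for fixed $y$ the product $S(y+\cdot)\wedge(dd^c v)^{n-1}$ with $v=\log|w|$ falls under Demailly's Theorem III.4.5/Proposition III.4.9 because $L(v)=\{0\}$, and the monotone approximation $v_\theta\searrow v$ plus the uniform mass estimates and dominated convergence in $y$ give existence of the limit. This is a genuinely different route from the paper, which instead feeds the bidegree vanishing into Lemma \ref{lem:basic_conv_KT} (the hypotheses there are trivially satisfied because $R\wedge\omega_{\rm euc}^{2n-r-1-i}=0$ for $i\le n-2$) and then runs a single integration by parts on $D^2$.

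The continuity half, however, has a genuine gap at its key step. You assert that ``Bedford--Taylor continuity gives $S_k(y+\cdot)\wedge(dd^cv)^{n-1}\to S(y+\cdot)\wedge(dd^cv)^{n-1}$ weakly for each $y$.'' The Bedford--Taylor/Demailly convergence theorems concern monotone sequences of the plurisubharmonic \emph{potentials} with the positive closed current factor held fixed; they say nothing about weak convergence in the current factor $S$. Continuity of $S\mapsto(dd^cv)^{n-1}\wedge S$ for the weak topology is false for general unbounded $v$ and is exactly the kind of statement one cannot take for granted (it is why the map in the lemma needs a proof at all). In the present situation the statement does happen to be true, but for a reason you do not supply: one must run the iterative definition $(dd^cv)^{j+1}\wedge S_k=dd^c\bigl(v\,(dd^cv)^{j}\wedge S_k\bigr)$ and show at each stage that $v$ is \emph{uniformly} integrable with respect to the trace measures of $(dd^cv)^{j}\wedge S_k$ near $w=0$; this follows from Lelong's monotonicity formula, which gives $\|(dd^cv)^{j}\wedge S_k\|_{B(0,r)}\le C r^{2(n-1-j)}$ with $C$ uniform in $k$, whence $\int_{B(0,\epsilon)}|v|\,d\|(dd^cv)^{j}\wedge S_k\|\to 0$ uniformly in $k$ for $j\le n-2$. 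Without this (or an equivalent) argument the continuity claim is unsupported. Note that the paper sidesteps the issue entirely: after one integration by parts the term carrying $dd^c(\pi_\Delta^*g)$ dies for bidegree reasons, and the surviving term is supported away from $\Delta$, where $u$ and $(dd^cu)^{n-2}$ are smooth, so continuity in $S$ is immediate. You should either import that mechanism or supply the uniform-integrability argument sketched above.
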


\begin{proof}
	For compact $K\subset D^2$, we consider the integrals
	\begin{align*}
		\int_{K}u\left\langle \pi_1^*S\wedge \pi_2^*\left(R\wedge\omega_{\rm euc}^{2n-r-1-i}\right)\wedge (dd^cu)^i\right\rangle_\Kmc
	\end{align*}
	for $i=0, \ldots, n-2$. Since $R\wedge\omega_{\rm euc}^{2n-r-1-i}$ is defined on $D$ and its bidegree is $2n-1-i$, this integral is $0$ for $i=0, 1, \ldots, n-2$. Lemma \ref{lem:basic_conv_KT} implies that the current $\left\langle \pi_1^*S\wedge \pi_2^*(R\wedge \omega_{\rm euc}^{n-r})\wedge (dd^cu)^{n-1}\right\rangle_\Kmc$ is well-defined.
	\medskip
	
	Let $(S_k)_{k\in\N}$ be a sequence of currents in $\Cc_1(D)$ such that $S_k\to S$ as $k\to \infty$ in the sense of currents. We first prove that the family $\left(\left\langle \pi_1^*S_k\wedge \pi_2^*\left(R\wedge \omega_{\rm euc}^{n-r}\right)\wedge(dd^cu)^{n-1}\right\rangle\right)_{n\in \N}$ has locally uniformly bounded mass around $\Delta$. Let $D'$ be a relatively compact open subset of $D$. Let $\chi_1:D\to [0,1]$ and $\chi_2:D\to [0,1]$ be smooth functions with compact support such that $\chi_2\equiv 1$ on $D'$ and that $\chi_1\equiv 1$ on a neighborhood of the support of $\chi_2$.  
	We basically follow the arguments used in the proof of Lemma \ref{lem:basic_conv_KT}. 
	Then, for $\theta\in\C^*$ with $|\theta|\ll 1$, we have
	\begin{align*}
		&\int_{D^2}\left(\pi_1^*\chi_1\right)\left(\pi_2^*\chi_2\right) \left\langle \pi_1^*S_k\wedge \pi_2^*\left(R\wedge \omega_{\rm euc}^{n-r}\right)\wedge(dd^cu)^{n-1}\right\rangle_\Kmc\\
		&\quad=\lim_{\theta\to 0}\int_{D^2}u_\theta^\Kmc \left(\pi_1^*\chi_1\right)dd^c\left(\pi_2^*\chi_2\right) \wedge \pi_1^*S_k\wedge \pi_2^*\left(R\wedge \omega_{\rm euc}^{n-r}\right)\wedge (dd^cu)^{n-2}\\
		&\quad\quad\quad+ \int_{D^2}u_\theta^\Kmc \left(dd^c\left(\left(\pi_1^*\chi_1\right)\left(\pi_2^*\chi_2\right)\right)-\left(\pi_1^*\chi_1\right)dd^c\left(\pi_2^*\chi_2\right)\right)\wedge \pi_1^*S_k\wedge \pi_2^*\left(R\wedge \omega_{\rm euc}^{n-r}\right)\wedge (dd^cu)^{n-2}.
	\end{align*}
	The first integral is $0$ for bidegree reason. Since the support of any derivative of $\chi_1$ is a definite distance away from $\Delta$, the second integral converges as $\theta\to 0$ and $k\to \infty$ in this order. Hence, the sequence has uniformly bounded mass.
	\medskip
	
	We show that the sequence $\left(\left\langle \pi_1^*S_k\wedge \pi_2^*\left(R\wedge \omega_{\rm euc}^{n-r}\right)\wedge(dd^cu)^{n-1}\right\rangle_\Kmc\right)_{n\in \N}$ converges and the limit is $\left\langle \pi_1^*S\wedge \pi_2^*\left(R\wedge \omega_{\rm euc}^{n-r}\right)\wedge(dd^cu)^{n-1}\right\rangle_\Kmc$. Outside the set $\Delta$, the limit currents of the sequence $\left(\left\langle \pi_1^*S_k\wedge \pi_2^*\left(R\wedge \omega_{\rm euc}^{n-r}\right)\wedge(dd^cu)^{n-1}\right\rangle_\Kmc\right)_{n\in \N}$ are equal to $\left\langle \pi_1^*S\wedge \pi_2^*\left(R\wedge \omega_{\rm euc}^{n-r}\right)\wedge (dd^cu)^{n-1}\right\rangle_\Kmc$. So, we only need to check the limit currents on $\Delta$. As in the proof of Lemma \ref{lem:basic_conv_KT}, by Lemma \ref{lem:restriction_lem}, in order to investigate the measure restricted to $\Delta$, we only need to consider a test function of the type $\pi_\Delta^*g$, where $g$ is a smooth test function on $\Delta$.\medskip
	
	Hence, as in Lemma \ref{lem:basic_conv_KT}, we claim that for a smooth test function $g$ defined on $\Delta$, we have
	\begin{align*}
		&\lim_{k\to\infty}\int_{D^2} \left(\pi_F^*\chi_F\right)\left(\pi_\Delta^*g\right)\left\langle \pi_1^*S_k\wedge \pi_2^*\left(R\wedge \omega_{\rm euc}^{n-r}\right)\wedge (dd^cu)^{n-1}\right\rangle_\Kmc\\
		&\quad\quad\quad\quad\quad=\int_{D^2} \left(\pi_F^*\chi_F\right)\left(\pi_\Delta^*g\right) \left\langle \pi_1^*S\wedge \pi_2^*\left(R\wedge \omega_{\rm euc}^{n-r}\right)\wedge(dd^cu)^{n-1}\right\rangle_\Kmc.
	\end{align*}
	Here, $\chi_F:\C^n\to [0,1]$ is a smooth function with compact support such that $\chi_F\equiv 1$ in a neighborhood of $0$ and that $\supp\, \pi_\Delta^*g\cap \supp\, \pi_F^*\chi_F$ is a compact subset of $D^2$. We have
	\begin{align*}
		&\lim_{k\to\infty}\int_{D^2} \left(\pi_F^*\chi_F\right)\left(\pi_\Delta^*g\right)\left\langle \pi_1^*S_k\wedge \pi_2^*\left(R\wedge \omega_{\rm euc}^{n-r}\right)\wedge(dd^cu)^{n-1}\right\rangle_\Kmc\\
		&\quad\quad\quad\quad\quad=\lim_{k\to\infty}\lim_{\theta\to 0}\int_{D^2} u_\theta^\Kmc \left(\pi_\Delta^*g\right)dd^c(\pi_F^*\chi_F)\wedge \pi_1^*S_k\wedge \pi_2^*\left(R\wedge \omega_{\rm euc}^{n-r}\right)\wedge(dd^cu)^{n-2}\\
		&\quad\quad\quad\quad\quad=u\left(\pi_\Delta^*g\right)dd^c(\pi_F^*\chi_F)\wedge \pi_1^*S\wedge \pi_2^*\left(R\wedge \omega_{\rm euc}^{n-r}\right)\wedge(dd^cu)^{n-2}.
	\end{align*}
	The first equality comes from the maximality of the bidegree of $R\wedge\omega_{\rm euc}^{n-r}$ as we have set $(x, w)=(x_2, x_1-x_2)$.
	The support of $dd^c(\pi_F^*\chi_F)$ does not intersect $\Delta$. So, if we express the integral $$\int_{D^2} \left(\pi_F^*\chi_F\right)\left(\pi_\Delta^*g\right) \left\langle \pi_1^*S\wedge \pi_2^*\left(R\wedge \omega_{\rm euc}^{n-r}\right)\wedge (dd^cu)^{n-1}\right\rangle_\Kmc$$ in the above form, we see that they are same.
\end{proof}

\begin{proof}[Proof of Proposition \ref{prop:(1, 1)}]
	We prove that $S$ and $R$ satisfy Condition $(\mathrm{I})$. The work in \cite{HKV} proves that the Dinh-Sibony product of $S$ and $R$ is $dd^c(fR)$. See also \cite{KV}.
	\medskip
	
	In the above proof, we have seen that for compact $K\subset D^2$, we have
	\begin{align*}
		\int_{K}u\left\langle \pi_1^*S\wedge \pi_2^*\left(R\wedge\omega_{\rm euc}^{2n-r-1-i}\right)\wedge (dd^cu)^i\right\rangle_\Kmc=0
	\end{align*}
	for $i=0, \ldots, n-2$. So, we consider the case of $i=n-1$.
	\medskip
	
	Let $D'$ be a relatively compact open subset of $D$. Let $\chi_1:D\to [0,1]$ and $\chi_2:D\to [0,1]$ be smooth functions as above in Lemma \ref{lem:n-1_continuity}. We may assume that $f$ is negative on the support of $\chi_2$. Let $(f_j)$ be a sequence of smooth plurisubharmonic functions decreasingly converging to $f$ and negative on the support of $\chi_2$. Since $f$ is locally integrable with respect to $(R\wedge\omega_{\rm euc}^{n-r})$, we have
	\begin{align*}
		0\ge \int_{D^2} \pi_1^*(\chi_1 f_j) \pi_2^*(\chi_2 R\wedge\omega_{\rm euc}^{n-r}) \wedge [\Delta]=\int_{D}\chi_2f_j R\wedge\omega_{\rm euc}^{n-r}\ge \int_{D}\chi_2f R\wedge\omega_{\rm euc}^{n-r} >-\infty.
	\end{align*}
	The current $\pi_2^*\left(\chi_2 R\wedge\omega_{\rm euc}^{n-r}\right)$ is closed since it is of maximal bidegree with respect to $y$. Hence, we have
	\begin{align}
		\notag&\int_{D^2} \pi_1^*(\chi_1 f_j) \pi_2^*(\chi_2 R\wedge\omega_{\rm euc}^{n-r}) \wedge [\Delta]=\int_{D^2} dd^c\pi_1^*(\chi_1 f_j) \wedge\pi_2^*(\chi_2 R\wedge\omega_{\rm euc}^{n-r}) \wedge u(dd^cu)^{n-1}\\
		\label{eq:last_1}&=\int_{D^2} \pi_1^*(\chi_1 dd^cf_j) \wedge\pi_2^*(\chi_2 R\wedge\omega_{\rm euc}^{n-r}) \wedge u(dd^cu)^{n-1}\\
		\label{eq:last_2}&\quad\quad\quad + \int_{D^2} \pi_1^*(dd^c(\chi_1f_j)-\chi_1dd^cf_j) \wedge\pi_2^*(\chi_2 R\wedge\omega_{\rm euc}^{n-r}) \wedge u(dd^cu)^{n-1}.
	\end{align}
	The first equality holds as $f_j$ is smooth.
	Observe that the support of any derivative of $\chi_1$ is disjoint from the support of $\chi_2$. So, over the support of $d\chi_1$, $d^c\chi_1$ or $dd^c\chi_1$, if the integral \eqref{eq:last_2} is first integrated with respect to $y$, then it is a smooth form. We consider one of the terms in the integral \eqref{eq:last_2}, which is of the form $$\int_D df_j\wedge d^c\chi_1\wedge \psi,$$ where $\psi=\int_{y\in D} \pi_2^*(\chi_2 R\wedge\omega_{\rm euc}^{n-r}) \wedge u(dd^cu)^{n-1}$ is a smooth $(n-1, n-1)$-form on $\supp\, d^c\chi_1$; the other two terms can be dealt in the same way. Since the support of $\chi_1$ is compact, we have
	\begin{align*}
		\int_D df_j\wedge d^c\chi_1\wedge \psi=\int_D f_j\wedge dd^c\chi_1\wedge\psi 
		+ \int_D f_j d^c\chi_1\wedge d\psi.
	\end{align*}
	It is bounded by $\|\chi_1\|_{C^2}\|\psi\|_{C^1, \supp\, d^c\chi_1}\|f_j\|_{L^1, \supp \chi_1}$ up to a multiplicative constant independent of $j$. So, \eqref{eq:last_2} is uniformly bounded. Hence, what we have obtained so far is that 
	\begin{align*}
		\eqref{eq:last_1}&=\int_{D^2} \pi_1^*(\chi_1 f_j) \pi_2^*(\chi_2 R\wedge\omega_{\rm euc}^{n-r}) \wedge [\Delta]-\eqref{eq:last_2}>-M_1
	\end{align*}
	for some constant $M_1>0$ independent of $j$.
	\medskip
	
	The function $S\to \int_{D^2}u\left\langle\pi_1^*\left(\chi_1S\right) \wedge\pi_2^*(\chi_2 R\wedge\omega_{\rm euc}^{n-r}) \wedge (dd^cu)^{n-1}\right\rangle_\Kmc$ is upper-semicontinuous as Lemma \ref{lem:n-1_continuity} says that $S\to \int_{D^2}u_\theta^\Kmc\left\langle\pi_1^*\left(\chi_1S\right) \wedge\pi_2^*(\chi_2 R\wedge\omega_{\rm euc}^{n-r}) \wedge (dd^cu)^{n-1}\right\rangle_\Kmc$ is continuous and $S\to \int_{D^2}u\left\langle\pi_1^*\left(\chi_1S\right) \wedge\pi_2^*(\chi_2 R\wedge\omega_{\rm euc}^{n-r}) \wedge (dd^cu)^{n-1}\right\rangle_\Kmc$ can be written as a decreasing limit of continuous functions $\left(S\to\int_{D^2}u_\theta^\Kmc\left\langle\pi_1^*\left(\chi_1S\right) \wedge\pi_2^*(\chi_2 R\wedge\omega_{\rm euc}^{n-r}) \wedge (dd^cu)^{n-1}\right\rangle_\Kmc\right)_{0<|\theta|\ll 1}$. By the upper-semicontinuity, since $\eqref{eq:last_1}\ge -M_1$ for $j\in\N$, we conclude that
	\begin{align*}
		\int_{D^2} u\left\langle\pi_1^*(\chi_1 dd^cf) \wedge\pi_2^*(\chi_2 R\wedge\omega_{\rm euc}^{n-r}) \wedge (dd^cu)^{n-1}\right\rangle_\Kmc>-M_1
	\end{align*}
	as desired. For every compact subset of $D^2$, we can always find valid $\chi_1$ and $\chi_2$ satisfying the support condition as above. Due to the negativity, $u\left\langle\pi_1^*(dd^cf) \wedge\pi_2^*(R\wedge\omega_{\rm euc}^{n-r}) \wedge (dd^cu)^{n-1}\right\rangle_\Kmc$ is locally integrable, which means $dd^cf$ and $R$ satisfy Condition $(\mathrm{I})$.
\end{proof}

\subsection{Positive closed currents with continuous superpotentials}\label{subsec:conti_sup} When a positive closed current has continuous superpotentials, the intersection defined by tangent currents coincides with that defined by superpotentials (\cite{VuMich}, \cite{DNV}). We show that on a compact K\"ahler manifold, a positive closed current with continuous superpotentials satisfies Condition $(\mathrm{I})$ with every positive closed current. For definitions and related properties of superpotentials on compact K\"ahler manifolds, we refer the reader to \cite{DS10} and \cite{DNV}. See also \cite{DS09} for the theory on complex projective spaces.
\medskip

Let $(X, \omega_X)$ be a compact K\"ahler manifold of dimension $n$. We can take local potential superfunctions as in Subsection \ref{subsec:superfunctions} with $Y=X$. We may take $J$ to be a finite set. Let $\Xf:=X_1\times X_2$ and $\pi_l:\Xf\to X_l$ the canonical projection onto the $l$-th factor for $l=1, 2$, where each $X_l$ is a copy of $X$. Let $\pi:\Xfh\to \Xf$ denote the blow-up of $\Xf$ along $\Delta$. Then, by a theorem of Blanchard, $\Xfh$ is a compact K\"ahler manifold and let $\omega_\Xfh$ denote a K\"ahler form on $\Xfh$. Let $\widehat{\Delta}$ denote the exceptional divisor. Let $\Pi_l:=\pi_l\circ\pi$ for each $l=1, 2$. Let $\alpha_{\widehat{\Delta}}$ be a smooth real closed $(1, 1)$-form on $\Xfh$ cohomologous to $[\widehat{\Delta}]$. Then, there exists a quasi-plurisubharmonic function $u_{\widehat{\Delta}}$ on $\Xfh$ such that $[\widehat{\Delta}]-\alpha_{\widehat{\Delta}}=dd^c u_{\widehat{\Delta}}$. 
\medskip

We claim that if $S\in\Cc_s(X)$ admits continuous superpotentials, each function $\Fc_{S, j}^i:\Dc_S^{i-1}\to \R\cup \{-\infty\}$ is upper-semicontinuous not just on $\Dc_{S, j}^i$ but on $\Dc_{S, j}^{i-1}$ in the following sense.
\begin{proposition}\label{prop:usc}
	Let $S\in\Cc_s(X)$ admit continuous superpotentials. Let $i\in\{1, \ldots, n\}$ and $j\in J$. Let $M>0$ be a constant. Let $R\in\Dc_{S, j}^{i-1}$ be a current such that $\Fc_{S, j}^{i-1}(R)\ge-M$. Let $\left(R_k\right)_{k\in\N}$ be a sequence of smooth currents in $\Dc_{S, j}^{i-1}$ such that $R_k\to R$ as $k\to\infty$ and that $\Fc_{S, j}^{i-1}(R_k)\ge-M$ for every $k\in\N$. Then, we have 
	\begin{align*}
		\limsup_{k\to\infty}\Fc_{S, j}^i(R_k)\le \Fc_{S, j}^i(R).
	\end{align*}
\end{proposition}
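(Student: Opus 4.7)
My plan is to first reduce the problem to the $\theta$-regularized superfunction and then exploit the continuous superpotential of $S$. Since $\Fc_{S,j}^i=\inf_{0<|\theta|\ll 1}\Fc_{S,j,\theta}^i$ (a decreasing limit in $|\theta|$) and any infimum of continuous functions is upper-semicontinuous, it will suffice to show that, for each fixed $\theta$, the function $\Fc_{S,j,\theta}^i$ is continuous at $R$ along the given sequence $R_k\to R$ inside $\Dc_{S,j}^{i-1}$. The uniform bound $\Fc_{S,j}^{i-1}(R_k),\Fc_{S,j}^{i-1}(R)\ge -M$, which by Remark \ref{rmk:integrability} amounts to a uniform $L^1$-control of $u$ against the trace measures at level $i-1$, will be used to justify the passage to the limit.

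Next I will rewrite $\Fc_{S,j,\theta}^i(R)$ as a pairing of $S$ against a single current on $X$. Since $\chi_j^i u_\theta^{\Kmc}$ is a bounded smooth function with compact support and $R\in\Dc_{S,j}^{i-1}$, Lemma \ref{lem:basic_conv_KT} makes the push-forward
\[
\alpha_{R,\theta}:=(\pi_1)_*\Bigl(\chi_j^i u_\theta^{\Kmc}\,\bigl\langle \pi_2^*(R\wedge \omega_{\rm euc}^{n-i})\wedge (dd^c u)^{i-1}\bigr\rangle_{\Kmc}\Bigr)
\]
a well-defined positive current of bidegree $(n-s,n-s)$ on $X$ with locally finite mass, satisfying $\Fc_{S,j,\theta}^i(R)=\int_X S\wedge \alpha_{R,\theta}$. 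For the smooth approximants $R_k$ I will then integrate by parts, transferring one $dd^c u$ in $(dd^c u)^{i-1}$ onto $\chi_j^i u_\theta^{\Kmc}$, to rewrite the integrand in terms of $u$, a smooth form with bounded coefficients, and $(dd^c u)^{i-2}$. The hypothesis $\Fc_{S,j}^{i-1}(R_k)\ge -M$ then gives a uniform $L^1$-bound on this new expression, hence on the mass of $\alpha_{R_k,\theta}$, while the weak convergence $R_k\to R$ combined with the smoothness of the remaining factors yields $\alpha_{R_k,\theta}\to \alpha_{R,\theta}$ weakly along with quasi-potentials controlled at the level of the blow-up $\Xfh$ of $X^2$ along $\Delta$.

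The continuous superpotential of $S$ -- which by definition provides continuity of the pairing $T\mapsto \int_X S\wedge T$ along families of positive closed currents with uniformly bounded mass and uniformly controlled quasi-potentials -- then delivers $\Fc_{S,j,\theta}^i(R_k)\to \Fc_{S,j,\theta}^i(R)$, and taking the infimum over $\theta$ completes the upper-semicontinuity claim. The hard part will be to match the topology of convergence of $\alpha_{R_k,\theta}$ with the continuity hypothesis imposed on the superpotential of $S$: the currents $\alpha_{R,\theta}$ carry singular pieces concentrated near $\Delta$ coming from $(dd^c u)^{i-1}$ weighted by the bounded function $u_\theta^{\Kmc}$, so some care is needed to extract a quasi-potential of $\alpha_{R,\theta}$ on $\Xfh$ depending continuously on $R$. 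The uniform hypothesis on $\Fc_{S,j}^{i-1}$ is indispensable at this step, since it converts the pointwise membership of $R_k$ in $\Dc_{S,j}^{i-1}$ into the uniform integrability needed to carry the convergence through the superpotential pairing.
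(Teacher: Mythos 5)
Your overall architecture matches the paper's: write $\Fc_{S,j}^i$ as a decreasing limit in $|\theta|$ of the regularized superfunctions $\Fc_{S,j,\theta}^i$, and reduce upper-semicontinuity to continuity of each $\Fc_{S,j,\theta}^i$ along the sequence $R_k\to R$. The gap is in how you propose to prove that continuity. You factor $\Fc_{S,j,\theta}^i(R)$ as $\int_X S\wedge\alpha_{R,\theta}$ with $\alpha_{R,\theta}=(\pi_1)_*\bigl(\chi_j^iu_\theta^\Kmc\,\langle\pi_2^*(R\wedge\omega_{\rm euc}^{n-i})\wedge(dd^cu)^{i-1}\rangle_\Kmc\bigr)$ and then invoke ``continuity of the pairing $T\mapsto\int_XS\wedge T$'' coming from the continuous superpotential of $S$. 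This step does not work as stated. First, the $S$-free product $\langle\pi_2^*(R\wedge\omega_{\rm euc}^{n-i})\wedge(dd^cu)^{i-1}\rangle_\Kmc$ is not known to exist: the hypothesis $R\in\Dc_{S,j}^{i-1}$ only controls the Monge--Amp\`ere-type limits of the product \emph{with} $\pi_1^*S$, so Lemma \ref{lem:basic_conv_KT} cannot be applied to the factor without $S$; and even granting existence, the identity $\Fc_{S,j,\theta}^i(R)=\int_XS\wedge\alpha_{R,\theta}$ requires commuting the inner $\theta'\to0$ limit with the wedge by $\pi_1^*S$, which is exactly the kind of separate continuity these products are known to lack. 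Second, the currents $\alpha_{R_k,\theta}$ are positive but not closed (multiplication by the non-constant function $\chi_j^iu_\theta^\Kmc$ destroys closedness), so they are not in the domain of the superpotential of $S$; a continuous superpotential gives continuity of $R'\mapsto\Uc_S(R')$ on normalized positive \emph{closed} $(n-s+1,n-s+1)$-currents, not of $T\mapsto\int_XS\wedge T$ against general positive currents of complementary bidegree. You flag this mismatch yourself as ``the hard part,'' but it is precisely the content of the proof and is left unresolved.

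The paper closes this gap differently (Lemma \ref{lem:usc_conti}): it proves directly that the full currents $\langle\pi_1^*S\wedge\pi_2^*(R_k\wedge\omega_{\rm euc}^{n-i})\wedge(dd^cu)^{i-1}\rangle_\Kmc$ converge to the one for $R$. The uniform bound $\Fc_{S,j}^{i-1}(R_k)\ge-M$ gives locally uniform mass bounds; off $\Delta$ the identification of the limit is automatic; and on $\Delta$, after reducing to test functions of the form $\pi_F^*\chi_\varepsilon\,\pi_\Delta^*f$ and integrating by parts, everything reduces to showing that the limit currents of $u(\pi_F^*\chi_\varepsilon)\pi_1^*S\wedge\pi_2^*(R_k\wedge\omega_{\rm euc}^{n-i+1})\wedge(dd^cu)^{i-2}$ put no mass on $\Delta$. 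That is where the continuous superpotential of $S$ actually enters: one passes to the blow-up $\Xfh$ of $X^2$ along $\Delta$, dominates these currents by $u_{\widehat{\Delta}}\Pi_1^*S\wedge\Pi_2^*R_k\wedge \omega_{\Xfh}^{n-1}$, and applies \cite[Proposition 2.7]{DNV} to get the ``no mass on $\widehat{\Delta}$'' statement uniformly in $k$. To salvage your approach you would need to replace the pairing argument of your second and third paragraphs by an argument of this type.
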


\begin{lemma}\label{lem:usc_conti}
	Let $S$, $R$ and $(R_k)_{k\in\N}$ be as above in Proposition \ref{prop:usc}. Then, we have 
	\begin{align*}
		\lim_{k\to\infty}\left\langle \pi_1^*S\wedge\pi_2^*\left(R_k\wedge\omega_{\rm euc}^{n-i}\right)\wedge(dd^cu)^{i-1}\right\rangle_\Kmc=\left\langle \pi_1^*S\wedge\pi_2^*\left(R\wedge\omega_{\rm euc}^{n-i}\right)\wedge(dd^cu)^{i-1}\right\rangle_\Kmc.
	\end{align*}
\end{lemma}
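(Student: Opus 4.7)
The plan is to commute the two limits $k\to\infty$ and $\theta\to 0$ that enter the definition of the $\Kmc$-product. Fix a smooth test function $\varphi$ compactly supported on the chart $U_j^{i-1}\times U_j^{i-1}$ and set
\begin{align*}
 I_{k,\theta}:=\int\varphi\,\pi_1^*S\wedge\pi_2^*(R_k\wedge\omega_{\rm euc}^{n-i})\wedge\Kmc_\theta^{i-1},
\end{align*}
with $I_{\infty,\theta}$ the analogue for $R$. First I would treat fixed $\theta\ne 0$: because $u_\theta^\Kmc$ is the constant $\log|\theta|$ in a neighbourhood of $\Delta$, the form $\Kmc_\theta^{i-1}=dd^cu_\theta^\Kmc\wedge(dd^cu)^{i-2}$ is a smooth closed form on the chart. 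Since $S$ is fixed on the first factor while $R_k\to R$ on the second, the tensor product $\pi_1^*S\wedge\pi_2^*R_k$ converges to $\pi_1^*S\wedge\pi_2^*R$ as double currents; wedging with the smooth form $\pi_2^*\omega_{\rm euc}^{n-i}\wedge\Kmc_\theta^{i-1}$ preserves this convergence, so $I_{k,\theta}\to I_{\infty,\theta}$ as $k\to\infty$ for every fixed $\theta\ne 0$.

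Next I would establish equicontinuity of $(I_{k,\theta})_\theta$ as $\theta\to 0$, uniform in $k\in\N\cup\{\infty\}$. Since $\pi_1^*S$, $\pi_2^*(R_k\wedge\omega_{\rm euc}^{n-i})$ and $(dd^cu)^{i-2}$ are all closed and $\varphi$ has compact support, Stokes' theorem transfers $dd^c$ from $u_\theta^\Kmc$ onto $\varphi$ and yields
\begin{align*}
 I_{k,\theta}=\int u_\theta^\Kmc\,dd^c\varphi\wedge\pi_1^*S\wedge\pi_2^*(R_k\wedge\omega_{\rm euc}^{n-i})\wedge(dd^cu)^{i-2}.
\end{align*}
Bounding $\pm dd^c\varphi\le C\omega_{D^2}=C(\pi_1^*\omega_{\rm euc}+\pi_2^*\omega_{\rm euc})$ splits $|I_{k,\theta}|$ into two pieces. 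The $\pi_2^*\omega_{\rm euc}$-piece is an integral of $|u_\theta^\Kmc|$ against the positive measure $\langle\pi_1^*S\wedge\pi_2^*(R_k\wedge\omega_{\rm euc}^{n-i+1})\wedge(dd^cu)^{i-2}\rangle_\Kmc$, which is bounded by $|\Fc_{S,j,\theta}^{i-1}(R_k)|\le|\Fc_{S,j}^{i-1}(R_k)|\le M$ by the hypothesis and the monotonicity $\Fc_\theta^{i-1}\searrow\Fc^{i-1}$. The $\pi_1^*\omega_{\rm euc}$-piece, of the form $|u_\theta^\Kmc|\wedge\pi_1^*(S\wedge\omega_{\rm euc})\wedge\pi_2^*(R_k\wedge\omega_{\rm euc}^{n-i})\wedge(dd^cu)^{i-2}$, is handled by invoking the symmetry $u(x,y)=u(y,x)$ via the involution $\rho$ and the fact that $S\wedge\omega_{\rm euc}$ still admits continuous superpotentials on $X$, so an analogous $\Fc$-type control applies. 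A standard $\varepsilon/3$ diagonal argument combining these estimates with the fixed-$\theta$ convergence from Step~1 then gives $I_{k,\theta_k}\to\langle\pi_1^*S\wedge\pi_2^*(R\wedge\omega_{\rm euc}^{n-i})\wedge(dd^cu)^{i-1}\rangle_\Kmc(\varphi)$ along any vanishing sequence $\theta_k\to 0$, proving the lemma.

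The main obstacle will be the uniform estimate for the $\pi_1^*\omega_{\rm euc}$-piece above. The direct hypothesis $\Fc_{S,j}^{i-1}(R_k)\ge-M$ controls the integral carrying the factor $R_k\wedge\omega_{\rm euc}^{n-i+1}$, while this residual piece carries $S\wedge\omega_{\rm euc}$ paired with $R_k\wedge\omega_{\rm euc}^{n-i}$, so the bidegrees do not line up. Reconciling them requires (a) using $\rho^*u=u$ to swap the roles of the two factors without changing the integrand, and (b) invoking the continuity of superpotentials of $S$ to conclude that $S\wedge\omega_{\rm euc}$ inherits the same property, so that an analogue of $\Fc_{S,j}^{i-1}$ makes sense with $S$ replaced by $S\wedge\omega_{\rm euc}$ and yields a uniform bound along the sequence of smooth currents $R_k$. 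The upper semicontinuity already established in Proposition \ref{prop:usc_local} provides the last ingredient to propagate the bound to $R$, after which the commutation of limits closes the argument.
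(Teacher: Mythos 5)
Your Step 1 (fixed $\theta\ne0$, let $k\to\infty$) and the overall $\varepsilon/3$ scheme are fine in outline, but the scheme stands or falls on the middle estimate: $|I_{k,\theta}-I_{k,0}|\to 0$ as $\theta\to 0$ \emph{uniformly in $k$}. What your Step 2 actually delivers is only a uniform bound $\sup_{k,\theta}|I_{k,\theta}|\le C$ coming from $\Fc_{S,j}^{i-1}(R_k)\ge -M$, and that is not enough. After your integration by parts and the splitting $\pm dd^c\varphi\le C\omega_{D^2}$, the quantity to be made small is $\int(u_\theta^\Kmc-u)\,d\mu_k$ for a family of positive measures $\mu_k$ supported near $\Delta$; since $u_\theta^\Kmc-u$ lives on $\{u\le\log|\theta|+1\}$ and is comparable to $-u$ there, uniform smallness is a \emph{uniform integrability} statement, $\sup_k\int_{\{u\le -A\}}(-u)\,d\mu_k\to 0$ as $A\to\infty$. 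A uniform bound $\sup_k\int(-u)\,d\mu_k\le M$ does not imply this: the $(-u)$-mass may concentrate on $\Delta$ along the sequence. With monotonicity and the uniform bound alone you only obtain $\limsup_k I_{k,0}\le I_{\infty,0}$ for each positive piece, i.e.\ upper semicontinuity --- which is Proposition \ref{prop:usc}, not the two-sided limit asserted in the lemma.

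This is precisely where the paper's proof does its real work: it shows that every limit measure of $u\,(\pi_F^*\chi_\varepsilon)\,\pi_1^*S\wedge\pi_2^*(R_k\wedge\omega_{\rm euc}^{n-i+1})\wedge(dd^cu)^{i-2}$ puts \emph{no mass on $\Delta$}, by passing to the blow-up $\Xfh$ of $X^2$ along $\Delta$, dominating the integrand by $u_{\widehat{\Delta}}\Pi_1^*S\wedge\Pi_2^*R_k\wedge\omega_{\Xfh}^{n-1}$, and invoking \cite[Proposition 2.7]{DNV}; this is the one place the continuity of the superpotentials of $S$ enters. Your proposal invokes that continuity only to patch the bidegree mismatch in the $\pi_1^*\omega_{\rm euc}$-piece and never supplies this no-mass-on-$\Delta$ input, so the limit interchange is not justified. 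Two further remarks: the $\pi_1^*\omega_{\rm euc}$-piece is an artifact of the crude bound $\pm dd^c\varphi\le C\omega_{D^2}$ --- the paper tests against product cutoffs $\pi_F^*\chi_\varepsilon\cdot\pi_\Delta^*f$ (resp.\ $\pi_1^*\chi_1\cdot\pi_2^*\chi_2$ with $\chi_1\equiv1$ near $\supp\,\chi_2$), so the only term surviving near $\Delta$ is the one controlled directly by $\Fc_{S,j}^{i-1}$, all cross terms being supported at a fixed distance from $\Delta$ where everything is smooth; and the assertion that ``$S\wedge\omega_{\rm euc}$ still admits continuous superpotentials on $X$'' does not parse as stated, since $\omega_{\rm euc}$ is only defined on the chart. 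To repair the argument you must add the no-mass-on-$\Delta$ lemma (equivalently, the uniform integrability estimate); the rest of your outline then goes through.
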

\begin{proof}
	When $i=1$, this is obvious. So, we may assume that $i\ge 2$. 
	We first claim that the sequence $\left\langle \pi_1^*S\wedge\pi_2^*\left(R_k\wedge\omega_{\rm euc}^{n-i}\right)\wedge(dd^cu)^{i-1}\right\rangle_\Kmc$ has locally uniformly bounded mass with respect to $k\in \N$.
	\medskip
	
	It suffices to consider local uniform boundedness of mass around $\Delta$ as $dd^cu$ is smooth outside $\Delta$. Let $\chi_1:U_j^{i-1}\to [0,1]$ be a smooth function with compact support and $\chi_2:U_j^{i-1}\to [0, 1]$ another smooth function with compact support such that $\chi_1\equiv 1$ on a neighborhood of $\supp\,\chi_2$. We have
	\begin{align*}
		&\int_{{U_j^{i-1}\times U_j^{i-1}}} \left(\pi_1^*\chi_1\right)\left(\pi_2^*\chi_2\right)\left\langle \pi_1^*S\wedge\pi_2^*\left(R_k\wedge\omega_{\rm euc}^{n-i}\right)\wedge (dd^cu)^{i-1}\right\rangle_\Kmc\\
		&\quad =\lim_{\theta\to 0}\int_{{U_j^{i-1}\times U_j^{i-1}}} u_\theta^\Kmc \left(\pi_1^*\chi_1\right)dd^c\left(\pi_2^*\chi_2\right) \wedge\pi_1^*S\wedge\pi_2^*\left(R_k\wedge\omega_{\rm euc}^{n-i}\right)\wedge(dd^cu)^{i-2}\\
		&\quad\,\, +\int_{{U_j^{i-1}\times U_j^{i-1}}}  u_\theta^\Kmc \left(dd^c\left(\left(\pi_1^*\chi_1\right)\left(\pi_2^*\chi_2\right)\right)-\left(\pi_1^*\chi_1\right)dd^c\left(\pi_2^*\chi_2\right)\right) \wedge\pi_1^*S\wedge\pi_2^*\left(R_k\wedge\omega_{\rm euc}^{n-i}\right)\wedge(dd^cu)^{i-2}.
	\end{align*}
	The first integral is uniformly bounded with respect to $k$ due to the assumption that $\Fc_{S, j}^{i-1}(R_k)\ge-M$. For the second integral, the region of integration is uniformly separated from $\Delta$ with respect to $\theta$, $j$. The function $u$ and the form $dd^cu$ are smooth on the region of integration. So, the second integral is uniformly bounded with respect to $k$.\medskip
	
	Now, we look into limit currents of $\left(\left\langle \pi_1^*S\wedge\pi_2^*\left(R_k\wedge\omega_{\rm euc}^{n-i}\right)\wedge (dd^cu)^{i-1}\right\rangle_\Kmc\right)_{k\in\N}$. 
	Notice that over $U_j^{i-1}\times U_j^{i-1}\setminus \Delta$, every limit current equals $\pi_1^*S\wedge\pi_2^*\left(R\wedge\omega_{\rm euc}^{n-i}\right)\wedge(dd^cu)^{i-1}$. So, we are interested in the restrictions of the limit currents to $\Delta$. By replacing $R_k$ by its subsequence, we may assume that the sequence converges. As in Lemma \ref{lem:basic_conv_KT}, due to Lemma \ref{lem:restriction_lem}, it suffices to consider a test function of the type $\pi_F^*\chi_\varepsilon\pi_\Delta^*f$, where $f:\left(U_j^{i-1}\times U_j^{i-1}\right) \cap\Delta\to \R$ is a smooth test function on $\left(U_j^{i-1}\times U_j^{i-1}\right) \cap\Delta$ and $\chi_\varepsilon:\C^n\to [0,1]$ is a smooth function with compact support in the $\varepsilon$-neighborhood of the origin such that $\chi_\varepsilon=1$ in a neighborhood of $0$. Here, $\pi_\Delta$ and $\pi_F$ are as in Section \ref{sec:intersection} with $k=2$. We may assume that $0<\varepsilon\ll 1$. We have 
	\begin{align}
		\label{eq:seq_total}&\int \left(\pi_F^*\chi_\varepsilon\right)\left(\pi_\Delta^*f\right)\left\langle \pi_1^*S\wedge\pi_2^*\left(R_k\wedge\omega_{\rm euc}^{n-i}\right)\wedge (dd^cu)^{i-1}\right\rangle_\Kmc\\
		\notag&\quad=\lim_{\theta\to 0}\int u_\theta^\Kmc  \left(\pi_F^*\chi_\varepsilon\right) dd^c\left(\pi_\Delta^*f\right)\wedge\pi_1^*S\wedge\pi_2^*\left(R_k\wedge\omega_{\rm euc}^{n-i}\right)\wedge (dd^cu)^{i-2}\\
		\label{eq:seq_2}&\quad\quad +\int u  \left(dd^c\left(\left(\pi_F^*\chi_\varepsilon\right)\left(\pi_\Delta^*f\right)\right)-\left(\pi_F^*\chi_\varepsilon\right)dd^c\left(\pi_\Delta^*f\right)\right)\wedge\pi_1^*S\wedge\pi_2^*\left(R_k\wedge\omega_{\rm euc}^{n-i}\right)\wedge (dd^cu)^{i-2}.			
	\end{align}
	In the same way, we see that
	\begin{align}
		\label{eq:limit_total}&\int \left(\pi_F^*\chi_\varepsilon\right)\left(\pi_\Delta^*f\right)\left\langle \pi_1^*S\wedge\pi_2^*\left(R\wedge\omega_{\rm euc}^{n-i}\right)\wedge (dd^cu)^{i-1}\right\rangle_\Kmc\\
		\notag&\quad=\lim_{\theta\to 0}\int u_\theta^\Kmc  \left(\pi_F^*\chi_\varepsilon\right) dd^c\left(\pi_\Delta^*f\right)\wedge\pi_1^*S\wedge\pi_2^*\left(R\wedge\omega_{\rm euc}^{n-i}\right)\wedge(dd^cu)^{i-2}\\
		\label{eq:limit_2}&\quad\quad +\int u\left(dd^c\left(\left(\pi_F^*\chi_\varepsilon\right)\left(\pi_\Delta^*f\right)\right)-\left(\pi_F^*\chi_\varepsilon\right)dd^c\left(\pi_\Delta^*f\right)\right) \wedge\pi_1^*S\wedge\pi_2^*(R\wedge\omega_{\rm euc}^{n-i})\wedge(dd^cu)^{i-2}.	
	\end{align}
	Since the region of integration is away from $\Delta$, \eqref{eq:seq_2} converges to \eqref{eq:limit_2} as $k\to\infty$.
	\medskip
	
	We claim that the limit currents of $\left(u\left(\pi_F^*\chi_\varepsilon\right) \pi_1^*S\wedge\pi_2^*\left(R_k\wedge\omega_{\rm euc}^{n-i+1}\right)\wedge (dd^cu)^{i-2}\right)_{k\in\N}$ have no mass on $\Delta$. (Their existence comes from $\Fc_{S, j}^{i-1}(R_k)\ge-M$ for $k\in\N$.)
	Since the current $R_k$ is smooth, it satisfies Condition $(\mathrm{K}-\max)$ along $\Delta$ and the current $\big\langle \pi_1^*S\wedge \pi_2^*\left(R_k\wedge\omega_{\rm euc}^{n-i+1}\right)\wedge (dd^cu)^{i-2}\big\rangle_\Kmc$ is well-defined and has no mass on $\Delta$. So, we have 
	$$u\left\langle \pi_1^*S\wedge \pi_2^*(R_k\wedge\omega_{\rm euc}^{n-i+1})\wedge(dd^cu)^{i-2}\right\rangle_\Kmc=\mathbf{1}_{\left(U_j^{i-1}\times U_j^{i-1}\right)\setminus \Delta}u\left( \pi_1^*S\wedge \pi_2^*(R_k\wedge\omega_2^{n-i+1})\right)\wedge(dd^cu)^{i-2}.$$
	By change of coordinates, which is the restriction $\pi\big|_{\Xfh\setminus \widehat{\Delta}}$ of the blow-up map, we can compare the currents $ (\pi^*u) \Pi_1^*S\wedge \Pi_2^*(R_k\wedge\omega_{\rm euc}^{n-i+1})\wedge \left(dd^c(\pi^*u)\right)^{i-2}$ and $u_{\widehat{\Delta}}\Pi_1^*S\wedge\Pi_2^*R_k\wedge \omega_{\Xfh}^{n-1}$ over the set ${\widehat{{U_j^{i-1}\times U_j^{i-1}}}\setminus \widehat{\Delta}}$. Writing out the integral in the blown-up space, the form $dd^c(\pi^*u)$ becomes a bounded smooth form. Since the singularity of $\pi^*u$ is the same as $\log \dist(\cdot, \widehat{\Delta})$, the current $\pi_*\left(u_{\widehat{\Delta}}\Pi_1^*S\wedge\Pi_2^*R_k\wedge \omega_{\Xfh}^{n-1}\right)$ dominates $u\left\langle \pi_1^*S\wedge \pi_2^*\left(R_k\wedge\omega_{\rm euc}^{n-i+1}\right)\wedge(dd^cu)^{i-2}\right\rangle_\Kmc$. The mass of $R_k$ is bounded independently of $k$ since $R$ and $R_k$ for $k\in\N$ are positive and $R_k\to R$ as $k\to\infty$. \cite[Proposition 2.7]{DNV} proves the claim. Then, together with the convergence of \eqref{eq:seq_2} to \eqref{eq:limit_2}, this proves that \eqref{eq:seq_total} converges to \eqref{eq:limit_total} since $dd^cu$ is smooth outside $\Delta$ and $\Fc_{S, j}^{i-1}(R)$ is finite.
\end{proof}

\begin{proof}[Proof of Proposition \ref{prop:usc}]
	By Lemma \ref{lem:usc_conti}, the function $R\to \Fc_{S, j, \theta}^i(R)$ 
	is continuous on the space $\{R, R_1, R_2, \ldots\}$. So, each function $\Fc_{S, j}^i$ is a decreasing limit of continuous functions $\Fc_{S, j, \theta}^i$. Hence, it is upper-semicontinuous on $\{R, R_1, R_2, \ldots\}$ and we obtain the inequality.
\end{proof}

\begin{proposition}
	Let $S\in\Cc_s(X)$ admit continuous superpotentials. Let $1\le r\le n-s$ be an integer. Then, for every $R\in\Cc_r(X)$, $S$ and $R$ satisfy Condition $(\mathrm{I})$. 
\end{proposition}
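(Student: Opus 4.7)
The plan is to combine a smooth approximation of $R$ with the upper-semicontinuity proved in Proposition \ref{prop:usc}. Using a regularization of $R$ on the compact K\"ahler manifold $X$ (for instance Demailly's approximation or a heat-kernel smoothing, possibly after adding an arbitrarily small multiple of $\omega_X^r$ to keep positivity and then absorbing it by linearity), I would produce a sequence $(R_k)_{k\in\N}$ of smooth positive closed $(r,r)$-forms with $R_k\to R$ weakly and $\|R_k\|$ uniformly bounded. Since each $R_k$ is smooth, the pair $(S,R_k)$ trivially satisfies Condition $(\mathrm{I})$ and $\Fc^i_{S,j}(R_k)\in\R$ for every $i\in\{1,\ldots,n\}$ and $j\in J$.

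The key step is to establish a uniform bound $\Fc^i_{S,j}(R_k)\ge -M_i$, independent of $k$. For this I would globalize the local integrals. On the blow-up $\pi:\Xfh\to\Xf$ along $\Delta$, the local function $u=\log|x-y|$ has the same singularity as the pull-back $\pi^*u$, whose $dd^c$ differs from $\alpha_{\widehat{\Delta}}+[\widehat{\Delta}]$ only by a globally smooth form. Using the decomposition indicated in Remark \ref{rmk:singularity_K}, the quantity $\Fc^i_{S,j}(R_k)$ equals, up to bounded terms, a sum of integrals of the form $\int \Pi_1^*S\wedge \Pi_2^*R_k\wedge u_{\widehat{\Delta}}\wedge(\textrm{smooth form})$, with the globally defined quasi-plurisubharmonic function $u_{\widehat{\Delta}}$ on $\Xfh$. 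Such pairings are precisely what the superpotentials of $S$ (in the sense of \cite{DS10}, \cite{DNV}) compute, so continuity of the superpotentials of $S$ together with $R_k\to R$ weakly and $\|R_k\|$ uniformly bounded yields the required uniform lower bound.

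With this bound available, I would conclude inductively in $i$ from $1$ to $n$. The base case $i=1$ follows directly since $u$ is integrable against the positive closed current $\pi_1^*S\wedge\pi_2^*(R\wedge\omega_{\rm euc}^{n-1})$ by the same superpotential argument applied already to the limit $R$. For the inductive step, assuming $R\in \Dc^{i-1}_{S,j}$ with $\Fc^{i-1}_{S,j}(R_k)\ge -M_{i-1}$ uniformly, Proposition \ref{prop:usc} gives
\[
\Fc^i_{S,j}(R)\ \ge\ \limsup_{k\to\infty}\Fc^i_{S,j}(R_k)\ \ge\ -M_i\ >\ -\infty,
\]
so $R\in \Dc^i_{S,j}$. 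After $n$ iterations this yields $R\in \Dc^n_{S,j}$ for every $j\in J$, which is exactly Condition $(\mathrm{I})$ for $(S,R)$ in the sense of Theorem \ref{thm:intersection_cpt}. The principal obstacle I anticipate is the globalization step: passing from the purely local integrals $\Fc^i_{S,j}(R_k)$ to a globally defined superpotential-style pairing on $\Xfh$ with only bounded error requires careful book-keeping with partitions of unity and with the residual smooth forms arising from the blow-up; once this identification is made, the continuity of the superpotentials of $S$ provides the required uniform bound essentially for free.
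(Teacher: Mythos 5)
Your overall strategy matches the paper's: regularize $R$, extract a uniform lower bound on the local integrals from the continuity of the superpotentials of $S$ by passing to the blow-up $\Xfh$, then conclude via the upper-semicontinuity of Proposition \ref{prop:usc} and induct on $i$. The globalization step you flag as the main obstacle is exactly what the paper delegates to \cite[Proposition 2.5]{DNV}, which gives the uniform bound $\int_{\Xfh}u_{\widehat{\Delta}}\,\Pi_1^*S\wedge\Pi_2^*(R_k\wedge\omega_X^{n-i})\wedge\omega_{\Xfh}^{i-1}>-M$ for smooth positive closed $R_k$ of bounded mass; that part of your plan is sound.

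The genuine gap is in your first step. For $r\ge 2$, a positive closed $(r,r)$-current on a compact K\"ahler manifold cannot in general be approximated by smooth \emph{positive} closed forms: Demailly-type regularization and heat-kernel smoothing are specific to bidegree $(1,1)$ or destroy positivity, and adding $\epsilon\,\omega_X^r$ does not repair this, because the obstruction is not the positivity of the cohomology class but the absence of smooth positive representatives converging to the given current. Note that the currents actually fed into the superfunctions live in $\Cc_{n-s+1}(X)$ with $n-s+1\ge 2$, so this is not a corner case. The paper circumvents it with the Dinh--Sibony structure theorem \cite{DS04}: write $R=R^+-R^-$ with $R^\pm$ positive closed, each of which \emph{is} a limit of smooth positive closed currents in its own cohomology class. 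The uniform-bound-plus-upper-semicontinuity argument is then run on $R^+$ alone, giving $\Fc_{S,j}^i(R^+)>-\infty$, and finiteness of $\Fc_{S,j}^i(R)$ follows for free from the linearity and negativity of the superfunctions together with the positivity of $R^-$ (subtracting $\Fc_{S,j}^i(R^-)\le 0$ can only increase the value). Without this decomposition, or a substitute for it, your approximating sequence $(R_k)$ need not exist and the induction has nothing to start from. A minor additional point: since the superfunctions are defined on $\Cc_{n-s+1}(X)$, you should first reduce to proving $\Dc_{S,j}^n=\Cc_{n-s+1}(X)$ and then apply this to $R\wedge\omega_X^{n-s-r+1}$, as the paper does explicitly.
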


\begin{proof}
	It suffices to show that $\Dc_{S,j}^i=\Cc_{n-s+1}(X)$ for every $i\in\{1, \ldots, n\}$ and $j\in J$. So, we assume that $r=n-s+1$. Let $R\in \Cc_{n-s+1}(X)$. According to \cite{DS04}, $R$ can be written as $R=R^+-R^-$, where $R^\pm\in \Cc_{n-s+1}(X)$ are currents that can be approximated by smooth currents in $\Cc_{n-s+1}(X)$ in the same cohomology class as $R^\pm$, respectively. Let $(R_k^+)_{k\in\N}$ be a sequence of smooth currents in $\Cc_{n-s+1}(X)$ such that $\{R_k^+\}=\{R^+\}$ and $\lim_{k\to\infty}R_k^+=R^+$. By \cite[Proposition 2.5]{DNV}, there exists a constant $M_{R^+}>0$ such that $\int_{\Xfh}u_{\widehat{\Delta}}\Pi_1^*S\wedge\Pi_2^*(R_k^+\wedge \omega_X^{n-i})\wedge \omega_{\Xfh}^{i-1}>-M_{R^+}$ for $k\in\N$. When $i=1$, the same argument as in the proof of Proposition \ref{prop:usc}, the upper-semicontinuity in Proposition \ref{prop:usc} and the negativity of the current imply that $\Fc_{S, j}^1(R^+)$ is finite for every $j\in J$. From the negativity of $\Fc_{S, j}^1$ and the positivity of $R^-$, we see that $\Fc_{S, j}^1(R)$ is finite for every $j\in J$ and therefore, $\Dc_{S, j}^1=\Cc_{n-s+1}(X)$. We apply the argument inductively with respect to $i$. Then, we obtain $\Dc_{S, j}^n=\Dc_{S, j}^{n-1}=\ldots=\Dc_{S, j}^1=\Cc_{n-s+1}(X)$ for every $j\in J$ as desired.
\end{proof}

\subsection{Vertical and horizontal currents} This subsection is from a comment of T. -C. Dinh's. The theory of superpotentials in \cite{DS09} and \cite{DS10} was introduced for compact K\"ahler manifolds and the approach there is of global nature. As in Section \ref{sec:compact} and Subsection \ref{subsec:conti_sup}, our approach is applicable to compact K\"ahler manifolds.
However, due to the local nature of our approach, some more study needs to be carried out for the direct comparison of them in a general situation.\medskip

Nevertheless, the intersection of vertical and horizontal currents in \cite{DS06} contains a similar idea to the one in the theory of superpotentials in some sense and is of local nature. We study the intersection of vertical and horizontal currents, which may serve as an evidence that our approach is consistent with the theory of superpotentials. Note that any smooth test function can be written as a linear combination of smooth plurisubharmonic functions. 
\medskip

Let $M\subset \C^p$ and $N\subset \C^{n-p}$ be two bounded convex open sets. Consider the domain $D:=M\times N$ in $\C^n$. The set $\partial_vD:=\partial M\times N$ (resp., $\partial_hD:=M\times \partial N$) is called the vertical (resp., horizontal) boundary of $D$. A subset $E$ of $D$ is called vertical (resp., horizontal) if $\overline{E}$ does not intersect $\overline{\partial_vD}$ (resp., $\overline{\partial_hD}$). Let $\pi^D_1$ and $\pi^D_2$ denote the canonical projections of $D$ onto $M$ and $N$. Then, $E$ is vertical (resp., horizontal) if and only if $\overline{\pi_1^D(E)}$ is compact in $M$ (resp., $\overline{\pi_2^D(E)}$ is compact in $N$). A current on $D$ is vertical (resp., horizontal) if its support is vertical (resp., horizontal). Let $\Cc^v_p(D)$ (resp., $\Cc^h_{n-p}(D)$) denote the cone of positive closed vertical (resp., horizontal) currents of bidegree $(p, p)$ (resp., $(n-p, n-p)$) on $D$.
\begin{proposition}
	Let $S\in \Cc_p^v(D)$ and $R\in \Cc_{n-p}^h(D)$. Then, $S$ and $R$ satisfy Condition $(\mathrm{I})$  and the product $(S\wedge R)_K$ coincides with the product in \cite[Section 3]{DS06}, which we denote by $S\wedge_{VH}R$ here. More precisely, for a smooth plurisubharmonic function $\varphi$ on $D$, we have
	\begin{align}\label{eq:HV}
		\langle (S\wedge R)_K, \varphi\rangle=\langle S\wedge_{VH}R, \varphi\rangle:=\limsup_{\substack{S'\to S\\ R'\to R}}\langle S'\wedge R', \varphi\rangle,
	\end{align}
	where $S'\in\Cc_p^v(D)$ and $R'\in\Cc_{n-p}^h(D)$ are smooth with supports converging in the Hausdorff sense to those of $S$ and $R$, respectively.
\end{proposition}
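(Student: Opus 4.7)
The plan is to establish both assertions by combining standard convolution regularizations with the machinery already developed in the paper. The key geometric fact is that $\supp S\cap\supp R$ lies in the compact set $K_v\times K_h\subset D$, where $K_v\subset M$ and $K_h\subset N$ are compact; consequently, the image in $\Delta\cong D$ of $\supp(\pi_1^*S\wedge\pi_2^*R)\cap\Delta$ is compact, so the intersection occurs strictly inside $D$. After extending $S$ and $R$ by zero across the horizontal and vertical boundaries respectively in a suitable neighborhood of $K_v\times K_h$, the standard convolutions $S_\epsilon$ and $R_\epsilon$ are smooth and, for $\epsilon$ small, still vertical and horizontal with supports converging to those of $S,R$ in the Hausdorff sense.

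To verify Condition $(\mathrm{I})$, I would proceed by induction on $i=1,\ldots,n$. For $i\le n-p$ the form $R\wedge\omega_{\rm euc}^{n-i+1}$ vanishes by bidegree, and the condition is vacuous. For $n-p+1\le i\le n$, I apply Lemma \ref{lem:basic_conv_KT} inductively: from the previous step's product, the lemma yields the well-definedness at step $i$ once the family $\left(\pi_1^*S\wedge\pi_2^*(R\wedge\omega_{\rm euc}^{n-i+1})\wedge\Kmc_\theta^{i-1}\right)_{0<|\theta|\ll 1}$ is shown to have locally uniformly bounded mass near $\Delta$. I would obtain this bound by replacing $S,R$ with their smooth approximations $S_\epsilon,R_\epsilon$, running the same integration-by-parts argument used in the proof of Lemma \ref{lem:basic_conv_KT}, and exploiting that the local masses of $S_\epsilon$ and $R_\epsilon\wedge\omega_{\rm euc}^{n-i+1}$ are controlled uniformly in $\epsilon$ by those of $S$ and $R\wedge\omega_{\rm euc}^{n-i+1}$. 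Passing $\epsilon\to 0$ yields the bound for $S,R$ themselves, and the induction closes.

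For the identification $(S\wedge R)_K=S\wedge_{VH}R$, I apply Theorem \ref{thm:standard_regularization}, which gives $S_\epsilon\wedge R\to(S\wedge R)_K$ in the sense of currents. A diagonal extraction then yields a sequence $\epsilon_k\to 0$ with $S_{\epsilon_k}\wedge R_{\epsilon_k}\to(S\wedge R)_K$. Since $(S_{\epsilon_k},R_{\epsilon_k})$ is an admissible pair of smooth vertical/horizontal approximations with supports converging in the Hausdorff sense, one obtains
$$\langle(S\wedge R)_K,\varphi\rangle=\lim_{k\to\infty}\langle S_{\epsilon_k}\wedge R_{\epsilon_k},\varphi\rangle\le\langle S\wedge_{VH}R,\varphi\rangle$$
for every smooth plurisubharmonic $\varphi$. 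For the reverse inequality, I would invoke the results of \cite{DS06} showing that the $\limsup$ defining $S\wedge_{VH}R$ is attained as a true limit along any admissible smooth approximation with supports converging in the Hausdorff sense; hence the convolutions $(S_{\epsilon_k},R_{\epsilon_k})$ also compute $\langle S\wedge_{VH}R,\varphi\rangle$, closing the loop.

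The main obstacle is the uniform mass bound underlying the inductive step for Condition $(\mathrm{I})$. The form $u(dd^cu)^{i-1}$ carries a high-order logarithmic singularity along $\Delta$ that does not dissipate after a single integration by parts. The control must come from the vertical/horizontal decomposition, which decouples the factors in the two groups of variables on $D=M\times N$: slicing $S$ by the fibers of $\pi^D_2$ and $R$ by the fibers of $\pi^D_1$ reduces the pairing to an iterated integral where each slice measure is positive and has locally finite mass, and the singular factor reduces to a classical integrable kernel on the transverse directions. The most delicate part of the plan is therefore the careful bookkeeping in this Fubini-type reduction, ensuring that the estimate is uniform in the regularization parameter $\epsilon$.
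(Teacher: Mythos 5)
Your verification of Condition $(\mathrm{I})$ is where the argument has a genuine gap. The inductive step you describe reduces, after the integration by parts of Lemma \ref{lem:basic_conv_KT}, not to a mass bound on $\pi_1^*S_\epsilon\wedge\pi_2^*(R_\epsilon\wedge\omega_{\rm euc}^{n-i+1})\wedge\Kmc_\theta^{i-1}$ but to the finiteness of $\int u\,\langle\cdots\wedge(dd^cu)^{i-1}\rangle_\Kmc$, i.e.\ to the integrability of the unbounded negative function $u$ against the limit measure of the previous step. Uniform control of the local masses of $S_\epsilon$ and $R_\epsilon$ does not touch this: mass bounds survive weak limits, but $\int u\,d\mu_\epsilon$ need not converge to $\int u\,d\mu$ (only an upper semicontinuity holds, which goes the wrong way for proving finiteness from below). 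You acknowledge this yourself and defer the real estimate to a ``Fubini-type reduction'' that is never carried out; note also that Demailly-type Chern--Levine--Nirenberg criteria based on the Hausdorff dimension of $L(u)\cap\supp(\pi_1^*S\wedge\pi_2^*R)$ do not apply here, since the supports of a vertical and a horizontal current can meet in a set of full dimension. The paper closes this step in one stroke: since $\supp(\pi_1^*S\wedge\pi_2^*R)\cap\Delta$ is \emph{compact} in $D^2$ (exactly the geometric fact you isolate), Oka's inequality for currents \cite[Proposition 3.1]{FS} yields the finiteness of all the integrals in Definition \ref{defn:Condition(I)} at once. Without that (or an actually executed substitute), Condition $(\mathrm{I})$ is not established, and then neither $(S\wedge R)_K$ nor the applicability of Theorem \ref{thm:standard_regularization} in your second part is available.

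On the identification $(S\wedge R)_K=S\wedge_{VH}R$, your route differs from the paper's. For the inequality $\langle(S\wedge R)_K,\varphi\rangle\le\langle S\wedge_{VH}R,\varphi\rangle$ the paper uses the kernel regularizations $\Lc^{\Kmc_\theta^n}(S)\wedge\Lc^{\Kmc_{\theta'}^n}(R)$ via Theorem \ref{thm:conv_many}, while you use convolutions plus a diagonal extraction; both are workable (your version needs the minor care you mention about convolving near the horizontal/vertical boundaries, and a justification that weak convergence on bounded sets of currents is metrizable so that the diagonal argument is legitimate). For the reverse inequality, however, you invoke a joint-continuity statement for the Dinh--Sibony vertical/horizontal product along arbitrary admissible approximations and treat it as a black box. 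The paper deliberately uses only the weaker \cite[Proposition 3.7]{DS06} -- existence of \emph{one} family $R^{(\varepsilon)}$ with $S\wedge R^{(\varepsilon)}\to S\wedge_{VH}R$ -- and then closes the gap with an upper-semicontinuity argument in the spirit of Proposition \ref{prop:usc_local}, which itself depends on the finiteness supplied by Condition $(\mathrm{I})$. If the continuity you cite really is available in \cite{DS06} in the strength you need, your reverse direction is simpler; as written it is an unverified appeal, and in any case both halves of your identification collapse without the Condition $(\mathrm{I})$ step above.
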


\begin{proof}
	We slightly shrink $D$ so that $D$ has smooth boundary but still contains the compact subset $\supp\, S \cap \supp\, R$.
	We use the notations in Section \ref{sec:intersection} with $k=2$. Let $L(u):=\{(x, y)\in D^2: u \textrm{ is unbounded in any neighborhood of }(x, y) \textrm{ in }D^2\}$. Then, $L(u)=\Delta$ and $\supp\left(\pi_1^*S\wedge\pi_2^*R\right)\cap \Delta$ is a compact subset of $\Delta$. By Oka's inequality in \cite[Proposition 3.1]{FS}, the integrals in Definition \ref{defn:Condition(I)} are all finite, which means that $S$ and $R$ satisfy Condition $(\mathrm{I})$ (or, one might be able to use Stokes' theorem). So, the first assertion is proved.\medskip
	
	\medskip
	
	Since $\displaystyle(S\wedge R)_K= \lim_{\theta\to 0}\lim_{\theta'\to 0}\Lc^{\Kmc^n_\theta}(S)\wedge \Lc^{\Kmc^n_{\theta'}}(R)$ according to Theorem \ref{thm:conv_many}, we have
	\begin{align*}
		\langle (S\wedge R)_K, \varphi\rangle \le \limsup_{\substack{S'\to S\\ R'\to R}}\langle S'\wedge R', \varphi\rangle=\langle S\wedge_{VH}R, \varphi\rangle.
	\end{align*}
	On the other hand, according to \cite[Proposition 3.7]{DS06}, there exists a family $\left(R^{(\varepsilon)}\right)_{0<\varepsilon\ll 1}$ of smooth currents in $\Cc_{n-p}^h(D)$ such that $R^{(\varepsilon)}\to R$ and $S\wedge R^{(\varepsilon)}$ converges to $S\wedge_{VH}R$ as $\varepsilon\to 0$. Let $\chi_0:\C^n\to [0,1]$ be a smooth function with compact support such that $\chi_0\equiv 1$ in a neighborhood of the origin. Let $\chi_\Delta(x, y):=\chi_0(x-y)$ be a smooth function on $D^2$ with support in a neighborhood of $\Delta$ such that $\chi_\Delta\equiv 1$ in a neighborhood of $\Delta$. We have
	\begin{align}
		\notag&\langle S\wedge_{VH}R, \varphi\rangle=\lim_{\varepsilon\to 0}\left\langle S\wedge R^{(\varepsilon)}, \varphi\right\rangle
		=\lim_{\varepsilon\to 0}\lim_{\theta\to 0}\int\Kmc_\theta^n\wedge\pi_1^*S\wedge\pi_2^*R^{(\varepsilon)}\wedge \chi_\Delta\left(\pi_2^*\varphi\right)\\
		\label{eq:last_11}&=\lim_{\varepsilon\to 0}\lim_{\theta\to 0}\int u^\Kmc_\theta(dd^cu)^{n-1}\wedge\pi_1^*S\wedge\pi_2^*R^{(\varepsilon)}\wedge \left(\chi_\Delta\pi_2^*dd^c\varphi\right)\\
		\label{eq:last_12}&\quad\quad\quad+\lim_{\varepsilon\to 0}\lim_{\theta\to 0}\int u^\Kmc_\theta(dd^cu)^{n-1}\wedge\pi_1^*S\wedge\pi_2^*R^{(\varepsilon)}\wedge \left(dd^c\left(\chi_\Delta\pi_2^*\varphi\right)-\chi_\Delta\pi_2^*dd^c\varphi\right).
	\end{align}
	Since $\Delta\cap \supp\,\left(dd^c\left(\chi_\Delta\pi_2^*\varphi\right)-\chi_\Delta\pi_2^*dd^c\varphi\right)=\emptyset$, we have 
	\begin{align*}
		\eqref{eq:last_12}=\int u^\Kmc(dd^cu)^{n-1}\wedge\pi_1^*S\wedge\pi_2^*R\wedge \left(dd^c\left(\chi_\Delta\pi_2^*\varphi\right)-\chi_\Delta\pi_2^*dd^c\varphi\right).
	\end{align*}
	As noted previously, the integrals in Definition \ref{defn:Condition(I)} are all finite. In particular, $\int_{D^2}\chi_\Delta u\big\langle \pi_1^*S\wedge\pi_2^*(R\wedge dd^c\varphi)\wedge(dd^cu)^{n-1}\big\rangle_\Kmc$ is well-defined and finite. So, as in Proposition \ref{prop:usc_local}, the function $R\to\int_{D^2\setminus \Delta} u(dd^cu)^{n-1}\wedge\pi_1^*S\wedge\pi_2^*R\wedge \left(\chi_\Delta\pi_2^*dd^c\varphi\right)$ is upper-semicontinuous on $\{R\}\cup\left\{R^{(\epsilon)}:0<\epsilon\ll 1\right\}$. So, we have
	\begin{align*}
		\eqref{eq:last_11}&=\lim_{\varepsilon\to 0}\int_{D^2\setminus \Delta} u(dd^cu)^{n-1}\wedge\pi_1^*S\wedge\pi_2^*R^{(\varepsilon)}\wedge \left(\chi_\Delta\pi_2^*dd^c\varphi\right)\\
		&\le \int_{D^2\setminus \Delta} u(dd^cu)^{n-1}\wedge\pi_1^*S\wedge\pi_2^*R\wedge \left(\chi_\Delta\pi_2^*dd^c\varphi\right).
	\end{align*}
	Hence, as in Theorem \ref{thm:main_intersection_1}, we obtain that
		$\langle S\wedge_{VH}R, \varphi\rangle\le \langle (S\wedge R)_K, \varphi\rangle$,
	which means that the two definitions coincide.
\end{proof}

\section{Self-intersections of Analytic Subsets}\label{sec:self-intersecting}

In this section, we further study the intersection of positive closed currents beyond the proper intersection. In Sections \ref{sec:intersection} and \ref{sec:examples}, we have seen that under Condition $(\mathrm{I})$ (or equivalently, Condition $(\mathrm{K}-\max)$), the shadow of tangent currents is closely related to the intersection of positive closed currents. Now, we look into the shadow of tangent currents when the relative non-pluripolar product is used. We consider the self-intersection of analytic subsets, that is, $A\cap A=A$, where $A$ is an analytic subset. The shadow of tangent currents may be regarded as the largest piece of the intersection of positive closed currents. We use Theorem \ref{thm:shadow_general} to prove that the shadow of the corresponding tangent current in this case is exactly the current of integration on $A$ itself.
\medskip

\noindent{\bf Theorem \ref{thm:non-proper}. }{\it
Let $X$ be a compact K\"ahler manifold of dimension $n$. Let $A$ be an irreducible analytic subset of pure codimension $a$ in $X$. Let $\Delta$ be the diagonal submanifold of $X^2$. Let $\pi_i:X^2\to X$ denote the canonical projection onto the $i$-th factor for $i=1, 2$. Then, $\pi_1^*[A]\wedge\pi_2^*[A]$ has a unique tangent current along $\Delta$ and its $h$-dimension is $n-a$. Its shadow is exactly $[A]$.
}

\begin{proof}
	\cite[Proposition 2.6]{Vu} proves that $\pi_1^*[A]\wedge\pi_2^*[A]$ has a unique tangent current along $\Delta$. For the existence of tangent currents, see also \cite{DS18}. Let $T_{A, \infty}$ denote the unique tangent current.\medskip 
	
	We compute its $h$-dimension and shadow. According to \cite[Lemma 5.3]{DS18}, the $h$-dimension is at most $n-a$.	The problem of determining the $h$-dimension and the shadow is of a local nature. Hence, we consider a local situation. Let $D\subset X$ be an open subset and we identify it with a bounded simply connected domain with smooth boundary via coordinate chart. We use the notations in Section \ref{sec:intersection} with $k=2$.\medskip
	
	If its $h$-dimension is smaller than $n-a$, then $(\pi_\Delta)_*(T_{A, \infty}\wedge \Omega_{n-a})=0$. Hence, once we prove that $(\pi_\Delta)_*(T_{A, \infty}\wedge \Omega_{n-a})$ is not $0$, it means that the $h$-dimension of $T_{A, \infty}$ is $n-a$ and by Remark \ref{rmk:indep_shadow}, $(\pi_\Delta)_*(T_{A, \infty}\wedge \Omega_{n-a})$ equals the shadow of $T_{A, \infty}$.\medskip
	
	The unbounded locus $L(u)$ of $u$ is $\Delta$ and therefore $L(u)\cap \supp\, \left(\pi_1^*[A]\wedge\pi_2^*[A]\right)$ is essentially $A$, which is of complex dimension $n-a$. As in Proposition \ref{prop:analytic_subset}, by \cite[Theorem III.4.5]{Demailly} and \cite[Proposition III.4.9]{Demailly}, the integral 
	\begin{align}\label{eq:int_self_intersecting}
		\int_K u (dd^cu)^{n-a-1}\pi_1^*[A]\wedge\pi_2^*[A]\wedge \omega^{n-a+1}
	\end{align}	
	is finite on every compact $K\subset D^2$. By Proposition \ref{prop:Kmc}, $\left\langle \pi_1^*[A]\wedge\pi_2^*[A]\wedge(dd^c u)^{n-a}\right\rangle_\Kmc$ exists. By Proposition \ref{prop:main_general}, we have 
	\begin{displaymath}
		(\pi_\Delta)_*(T_{A, \infty}\wedge \Omega_{n-a})=(\pi_\Delta)_*\left(\mathbf{1}_\Delta\left\langle \pi_1^*[A]\wedge\pi_2^*[A]\wedge(dd^c u)^{n-a}\right\rangle_\Kmc\right).
	\end{displaymath}
	The support of $\pi_1^*[A]\wedge\pi_2^*[A]\wedge\Kmc_{\theta}^{n-a}$ sits inside $A\times A$. Note that based on the coordinates for $\overline{E}$ in our consideration, $D^2\cap \Delta$ and $D$ are identical. Hence, we may say that the support of $(\pi_\Delta)_*\left(\mathbf{1}_\Delta\left\langle \pi_1^*[A]\wedge\pi_2^*[A]\wedge(dd^c u)^{n-1}\right\rangle_\Kmc\right)$
	sits inside $A$. Since $(\pi_\Delta)_*(T_{A, \infty}\wedge \Omega_{n-a})$ is a positive closed $(n-a, n-a)$-current and its support lies in the analytic subset $A$ of dimension $n-a$, by the support theorem of Siu, it should be a linear combination of the currents of integration on irreducible components of $A\cap D$.\medskip
	
	We prove that the coefficients equal $1$. For this, by shrinking $D$ to a smaller open subset of $D$, we may assume that $A\cap D$ is irreducible and in the regular part of $A$. From the boundedness of the integrals in \eqref{eq:int_self_intersecting}, we can use Theorem \ref{thm:shadow_general} to compute the shadow of tangent currents.
	We choose a local coordinate chart $x=(x', x'')$ of $D$, so that $x'=(x_1, \ldots, x_{n-a})$, $x''=(x_{n-1+1}, \ldots, x_n)$ and $A=\{x''=0\}$ in $D$. We also choose a similar local coordinate chart $(x, y)=(x', x'', y', y'')$ of $D^2$. We denote by $z=(z', z'')$, $z'=x'-y'$ and $z''=x''-y''$. We use $u=\frac{1}{2}\log \sum \left(|z'|^2+|z''|^2\right)$ and $u'= \log |z'|$ on $D^2$.\medskip
	
	We claim that the support of $\left\langle \pi_1^*[A]\wedge\pi_2^*[A]\wedge(dd^c u)^{n-a}\right\rangle_\Kmc$ sits inside $\Delta$. Let $\Phi$ be a smooth test form in $D^2$, whose support do not intersect $\Delta$. Over $\supp\, \Phi$, $dd^c u$ is smooth and we have $u^\Kmc_\theta = u$ when $|\theta|\ll 1$. We have
	\begin{align*}
		&\int_{D^2} \left\langle \pi_1^*[A]\wedge\pi_2^*[A]\wedge(dd^c u)^{n-a}\right\rangle_\Kmc\wedge\Phi = \int_{D^2}  \pi_1^*[A]\wedge\pi_2^*[A]\wedge (dd^c u)^{n-a}\wedge\Phi\\
		&\quad\quad\quad=\frac{1}{2^{n-a}}\int_{D^2}  \pi_1^*[A]\wedge\pi_2^*[A]\wedge (dd^c \log \sum \left(|z'|^2+|z''|^2\right))^{n-a}\wedge\Phi\\
		&\quad\quad\quad=\frac{1}{2^{n-a}}\int_{A^2} (dd^c \log \sum \left(|z'|^2\right))^{n-a}\wedge\Phi|_{A^2}.
	\end{align*}
	The product $(dd^c \log \sum \left(|z'|^2\right))^{n-a}$ vanishes from direct computations. Indeed, King's residue formula says that $(dd^c \log \sum \left(|z'|^2\right))^{n-a}$ equals the Dirac mass at $z'=0$ on $\C^{n-a}$. As $\supp\, \Phi$ does not intersect $z'=0$, the integral vanishes as desired.\medskip
	
	Let $\varphi$ be a smooth test $(n-a, n-a)$-form on $D$ or equivalently on $\Delta$. In the integral below, we are integrating outside $\Delta$. Thus, we have
	\begin{align*}
		&\left\langle(\pi_\Delta)_*(T_{A, \infty}\wedge \Omega_{n-a}), \varphi\right\rangle=\left\langle (\pi_\Delta)_*\left\langle \pi_1^*[A]\wedge\pi_2^*[A]\wedge(dd^cu)^{n-a}\right\rangle_\Kmc, \varphi \right\rangle \\
		&\quad\quad\quad= \int_{D^2\setminus \Delta}u \pi_1^*[A]\wedge\pi_2^*[A]\wedge (dd^cu)^{n-a-1}\wedge dd^c(\pi_\Delta^*\varphi)\\
		&\quad\quad\quad=\int_{A^2\setminus \Delta} u' (dd^cu')^{n-a-1} dd^c\pi_\Delta^*\varphi|_{A^2\cap \Delta}=\int_{A^2} u' (dd^cu')^{n-a-1} dd^c\pi_\Delta^*\varphi|_{A^2\cap \Delta}=\langle [A], \varphi\rangle.
	\end{align*}
	The first equality is from the above support condition and Theorem \ref{thm:shadow_general}. The second equality comes from Theorem \ref{thm:shadow_general}. As previously, the second to last equality is from \cite[Theorem III.4.5]{Demailly} and \cite[Proposition III.4.9]{Demailly}. The last equality comes from King's residue formula. Hence, we see that the coefficient should be $1$ as desired.
\end{proof}

\begin{remark}
	Theorem \ref{thm:non-proper} holds for any complex manifolds with the same proof as long as tangent currents of $\pi_1^*[A]\wedge\pi_2^*[A]$ along $\Delta$ exist. For instance, the reader is referred to \cite{Vu}.
\end{remark}

\end{document}